\newcommand{\norm}[2]{\left\Vert {#1} \right\Vert_{#2}}
\newcommand{\mathbbmm}[1]{\text{\usefont{U}{bbm}{m}{n}#1}}
\newcommand{\ind}{\mathbbmm{1}}
\DeclareMathOperator{\dive}{div}
\def\ds{\displaystyle}
\def\eps{{\varepsilon}}
\def\N{\mathbb{N}}
\def\O{\Omega}
\def\R{\mathbb{R}}
\def\A{\mathcal{A}}
\def\HH{\mathcal{H}}
\newcommand{\be}{\begin{equation}}
\newcommand{\ee}{\end{equation}}
\numberwithin{equation}{section}
\theoremstyle{plain}
\newtheorem{theo}{Theorem}[section]
\newtheorem{lemma}[theo]{Lemma}
\newtheorem{coro}[theo]{Corollary}
\newtheorem{prop}[theo]{Proposition}
\newtheorem{defi}[theo]{Definition}
\newtheorem{definition}[theo]{Definition}
\theoremstyle{remark}
\newtheorem{remark}[theo]{Remark}
\newtheorem{oss}[theo]{Remark}
\def\XXint#1#2#3{{\setbox0=\hbox{$#1{#2#3}{\int}$ }
\vcenter{\hbox{$#2#3$ }}\kern-.6\wd0}}
\title[Regularity  of the optimal sets]{Regularity of the optimal sets for\\ a class of integral shape functionals}
\author[G.~Buttazzo, F.~P.~Maiale, D.~Mazzoleni, G.~Tortone, B.~Velichkov]{Giuseppe Buttazzo, Francesco Paolo Maiale,\\ Dario Mazzoleni, Giorgio Tortone, Bozhidar Velichkov }
\address {Giuseppe Buttazzo\newline \indent
	Dipartimento di Matematica, Universit\`a di Pisa \newline \indent
	Largo Bruno Pontecorvo, 5, I--56127 Pisa, Italy}
\email{giuseppe.buttazzo@unipi.it}
\address {Francesco Paolo Maiale \newline \indent
	Scuola Normale Superiore\newline \indent
	Piazza dei Cavalieri 7, 56126 Pisa, Italy}
\email{francesco.maiale@sns.it}
\address {Dario Mazzoleni \newline \indent
Dipartimento di Matematica, University of Pavia \newline \indent
Via Ferrata 5, 27100 Pavia, Italy}
\email{dario.mazzoleni@unipv.it}
\address {Giorgio Tortone \newline \indent
	Dipartimento di Matematica, Universit\`a di Pisa \newline \indent
	Largo Bruno Pontecorvo, 5, 56127 Pisa, Italy}
\email{giorgio.tortone@dm.unipi.it}
\address {Bozhidar Velichkov \newline \indent
Dipartimento di Matematica, Universit\`a di Pisa \newline \indent
Largo Bruno Pontecorvo, 5, 56127 Pisa, Italy}
\email{bozhidar.velichkov@unipi.it}
\begin{document}

\thanks{{\bf Acknowledgments.}
	G.T. and B.V. are supported by the European Research Council's (ERC) project n.853404 ERC VaReg - \it Variational approach to the regularity of the free boundaries\rm, financed by the program Horizon 2020.
	G.B., F.P.M., D.M. and G.T. are members of INDAM-GNAMPA.
	D.M. has been partially supported by the MIUR-PRIN Grant 2020F3NCPX
``Mathematics for industry 4.0 (Math4I4)''}

\subjclass[2020] {35R35, 49Q10, 35B65, 49N60, 35N25}
\keywords{Free boundary regularity, optimal shapes, one-phase Bernoulli problem, global stable solutions, dimension reduction, critical dimension}

\begin{abstract}
We prove {the first} regularity theorem for the free boundary of solutions to shape optimization problems involving integral  functionals, for which the energy of a domain $\Omega$ is obtained as the integral of a cost function $j(u,x)$ depending on the solution $u$ of a certain PDE problem on $\Omega$. The main feature of these functionals is that the minimality of a domain $\Omega$ cannot be translated into a variational problem for a single (real or vector valued) state function.

In this paper we focus on the case of affine cost functions $j(u,x)=-g(x)u+Q(x)$, where $u$ is the solution of the PDE $-\Delta u=f$ with Dirichlet boundary conditions. We obtain the Lipschitz continuity and the non-degeneracy of the optimal $u$ from the inwards/outwards optimality of $\Omega$ and then we use the stability of $\Omega$ with respect to variations with smooth vector fields in order to study the blow-up limits of the state function $u$.
{By performing a }triple consecutive blow-up, we prove the existence of blow-up sequences converging to homogeneous stable solution of the one-phase Bernoulli problem and according to the blow-up limits, we decompose $\partial\Omega$ into a singular and a regular part. In order to estimate the Hausdorff dimension of the singular set of $\partial\Omega$ we give a new formulation of the notion of stability for the one-phase problem, which is preserved under blow-up limits and allows to  develop a dimension reduction principle. Finally, by combining a higher order Boundary Harnack principle and a
viscosity approach, we prove $C^\infty$ regularity of the regular part of the free boundary when the data are smooth.
\end{abstract}

\maketitle
\setcounter{tocdepth}{1}
\tableofcontents

%%%%%%%%%%%%%%%%%%%%%%%%%%%%%%%%%%%%%%%%%%%%%%%%%%
\section{Introduction}\label{sintro}

This paper is dedicated to the regularity of the optimal shapes, solutions to shape optimization problems of the form
$$\min\big\{J(A)\ :\ A\in\A\big\},$$
where $\A$ is an admissible class of open, Lebesgue measurable or quasi-open subsets of $\R^d$, and where $ J:\A\to\R$ is a given shape functional, with $J(A)$ usually depending on the solution of a PDE on the domain $A$. This kind of minimization problems arise in different models in Biology, Engineering and Physics (see for example~\cite{HP18,bubu} for an overview) and have been extensively studied from both numerical and theoretical points of view. In particular, there are two classes of shape optimization problems of the form above with long history, both leading to overdetermined elliptic PDE problems with Dirichlet boundary conditions.\medskip

The first class involves the so-called {\it spectral functionals}, that is, functionals depending on the eigenvalues of the Dirichlet Laplacian as
$$J(A)=\varphi\big(\lambda_1(A),\dots,\lambda_k(A)\big)+|A|,$$
where $\varphi:\R^k\to\R$ is a real-valued function and $|A|$ denotes the Lebesgue measure of $A$. The associated shape optimization problems
$$\min\Big\{\varphi\big(\lambda_1(A),\dots,\lambda_k(A)\big)+|A|\ :\ A\subseteq\R^d\Big\},$$
have a long history and are related to the classical question ``Can one hear the shape of the drum?'' and, more generally, to the interplay between the geometry of the domains and the spectrum of the Dirichlet Laplacian. The first results on the characterization of the optimal shapes, for the first and the second Dirichlet eigenvalues, go back to the works of Faber-Krahn (1922) and Krahn-Szeg\"o (1923), and consist in finding explicit minimizers (balls and unions of disjoint balls), which is only possible in some special cases as $J=\lambda_1$ and $J=\lambda_2$. Today, thanks to theory developed by Buttazzo and Dal Maso~\cite{BDM93}, and to the more recent results \cite{bulbk,mp}, it is well-known that, for monotone functionals $\varphi$, minimizers exist in a class of measurable (quasi-open) sets. The regularity of the optimal shapes has also been extensively studied; we refer to \cite{brianconlamboley} and \cite{rtv} for the case of optimal sets of $\lambda_1$ in a box, to \cite{maztv2} for the optimal sets of $\lambda_2$ in a box, and to \cite{bmpv,maztv1,kl1,kl2} (see also \cite{CSY,maztv2}) for functionals involving higher eigenvalues $\lambda_k$. \medskip

The second class of functionals involves {\it integral shape functionals}, namely, for every bounded open set $A\subset\R^d$ we define
$$J(A):=\int_A j(u_A,x)\,dx\,,$$
where the cost function
$$j:\R\times\R^d\to\R\,$$
is fixed and the state function $u_A$ is the (weak) solution of the PDE
\begin{equation}\label{e:state-equation-open}
-\Delta u=f\quad \text{in}\quad A\,,\qquad u\in H^1_0(A)\,,
\end{equation}
 the force term $f:\R^d\to\R$ being also a prescribed function. \medskip

 Optimization problems for integral shape functionals arise in Optimal Control and in models from Mechanics, in which the optimization criteria $j(u_A,x)$ takes into account external factors and forces that might appear not immediately, but only after the design is complete and the state function $u_A$ is already fixed. This type of problems pops up also in population dynamics, when one aims to optimize the population size.
As in the case of spectral functionals, also for integral functionals with monotone cost, the existence of optimal shapes in bounded domains follows from the general theory of Buttazzo and Dal Maso, and again the solutions belong to the large class of measurable (or quasi-open) sets. A general existence result in the class of open sets, was proved recently in \cite{BMV1} and in~\cite{BuSh20}. Precisely, it was shown that if $D$ is a bounded open set in $\R^d$ and if the function $j$ satisfies some suitable growth assumptions, then the shape optimization problem 
$$\min\Big\{\int_A j(u_A,x)\,dx\ :\ A-open,\ A\subseteq D\Big\},$$
has a solution $\Omega\subset D$, $\Omega$ - open.\medskip

\noindent On the other hand, even if the existence theory is quite well understood, there is no regularity theory for the minimizers of integral shape functionals, even in the simplest  case
\begin{equation}\label{e:intro-def-j}
j(u,x)=-g(x)u+Q(x)\,,\quad\text{with}\quad g\neq f\,,
\end{equation}
the regularity of the optimal sets was out of reach. 

\medskip In this paper we prove the first general regularity result for the optimal shapes of integral functionals. In order to make our main result (\cref{thm.main}) easier to read, we introduce the following definition.
\begin{definition}\label{def:d*}
Let $D$ be an open set in $\R^d$.
For $k\in \N\setminus\{0\}$, $\alpha\in [0,1]$ and $N\in\N$,
we call a set $\Omega\subset D$ \emph{{$(k,\alpha, N)$-regular in $D$}} if the free boundary $\partial \Omega\cap D$ is the disjoint union of a regular part $Reg(\partial \Omega)$ and a (possibly empty) singular part $\text{\rm Sing}(\partial\Omega)$ such that:
\begin{itemize}
\item $\text{\rm Reg}(\partial\Omega)$ is a relatively open subset of $\partial \Omega\cap D$ and locally a graph of a $C^{k,\alpha}$-regular function;
\item $\text{\rm Sing}(\partial \Omega)$ is a closed subset of $\partial\Omega\cap D$ and has the following properties:
\begin{itemize}
\item If $d<N$, then $\text{\rm Sing}(\partial \Omega)$ is empty.
\item If $d\ge N$, then the Hausdorff dimension of $\text{\rm Sing}(\partial \Omega)$ is at most $d-N$, namely
\[
\HH^{d-N+\eps}\big(\text{\rm Sing}(\partial \Omega)\big)=0\quad\text{for every}\quad \eps>0.
\]
\end{itemize}
\end{itemize}
Moreover, if the regular part $\text{\rm Reg}(\partial \Omega)$ is $C^\infty$, then we say that $\Omega$ is \emph{$(\infty, N)$-regular in $D$}.
%We also notice that, if $D=\R^d$, then clearly $\partial\Omega\cap D=\partial\Omega$.
\end{definition}
%
%In our work, $d^*$ will the smallest dimension at which the
%free boundaries of the local minima of the one-phase functional
%$$u\mapsto \int|\nabla u|^2\,dx+|\{u>0\}|,$$
%admit singularities. Finding the exact value of $d^\ast$ is currently an open problem; thanks to~\cite{cjk}, \cite{dj} and~\cite{js}, it is known that $d^*$ is $5$, $6$, or $7$.
%%\in\{5,6,7\}$,. But it is conjectured that $d^*=7$.

The main result of the present paper is the following.

\begin{theo}\label{thm.main}
Let $D$ be a bounded open set in $\R^d$, where $d\ge 2$. Let
$$f:D\to\R\,,\quad g:D\to\R\,,\quad Q:D\to\R\,,$$
be given non-negative functions.
Suppose that the following conditions hold:
\begin{enumerate}[\rm(a)]
	\item $f,g\in L^\infty(D)\cap C^2(D)$;
	\item there are constants $C_1,C_2 > 0$ such that
	\be\label{e:feg}
	0\le C_1 g\le f\le C_2 g\quad\mbox{in}\quad D.
	\ee
	\item $Q\in C^{2}(D)$ and there are a positive constants $c_Q,C_Q$ such that
	$$0<c_Q\le Q(x)\leq C_Q\quad\text{for every}\quad x\in D\,.$$
\end{enumerate}
Then, there is $\alpha\in(0,1)$ such that every solution $\Omega\subseteq D$ to the shape optimization problem
\begin{equation}\label{e:intro-shape-opt-pb-in-main-teo}
\min \bigg\{\int_A \Big(-g(x)u_A+Q(x)\Big)\,dx\ :\ A\subset D,\ A\ \text{open}\bigg\}\,,
\end{equation}
is $(1,\alpha,5)$-regular in $D$. Moreover, 
%if $f,g\in C^k(D)$ and $Q\in C^{k,\beta}(D)$ for some $\beta\in (0,1)$, then $\O$ is $(k+1,\alpha, 5)$-regular in $D$. In particular, 
if $f,g,Q\in C^\infty(D)$, then $\O$ is $(\infty,5)$-regular in $D$.
\end{theo}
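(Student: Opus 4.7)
The plan is a regularity program for the free boundary $\partial\Omega$ organized in four layers: basic a-priori estimates on the state function; blow-up analysis driven by the stability of $\Omega$; classification of blow-up limits via a triple consecutive blow-up reducing to the one-phase Bernoulli problem; and finally, dimension reduction for the singular set together with a bootstrap on the regular part.

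First I would exploit the inward/outward optimality of $\Omega$. Inward competitors $A\subseteq\Omega$ yield the Lipschitz bound $u_\Omega\in\text{Lip}(\Omega)$ via an Alt--Caffarelli-style comparison that uses $f\in L^\infty$ and the upper bound $Q\le C_Q$; outward competitors, combined with $Q\ge c_Q>0$ and $f\le C_2 g$, yield the matching non-degeneracy $u_\Omega(x)\ge c\,\dist(x,\partial\Omega)$. Together these provide positive density for both $\{u_\Omega>0\}$ and its complement near $\partial\Omega$. Simultaneously, computing the first shape derivative of $J$ along a vector field $\xi$, after introducing the adjoint state $v\in H^1_0(\Omega)$ with $-\Delta v=g$, gives the two-state overdetermined condition $(\partial_\nu u_\Omega)(\partial_\nu v)=Q$ on $\partial\Omega$; the second shape derivative along (say) divergence-free $\xi$ encodes the stability of $\Omega$.

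The next step, which is the heart of the argument, is blow-up. Rescaling $u_r(x)=u_\Omega(x_0+rx)/r$ and $v_r(x)=v(x_0+rx)/r$ at a free boundary point $x_0$, the Lipschitz and non-degeneracy estimates allow extraction of a locally uniform limit $(u_0,v_0)$, with $u_0,v_0$ both harmonic on $\{u_0>0\}=\{v_0>0\}$ and satisfying $(\partial_\nu u_0)(\partial_\nu v_0)=Q(x_0)$ on the blown-up free boundary. A single blow-up need not be homogeneous, since no obvious monotonicity formula is available for the integral functional with $g\ne f$. The remedy is the triple consecutive blow-up announced in the abstract: a first blow-up yields a global Lipschitz minimizer of a limiting problem; a second blow-up, together with a Liouville-type rigidity using that both $u_0$ and $v_0$ are harmonic on the same set with zero Dirichlet data, forces $v_0$ to be proportional to $u_0$, converting the overdetermined condition into the one-phase form $|\nabla u_0|^2=\text{const}$; a third blow-up then produces a homogeneous \emph{stable} solution of the one-phase Bernoulli problem.

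The principal obstacle, and the most delicate ingredient of the proof, is passing the stability condition through these blow-ups: the natural second-variation inequality involves the adjoint $v$ and is not preserved under naive rescaling. Reformulating stability in a form closed under blow-ups is what unlocks the dimension reduction. Combining Federer's reduction, the classification of one-homogeneous stable solutions of the one-phase problem in low dimensions, and the reformulated stability yields the critical threshold $N=5$, so that $\text{Sing}(\partial\Omega)=\emptyset$ for $d<5$ and $\HH^{d-5+\eps}(\text{Sing}(\partial\Omega))=0$ otherwise. On the regular part, the standard flatness-implies-$C^{1,\alpha}$ theorem for the one-phase Bernoulli problem transfers to $\partial\Omega$ through the blow-up convergence; a higher-order boundary Harnack principle applied to both $u_\Omega$ and $v$ (which solve $-\Delta u_\Omega=f$ and $-\Delta v=g$ with Dirichlet conditions on $\partial\Omega$ and $C^2$ data), together with $(\partial_\nu u_\Omega)(\partial_\nu v)=Q$ read in the viscosity sense, gives an iterative bootstrap $C^{1,\alpha}\to C^{2,\alpha}\to\cdots$, producing $C^\infty$ regularity when $f,g,Q\in C^\infty$.
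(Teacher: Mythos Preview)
Your overall architecture matches the paper's exactly: a-priori estimates from inward/outward optimality, a triple blow-up producing a homogeneous stable one-phase solution, stability passed through blow-ups and reformulated for a Federer dimension reduction, then an $\eps$-regularity theorem plus a boundary Harnack bootstrap. A few of your mechanisms are misassigned, however, and two of them are substantive.

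First, you have the sources of Lipschitz continuity and non-degeneracy swapped: in the paper the \emph{outward} optimality (enlarging $\Omega$) gives the Alt--Caffarelli almost-minimizer property and hence Lipschitz continuity of $u_\Omega$, while the \emph{inward} optimality (shrinking $\Omega$) gives non-degeneracy. More importantly, your first blow-up is not a ``global Lipschitz minimizer of a limiting problem'': the whole difficulty the paper stresses is that minimality of $u_\Omega$ is \emph{not} available (no competitors $\tilde u = u_\Omega + \phi$ are admissible), so only the criticality and stability conditions $\delta\mathcal F=0$, $\delta^2\mathcal F\ge 0$ along vector fields survive the limit. This is why homogeneity is not immediate and the triple blow-up is needed.

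Second, two of your key tools are not the ones that actually work here. The proportionality $v_{00}=\lambda u_{00}$ after the second blow-up is not a ``Liouville-type rigidity'' for harmonic functions vanishing on the same set (which is false in general cones); it comes from the Boundary Harnack Principle applied on the first blow-up domain $\Omega_0$, which makes $u_0/v_0$ H\"older continuous up to $\partial\Omega_0$, so any further blow-up freezes the ratio. And the $C^{1,\alpha}$ step on $\text{Reg}(\partial\Omega)$ does not come from the one-phase improvement-of-flatness: near a regular point you have \emph{two} flat functions $u,v$ with the product condition $|\nabla u|\,|\nabla v|=Q$, so the paper invokes the system $\eps$-regularity theorem of Maiale--Tortone--Velichkov, after first checking (via the triple blow-up) that $(u,v)$ solves the system in the viscosity sense. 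With these corrections in place, your outline is essentially the paper's proof.
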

\begin{proof}
The definitions of the regular and the singular parts, $\text{\rm Reg}(\partial \Omega)$ and $\text{\rm Sing}(\partial \Omega)$, of the free boundary $\partial\Omega\cap D$ are given in \cref{s:decomposition}. The $C^{1,\alpha}$ regularity of $\text{\rm Reg}(\partial \Omega)$ is proved in \cref{t:regularity-of-Reg}, while the $C^\infty$ regularity follows from \cref{prop:smoothnessbdry}. The bounds on the dimension of $\text{\rm Sing}(\partial \Omega)$ are given in \cref{t:dimension-of-sing}.
\end{proof}

\begin{oss}[On the bound on the dimension of the singular set $\text{\rm Sing}(\partial \Omega)$]
In \cref{section.stable} we develop a theory about the regularity of the stable global solutions of the one-phase Bernoulli problem (the definition of global stable solution is given in \cref{def:global-stable-solutions}); we show that there is a critical dimension $d^\ast$ (see \cref{def:global-stable-solution-critical-dimension}), in which a singular global solution appears for the first time (see \cref{t:stable-dimension-reduction}), and we prove that the $d^\ast$ can be only $5$, $6$, or $7$ (see \cref{t:stable-critical-dimension}). In \cref{section.sing} (\cref{t:dimension-of-sing}) we use this results to show the following bounds on the singular part $\text{\rm Sing}(\partial \Omega)$ of an optimal set $\Omega$, solution to \eqref{e:intro-shape-opt-pb-in-main-teo}: 
\begin{itemize}
\item if $d<d^\ast$, then $\text{\rm Sing}(\partial \Omega)$ is empty;
\item if $d\ge d^\ast$, then the Hausdorff dimension of $\text{\rm Sing}(\partial \Omega)$ is at most $d-d^\ast$.
\end{itemize}
 In particular, since (by \cref{t:stable-critical-dimension}) $d^\ast\ge 5$, we get that: 
 \begin{itemize}
\item if $d<5$, then $\text{\rm Sing}(\partial \Omega)$ is empty;
\item if $d\ge 5$, then the Hausdorff dimension of $\text{\rm Sing}(\partial \Omega)$ is at most $d-5$.
\end{itemize}
In other words, the precise statement of \cref{thm.main} is that under the conditions (a)-(b)-(c), the optimal sets are $(1,\alpha,d^\ast)$-regular, where $d^\ast$ is the critical dimension from \cref{def:global-stable-solution-critical-dimension}.
\end{oss}

\begin{oss}[On the assumptions (a)-(b)-(c) in \cref{thm.main}]
The $C^2$ regularity assumption in (a) is technical and is related to the use we make of the second order variations of the functional $J$ along vector fields (see \cref{section.variations}). The assumption (b) is used in the proofs of the Lipschitz continuity and the non-degeneracy of the state function $u_\Omega$; we notice that (b) is automatically satisfied when $f$ and $g$ are both bounded from above and from below by positive constants. The bounds from above and below on the weight $Q$ in (c) are usual in the class of Bernoulli-type free boundary problems; these bounds are necessary for the Lipschitz continuity and the non-degeneracy of $u_\Omega$ (see \cref{section.lip}), which are essential ingredients for the blow-up analysis in \cref{section.blowup}. The $C^2$ regularity of $Q$, on the other hand, is used again in the computation of the second variation in \cref{section.variations} and the passage to the blow-up limit in the proof of \cref{t:dimension-of-sing}. 
\end{oss}	

\begin{oss}[On the existence of optimal sets]
In \cite[Theorem~1.1]{BMV1} it was proved that if $D$ is a bounded open subset of $\R^d$ and if $f, g, Q$ satisfy the following conditions:
\begin{itemize}
\item $f,g\in L^\infty(D)$;
\item there are positive constants $C_1\le C_2$ such that $0\le C_1g\le f\le C_2g$ in $D$;
\item $Q\in L^\infty(D)$, $Q\ge 0$ in $D$,
\end{itemize}
then, there is an open set $\Omega\subset D$ solution to the shape optimization problem \eqref{e:intro-shape-opt-pb-in-main-teo}.
\end{oss}

%\begin{remark}[On the critical dimension $d^*$]
 %As it will be explained in Section~\ref{section.sing}, $d^*$ in this paper is the smallest dimension at which there is a $1$-homogeneous global stable solution for the one-phase Bernoulli problem (see Definition~\ref{def:global-stable-solutions}) admitting a singularity. This is, a priori, different from $\widetilde d^*$ the smallest dimension at which there is a $1$-homogeneous minimizer for the one-phase Bernoulli problem admitting a singularity. This latter definition is often use for the analysis of the singular set for free boundary problems, starting from~\cite{w,cjk}. One key feature of our work is that the minimality does not pass to the limit in the blow-up, thus only the notion of global stability is preserved and employable for the analysis of the singular set.
 %It must be noted that, as proved in Section~\ref{section.sing}, we obtain exactly the same dimensional bounds on the critical dimension that are known for $\widetilde d^*$, but this is not at all obvious a priori and requires a new and highly nontrivial analysis of $1$-homogeneous global stable solution of the Alt-Caffarelli functional, which we believe is interesting on its own.
%\end{remark}

The presence of the inclusion constraint $\Omega\subset D$ is essential for the existence theory for general shape optimization problems (see for instance \cite{BDM93} and \cite{BMV1}). In the case of integral functionals with affine cost, as the one in \eqref{e:intro-shape-opt-pb-in-main-teo}, the inclusion constraint can be removed. In the next theorem, which we prove in \cref{s:existence}, we show that optimal sets exist in $\R^d$. Moreover, we prove that the optimal sets in $\R^d$ are bounded, which implies that they are solutions to \eqref{e:intro-shape-opt-pb-in-main-teo} in some sufficiently large ball $D:=B_R$, so the regularity of the free boundary in $\R^d$ is a consequence of \cref{thm.main}.

\begin{theo}\label{cor.main}
In $\R^d$, $d\ge 2$, let $f,g,Q:\R^d\to\R$ be non-negative functions.\\
Suppose that the following conditions hold:
	\begin{enumerate}[\rm(a)]
		\item $f,g\in L^\infty(\R^d)\cap L^1(\R^d)\cap C^2(\R^d)$ and that
	$$f(x)>0\quad\mbox{and}\quad g(x)>0\quad\mbox{for every}\quad x\in\R^d.$$
		\item $Q\in C^{2}(\R^d)$ and there are positive constants $c_Q,C_Q$ such that
		$$0<c_Q\le Q(x)\leq C_Q\quad\text{for every}\quad x\in \R^d\,.$$
	\end{enumerate}
Then, there is an open set $\Omega\subset\R^d$, which is a solution to the shape optimization problem
	\begin{equation}\label{e:intro-shape-opt-pb-in-main-cor}
	\min \bigg\{\int_A \Big(-g(x)u_A+Q(x)\Big)\,dx\ :\ A\subset \R^d,\ A\ \text{open},\ |A|<+\infty\bigg\}\,,
	\end{equation}
and every solution $\Omega$ to~\eqref{e:intro-shape-opt-pb-in-main-cor} is bounded and $(1,\alpha,5)$-regular in $\R^d$. Moreover, if  $f,g,Q\in C^\infty(\R^d)$, then $\O$ is $(\infty,5)$-regular in $\R^d$.

%
%	Moreover, if $f,g\in C^k(\R^d)$ and $Q\in C^{k,\beta}(\R^d)$ for some $\beta\in (0,1)$, then $\O$ is $(k+1,\alpha, d^*)$-regular in $\R^d$. 
% In particular, if $f,g,Q\in C^\infty(\R^d)$, then $\O$ is $(\infty,d^*)$-regular in $\R^d$.

%
%	\begin{enumerate}[\rm(i)]
%		\item $\partial \Omega\setminus S$ is locally the graph of a $C^{1,\alpha}$ function for some $\alpha\in(0,1]$;
%		\item if $d\le 4$, then $S$ is empty; if $d\ge 5$, then the Hausdorff dimension of $S$ is at most $d-5$.
%	\end{enumerate}
%	Moreover, if $f,g\in C^\infty(\R^d)$, then $\partial\Omega\setminus S$ is locally a $C^\infty$ manifold.
\end{theo}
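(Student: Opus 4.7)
The regularity assertion in \cref{cor.main} is an immediate corollary of \cref{thm.main}: once we know that any optimal $\Omega$ for \eqref{e:intro-shape-opt-pb-in-main-cor} is bounded, say $\Omega\subset B_R$, then $\Omega$ is automatically a minimizer for the constrained problem \eqref{e:intro-shape-opt-pb-in-main-teo} on $D=B_R$ (every $A\subset B_R$ is admissible for \eqref{e:intro-shape-opt-pb-in-main-cor} since $|A|<+\infty$), and the hypotheses (a)--(c) of \cref{thm.main} on $B_R$ follow at once from (a)--(b) of \cref{cor.main} (the comparability constants $C_1,C_2$ being produced by the positivity and continuity of $f,g$ on the compact $\overline B_R$). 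Hence the whole problem reduces to \emph{(i)} proving the existence of a minimizer of \eqref{e:intro-shape-opt-pb-in-main-cor}, and \emph{(ii)} proving the boundedness of every such minimizer.

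\textbf{A priori estimates and truncation.} The key preliminary is a uniform bound $\|u_A\|_{L^\infty(\R^d)}\le C_0$ for every open $A\subset\R^d$ with $|A|<+\infty$, together with a quantitative decay $\eta(R):=\sup_{|x|\ge R}u_A(x)\to 0$, both depending only on $d$ and $\|f\|_{L^1\cap L^\infty}$. In $d\ge 3$ this follows from the comparison $u_A\le G\ast f$ with the (bounded) Newtonian potential of $f$; in $d=2$ the Newtonian potential is not bounded and one has to use Green potentials on sufficiently large auxiliary balls together with a Moser-type iteration exploiting $|A|<+\infty$. Plugging this $L^\infty$-bound into $J$ yields $J(A)\ge c_Q|A|-C_0\|g\|_{L^1}$, so the infimum is finite and every minimizing sequence has uniformly bounded measure. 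For (i) I would then set up a truncation scheme: take $R_n\to+\infty$ and, using \cite[Theorem~1.1]{BMV1}, select minimizers $\Omega_n\subset B_{R_n}$ of the constrained problem \eqref{e:intro-shape-opt-pb-in-main-teo} on $D=B_{R_n}$. The values $J(\Omega_n)$ are non-increasing in $n$ (the admissible class enlarges) and converge to $\inf_{\R^d}J$; the boundedness step below, applied with uniform constants to each $\Omega_n$, produces a single radius $R_0$ such that $\Omega_n\subset B_{R_0}$ for all $n$ large, which makes $\Omega_n$ a minimizer also in $\R^d$ and proves (i).

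\textbf{Boundedness and main obstacle.} For (ii) fix a minimizer $\Omega$ (in $\R^d$ or in some $B_{R_n}$) and test optimality against the inward competitor $\Omega\cap B_R$. The function $w_R:=u_\Omega-u_{\Omega\cap B_R}\ge 0$ vanishes on $\partial\Omega\cap B_R$, is harmonic in $\Omega\cap B_R$, and equals $u_\Omega$ on $\partial B_R\cap\Omega$ and on $\Omega\setminus B_R$, so by the maximum principle $w_R\le\eta(R)$ throughout $\Omega$. The inward-minimality inequality $J(\Omega)\le J(\Omega\cap B_R)$ then reads
\[
c_Q\,|\Omega\setminus B_R|\,\le\,\int_\Omega g\,w_R\,dx\,\le\,\|g\|_{L^1}\,\eta(R)\,\longrightarrow\,0\quad\text{as }R\to\infty.
\]
Combined with the uniform density lower bound $|B_r(x_0)\cap\Omega|\ge c_\ast r^d$ at every $x_0\in\partial\Omega$ and every $r\in(0,r_0)$ (a direct consequence of the Lipschitz continuity and non-degeneracy of $u_\Omega$ proved in \cref{section.lip}, whose constants depend only on $c_Q,C_Q,\|f\|_\infty,\|g\|_\infty$ and not on the position in $\R^d$), this forces $\partial\Omega$ to lie in a single ball. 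The most delicate point, and what I expect to be the main obstacle, is precisely the quantitative decay $\eta(R)\to 0$ in dimension $d=2$, where the Newtonian potential of $f$ is unbounded; exploiting $f\in L^1\cap L^\infty$ and $|\Omega|<+\infty$ via Green functions on exhausting balls (or via a De Giorgi--Moser iteration) is the standard but delicate substitute that makes the whole scheme go through. Once this ingredient is secured, the rest is a clean comparison-principle argument coupled with the non-degeneracy already established in the paper.
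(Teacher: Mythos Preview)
Your reduction to \cref{thm.main} once boundedness is established is exactly the paper's strategy, and your comparison inequality $c_Q|\Omega\setminus B_R|\le\|g\|_{L^1}\,\eta(R)$ is a correct and clean consequence of inward optimality. The gap is in the final step of the boundedness argument: you invoke the density and non-degeneracy estimates of \cref{section.lip} and assert that their constants depend only on $c_Q,C_Q,\|f\|_\infty,\|g\|_\infty$. This is not so. Inspection of \cref{prop.almost}, \cref{lemm.lip}, \cref{lemm.nondeg} and \cref{p:density} shows that all those constants depend on the comparability constants $C_1,C_2$ from \eqref{e:feg}. The hypotheses of \cref{cor.main} give only $f,g>0$ pointwise with $f,g\in L^1\cap L^\infty$, so $f/g$ need not be bounded or bounded away from zero on $\R^d$; there are no global $C_1,C_2$. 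Consequently you have no position-independent lower density bound at far-away boundary points, and the implication ``$|\Omega\setminus B_R|\to 0$ $\Rightarrow$ $\Omega$ bounded'' is unjustified. The same issue breaks the truncation scheme for existence, since the uniform inclusion $\Omega_n\subset B_{R_0}$ relies on the same uniform non-degeneracy for the sequence $\Omega_n\subset B_{R_n}$.

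The paper sidesteps this by never appealing to the $C_1,C_2$-dependent estimates in the boundedness step. It uses the self-adjointness of the resolvent and the pointwise bounds $f\le\|f\|_\infty$, $g\le\|g\|_\infty$ to obtain, for every $\omega\subset\Omega$,
\[
c_Q\,|\Omega\setminus\omega|\;\le\;\|f\|_\infty\|g\|_\infty\int_\Omega\big(\widetilde R_\Omega(1)-\widetilde R_\omega(1)\big)\,dx,
\]
which says that $\Omega$ is a shape subsolution for the torsion functional with constant $\frac{c_Q}{2\|f\|_\infty\|g\|_\infty}$, independent of $C_1,C_2$; boundedness then follows from \cite[Theorem~3.13]{bucve}. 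Existence is handled by a separate route (relaxation to measurable sets and concentration-compactness) rather than by truncation. If you want to salvage your scheme, the missing ingredient is precisely a non-degeneracy bound whose constant does not involve the ratio $f/g$; the torsion-subsolution trick above is the natural way to produce one.
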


%For the sake of simplicity, we will call in the rest of the Section \[J(A)=\int_A\Big(-g(x)u_A+Q(x)\Big)\,dx.\]

\subsection{Integral shape functionals in the case $f=cg$} 
In this section we briefly discuss the case in which $f$ and $g$ are proportional, which is the only instance of integral functional studied in the literature. Precisely, we claim that if $f$ and $g$ are such that
\begin{equation}\label{e:intro-proportional}
g=\frac{1}{2\lambda^2} f\quad\text{for some constant}\quad \lambda>0\,,
\end{equation}
the shape optimization problem \eqref{e:intro-shape-opt-pb-in-main-teo} is equivalent to the Bernoulli free boundary problem
\begin{equation}\label{e:intro-Bernoulli}
\min\bigg\{\int_D\Big(\frac12|\nabla u|^2-f(x)u+\lambda^2Q(x)\ind_{\{u\neq 0\}}\Big)\,dx\ :\ u\in H^1_0(D)\bigg\}.
\end{equation}
Fix a solution $u\in H^1_0(D)$ to \eqref{e:intro-Bernoulli}, for which the set $\{u\neq0\}$ is open, and fix an optimal set $\Omega$ for \eqref{e:intro-shape-opt-pb-in-main-teo} with a state function $u_\Omega$. Since $u$ satisfies
$$-\Delta u=f\quad\text{in}\quad \{u\neq0\}\,\qquad u\in H^1_0(\{u\neq0\})\,,$$
by integrating by parts, we have that
\begin{align*}
\int_D\Big(\frac12|\nabla u|^2-f(x)u+\lambda^2Q(x)\ind_{\{u\neq 0\}}\Big)\,dx&=\int_D\Big(-\frac{1}{2} f(x)u+\lambda^2Q(x)\ind_{\{u\neq 0\}}\Big)\,dx\\
&=\lambda^2\int_{\Omega_u}\big(-g(x)u+Q(x)\big)\,dx=\lambda^2J(\{u\neq0\})\,.
\end{align*}
Analogously, since $\{u_\Omega\neq0\}\subset\Omega$ and $Q$ is positive, we have
\begin{align*}
\int_D\Big(\frac12|\nabla u_\Omega|^2-f(x)u_\Omega+\lambda^2Q(x)\ind_{\{u_\Omega\neq 0\}}\Big)\,dx\le\lambda^2 J(\Omega).
\end{align*}
Thus, if $u$ is a solution to \eqref{e:intro-Bernoulli}, then $J(\{u\neq0\})\le J(\Omega)$ and so, $\{u\neq0\}$ is a solution to \eqref{e:intro-shape-opt-pb-in-main-teo}. Conversely, if $\Omega$ minimizes \eqref{e:intro-shape-opt-pb-in-main-teo}, then $u_\Omega$ is a minimizer of \eqref{e:intro-Bernoulli}. Thus, for proportional $f$ and $g$, the problem \eqref{e:intro-shape-opt-pb-in-main-teo} is equivalent to \eqref{e:intro-Bernoulli}; moreover, we notice that the argument above does not require the positivity of $f$ and $g$, so the equivalence of the problems \eqref{e:intro-Bernoulli} and \eqref{e:intro-shape-opt-pb-in-main-teo} holds also when $f$ and $g$ change sign.

The regularity of the solutions to the free boundary problem  \eqref{e:intro-Bernoulli} is nowadays well-understood (at least when $Q$ satisfies the condition (c) of \cref{thm.main}). When $f\ge0$, the regularity of $\partial\Omega$ follows from the regularity theory for the one-phase Bernoulli problem (see \cite{altcaf}, \cite{w}, \cite{desilva}, \cite{cjk}, \cite{js}, \cite{dj}, \cite{GustShahg}). If the right-hand side $f$ changes sign, then \eqref{e:intro-Bernoulli} becomes a two-phase Bernoulli problem, for which the regularity of the free boundary was obtained recently in \cite{sv} and \cite{DSV}.  \medskip

Finally, we notice that when $f$ and $g$ are not proportional, the state function $u_\Omega$ of an optimal set $\Omega$ (that is, a solution to the shape optimization problem \eqref{e:intro-shape-opt-pb-in-main-teo}) is not a minimizer of a free boundary functional as the one in \eqref{e:intro-Bernoulli}. In particular, this implies that one can test the optimality of $u_\Omega$ only with functions $\widetilde u$ which are themselves state functions of some $\widetilde \Omega$. This means that a function $\widetilde u$, that differs from $u$ only in a small ball $B_r$, cannot be used to test the optimality of $u_\Omega$ (truncations, harmonic replacements and radial extensions in small balls are not admitted), which makes most of the classical free boundary regularity results impossible to apply.

%as even the local perturbations of the domain $\Omega$ have non-local effects on the state function $u_\Omega$.

\subsection{Adjoint state and optimality condition on the free boundary} Let us go back to the general case when $f$ and $g$ are not proportional. We will show that the optimality condition on the boundary $\partial\Omega\cap D$ of an optimal open set $\Omega$ for \eqref{e:intro-shape-opt-pb-in-main-teo} leads to a free boundary problem involving the state function $u_\Omega$. In order to see this, we introduce the adjoint state function $v_\Omega$ as follows: for every open set $A\subset D$ we will denote by $v_A$ the weak solution to the problem
\be\label{e:stateqv}
-\Delta v_A=g\quad \text{in}\quad A\,,\qquad v_A\in H^1_0(A)\,.
\ee
By an integration by parts one can see that
$$\int_{D}gu_A\,dx=\int_D\nabla u_A\cdot\nabla v_A\,dx=\int_{D}fv_A\,dx\,,$$
which means that the two state variables $u_A$ and $v_A$ are interchangeable. Precisely, an open set $\Omega\subset D$ is a solution to \eqref{e:intro-shape-opt-pb-in-main-teo} if and only if it minimizes
\begin{equation}\label{e:intro-shape-opt-pb-v-Omega}
\min \bigg\{\int_A \Big(-f(x)v_A+Q(x)\Big)\,dx\ :\ A\subset D,\ A\ \text{open}\bigg\}\,.
\end{equation}
Sometimes, it is more convenient to consider simultaneously the two state functions, by using the following equivalent formulation, which is symmetric in $v_\Omega$ and $u_\Omega$
\begin{equation}\label{e:intro-shape-opt-pb-u-v}
\min \bigg\{\int_A \Big(\nabla u_A\cdot\nabla v_A-f(x)v_A-g(x)u_A+Q(x)\Big)\,dx\ :\ A\subset D,\ A\ \text{open}\bigg\}\,.
\end{equation}
Throughout the paper we will denote the functional from \eqref{e:intro-shape-opt-pb-in-main-teo} by $\mathcal F$. Precisely, we set
\be\label{e:Functional}
\mathcal{F}(\Omega;D):=\int_D \Big(-g(x)u_\Omega+Q(x)\ind_\Omega\Big)\,dx\,,
\ee
which after an integration by parts has the symmetric form
$$\mathcal{F}(\Omega;D)=\int_{D}\Big(\nabla u_\O\cdot\nabla v_\O\, - g(x)u_\O - f(x) v_\O+Q(x)\ind_{\Omega}\Big)\,dx\,,$$
so if $\xi\in C^\infty_c(D;\R^d)$ is a smooth compactly supported vector field in $D$ and $\Omega_t:=(Id+t\xi)(\Omega)$, then the first variation of $\mathcal F$ along $\xi$ is given by (see \cref{l:firstv})
\begin{align*}
\delta \mathcal{F}(\Omega;D)[\xi]:&=\frac{\partial}{\partial t}\bigg|_{t=0}\mathcal{F}({\Omega_t},D)\\
&=\int_\O \Big(\big(\nabla u_\Omega \cdot \nabla v_\Omega + Q(x) \big)\dive\xi - \nabla u_\O \cdot \big((\nabla\xi) + (D\xi)\big)\nabla v_\O \Big)\, dx\\
&\qquad-\int_\Omega \Big(u_\O \dive(g\xi) + v_\O \dive(f\xi)\Big)\,dx\,.
\end{align*}
Moreover, if $\Omega$ is a minimizer and $\partial\Omega$ is smooth, then an integration by parts gives that
$$\delta \mathcal{F}(\Omega;D)[\xi] = \int_{\partial \Omega} (\nu \cdot \xi)\big(Q-|\nabla u_\O ||\nabla v_\O|\big)\,d\mathcal H^{d-1}=0\quad\text{for every}\quad \xi\in C^\infty_c(D;\R^d)\,,$$
so the state functions $u_\Omega$ and $v_\Omega$ are (at least formally) solutions to the free boundary system
\be\label{e:intro-generalsystem}
\begin{cases}
	-\Delta u_\Omega=f\quad\text{in}\quad \Omega\,,\smallskip\\
	-\Delta v_\Omega=g\quad\text{in}\quad \Omega\,,\smallskip\\
	u_\Omega=v_\Omega=0\quad\text{and}\quad|\nabla u_\Omega||\nabla v_\Omega|=Q\quad\text{on}\quad D\cap\partial\Omega\,.
\end{cases}
\ee
%
%We highlight that the condition on the free boundary involving the product of the normal derivatives is not so well-understood, and one of the few works in which it appears is~\cite{agcasp}.
%
\begin{oss}\label{oss:intro-epsilon-regularity}
An epsilon-regularity theorem for viscosity solutions of the above system was proved recently in \cite{mtv.flat}. Precisely, it was shown that if:
\begin{itemize}
\item $u_\Omega,v_\Omega:D\to\R$ are continuous and non-negative functions such that $$\Omega=\{u_\Omega>0\}=\{v_\Omega>0\}\,;$$
\item $(\Omega,u_\Omega,v_\Omega)$ is a viscosity solutions to \eqref{e:intro-generalsystem}\,;
\item $u_\Omega$ and $v_\Omega$ are  {$\eps$-\it flat} (in a suitable sense) in a ball $B_r(x_0)\subset D$ centered on $\partial\Omega\,$;
\end{itemize}
then the free boundary $\partial\Omega$ is $C^{1,\alpha}$-regular in $B_{\sfrac{r}2}(x_0)$. 

\noindent We notice that an epsilon-regularity theorem for a similar free boundary system, with $f\equiv g\equiv 0$, was studied in \cite{agcasp} in the context of a different shape optimization problem. Precisely, in \cite{agcasp}, by using the minimality of $\Omega$, it was shown that it is an NTA domain, so by the Boundary Harnack Principle for harmonic functions, the system reduces to a one-phase problem for the function $u_\Omega$, for which the epsilon-regularity theorem is known by \cite{altcaf} and \cite{desilva}.
In Section~\ref{s:heat-conduction} we will detail how our results can be applied to the problem of~\cite{agcasp}.
\end{oss}
 %We highlight that the condition on the free boundary involving the product of the normal derivatives is not so well-understood, and one of the few works in which it appears is~\cite{agcasp}.

\subsection{Regularity of the free boundary and dimension of the singular set} 
%In \cref{s:decomposition} we will divide the free boundary $\partial\Omega\cap D$, of an optimal set $\Omega$ for \eqref{e:intro-shape-opt-pb-in-main-teo}, into two disjoint sets: a regular part $\text{\rm Reg}(\partial\Omega)$ and a singular part $\text{\rm Sing}(\partial\Omega)$. Roughly speaking, a point $x_0\in\partial\Omega\cap D$ will be called {\it regular} if there is a radius $r>0$ (small enough) such that $u_\Omega$ and $v_\Omega$ are {$\eps$-\it flat} in a $B_r(x_0)$ in the sense of \cite{mtv.flat}; a point which is not {\it regular} will be called {\it singular}. 
The key steps in the proof of \cref{thm.main} are the following:
\begin{itemize}
\item Prove that if $\Omega$ is an optimal set for \eqref{e:intro-shape-opt-pb-in-main-teo}, then it is a viscosity solution to \eqref{e:intro-generalsystem}. This, in combination with the epsilon-regularity result cited in \cref{oss:intro-epsilon-regularity}, will imply that the {\it flat} part of the free boundary is smooth. 
\item Show that there exists a critical dimension $d^*\in\{5,6,7\}$ such that in dimension $d< d^*$ all the points on $\partial\Omega\cap D$ are regular.
\item Prove that a Federer-type dimension reduction principle holds for solutions to \eqref{e:intro-shape-opt-pb-in-main-teo}.
\end{itemize}	

There are two main difficulties in following the program outlined in the three points above.\medskip\\ The first difficulty is in the fact that the first order optimality condition
$$\delta \mathcal{F}(\Omega;D)[\xi]=0\quad\text{for every}\quad \xi\in C^\infty_c(D;\R^d)\,,$$
is not leading to a monotonicity formula for $u_\Omega$ and $v_\Omega$; in particular, we do not know if the blow-up limits of $u_\Omega$ and $v_\Omega$ are in general homogeneous.\medskip\\
The second obstruction comes from the impossibility to make external perturbations of $u_\Omega$ and $v_\Omega$ (that is no perturbations of the form $\widetilde u=u_\Omega+\phi$ and $\widetilde v=v_\Omega+\psi$ are allowed), so the only information conserved along blow-up sequences is the one contained in the internal variations of $\Omega$ along smooth vector fields. \medskip

We overcome these difficulties by using the first and the second variation of $\mathcal F$. Indeed, suppose that $\Omega$ is optimal in $D$ and consider the flow $\Phi_t$ associated to a compactly supported vector field $\xi\in C^\infty_c(D;\R^d)$. Then, setting  $\Omega_t:=\Phi_t(\Omega)$, we get that the function 
$$t\mapsto \mathcal{F}(\Omega_t;D),$$
has a minumum in $t=0$, so we have 
\begin{equation}\label{e:intro-stable-critical-point}
\frac{\partial}{\partial t}\Big|_{t=0}\mathcal{F}(\Omega;D)=0\qquad\text{and}\qquad \frac{\partial^2}{\partial t^2}\Big|_{t=0}\mathcal{F}(\Omega;D)\ge0\,,
\end{equation}
that is, $\Omega$ is a stable critical point of the shape functional
$$\Omega\mapsto \mathcal{F}(\Omega;D).$$
We prove that this notion is stable under blow-up limits at free boundary points $x_0\in\partial\Omega$:
$$u_0(x)=\lim_{n\to\infty}\frac{u_{\Omega}(x_0+r_nx)}{r_n}\qquad\text{and}\qquad v_0(x)=\lim_{n\to\infty}\frac{v_{\Omega}(x_0+r_nx)}{r_n}\,.$$
Thus $\Omega_0=\{u_0>0\}=\{v_0>0\}$ is a stable critical point for the same functional, but this time with $f\equiv g\equiv0$ and $Q\equiv Q(x_0)$. Now, since $u_0,v_0:\R^d\to\R$ are harmonic on $\Omega_0$, we can apply the Boundary Harnack Principle from \cite{mtv.BH} to obtain that the ratio
$$\frac{u_0}{v_0}:\Omega_0\to\R$$
is H\"older continuous in $\Omega_0$, up to $\partial\Omega_0$. After a second blow-up (this time in zero), we get that the positivity set $\Omega_{00}$ of the functions
$$u_{00}(x)=\lim_{m\to\infty}\frac{u_{0}(r_mx)}{r_m}\qquad\text{and}\qquad v_{00}(x)=\lim_{m\to\infty}\frac{v_{0}(r_mx)}{r_m}\,,$$
is a stable critical point (in every ball $B_R\subset\R^d$) for the functional $\mathcal F(\cdot,B_R)$, still with $f\equiv g\equiv0$ and $Q\equiv Q(x_0)$. Moreover, by the Boundary Harnack Principle, we also have that $u_{00}$ and $v_{00}$ are proportional. Thus, $u_{00}$ is (up to a constant) a stable critical point (in the sense of \eqref{e:intro-stable-critical-point}) for the Alt-Caffarelli's one-phase functional
$$\mathcal G(u;B_R):=\int_{B_R}|\nabla u|^2\,dx+|\{u>0\}\cap B_R|\quad\text{in every ball}\quad B_R\subset \R^d\,,$$
so the third blow-up in zero
$$u_{000}(x)=\lim_{k\to\infty}\frac{u_{00}(r_kx)}{r_k}\,$$
is a 1-homogeneous stable critical point for the Alt-Caffarelli's functional $\mathcal G$. \medskip

The existence of a homogeneous blow-up $u_{000}$ is a key element in the proof of \cref{thm.main}. Indeed, it allows to prove (see \cref{p:viscosity-solutions}) that the state functions $u_\Omega$ and $v_\Omega$ are viscosity solutions to the system \eqref{e:intro-generalsystem} by showing that if we take a free boundary point admitting a one-sided tangent ball, then the homogeneous blow-up limits $u_{000}$ and $v_{000}$ constructed above are half-plane solutions. This, in combination with the epsilon-regularity theorem cited in \cref{oss:intro-epsilon-regularity}, implies that, in a neighborhood of any boundary point admitting a half-plane solution as blow-up limit, the free boundary is $C^{1,\alpha}$-regular; we will call these points {\it regular points} (see \cref{s:decomposition}), while the remaining part of the free boundary (if any) will be called {\it singular}. Finally, we notice that the smoothness of the regular part requires only the criticality of $\Omega$ (the first part of \eqref{e:intro-stable-critical-point})\medskip  

%Notice that the smoothness of the regular part requires only the criticality of $\Omega$ (the first part of \eqref{e:intro-stable-critical-point}). On the other hand, 
%the analysis of the singular part relies 
%the proof of he fact that the singular set is empty, when $d\le 4$, and has Hausdorff dimension at most $d-5$, when $d\ge 5$, relies 
%also on the stability of $\Omega$  (the second part of \eqref{e:intro-stable-critical-point}). 
It is natural to expect that the stability of $\Omega$ (the second part of \eqref{e:intro-stable-critical-point}) leads to an estimate on the dimension of the singular set; in fact, the bounds on the critical dimension (the dimension in which a singularity appears for the first time) for minimizers of the one-phase Alt-Caffarelli functional rely (see \cite{cjk} and \cite{js}) on the well-known {\it stability inequality} of Caffarelli-Jerison-Kenig, which was originally obtained in \cite{cjk} through a particular second order variation of the functional $\mathcal G$. On the other hand, this stability inequality is not easy to handle when it comes to passing to blow-up limits and developing a dimension reduction principle. In \cref{section.stable}, we give a different formulation of the stability, which uses the second variation along vector fields, as defined in \eqref{e:intro-stable-critical-point}. Again in \cref{section.stable}, we show that our notion of stability allows to develop a dimension reduction principle and that there is a critical dimension $d^\ast$, which is the smallest dimension admitting one-homogeneous stable solutions with singularity (see \cref{t:stable-dimension-reduction}). Then, in \cref{p:stable-cones-satisfy-stability-inequality}, we prove that on smooth cones (that is, cones with isolated singularity) our notion of stability is equivalent to the stability inequality of Caffarelli-Jerison-Kenig. Thus, we obtain the bound $5\le d^\ast\le 7$ on the critical dimension $d^\ast$ as a consequence of the results of Jerison-Savin \cite{js} and De Silva-Jerison \cite{dj}. Finally, in \cref{t:dimension-of-sing}, we prove the bounds on the singular set from \cref{thm.main} by (again) a triple blow-up argument, which allows to transfer the dimension bounds from \cref{t:stable-dimension-reduction} to the singular set of the solutions to \eqref{e:intro-shape-opt-pb-in-main-teo}.

\subsection{Plan of the paper}
%By assuming the existence of constants $C_1,C_2>0$ such that \eqref{e:feg} holds true, we can easily prove local properties of the state variables by exploiting almost-minimality condition in the spirit of \cite{BHP, bmpv}, that is
%\begin{align*}
%\text{\rm Outward optimality }\, (\mathcal{F}(u_{\O},D)\leq \mathcal{F}(u_{\omega},D),\mbox{ for }\O\subseteq \omega) &\Longrightarrow\text{\rm Lipschitz continuity of }u_\O,v_\O\\
%\text{\rm Inward optimality }\, (\mathcal{F}(u_{\O},D)\leq \mathcal{F}(u_{\omega},D),\mbox{ for }\O\supseteq \omega) &\Longrightarrow\text{\rm Non-degeneracy of }u_\O,v_\O
%\end{align*}
%On the other hand, inspired by the classical notions of inner variation in free boundary theory, we can deduce different conditions associated to the minimization problem \eqref{e:intro-shape-opt-pb-in-main-teo}. Indeed, assume that $f,g \in L^2(D)$ and consider a smooth vector field $\xi\in C^\infty_c(D;\R^d)$ with compact support. Then, by setting $\Phi_t(x)=x+t\xi(x)$ and $\O_t=\Phi_t(\O)$, we obtain
%\begin{align*}
%\text{\rm Stationary along inner variations }\left(\frac{\partial}{\partial t}\bigg|_{t=0}\mathcal{F}(u_{\Omega_t},D)=0\right) &\Longrightarrow\text{\rm Homogeneous blow-up}\\
%\text{\rm Stable along inner variations}\, \left(\frac{\partial^2}{\partial t^2}\bigg|_{t=0}\mathcal{F}(u_{\Omega_t},D)\geq 0\right) &\Longrightarrow\text{\rm Hausdorff dimension of Sing}(\partial \O)
%\end{align*}

In \cref{section.variations} we compute the first and second variations of the functional $\mathcal F$. In \cref{section.lip} we prove the non-degeneracy and Lipschitz continuity of the state variables. In \cref{section.blowup} we perform a blow-up analysis by proving the existence of {triple} blow-up sequences converging to a homogeneous limit.\medskip

In \cref{s:decomposition}, we use the blow-up limits from  \cref{section.blowup} to define the decomposition of the free boundary into a regular part and a singular part.\medskip

In \cref{section.reg} we prove that the state functions $u_\Omega, v_\Omega$ of an optimal set $\Omega$ for \eqref{e:intro-shape-opt-pb-in-main-teo} are viscosity solutions of the free boundary system \eqref{e:intro-generalsystem}. Using this information, in \cref{t:regularity-of-Reg}, we prove that the regular part of the free boundary is $C^{1,\alpha}$-smooth. \medskip

In \cref{section.stable} we define the notion of a global stable solution to the one-phase Bernoulli (Alt-Caffarelli) problem and we study the dimension of the singular sets for these global stable solutions. We notice that this section can also be read indipendently and that, together with \cref{section.variations}, it contains the key results for the analysis of the singular set.\medskip

In \cref{section.sing} we use the first and the second variations from \cref{section.variations}, the triple blow-up procedure from \cref{section.blowup} and the theory from \cref{section.stable} in order to prove the dimension bounds on the singular set. This concludes the proof of \cref{thm.main}, which follows from \cref{t:dimension-of-sing}, \cref{t:regularity-of-Reg} and \cref{prop:smoothnessbdry}.\medskip

In \cref{s:existence} we address the existence of optimal sets in $\R^d$ and we prove Theorem~\ref{cor.main}. Ultimately, in \cref{s:heat-conduction} we apply the analysis of \cref{section.stable} and \cref{section.sing}  to optimal sets arising  sets arising from the heat conduction problem studied in \cite{agcasp}. 

\subsection{Notations}\label{s:notations}
In the whole paper we use the notation
$$B_r(x_0)=\{x\in \R^d : |x-x_0|<r\},$$
for the ball of radius $r$ centered at point $x_0$ and, when $x_0=0$, we write $B_r=B_r(0)$; we denote by $\omega_d$ the Lebesgue measure of a ball of radius one in $\R^d$.
For any set $A\subset \R^d$, we set
\[
\mathds{1}_{A}(x):=\begin{cases}
1, &\text{if }\ x\in A,\\
0, &\text{if }\ x\in \R^d\setminus A.
\end{cases}
\]
Given a non-negative function $u$ we often denote its positivity set as $\Omega_u:=\{u>0\}$.

We denote by $H^1(\R^d)$ the set of Sobolev functions in $\R^d$, that is, the closure of the smooth functions with bounded support $C^\infty_c(\R^d)$ with respect to the usual Sobolev norm
$$\|\varphi\|_{H^1}^2=\int_{\R^d}\big(|\nabla\varphi|^2+\varphi^2\big)\,dx\,.$$
Given an open set $\Omega\subset\R^d$, we define the Sobolev space $H^1_0(\Omega)$ as the closure, with respect to $\|\cdot\|_{H^1}$, of the space $C^\infty_c(\Omega)$ of smooth functions compactly supported in $\Omega$. Thus, every $u\in H^1_0(\Omega)$ is identically zero outside $\Omega$ and we have the inclusion $H^1_0(\Omega)\subset H^1(\R^d)$.

We will sometimes use the following notation for minimum and maximum of two real numbers:\[
\min\{a,b\}=a\wedge b,\qquad \max\{a,b\}=a\vee b,\qquad a,b\in\R.
\]

Given a function $u:\R^d\to\R$ we will denote by $\nabla u$ and $Du$ the vectors column and row with components the partial derivatives of $u$, while $D^2u$ will be the Hessian matrix of $u$. Given a vector field $F:\R^d\to\R^d$ with components $F_k$, $k=1,\dots,d$ we will denote by $\nabla F$ the $d\times d$ matrix with columns $\nabla F_k$, $k=1,\dots, d$ and rows $(\partial_jF_1,\partial_jF_2,\dots,\partial_jF_d)$. By convention $DF:=(\nabla F)^T$, where for any matrix $M\in \R^{d\times d}$, we will denote by $M^T$ its transpose. Given a vector field $V:\R^d\to\R^d$ and a matrix with variable coefficients $M=(m_{ij})_{ij}:\R^d\to\R^{d\times d}$ we will denote by $(V\cdot \nabla)(M)$ the $d\times d$ matrix with variable coefficients $V\cdot\nabla m_{ij}$.

%%%%%%%%%%%%%%%%%%%%%%%%%%%%%%%%%%%%%%%%%%%%%%%%%%
\section{First and second variations under inner perturbations}\label{section.variations}
In this Section we compute the first and the second variations (with respect to perturbations with compactly supported vector fields) of the functional $\mathcal F$ from \eqref{e:Functional}. Both variations will be fundamental tools in the study of the blow-up limits of the state variables $u_\Omega$ and $v_\Omega$ on a domain $\Omega$, which is optimal for \eqref{e:intro-shape-opt-pb-in-main-teo}.

%In order to characterize the blow-up limit of the state variables, and to better understand the free boundary problem arising from \eqref{e:intro-shape-opt-pb-in-main-teo}, it is more convenient to formulate the cost functional $\mathcal{F}$ as in \eqref{e:Functional}.
\subsection{First and second variation of the state function}\label{sub:first-and-second-variation-of-u}
In \cref{l:first-and-second-variation-along-a-flow} we compute the expansion of the state variable $u_\O$ (solution to \eqref{e:state-equation-open}) with respect to smooth perturbations of a set $\O$. We first prove \cref{l:abstact-first-order-expansion} and \cref{l:abstact-second-order-expansion}, where we compute the expansion of a one-parameter family of solutions to PDEs on the same domain $\Omega$.

\begin{oss}\label{rem:matrix-norms}
In what follows we will denote by $\R^{d\times d}$ the space of $d\times d$ square matrices with real coefficients. Given a real matrix $A=(a_{ij})_{ij}\in \R^{d\times d}$, we define its norm in the space $\R^{d\times d}$ as
$$\|A\|_{\R^{d\times d}}:=\bigg(\sum_{i=1}^d\sum_{j=1}^da_{ij}^2\bigg)^{\sfrac12},$$
and we notice that for every vector $V\in\R^d$, we have $|AV|\le \|A\|_{\R^{d\times d}}|V|$, where $|V|$ is the usual Euclidean norm of $V$.
Next, let $\Omega$ be a measurable set in $\R^d$. Given a matrix $A:\Omega\to \R^{d\times d}$ with variable coefficients $a_{ij}:\Omega\to\R$, we say that
$$A\in L^\infty(\Omega;\R^{d\times d}),$$
if $a_{ij}\in L^\infty(\Omega)$ for every $1\le i,j\le d$. We define the norm $\|\cdot\|_{L^\infty(\Omega;\R^{d\times d})}$ as
$$\|A\|_{L^\infty(\Omega;\R^{d\times d})}:=\big\|\|A\|_{\R^{d\times d}}\big\|_{L^\infty(\Omega)}=\bigg\|\sum_{i=1}^d\sum_{j=1}^da_{ij}^2\bigg\|_{L^\infty(\Omega)}^{\sfrac12}.$$
\end{oss}

\begin{lemma}[First order expansion of solutions to PDEs]\label{l:abstact-first-order-expansion}
Let $\Omega$ be a bounded open set in $\R^d$. Let the functions
\begin{align*}
f:&\R\to L^2(\Omega),&t\mapsto f_t,\\
A\colon &\R\to L^\infty(\Omega;\R^{d\times d}),&t\mapsto A_t,
\end{align*}
be such that:
\begin{enumerate}[\quad\rm(a)]
\item $A_t(x)$ is a symmetric matrix for every $(t,x)\in\R\times\Omega$ and there is a symmetric matrix\\ $\delta A\in L^\infty(\Omega;\R^{d\times d})$ such that
$$A_t=\text{\rm Id}+t(\delta A)+o(t)\quad\text{in }L^\infty(\Omega;\R^{d\times d}).$$
\item there is $\delta f\in L^2(\Omega)$ such that
$$f_t=f_0+t(\delta f)+o(t)\quad\text{in }L^2(\Omega).$$
\end{enumerate}
Then, for every $t$ small enough there is a unique solution $u_t$ to the problem
\begin{equation}\label{e:s21-abstract-equation-u-t}
-\text{\rm div}(A_t\nabla u_t)=f_t\quad\text{in }\Omega,\qquad u_t\in H^1_0(\Omega),
\end{equation}
and
$$u_t=u_0+t(\delta u)+o(t)\quad\text{in }H^1_0(\Omega),$$
where $\delta u$ is the unique weak solution in $H^1_0(\Omega)$ to the PDE
\begin{equation}\label{e:s3-abstract-equation-delta-u}
-\Delta (\delta u)-\text{\rm div}((\delta A)\nabla u_0)=\delta f\quad\text{in }\Omega,\qquad \delta u\in H^1_0(\Omega)\,.
\end{equation}
\end{lemma}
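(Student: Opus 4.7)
The plan is a three-step argument: first establish existence and uniqueness of $u_t$ for small $t$; then derive the PDE satisfied by the residual $w_t := u_t - u_0 - t(\delta u)$; and finally close an energy estimate to conclude $\|w_t\|_{H^1_0} = o(t)$.

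For the first step, I would note that by hypothesis (a) we have $A_t \to \text{\rm Id}$ in $L^\infty(\Omega;\R^{d\times d})$ as $t \to 0$, so for $|t|$ sufficiently small $A_t(x)$ has eigenvalues uniformly in $[\sfrac12,2]$. Hence the bilinear form $a_t(u,\varphi) := \int_\Omega A_t \nabla u \cdot \nabla \varphi\,dx$ is bounded and coercive on $H^1_0(\Omega)$ with constants independent of $t$. Lax--Milgram produces a unique $u_t \in H^1_0(\Omega)$ solving \eqref{e:s21-abstract-equation-u-t}, and the same argument with $A_0 = \text{\rm Id}$ gives a unique $\delta u \in H^1_0(\Omega)$ solving \eqref{e:s3-abstract-equation-delta-u}. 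Testing \eqref{e:s21-abstract-equation-u-t} against $u_t$ also yields the uniform bound $\|u_t\|_{H^1_0} \lesssim \|f_t\|_{L^2} \lesssim 1$.

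For the second step, write $A_t = \text{\rm Id} + t(\delta A) + r_t$ and $f_t = f_0 + t(\delta f) + s_t$, where by assumption $\|r_t\|_{L^\infty(\Omega;\R^{d\times d})} = o(t)$ and $\|s_t\|_{L^2(\Omega)} = o(t)$. Testing \eqref{e:s21-abstract-equation-u-t} against $\varphi \in H^1_0(\Omega)$ and subtracting the weak formulations for $u_0$ and for $t(\delta u)$, a routine algebraic rearrangement (in which the $O(1)$ and $O(t)$ main contributions cancel precisely by virtue of \eqref{e:s3-abstract-equation-delta-u}) gives the identity
$$\int_\Omega A_t \nabla w_t \cdot \nabla \varphi\,dx \,=\, -t\!\int_\Omega (A_t - \text{\rm Id})\,\nabla(\delta u) \cdot \nabla \varphi\,dx + \int_\Omega s_t\,\varphi\,dx - \int_\Omega r_t\,\nabla u_0 \cdot \nabla \varphi\,dx,$$
valid for every $\varphi \in H^1_0(\Omega)$.

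For the third step I would choose $\varphi = w_t$: by coercivity of $A_t$ the left-hand side bounds $\|\nabla w_t\|_{L^2}^2$ from below by a constant multiple of itself, while Cauchy--Schwarz (combined with Poincar\'e for the $s_t$-term) bounds each of the three right-hand contributions by $o(t)\,\|\nabla w_t\|_{L^2}$; for the first term one uses that $t\|A_t - \text{\rm Id}\|_{L^\infty} = O(t^2) = o(t)$ together with the uniform bound on $\|\nabla(\delta u)\|_{L^2}$, while for the other two the hypotheses $\|s_t\|_{L^2} = o(t)$ and $\|r_t\|_{L^\infty} = o(t)$ are exactly what is required. Dividing through by $\|\nabla w_t\|_{L^2}$ yields $\|w_t\|_{H^1_0} = o(t)$, which is the claimed expansion. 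Since every PDE here lives on the fixed domain $\Omega$, there is no serious obstacle---the only care needed is the bookkeeping of the three distinct $o(t)$ contributions; the symmetry of $A_t$ is not actually used in this first-order expansion (coercivity suffices), but it will be needed later when the second-order expansion is computed.
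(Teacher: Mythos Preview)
Your proof is correct and takes a genuinely different route from the paper's. The paper defines the \emph{difference quotient} $w_t := \frac{1}{t}(u_t - u_0)$, derives an equation for it, uses energy estimates to show the family $\{w_t\}$ is bounded in $H^1_0(\Omega)$, extracts a weakly convergent subsequence, identifies the limit as $\delta u$ by passing to the limit in the equation, and finally upgrades weak to strong convergence by a second energy computation. You instead work directly with the \emph{residual} $w_t := u_t - u_0 - t(\delta u)$, derive its equation, and close a single energy estimate showing $\|w_t\|_{H^1_0} = o(t)$.

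Your approach is more direct: it avoids the compactness/subsequence step entirely and yields the conclusion in one energy estimate rather than two. The paper's approach, by contrast, is the standard ``soft'' argument one sees in perturbation theory and has the conceptual advantage of not requiring one to guess the limit $\delta u$ in advance---the equation for $\delta u$ emerges from the limiting procedure---whereas your argument presupposes $\delta u$ and verifies it. Both are entirely valid; yours is shorter and more quantitative, while the paper's generalizes more mechanically to the second-order expansion (Lemma~\ref{l:abstact-second-order-expansion}), where one repeats the same compactness scheme with $v_t := \frac{1}{t}(w_t - \delta u)$. Your observation that symmetry of $A_t$ is not used at first order is also correct.
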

\begin{proof}
Clearly $u_0\in H^1_0(\Omega)$ is the solution to $-\Delta u_0=f_0$ in $\Omega$.
We set $w_t:=\frac1t(u_t-u_0)$. We will prove that $w_t$ converges to $\delta u$ strongly in $H^1_0(\Omega)$.\\ We notice that \eqref{e:s21-abstract-equation-u-t} can be written as
$$-\text{\rm div}((\text{\rm Id}+(A_t-\text{\rm Id}))\nabla (u_0+tw_t))=f_0+(f_t-f_0)\quad\text{in}\quad \Omega.$$
So, using the equation for $u_0$ and dividing by $t$, we get
\begin{equation}\label{e:s3-abstract-equation-w-t}
-\Delta w_t-\text{\rm div}\Big(\frac1t(A_t-\text{\rm Id})\nabla u_0\Big)-\text{\rm div}\Big((A_t-\text{\rm Id})\nabla w_t\Big)=\frac1t(f_t-f_0)\quad\text{in}\quad \Omega.
\end{equation}
If we fix $\eps>0$, we can choose $t$ small enough such that
$$\|A_t-\text{\rm Id}\|_{L^\infty(\Omega;\R^{d\times d})}\le \eps\ ,\quad \left\|\frac1{t}(A_t-\text{\rm Id})-\delta A\right\|_{L^\infty(\Omega;\R^{d\times d})}\le \eps\ ,\quad \left\|\frac1t(f_t-f_0)-\delta f\right\|_{L^2(\Omega)}\le \eps.$$
Thus, by testing \eqref{e:s3-abstract-equation-w-t} with $w_t$, we obtain
\begin{align*}
\int_{\Omega}|\nabla w_t|^2\,dx&=-\int_{\Omega}\nabla w_t\cdot \frac1t(A_t-\text{\rm Id})\nabla u_0\,dx-\int_\Omega\nabla w_t\cdot (A_t-\text{\rm Id})\nabla w_t\,dx+\int_{\Omega}\frac1t(f_t-f_0)w_t\,dx\\
&\le\Big(\eps+\|\delta A\|_{L^\infty(\Omega;\R^{d\times d})}\Big)\|\nabla w_t\|_{L^2}\|\nabla u_0\|_{L^2}+\eps\|\nabla w_t\|_{L^2}^2+\Big(\eps+\|\delta f\|_{L^2(\Omega)}\Big)\|w_t\|_{L^2}.
\end{align*}
Now, by the Poincar\'e inequality,
$$\|\varphi\|_{L^2(\Omega)}^2\le C_d|\Omega|^{\sfrac{2}{d}}\|\nabla \varphi\|_{L^2(\Omega)}^2\quad\text{for every}\quad \varphi\in H^1_0(\Omega)\,,$$
and the equation for $u_0$, we have that
$$\|\nabla u_0\|_{L^2(\Omega)}^2=\int_{\Omega}f_0u_0\,dx\le \|f_0\|_{L^2(\Omega)}\|u_0\|_{L^2(\Omega)}\le  \|f_0\|_{L^2(\Omega)}C_d|\Omega|^{\sfrac{1}{d}}\|\nabla u_0\|_{L^2(\Omega)},$$
which gives the bound
$$\|\nabla u_0\|_{L^2(\Omega)}\le C_d|\Omega|^{\sfrac{1}{d}}\|f_0\|_{L^2(\Omega)}\,.$$
Thus, we deduce that
\begin{align*}
\int_{\Omega}|\nabla w_t|^2\,dx
&\le C_d|\Omega|^{\sfrac{1}{d}}\|f_0\|_{L^2(\Omega)}\Big(\eps+\|\delta A\|_{L^\infty(\Omega;\R^{d\times d})}\Big)\|\nabla w_t\|_{L^2}\\
&\qquad+\eps\|\nabla w_t\|_{L^2}^2+\Big(\eps+\|\delta f\|_{L^2(\Omega)}\Big)C_d|\Omega|^{\sfrac{1}{d}}\|\nabla w_t\|_{L^2},
\end{align*}
and so, for $t$ (and $\eps<1$) small enough
$$\left(\int_{\Omega}|\nabla w_t|^2\,dx\right)^{1/2}\le \frac{C_d|\Omega|^{\sfrac{1}{d}}}{(1-\eps)}\Big(1+\|f_0\|_{L^2(\Omega)}+\|f_0\|_{L^2(\Omega)}\|\delta A\|_{L^\infty(\Omega;\R^{d\times d})}+\|\delta f\|_{L^2(\Omega)}\Big).$$
Thus, for every sequence $t_n\to0$, there is a subsequence for which $w_{t_n}$ converges as $n\to \infty$, strongly in $L^2(\Omega)$ and weakly in $H^1_0(\Omega)$, to some function $w_\infty$. Passing to the limit the equation \eqref{e:s3-abstract-equation-w-t} we get that $w_\infty$ is also a solution to \eqref{e:s3-abstract-equation-delta-u}. Thus $w_\infty=\delta u$. In particular, this implies that $w_t$ converges as $t\to0$, strongly in $L^2(\Omega)$ and weakly in $H^1_0(\Omega)$, to $\delta u$. Finally, in order to prove that the convergence is strong, we test again \eqref{e:s3-abstract-equation-w-t} with $w_t$:
\begin{align*}
\limsup_{t\to\infty}\int_{\Omega}|\nabla w_t|^2\,dx
&=\lim_{t\to0}\int_{\Omega}\nabla w_t\cdot \frac1t(A_t-Id)\nabla u_0\,dx+\lim_{t\to0}\int_{\Omega}\frac1t(f_t-f_0)w_t\,dx\\
&=\int_{\Omega}\nabla (\delta u)\cdot \delta A\nabla u_0\,dx+\int_{\Omega}\delta f\delta u\,dx=\int_{\Omega}|\nabla (\delta u)|^2\,dx\,.
\end{align*}
Combining this estimate with the lower semi-continuity of the $H^1$ norm, we get
$$\lim_{t\to\infty}\int_{\Omega}|\nabla w_t|^2\,dx=\int_{\Omega}|\nabla (\delta u)|^2\,dx\,$$
which implies that $w_t$ converges to $\delta u$ strongly in $H^1_0(\Omega)$.
\end{proof}	

\begin{lemma}[Second order expansion of solutions to PDEs]\label{l:abstact-second-order-expansion}
	Let $\Omega$ be a bounded open set in $\R^d$. Let the functions
	$$f:\R\to L^2(\Omega)\quad\text{and}\quad A:\R\to L^\infty(\Omega;\R^{d\times d})$$
	be such that:
	\begin{enumerate}[\quad\rm(a)]
		\item $A_t(x)$ is a symmetric matrix for every $(t,x)\in\R\times\Omega$ and there are symmetric matrices\\ $\delta A\in L^\infty(\Omega;\R^{d\times d})$ and $\delta^2A\in L^\infty(\Omega;\R^{d\times d})$ such that
		$$A_t=\text{\rm Id}+t\,\delta A+t^2\delta^2A+o(t^2)\quad\text{in }L^\infty(\Omega;\R^{d\times d})\,;$$
		\item there are $\delta f\in L^2(\Omega)$ and $\delta^2f\in L^2(\Omega)$ such that
		$$f_t=f_0+t\,\delta f+t^2\delta^2f+o(t^2)\quad\text{in }L^2(\Omega).$$
	\end{enumerate}
	Then, for every $t$ small enough there is a unique solution $u_t$ to the problem
	\begin{equation}\label{e:s3-abstract-equation-u-t}
	-\text{\rm div}(A_t\nabla u_t)=f_t\quad\text{in }\Omega\,,\qquad u_t\in H^1_0(\Omega),
	\end{equation}
	and
	$$u_t=u_0+t\,\delta u+t^2\delta^2u+o(t^2)\quad\text{in }H^1_0(\Omega),$$
	where $\delta u\in H^1_0(\Omega)$ is the solution to \eqref{e:s3-abstract-equation-delta-u} and where $\delta^2u\in H^1_0(\Omega)$ solves the PDE
	\begin{equation}\label{e:s3-abstract-equation-delta-2-u}
-\Delta (\delta^2 u)=\dive((\delta A)\nabla (\delta u))+\dive((\delta^2 A)\nabla u_0)+\delta^2 f\quad\text{in }\Omega\,,
\qquad\delta^2 u\in H^1_0(\O).
	\end{equation}
\end{lemma}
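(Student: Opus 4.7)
The plan is to follow the same strategy as in \cref{l:abstact-first-order-expansion}, improving it by one order. Existence and uniqueness of $u_t$ for $|t|$ small is immediate from Lax--Milgram, since the symmetric matrix $A_t$ is uniformly elliptic (being $o(1)$-close to the identity in $L^\infty$). Likewise, the right-hand side of \eqref{e:s3-abstract-equation-delta-2-u} belongs to $H^{-1}(\Omega)$ because $\delta A,\delta^2 A\in L^\infty$, $\nabla u_0,\nabla\delta u\in L^2$, and $\delta^2 f\in L^2$, so \eqref{e:s3-abstract-equation-delta-2-u} admits a unique solution $\delta^2 u\in H^1_0(\Omega)$. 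It thus suffices to analyze the second-order remainder
$$W_t:=\frac{1}{t^2}\bigl(u_t-u_0-t\,\delta u\bigr)\,,$$
and prove that $W_t\to\delta^2 u$ strongly in $H^1_0(\Omega)$ as $t\to 0$.

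To derive the PDE for $W_t$, I would insert the ansatz $u_t=u_0+t\,\delta u+t^2 W_t$ into $-\dive(A_t\nabla u_t)=f_t$, substitute the expansions $A_t=\mathrm{Id}+t\,\delta A+t^2\,\delta^2 A+\rho_t$ with $\|\rho_t\|_{L^\infty(\Omega;\R^{d\times d})}=o(t^2)$ and $f_t=f_0+t\,\delta f+t^2\,\delta^2 f+\sigma_t$ with $\|\sigma_t\|_{L^2(\Omega)}=o(t^2)$, and use both $-\Delta u_0=f_0$ and \eqref{e:s3-abstract-equation-delta-u} to cancel the terms of order $t^0$ and $t^1$. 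After dividing by $t^2$, one obtains
$$-\dive(A_t\nabla W_t)=\dive\bigl((\delta A)\nabla\delta u\bigr)+\dive\bigl((\delta^2 A)\nabla u_0\bigr)+\delta^2 f+R_t\,,$$
where $R_t$ collects the terms $\sigma_t/t^2$, $t^{-2}\dive(\rho_t\nabla u_0)$, $t^{-1}\dive(\rho_t\nabla\delta u)$, and $t\,\dive((\delta^2 A)\nabla\delta u)$; using only the assumed decay of $\rho_t$ and $\sigma_t$ together with the $L^2$-boundedness of $\nabla u_0$ and $\nabla\delta u$, a short duality computation shows that each of these contributions converges to $0$ in $H^{-1}(\Omega)$.

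With the equation for $W_t$ in hand, a uniform $H^1_0$-bound follows by testing with $W_t$ itself and invoking the uniform ellipticity of $A_t$, the Poincar\'e inequality, and the a priori bounds $\|\nabla u_0\|_{L^2},\|\nabla\delta u\|_{L^2}\le C$ coming from $-\Delta u_0=f_0$ and \eqref{e:s3-abstract-equation-delta-u}. Hence along any sequence $t_n\to 0$, a subsequence of $W_{t_n}$ converges weakly in $H^1_0(\Omega)$ and strongly in $L^2(\Omega)$ to some $W_\ast$; passing to the limit in the weak formulation and using $A_{t_n}\to\mathrm{Id}$ in $L^\infty$ identifies $W_\ast$ as the unique solution $\delta^2 u$ of \eqref{e:s3-abstract-equation-delta-2-u}, so the entire family $W_t$ converges weakly to $\delta^2 u$.

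Finally, strong convergence is upgraded exactly as in the proof of \cref{l:abstact-first-order-expansion}: testing the PDE for $W_t$ against $W_t$ and letting $t\to 0$ yields $\|\nabla W_t\|_{L^2}^2\to\|\nabla(\delta^2 u)\|_{L^2}^2$, which combined with weak convergence gives strong $H^1_0$-convergence. The main delicate point in the argument is the bookkeeping of the remainder $R_t$; specifically, one must verify that $t^{-2}\dive(\rho_t\nabla u_0)$ and $t^{-1}\dive(\rho_t\nabla\delta u)$ are both $o(1)$ in $H^{-1}(\Omega)$, which is why the hypothesis $\rho_t=o(t^2)$ in $L^\infty$ (and not merely $o(t)$) is needed.
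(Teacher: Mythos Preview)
Your proposal is correct and follows essentially the same approach as the paper: define the second-order remainder $W_t=\frac{1}{t^2}(u_t-u_0-t\,\delta u)$ (the paper calls it $v_t=\frac{1}{t}(w_t-\delta u)$ with $w_t=\frac{1}{t}(u_t-u_0)$), derive its equation by cancelling the $t^0$ and $t^1$ terms via the equations for $u_0$ and $\delta u$, obtain a uniform $H^1_0$ bound, identify the weak limit as $\delta^2 u$, and upgrade to strong convergence by testing with $W_t$. The only cosmetic difference is that the paper keeps $-\Delta v_t$ on the left and carries the term $-\dive\big((A_t-\mathrm{Id})\nabla v_t\big)$ explicitly, whereas you keep $-\dive(A_t\nabla W_t)$ on the left; both formulations lead to the same estimates.
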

\begin{proof}
Let $w_t:=\frac1t(u_t-u_0)$ be as in the proof of \cref{l:abstact-first-order-expansion}. We set
$$v_t:=\frac1t(w_t-\delta u)\in H^1_0(\Omega).$$
We will prove that $v_t$ converges strongly in $H^1_0(\Omega)$ to $\delta^2u$. From the equation for $w_t$, we have
\begin{align*}
-\Delta (\delta u+tv_t)-\text{\rm div}\Big(\frac1t(A_t-\text{\rm Id})\nabla u_0\Big)-\text{\rm div}\Big((A_t-\text{\rm Id})\nabla(\delta u+tv_t)\Big)=\frac1t(f_t-f_0).
\end{align*}	
Thus, using the equation \eqref{e:s3-abstract-equation-delta-u} for $\delta u$ ($-\Delta (\delta u)-\text{\rm div}(\delta A\nabla u_0)=\delta f$), we get
\begin{align*}
-\Delta v_t-\text{\rm div}\Big(\frac1{t^2}(A_t-\text{\rm Id}-t\delta A)\nabla u_0\Big)-\text{\rm div}\Big(\frac1t(A_t-\text{\rm Id})\nabla(\delta u+tv_t)\Big)=\frac1{t^2}(f_t-f_0-t\delta f).
\end{align*}	
Now, reasoning as in \cref{l:abstact-first-order-expansion}, we get that the family $v_t$ is uniformly bounded in $H^1_0(\Omega)$ and converges as $t\to0$ strongly in $L^2(\Omega)$ and weakly in $H^1_0(\Omega)$ to the solution $\delta^2u$ of \eqref{e:s3-abstract-equation-delta-2-u}. In order to obtain the strong $H^1$ convergence, we compute
\begin{align*}
\limsup_{t\to\infty}\int_{\Omega}|\nabla v_t|^2\,dx
&=-\lim_{t\to0}\int_{\Omega}\nabla v_t\cdot \frac1{t^2}(A_t-\text{\rm Id}-t\delta A)\nabla u_0\,dx\\
&\qquad-\lim_{t\to0}\int_{\Omega}\nabla v_t\cdot \frac1{t}(A_t-\text{\rm Id})\nabla (\delta u)\,dx +\lim_{t\to0}\int_{\Omega}\frac1{t^2}(f_t-f_0-t\delta f)v_t\,dx\\
&=-\int_{\Omega}\nabla (\delta^2 u)\cdot (\delta^2 A)\nabla u_0\,dx-\int_{\Omega}\nabla (\delta^2u)\cdot (\delta A)\nabla (\delta u)\,dx+\int_{\Omega}(\delta^2 f)(\delta^2 u)\,dx\\
&=\int_{\Omega}|\nabla (\delta^2 u)|^2\,dx\,,
\end{align*}
which concludes the proof.
\end{proof}

\begin{oss}\label{r:expansion-det}	
	We recall that if $M=(m_{ij})_{1\le i,j\le d}\in \R^{d\times d}$ is a matrix with constant coefficients, then we have the following Taylor expansions in $\R^{d\times d}$ (with respect to the norm $\|\cdot\|_{\R^{d\times d}}$)
	$$(\text{\rm Id}+tM)^{-1}=\text{\rm Id}-tM+t^2M^2+o(t^2),$$
	$$\det(\text{\rm Id}+tM)=1+t\,\text{\rm tr}(M)+\frac{t^2}{2}\Big(\big(\text{\rm tr}(M)\big)^2-\text{\rm tr}(M^2)\Big)+o(t^2),$$
	where $\text{\rm Id}$ is the identity matrix in $\R^{d\times d}$ and $\text{\rm tr}(M)$ is the trace $\text{\rm tr}(M):=\sum_{i=1}^dm_{ii}$. As a consequence, if $M\in L^\infty(\Omega;\R^{d\times d})$ is a matrix with variable coefficients, then we have the expansions
	$$(\text{\rm Id}+tM)^{-1}=\text{\rm Id}-tM+t^2M^2+o(t^2)\quad\text{in}\quad L^\infty(\Omega;\R^{d\times d})\,,$$
	$$\det(\text{\rm Id}+tM)=1+t\,\text{\rm tr}(M)+\frac{t^2}{2}\Big(\big(\text{\rm tr}(M)\big)^2-\text{\rm tr}(M^2)\Big)+o(t^2)\quad\text{in}\quad L^\infty(\Omega;\R^{d\times d})\,.$$
	In particular, this implies that given two matrices $M,N\in L^\infty(\Omega;\R^{d\times d})$, we have:
	\begin{align*}
  \big(\text{\rm Id}+tM+t^2N+o(t^2)\big)^{-1}&=\text{\rm Id}-tM+t^2(M^2-N)+o(t^2)\,;\smallskip\\
	\det\big(\text{\rm Id}+tM+t^2N+o(t^2)\big)&=\det(\text{\rm Id}+tM)\det(\text{\rm Id}+t^2N)+o(t^2)\\
	&=1+t\,\text{\rm tr}(M)+\frac{t^2}{2}\Big(\big(\text{\rm tr}(M)\big)^2-\text{\rm tr}(M^2)+2\text{\rm tr}(N)\Big)+o(t^2)\,,
	\end{align*}
and so,
	\begin{align*}
	&\big(\text{\rm Id}+tM+t^2N+o(t^2)\big)^{-1}\big(\text{\rm Id}+tM+t^2N+o(t^2)\big)^{-T}\det\big(\text{\rm Id}+tM+t^2N+o(t^2)\big)\\
%	&=\Big(\text{\rm Id}-tM+t^2(M^2-N)\Big)\Big(\text{\rm Id}-tM^T+t^2(M^{2T}-N^T)\Big)\Big(1+t\,\text{\rm tr}(M)+\frac{t^2}{2}\Big(\big(\text{\rm tr}(M)\big)^2-\text{\rm tr}(M^2)+2\text{\rm tr}(N)\Big)\Big)+o(t^2)\\
	&\quad=\text{\rm Id}+t\Big(-M-M^T+\text{\rm tr}(M)\text{\rm Id}\Big)\\
	&\quad\quad+t^2\Big(MM^T+M^2+(M^{T})^2-(N+N^T)-(M+M^T)\,\text{\rm tr}(M)\Big)\\
	&\quad\qquad+\frac{t^2}{2}\Big(\big(\text{\rm tr}(M)\big)^2-\text{\rm tr}(M^2)+2\text{\rm tr}(N)\Big)\text{\rm Id}+o(t^2).
		\end{align*}
\end{oss}	
	
\begin{prop}\label{l:first-and-second-variation-along-a-flow}
	Let $D$ be a bounded open set in $\R^d$ and let $f \in C^2(D)$ with $f\ge0$ in ${D}$. Given an open set $\Omega\subset D$ and a compactly supported vector field $\xi \in C^\infty_c(D;\R^d)$, we consider the associated flow $\Phi:\R\times\R^d\to\R^d$, determined by the family of ODEs (for every $x\in D$)
	\begin{equation}\label{e:flow-of-xi}
	\begin{cases}
	\partial_t\Phi_t(x)=\xi\big(\Phi_t(x)\big)\quad\text{for every}\quad t\in\R\\
	\Phi_0(x)=x.
	\end{cases}
	\end{equation}
	We consider	the family of open sets $\Omega_t := \Phi_t(\Omega)$ and the corresponding state variables $u_{\O_t}$ given by \eqref{e:state-equation-open}.
	We define $\delta u_\Omega$ and $\delta^2 u_\Omega$ to be the weak solutions in $H^1_0(\Omega)$ to the PDEs
	\be\label{e:deltau}
	-\Delta (\delta u_\O)=\dive((\delta A)\nabla u_\O)+\delta f\quad\text{in}\quad\Omega\,,
	\qquad\delta u_\O\in H^1_0(\O),
	\ee
	and
	\be \label{e:delta2u}
	-\Delta (\delta^2 u_\O)=\dive((\delta A)\nabla (\delta u_\O))+\dive((\delta^2 A)\nabla u_\O)+\delta^2 f\quad\text{in}\quad\Omega\,,
	\qquad\delta^2 u_\O\in H^1_0(\O),
	\ee
	where the matrices $\delta A\in L^\infty(D;\R^{d\times d})$ and $\delta^2A\in L^\infty(D;\R^{d\times d})$ are given by
	\begin{align}\label{e:first-and-second-variation-A}
	\begin{aligned}
	\delta A &:=-D\xi-\nabla\xi+(\dive\xi)\text{\rm Id}\,,\\
	\delta^2 A &:= (D\xi)\,(\nabla\xi)+\frac12\big(\nabla\xi\big)^2+\frac12(D\xi)^2-\frac12(\xi\cdot\nabla)\big[\nabla\xi+D\xi\big]\\
	&\qquad-\big(\nabla\xi+D\xi\big)\dive\xi+\text{\rm Id}\frac{(\dive\xi)^2+\xi\cdot\nabla(\dive\xi)}{2}\,,
	\end{aligned}
	\end{align}
	while the variations $\delta f\in L^2(D)$ and $\delta^2f\in L^2(D)$ of the right-hand side $f$ are:
	\begin{align}\label{e:first-and-second-variation-f}
	\begin{aligned}
	\delta f &:=\dive(f\xi)\,,\\
	\delta^2 f &:= \frac12 \xi \cdot (D^2f)\xi+\frac12\nabla f\cdot D\xi[\xi]+f\frac{(\dive\xi)^2+\xi\cdot\nabla[\text{\rm div}\,\xi]}{2}+(\nabla f \cdot \xi)\dive\xi \,.
	\end{aligned}
	\end{align}
	Then,
	\be\label{expansion.u}
	u_{\O_t}\circ \Phi_t = u_\O + t (\delta u_\O) + t^2 (\delta^2 u_\O) + o(t^2)\quad \mbox{in}\quad H^1_0(D)\,.
	\ee
\end{prop}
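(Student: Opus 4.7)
\medskip

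\textbf{Proof plan.} The strategy is to pull everything back from $\Omega_t$ to $\Omega$ via $\Phi_t$, so that we can apply the abstract second order expansion of \cref{l:abstact-second-order-expansion} on the fixed domain $\Omega$. First, I would introduce the pulled-back state
\[
\widetilde u_t(x):=u_{\Omega_t}\!\big(\Phi_t(x)\big),\qquad x\in\Omega,
\]
which belongs to $H^1_0(\Omega)$ since $\Phi_t(\Omega)=\Omega_t$ and $u_{\Omega_t}\in H^1_0(\Omega_t)$. Using the chain rule and the change of variables $y=\Phi_t(x)$ in the weak formulation of $-\Delta u_{\Omega_t}=f$ in $\Omega_t$, with test functions of the form $\widetilde\varphi=\varphi\circ\Phi_t^{-1}$ for $\varphi\in H^1_0(\Omega)$, we get that $\widetilde u_t$ is the unique weak solution in $H^1_0(\Omega)$ of the equation
\[
-\dive(A_t\nabla \widetilde u_t)=f_t\quad\text{in }\Omega,
\]
with
\[
A_t:=(\nabla\Phi_t)^{-T}(\nabla\Phi_t)^{-1}\,\det(\nabla\Phi_t),\qquad f_t:=(f\circ\Phi_t)\,\det(\nabla\Phi_t).
\]
Note that $A_t$ is symmetric and, for $t$ small, uniformly elliptic on $\Omega$, so the hypothesis of uniqueness in \cref{l:abstact-second-order-expansion} is met.

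The second step is to expand $A_t$ and $f_t$ in $L^\infty(\Omega;\R^{d\times d})$ and $L^2(\Omega)$ respectively, to second order in $t$. Differentiating the flow ODE \eqref{e:flow-of-xi} twice in $t$ at $t=0$ gives
\[
\nabla\Phi_t=\text{\rm Id}+t\,\nabla\xi+\tfrac{t^2}{2}\Big((\xi\cdot\nabla)(\nabla\xi)+(\nabla\xi)^2\Big)+o(t^2)\quad\text{in}\quad L^\infty(\Omega;\R^{d\times d}).
\]
Plugging this into the formulas of \cref{r:expansion-det} (applied with $M=\nabla\xi$ and $N=\tfrac{1}{2}[(\xi\cdot\nabla)(\nabla\xi)+(\nabla\xi)^2]$, and taking the product in the order $A^{-T}A^{-1}$) one identifies exactly
\[
A_t=\text{\rm Id}+t\,\delta A+t^2\,\delta^2 A+o(t^2),
\]
with $\delta A$ and $\delta^2A$ as in \eqref{e:first-and-second-variation-A}; the appearance of $(D\xi)(\nabla\xi)$ comes from the cross term $M^TM$ in the product $(\text{\rm Id}+tM+\dots)^{-T}(\text{\rm Id}+tM+\dots)^{-1}$. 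For the right-hand side, the Taylor expansion
\[
f(\Phi_t(x))=f(x)+t\,\nabla f(x)\cdot\xi(x)+\tfrac{t^2}{2}\Big(\xi\cdot(D^2f)\xi+\nabla f\cdot D\xi[\xi]\Big)+o(t^2)
\]
combined with the expansion of $\det(\nabla\Phi_t)$ from \cref{r:expansion-det} produces $f_t=f_0+t\,\delta f+t^2\,\delta^2 f+o(t^2)$ in $L^2(\Omega)$, with $\delta f$ and $\delta^2 f$ matching \eqref{e:first-and-second-variation-f}; here the $C^2$-regularity of $f$ is used to control the Taylor remainder in $L^2(\Omega)$. An algebraic rearrangement (grouping the terms arising from the two expansions) recovers precisely the formulas in the statement.

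Having verified the hypotheses (a) and (b) of \cref{l:abstact-second-order-expansion}, applying that lemma yields
\[
\widetilde u_t=u_\Omega+t\,\delta u_\Omega+t^2\,\delta^2 u_\Omega+o(t^2)\quad\text{in }H^1_0(\Omega),
\]
where $\delta u_\Omega$ and $\delta^2 u_\Omega$ solve \eqref{e:deltau}--\eqref{e:delta2u}. Extending by zero (and recalling $\spt\xi\Subset D$ so that $\O_t\subset D$ for small $t$), the same expansion holds in $H^1_0(D)$, which is \eqref{expansion.u}.

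\textbf{Main obstacle.} The only genuinely technical step is the verification of the algebraic formulas for $\delta^2 A$: one must carefully keep track of the order of the factors (the appearance of $(D\xi)(\nabla\xi)=M^TM$ rather than $(\nabla\xi)(D\xi)$ depends on reading \cref{r:expansion-det} in the correct order for the product $A^{-T}A^{-1}$) and correctly combine the quadratic contributions coming from $(\nabla\Phi_t)^{-T}(\nabla\Phi_t)^{-1}$ with those coming from $\det(\nabla\Phi_t)$. Once this bookkeeping is done, the remainder of the argument reduces to applying the abstract expansions of \cref{l:abstact-first-order-expansion}--\cref{l:abstact-second-order-expansion}.
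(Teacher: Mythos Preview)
Your proposal is correct and follows essentially the same route as the paper: pull back $u_{\Omega_t}$ to the fixed domain $\Omega$ via $\Phi_t$, identify the transformed PDE $-\dive(A_t\nabla\widetilde u_t)=f_t$, expand $D\Phi_t$ (equivalently $\nabla\Phi_t$) to second order by differentiating the flow ODE, read off $\delta A,\delta^2A,\delta f,\delta^2f$ from \cref{r:expansion-det}, and invoke \cref{l:abstact-first-order-expansion}--\cref{l:abstact-second-order-expansion}. Your remark about the ordering $A^{-T}A^{-1}$ versus $A^{-1}A^{-T}$ is exactly the bookkeeping point the paper handles by working with $D\Phi_t$ and the product $(D\Phi_t)^{-1}(D\Phi_t)^{-T}$; both conventions lead to the cross term $(D\xi)(\nabla\xi)$ in $\delta^2A$.
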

\begin{proof}%[Proof of \cref{l:first-and-second-variation-along-a-flow}]
	We set $u_t:=u_{\O_t}\circ \Phi_t$. Then, $u_t\in H^1_0(\Omega)$ and $u_{\Omega_t}=u_t\circ \Phi_t^{-1}$.\\ Moreover, by a change of variables, we have that $u_t$ is satisfies the PDE
	\be\label{e:uniqu}
	-\dive\big(A_t\nabla u_t\big)=f_t\quad\text{in}\quad\Omega\,,\qquad u_t\in H^1_0(\Omega)\,,
	\ee
	where the matrix $A_t$ and the function $f_t$ are defined as
	\be\label{e:f-t-A-t}
	f_t:=f(\Phi_t)|\text{\rm det}(D\Phi_t)|\qquad\text{and}\qquad
	A_t:=(D\Phi_t)^{-1}(D\Phi_t)^{-T}|\text{\rm det}(D\Phi_t)|,
	\ee
	where for a vector field $F:\R^d\to\R^d$ with components $F_k$, $k=1,\dots,d$, we denote by
	%$\nabla F$ the matrix with columns $\nabla F_k$, $k=1,\dots,d$, and by
	$DF$ the matrix with rows $DF_k=(\nabla F_k)^T$. We next compute the second order Taylor expansion of $D\Phi_t$ in $t=0$. By differentiating the equation for the flow $\Phi_t$, we get
$$\begin{cases}
\partial_t(D\Phi_t)=D\xi(\Phi_t)D\Phi_t\quad\text{for every}\quad t\in\R\,,\\
D\Phi_0=\text{\rm Id}.
\end{cases}$$
Then, taking another derivative in $t$, we get
\begin{align*}
\partial_{tt}(D\Phi_t)=\partial_t\big[D\xi(\Phi_t)D\Phi_t\big]
&=\partial_t\big[D\xi(\Phi_t)\big]D\Phi_t+D\xi(\Phi_t)\partial_t\big[D\Phi_t\big]\\
&=\big(\partial_t\Phi_t\cdot\nabla\big)\big[D\xi\big](\Phi_t)D\Phi_t+D\xi(\Phi_t)D\xi(\Phi_t)D\Phi_t\\
&=\big(\xi(\Phi_t)\cdot\nabla\big)\big[D\xi\big](\Phi_t)D\Phi_t+D\xi(\Phi_t)D\xi(\Phi_t)D\Phi_t\,,
\end{align*}
where for a vector field $F$ and a matrix $M=(m_{ij})_{ij}$, we use the notation $(F\cdot \nabla)[M]$ for the matrix with coefficients $F\cdot \nabla m_{ij}$. Finally, taking $t=0$, we get
$$\frac{\partial^2}{\partial t^2}\Big|_{t=0}(D\Phi_t)=(\xi\cdot\nabla)[D\xi]+(D\xi)^2\,,$$
and the Taylor expansion
$$D\Phi_t=\text{\rm Id}+tD\xi+\frac{t^2}{2}\Big((\xi\cdot\nabla)[D\xi]+(D\xi)^2\Big)+o(t^2).$$
By the expansions from \cref{r:expansion-det}, we get
	\be\label{e:expansion}
	\begin{aligned}
		A_t&=\text{\rm Id}+ t (\delta A) + t^2 (\delta^2 A) + o(t^2)\quad\text{in }L^\infty(D;\R^{d\times d})\,,\\
		f_t&= f + t (\delta f) + t^2 (\delta^2 f) + o(t^2)\quad\text{in }L^2(D)\,,
	\end{aligned}
	\ee
	where $\delta A$, $\delta^2A$, $\delta f$, $\delta^2f$ are given by \eqref{e:first-and-second-variation-A} and \eqref{e:first-and-second-variation-f}. Thus, the claim follows from \cref{l:abstact-first-order-expansion} and \cref{l:abstact-second-order-expansion}.
\end{proof}

\subsection{First and second variation of $\mathcal F$}\label{sub:first-and-second-variation-of-F}
In the next lemma, we compute the first derivative of the functional $\mathcal{F}$ along inner variations with compact support in $D$.

\begin{lemma}[First variation of $\mathcal{F}$ along inner perturbations]\label{l:firstv}
Let $D$ be a bounded open set in $\R^d$ and let $f,g,Q\in C^1(D)$. 	
Let $\O\subseteq D$ be open and $\xi\in C^\infty_c(D;\R^d)$ be a vector field with compact support in $D$.
Let $\Phi_t$ be the flow of the vector field $\xi$ defined by \eqref{e:flow-of-xi} and set $\Omega_t := \Phi_t(\Omega)$. Then
\begin{align}\label{e:first-derivative-F-along-vector-field}
\begin{aligned}
\frac{\partial}{\partial t}\bigg|_{t=0}\mathcal{F}({\Omega_t},D)=&\, \int_\O \left(\nabla u_\Omega \cdot \nabla v_\Omega + Q \right)\dive\xi  + \xi\cdot\nabla Q- \nabla u_\O \cdot ((\nabla\xi) + (D\xi))\nabla v_\O \, dx\\
&\,-\int_\Omega u_\O \dive(g\xi) + v_\O \dive(f\xi)\, dx.
\end{aligned}
\end{align}
Moreover, if $\partial\Omega$ is $C^2$-regular in a neighborhood of the support of $\xi$, then
\be\label{e:statiosmooth}
\frac{\partial}{\partial t}\bigg|_{t=0}\mathcal{F}({\Omega_t},D) = \int_{\partial \Omega} (\nu \cdot \xi)\big(Q-|\nabla u_\O ||\nabla v_\O|\big)\, d\HH^{d-1},
\ee
where $\nu$ is the outer unit normal to $\partial \Omega$.
\end{lemma}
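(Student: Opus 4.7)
My plan is to pull the integral from the moving domain $\Omega_t$ back to the reference $\Omega$ via the flow $\Phi_t$ and then eliminate the abstract state-variation $\delta u_\Omega$ by using $v_\Omega$ as an adjoint state. Substituting $y=\Phi_t(x)$ in the non-symmetric form of $\mathcal F$,
$$\mathcal F(\Omega_t;D)=\int_{\Omega_t}\big(-g\,u_{\Omega_t}+Q\big)\,dy=\int_\Omega\big(-g_t\,u_t+Q_t\big)\,dx,$$
where $u_t:=u_{\Omega_t}\circ\Phi_t\in H^1_0(\Omega)$, $g_t:=g(\Phi_t)|\det D\Phi_t|$, $Q_t:=Q(\Phi_t)|\det D\Phi_t|$. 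The Taylor expansion of $D\Phi_t$ together with \cref{r:expansion-det} gives $g_t=g+t\,\dive(g\xi)+o(t)$ and $Q_t=Q+t\,\dive(Q\xi)+o(t)$ in $L^2(\Omega)$, while \cref{l:first-and-second-variation-along-a-flow} yields $u_t=u_\Omega+t\,\delta u_\Omega+o(t)$ strongly in $H^1_0(\Omega)$.

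\textbf{Step 2 (volume formula).} Differentiating at $t=0$,
$$\frac{\partial}{\partial t}\bigg|_{t=0}\mathcal F(\Omega_t;D)=-\int_\Omega u_\Omega\,\dive(g\xi)\,dx-\int_\Omega g\,\delta u_\Omega\,dx+\int_\Omega\dive(Q\xi)\,dx.$$
The middle term, the only one still containing the abstract $\delta u_\Omega$, is rewritten in two moves: testing $-\Delta v_\Omega=g$ against $\delta u_\Omega\in H^1_0(\Omega)$ gives $\int_\Omega g\,\delta u_\Omega\,dx=\int_\Omega\nabla v_\Omega\cdot\nabla(\delta u_\Omega)\,dx$, and then testing the weak form of \eqref{e:deltau} against $v_\Omega\in H^1_0(\Omega)$ gives
$$\int_\Omega\nabla v_\Omega\cdot\nabla(\delta u_\Omega)\,dx=-\int_\Omega(\delta A)\nabla u_\Omega\cdot\nabla v_\Omega\,dx+\int_\Omega v_\Omega\,\dive(f\xi)\,dx.$$
Plugging in $\delta A=-\nabla\xi-D\xi+(\dive\xi)\,\mathrm{Id}$ and $\dive(Q\xi)=Q\,\dive\xi+\xi\cdot\nabla Q$ gives exactly~\eqref{e:first-derivative-F-along-vector-field}.

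\textbf{Step 3 (boundary formula).} Assuming $\partial\Omega$ is $C^2$ on $\spt\xi$, I reduce \eqref{e:first-derivative-F-along-vector-field} to a boundary integral. The $Q$-piece integrates at once as $\int_\Omega\dive(Q\xi)\,dx=\int_{\partial\Omega}Q(\xi\cdot\nu)\,d\HH^{d-1}$; using $u_\Omega=v_\Omega=0$ on $\partial\Omega$ together with $-\Delta u_\Omega=f$, $-\Delta v_\Omega=g$, the state-dependent volume terms become
$$\int_\Omega u_\Omega\,\dive(g\xi)\,dx=\int_\Omega\Delta v_\Omega\,(\xi\cdot\nabla u_\Omega)\,dx,\qquad\int_\Omega v_\Omega\,\dive(f\xi)\,dx=\int_\Omega\Delta u_\Omega\,(\xi\cdot\nabla v_\Omega)\,dx.$$
A Rellich/Pohozaev-type integration by parts on the \emph{symmetric sum} of these two integrals (one further integration by parts of each Laplacian, grouping via the symmetry of mixed partials, and using $\nabla u_\Omega=(\partial_\nu u_\Omega)\nu$, $\nabla v_\Omega=(\partial_\nu v_\Omega)\nu$ on $\partial\Omega$) produces exactly the bulk terms $(\nabla u_\Omega\cdot\nabla v_\Omega)\dive\xi-\nabla u_\Omega\cdot(\nabla\xi+D\xi)\nabla v_\Omega$, which cancel with the matching terms in~\eqref{e:first-derivative-F-along-vector-field}, plus the boundary contribution $\int_{\partial\Omega}(\xi\cdot\nu)\,\partial_\nu u_\Omega\,\partial_\nu v_\Omega\,d\HH^{d-1}$. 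Since $u_\Omega,v_\Omega\ge0$ vanish on $\partial\Omega$ with outward normal $\nu$, the two normal derivatives are non-positive, so $\partial_\nu u_\Omega\,\partial_\nu v_\Omega=|\nabla u_\Omega||\nabla v_\Omega|$, and \eqref{e:statiosmooth} follows.

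\textbf{Main obstacle.} The substantive point is Step 2: the elimination of the abstract $\delta u_\Omega$ works precisely because $v_\Omega$ plays the role of an adjoint state for the cost $\int g u_\Omega$. Step 3 is just careful bookkeeping, but it is important to work with the \emph{symmetric} combination $\Delta v_\Omega(\xi\cdot\nabla u_\Omega)+\Delta u_\Omega(\xi\cdot\nabla v_\Omega)$, as otherwise second derivatives of $u_\Omega$ or $v_\Omega$ that need not exist up to the boundary would appear; in the symmetric sum all these second derivatives cancel, leaving only the single clean integrand $Q-|\nabla u_\Omega||\nabla v_\Omega|$.
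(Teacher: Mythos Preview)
Your proof is correct and follows essentially the same strategy as the paper's. The only organizational difference is in Step~2: the paper works with the symmetric form $\mathcal F(\Omega_t;D)=\int_{\Omega_t}(\nabla u_{\Omega_t}\cdot\nabla v_{\Omega_t}-gu_{\Omega_t}-fv_{\Omega_t}+Q)$, expands \emph{both} $u_t$ and $v_t$ to first order, and then kills the $\delta u_\Omega,\delta v_\Omega$ contributions by testing the state equations $-\Delta u_\Omega=f$, $-\Delta v_\Omega=g$; you instead start from the non-symmetric form $\int_{\Omega_t}(-gu_{\Omega_t}+Q)$, expand only $u_t$, and eliminate $\delta u_\Omega$ by the adjoint-state trick (test $-\Delta v_\Omega=g$ with $\delta u_\Omega$, then test \eqref{e:deltau} with $v_\Omega$). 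Both routes are equivalent and yield \eqref{e:first-derivative-F-along-vector-field} immediately. Your Step~3 is likewise the same Rellich--Pohozaev bookkeeping as the paper's divergence identity, just ordered differently.
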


\begin{proof}
By applying \cref{l:first-and-second-variation-along-a-flow} to $u_t=u_{\O_t}\circ\Phi_t$ and $v_t=v_{\O_t}\circ\Phi_t,$
we get that
$$u_t=u_\O+t(\delta u_\O)+o(t)\quad\text{and}\quad v_t=v_\O+t(\delta v_\O)+o(t)\quad\mbox{in}\quad H^1_0(D)\,,$$
where $\delta u_\Omega$ and $\delta v_\Omega$ are the solutions to
\begin{align}\label{equationdeltauv}
\begin{aligned}
-\Delta (\delta u_\O)&=\dive((\delta A)\nabla u_{\O})+\delta f\quad\text{in}\quad\Omega\ ,\qquad
\delta u_\O\in H^1_0(\O)\,,\\
-\Delta (\delta v_\O)&=\dive((\delta A)\nabla v_{\O})+\delta g
\quad\text{in}\quad\Omega\ ,\qquad \delta v_\O\in H^1_0(\O)\,,
\end{aligned}
\end{align}
with $\delta f:=\text{\rm div}(f\xi)$, $\delta g:=\text{\rm div}(g\xi)$, and $\delta A$ as in \cref{l:first-and-second-variation-along-a-flow}.
Therefore, setting
$$f_t:=f(\Phi_t)|\text{\rm det}(D\Phi_t)|\ ,\quad g_t:=g(\Phi_t)|\text{\rm det}(D\Phi_t)|\ ,$$
$$Q_t:=Q(\Phi_t)|\text{\rm det}(D\Phi_t)| \quad\text{and}\quad
A_t:=(D\Phi_t)^{-1}(D\Phi_t)^{-T}|\text{\rm det}(D\Phi_t)|\,,$$
we get
\begin{align*}
\mathcal{F}({\Omega_t},D) =&\,\int_{\Omega_t} \Big(\nabla u_{\O_t} \cdot \nabla v_{\O_t} \, - g u_{\O_t} - f v_{\O_t} +Q\Big)\,dy \\
=&\, \int_{\Omega}\Big(\nabla u_{t}\cdot A_t \nabla v_t -  g_t u_{t} -f_t v_t +Q_t\Big)\, dx \\
=&\, \mathcal{F}(\O,D) + t \int_{D}\Big(\nabla (\delta u_\O) \cdot \nabla v_{\O} + \nabla u_{\O} \cdot \nabla (\delta v_\O)  - g (\delta u_\O) - f(\delta v_\O)\Big)\, dx\\
&\quad+t\int_{\Omega}\Big(\nabla u_\O\cdot(\delta A)\nabla v_\O-u_\O(\delta g)-v_\O(\delta f)+(Q\dive\xi + \nabla Q \cdot \xi)\Big)\,dx+o(t)\\
=&\, \mathcal{F}({\O},D) + t \int_{\O} \Big(\nabla u_{\O}\cdot (\delta A) \nabla v_{\O}  -u_{\O} (\delta g) -v_{\O} (\delta f) +\dive(Q\xi)\Big)\,dx + o(t)
\end{align*}
where in the first equality we applied the change of variables $y=\Phi_t(x)$ and in the last one we use the equations $-\Delta u_\O = f$ and $-\Delta v_\O =g$ in $\O$. Substituting with the expression for $\delta A$ from \eqref{e:first-and-second-variation-A}, we obtain \eqref{e:first-derivative-F-along-vector-field}.

Suppose now that $\partial \Omega$ is $C^2$-smooth. Since in $\Omega$ we have the identity
\begin{align*}
&\left(\nabla u_\O \cdot \nabla v_\O\right)\dive\xi  - \nabla u_\O \cdot ((\nabla\xi) + (D\xi))\nabla v_\O\\
 &\qquad = \dive\Big(\xi(\nabla u_\O \cdot\nabla v_\O)-(\nabla u_\O \cdot \xi)\nabla v_\O - (\nabla v_\O \cdot \xi)\nabla u_\O\Big) +(\nabla v_\O \cdot \xi )\Delta u_\O + (\nabla u \cdot \xi )\Delta v_\O,
\end{align*}
by integrating by parts we get
\begin{align}\label{e:first-variation-F-along-vector-field}
\begin{aligned}
\delta \mathcal{F}(\O, D)[\xi] =&\int_\O \dive\Big(\xi\big((\nabla u_\O \cdot\nabla v_\O)+Q\big)-(\nabla u_\O \cdot \xi)\nabla v_\O - (\nabla v_\O \cdot \xi)\nabla u_\O\Big)\,dx\\
&\,-\int_\O \Big((\nabla u_\O\cdot \xi)g + u_\O \dive(g\xi) + (\nabla v_\O\cdot \xi)f +v_\O \dive(f\xi)\Big)\, dx\\
=&\int_{\partial\O} \Big((\nu \cdot \xi)((\nabla u \cdot\nabla v)+Q)-(\nabla u \cdot \xi)(\nu\cdot\nabla v) - (\nabla v \cdot \xi)(\nu\cdot\nabla u)\Big)\,d\HH^{d-1}.
\end{aligned}
\end{align}
Since $u_\O$ and $v_\O$ are positive in $\O$ and vanish on $\partial\Omega$, we have that
$$\nabla u_\O = -\nu |\nabla u_\O|\quad\text{and}\quad\nabla v_\O = -\nu |\nabla v_\O|\quad\text{on}\quad \partial\Omega\,,$$
and so
\begin{align*}
\delta \mathcal{F}(\O, D)[\xi] =&\int_{\partial \Omega} (\nu \cdot \xi)(|\nabla u_\O ||\nabla v_\O|+Q)-|\nabla u_\O| (\nu\cdot \xi)|\nabla v_\O| - |\nabla v_\O|(\nu\cdot \xi)|\nabla u_\O|  \, d\HH^{d-1}\\
=&\int_{\partial \Omega} (\nu \cdot \xi)(Q-|\nabla u_\O ||\nabla v_\O|)\, d\HH^{d-1},
\end{align*}
which concludes the proof.
\end{proof}

\begin{oss}[First variation and stationary domains]\label{oss:optimal-domains-are-stationary}
Given a bounded open set $D$, functions $f$, $g$ and $Q$ on $D$, and the functional $\mathcal F$ defined in \eqref{e:Functional}, we will use the notation $\delta \mathcal{F}(\Omega,D)[\xi]$ for the first variation of $\mathcal{F}$ at $\Omega$ along a smooth compactly suppported vector field $\xi \in C^\infty_c(D;\R^d)$. Precisely, we set
\begin{align}\label{e:first-variation-of-F}
\begin{aligned}
\delta \mathcal{F}(\Omega,D)[\xi]:=&\, \int_\O \left(\nabla u_\Omega \cdot \nabla v_\Omega + Q \right)\dive\xi  + \xi\cdot\nabla Q- \nabla u_\O \cdot ((\nabla\xi) + (D\xi))\nabla v_\O \, dx\\
&\,-\int_\Omega u_\O \dive(g\xi) + v_\O \dive(f\xi)\, dx.
\end{aligned}
\end{align}
We will say that an open set $\Omega\subset D$ is stationary (or a critical point) for $\mathcal F$ in $D$ if
$$\delta \mathcal{F}(\O,D)[\xi]=0\quad\text{for every}\quad \xi \in C^\infty_c(D;\R^d).$$
By \eqref{e:statiosmooth}, if $\Omega$ is stationary in $D$ and the boundary $\partial \O\cap D$ is smooth, then
\begin{equation}\label{e:stationarity-classical}
|\nabla u_\O ||\nabla v_\O|=Q\quad\text{on}\quad \partial \O\cap D\,.
\end{equation}
Finally, we notice that, by \cref{l:first-and-second-variation-along-a-flow}, any minimizer of \eqref{e:intro-shape-opt-pb-in-main-teo} is stationary for $\mathcal F$.
\end{oss}

\begin{prop}[Second variation of $\mathcal{F}$ along inner perturbations]\label{l:secondstv}
Let $D\subset\R^d$ be a bounded open set in $\R^d$  and let $f,g,Q\in C^2(D)$. 	Let $\Omega \subseteq D$ be an open set and $\xi \in C^\infty_c(D;\R^d)$ be a smooth vector field with compact support. Let $\Phi_t$ be the flow of the vector field $\xi$ defined by \eqref{e:flow-of-xi} and set $\Omega_t := \Phi_t(\Omega)$. Then
% the second variation
%$$\delta^2 \mathcal{F}(\Omega,D)[\xi]:=\frac{\partial^2}{\partial t^2}\bigg|_{t=0}\mathcal{F}(u_{\Omega_t},D),$$
%of $\mathcal F$ along the vector field $\xi$ is given by
\begin{align}\label{e:second-derivative-F-along-vector-field}
\begin{aligned}
\frac12\frac{\partial^2}{\partial t^2}\bigg|_{t=0}\mathcal{F}({\Omega_t},D)
&=\,
\int_\O\Big(\nabla u_\Omega\cdot(\delta^2 A)\nabla v_\Omega-\nabla (\delta u_\O)\cdot \nabla (\delta v_\O)\\
&\qquad\qquad\qquad\qquad\qquad - (\delta^2 f) v_\Omega - (\delta^2 g) u_\Omega+\delta^2Q\Big) dx\,,
\end{aligned}
\end{align}
where $\delta^2 A,\delta^2 f$, $\delta^2 g$, $\delta u_\O$ and $\delta v_\O$ are the ones defined in  \cref{l:first-and-second-variation-along-a-flow} and where
$$\delta^2Q:=(\xi\cdot\nabla Q)\text{\rm div}\xi+\frac12\xi \cdot D^2 Q \xi
+\frac12Q(\text{\rm div}\xi)^2+\frac12Q\xi\cdot\nabla(\text{\rm div}\xi).$$
\end{prop}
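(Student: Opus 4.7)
The plan is to mimic the derivation of the first variation \eqref{e:first-derivative-F-along-vector-field}, but retaining all the terms of order $t^2$. Applying the change of variables $y=\Phi_t(x)$ as in the proof of \cref{l:firstv} (and writing $\mathcal F$ in its symmetric form), one rewrites
$$\mathcal F(\Omega_t,D)=\int_\Omega\Big(\nabla u_t\cdot A_t\nabla v_t-g_t u_t-f_t v_t+Q_t\Big)\,dx,$$
with $u_t=u_{\Omega_t}\circ\Phi_t$, $v_t=v_{\Omega_t}\circ\Phi_t$, $f_t,g_t,A_t$ as in \eqref{e:f-t-A-t}, and $Q_t:=Q(\Phi_t)|\det D\Phi_t|$. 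By \cref{l:first-and-second-variation-along-a-flow} applied to both state functions, we already have the second-order expansions of $u_t,v_t$ in $H^1_0(\Omega)$, of $A_t$ in $L^\infty$, and of $f_t,g_t$ in $L^2$. A direct Taylor expansion of $Q\circ\Phi_t$, based on $\Phi_t(x)=x+t\xi+\tfrac{t^2}{2}(D\xi)\xi+o(t^2)$ combined with the Jacobian expansion from \cref{r:expansion-det}, produces $Q_t=Q+t\,\delta Q+t^2\delta^2 Q+o(t^2)$ with $\delta Q=\dive(Q\xi)$ and $\delta^2Q$ as in the statement; the computation is structurally identical to the one yielding $\delta^2 f$ in \eqref{e:first-and-second-variation-f}.

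Substituting these six expansions into the integral and collecting the $t^2$-coefficient yields
\begin{align*}
\tfrac12\partial_{tt}\mathcal F\big|_{t=0}
&=\int_\Omega\Big(\nabla u_\Omega\cdot(\delta^2 A)\nabla v_\Omega+\nabla(\delta u_\Omega)\cdot(\delta A)\nabla v_\Omega+\nabla u_\Omega\cdot(\delta A)\nabla(\delta v_\Omega)+\nabla(\delta u_\Omega)\cdot\nabla(\delta v_\Omega)\Big)dx\\
&\quad+\int_\Omega\Big(\nabla(\delta^2 u_\Omega)\cdot\nabla v_\Omega+\nabla u_\Omega\cdot\nabla(\delta^2 v_\Omega)\Big)dx-\int_\Omega\Big(g\,\delta^2 u_\Omega+f\,\delta^2 v_\Omega\Big)dx\\
&\quad-\int_\Omega\Big((\delta g)(\delta u_\Omega)+(\delta f)(\delta v_\Omega)+(\delta^2 g)u_\Omega+(\delta^2 f)v_\Omega\Big)dx+\int_\Omega\delta^2 Q\,dx.
\end{align*}

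The final step is a sequence of integrations by parts using the weak equations already available. Testing $-\Delta v_\Omega=g$ against $\delta^2 u_\Omega\in H^1_0(\Omega)$ gives $\int\nabla(\delta^2 u_\Omega)\cdot\nabla v_\Omega=\int g\,\delta^2 u_\Omega$, so the $\delta^2 u_\Omega$-pair cancels; symmetrically, so does the $\delta^2 v_\Omega$-pair. Testing \eqref{e:deltau} against $\delta v_\Omega$ produces
$$\int\nabla(\delta u_\Omega)\cdot\nabla(\delta v_\Omega)\,dx=-\int(\delta A)\nabla u_\Omega\cdot\nabla(\delta v_\Omega)\,dx+\int(\delta f)(\delta v_\Omega)\,dx,$$
which kills both $\int\nabla u_\Omega\cdot(\delta A)\nabla(\delta v_\Omega)$ and $\int(\delta f)(\delta v_\Omega)$. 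Using the symmetry of $\delta A$ and the same trick applied to the equation for $\delta v_\Omega$ tested against $\delta u_\Omega$, one finds $\int\nabla(\delta u_\Omega)\cdot(\delta A)\nabla v_\Omega-\int(\delta g)(\delta u_\Omega)=-\int\nabla(\delta u_\Omega)\cdot\nabla(\delta v_\Omega)$. The surviving terms are exactly those in \eqref{e:second-derivative-F-along-vector-field}. The computation is otherwise elementary; the only real difficulty is bookkeeping---verifying that every contribution involving $\delta^2 u_\Omega$ or $\delta^2 v_\Omega$ drops out, so that the second variation depends only on $u_\Omega,v_\Omega,\delta u_\Omega,\delta v_\Omega$, and carefully tracking the quadratic contributions from the flow $\Phi_t$ in $A_t,f_t,g_t,Q_t$.
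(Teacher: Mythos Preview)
Your proof is correct and follows essentially the same route as the paper: expand $\mathcal F(\Omega_t,D)$ to second order via the change of variables, collect the $t^2$-terms, then use the weak equations for $u_\Omega,v_\Omega$ (tested with $\delta^2 v_\Omega,\delta^2 u_\Omega$) and for $\delta u_\Omega,\delta v_\Omega$ (tested with $\delta v_\Omega,\delta u_\Omega$) to eliminate the unwanted terms. The paper's argument is organized in the same way, with the same cancellations.
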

\begin{proof}
By applying \cref{l:first-and-second-variation-along-a-flow} to $u_t:=u_{\O_t}\circ \Phi_t$ and $v_t:=v_{\O_t}\circ \Phi_t$, we get that
$$
u_t = u_{\O} + t (\delta u_\O) + t^2 (\delta^2 u_\O)+o(t^2), \quad v_t = v_{\O} + t (\delta v_\O) + t^2 (\delta^2 v_\O)+o(t^2) \quad\mbox{in}\quad H^1_0(\Omega).
$$
with $\delta u_\Omega ,\delta v_\Omega $ satisfying \eqref{equationdeltauv} and $\delta^2 u_\O ,\delta^2 v_\O \in H^1_0(\Omega)$ such that
\begin{align*}
-\Delta (\delta^2 u_\O) &=\dive((\delta A)\nabla (\delta u_\O))+\dive((\delta^2 A)\nabla u_\O)+\delta^2 f\quad\text{in}\quad\Omega\ ,\qquad \delta^2u_\Omega\in H^1_0(\Omega)\,,\\
-\Delta (\delta^2 v_\O) &=\dive((\delta A)\nabla (\delta v_\O))+\dive((\delta^2 A)\nabla v_\O)+\delta^2 g\quad\text{in}\quad\Omega\ ,\qquad \delta^2v_\Omega\in H^1_0(\Omega)\,.
\end{align*}
Thus, by computing the second order (in $t$) Taylor expansion of
% following the notations and the computations from the proof of Lemma \ref{l:firstv}, we obtain
\begin{align*}
\mathcal{F}({\Omega_t},D) =\, \int_{\Omega_t}\Big(\nabla u_{t}\cdot A_t \nabla v_t-g_t u_{t} -f_t v_t +Q_t\Big)\, dx\,,
%&=\mathcal{F}(\O,D)+t\delta\mathcal{F}(\O,D)[\xi]+t^2 \partial^2\mathcal{F}(\Omega,D)[\xi]+o(t^2)
\end{align*}
%where $\delta \mathcal{F}(\O,D)[\xi]$ is given by \eqref{e:first-variation-F-along-vector-field} and
we get that
\begin{align*}
\frac12	\frac{\partial^2}{\partial t^2}\bigg|_{t=0}\mathcal{F}({\Omega_t},D) =&\, \int_{\O} \nabla (\delta^2 u_\O )\cdot \nabla v_\O+\nabla u_\O\cdot(\delta^2 A)\nabla v_\O+\nabla u_\O\cdot \nabla (\delta^2 v_\O)\, dx\\
		&+\int_{\O}\nabla (\delta u_\O)\cdot \nabla (\delta v_\O)+\nabla (\delta u_\O)\cdot (\delta A)\nabla v_\O+\nabla u_\O\cdot (\delta A)\nabla (\delta v_\O)\,dx\\
		&-\int_{\O}f(\delta^2 v)+(\delta f)(\delta v_\O)+(\delta^2 f)v_\O+g(\delta^2 u_\O)+(\delta g)(\delta u_\O)+(\delta^2 g)u_\O\,dx\\
		&+\int_\Omega\delta^2Q\,dx.
%& +\frac14\int_{\O}\Big(2(\xi\cdot\nabla Q)\text{\rm div}\xi+\xi \cdot DQ[\xi]
%+\nabla Q\cdot D\xi[\xi]+f(\text{\rm div}\xi)^2+f\xi\cdot\nabla(\text{\rm div}\xi)\Big)\,dx.
	\end{align*}
	Thus, we only have to show that we can write the above expression as in \eqref{e:second-derivative-F-along-vector-field}.
By using $\delta v_\Omega$ as a test function in the equation for $\delta u_\O$ and vice versa, we obtain
$$\int_{\O}\nabla (\delta v_\O)\cdot (\delta A)\nabla u_\O\,dx-\int_\Omega(\delta v_\O) (\delta f)\,dx=-\int_{\O} \nabla (\delta u_\O)\cdot \nabla (\delta v_\O)\,dx$$
$$\int_{\O}\nabla (\delta u_\O)\cdot (\delta A)\nabla v_\O\,dx-\int_\Omega(\delta u_\O )(\delta g)\,dx=-\int_{\O} \nabla (\delta u_\O)\cdot \nabla (\delta v_\O)\,dx.$$
Then, by testing the equations for $u_\Omega$ and $v_\Omega$ respectively with $\delta^2v_\Omega$ and $\delta^2u_\Omega$, we get
$$\int_{\O} \nabla (\delta^2 u_\O )\cdot \nabla v_\O\,dx=\int_{\O}f\,\delta^2 v\,dx\qquad\text{and}\qquad \int_{\O} \nabla (\delta^2 v_\O) \cdot \nabla u_\O\,dx=\int_{\O}g\,\delta^2 u_\Omega\,dx.$$
Using this identities in the expression of the second derivative, we get precisely \eqref{e:second-derivative-F-along-vector-field}.
%\begin{align*}
%		\delta^2 \mathcal{F}(u_\Omega,D)[\xi] =&\, \int_{\O} \Big(\nabla u_\O\cdot(\delta^2 A)\nabla v_\O-\nabla (\delta u_\O)\cdot \nabla (\delta v_\O)\Big)-\Big( (\delta^2 f) v_\O + (\delta^2 g) u_\O\Big)\, dx\\
%	&\gio{+\int_{\O}\frac12(D^2 Q \xi \cdot \xi) + (\nabla Q\cdot \xi)\dive\xi+Q\frac12\Big(\big(\dive\xi\big)^2-\text{\rm Tr}\big((D\xi)^2\big)\Big)\,dx.}
%	\end{align*}
\end{proof}

\begin{oss}[Second variation and stable critical domains]
	Given a bounded open set $D$, functions $f$, $g$ and $Q$ on $D$, and the functional $\mathcal F$ from \eqref{e:Functional}, we will indicate by $\delta^2 \mathcal{F}(\Omega,D)[\xi]$ the second variation of $\mathcal{F}$ at $\Omega$ along a smooth compactly supported vector field $\xi \in C^\infty_c(D;\R^d)$. Precisely, we set
	\begin{align}\label{e:second-variation-F-along-vector-field}
	\begin{aligned}
	\delta^2 \mathcal{F}(\Omega,D)[\xi]
	&:=
	\int_\O\nabla u_\Omega\cdot(\delta^2 A)\nabla v_\Omega-\nabla (\delta u_\O)\cdot \nabla (\delta v_\O) - (\delta^2 f) v_\Omega - (\delta^2 g) u_\Omega+\delta^2Q\,dx\,.
%	&\quad \gio{+\int_{\O}\frac12(D^2 Q \xi \cdot \xi) + (\nabla Q\cdot \xi)\dive\xi+Q\frac12\Big(\big(\dive\xi\big)^2-\text{\rm Tr}\big((D\xi)^2\big)\Big)\,dx,}
	\end{aligned}
	\end{align}
	In particular, we notice that for every open set $\Omega$, we have
	$$\mathcal{F}({\Omega_t},D) =\mathcal{F}(\O,D)+t\delta\mathcal{F}(\O,D)[\xi]+t^2 \delta^2\mathcal{F}(\Omega,D)[\xi]+o(t^2)\,,$$
	where $\Omega_t=\Phi_t(\Omega)$ (with $\Phi_t$ the flow associated to the vector field $\xi$) and $\delta\mathcal{F}(\O,D)[\xi]$ is the first variation \eqref{e:first-variation-F-along-vector-field}.\smallskip
	
	We will say that an open set $\Omega\subset D$ is a\emph{ stable critical point} for $\mathcal F$ in $D$ if
	$$\delta \mathcal{F}(\O,D)[\xi]=0\quad\text{and}\quad \delta^2 \mathcal{F}(\O,D)[\xi]\ge 0\quad\text{for every}\quad \xi \in C^\infty_c(D;\R^d).$$
	By \cref{l:first-and-second-variation-along-a-flow} and \cref{l:secondstv}, any minimizer of \eqref{e:intro-shape-opt-pb-in-main-teo} is a stable critical point for $\mathcal F$.
\end{oss}

%%%%%%%%%%%%%%%%%%%%%%%%%%%%%%%%%%%%%%%%%%%%%%%%%%
\section{Lipschitz regularity and non-degeneracy of the state functions}\label{section.lip}
In this Section we study the regularity of the state functions $u_\Omega$ and $v_\Omega$ on an optimal domain $\Omega$, as well as their behavior close to the free boundary $\partial\Omega$. Consequently, we prove that the set $\Omega$ satisfies some density estimates.
\subsection{Assumptions on $D$, $f$, $g$, and $Q$}\label{sub:assumptions-D-f-g-Q}
Throughout this Subsection we will assume that:
\begin{itemize}
\item $D$ is a open subset of $\R^d$, with $d\ge 2$;
\item $f,g\in L^\infty(D)$ are two functions such that
$$\|f\|_{L^\infty}+\|g\|_{L^\infty}\le M\quad\text{and}\quad 0\le C_1g\le f\le C_2g\quad\text{on}\quad D\,,$$
for some positive constants $M,C_1,C_2>0$.
\item $Q\in L^\infty(D)$ is such that
$$0<c_Q\le Q\le C_Q\quad\text{on}\quad D\,.$$
\end{itemize}

\subsection{Inwards and outwards minimality conditions}\label{subsec:almost}

We will use the following notation. Given a set $A\subset\R^d$, and functions $f\in L^2(A)$ and $\varphi\in H^1(A)$, we set
\begin{equation}\label{e:definition-of-E-f}
E_f(\varphi,A):=\frac12\int_A|\nabla \varphi|^2\,dx-\int_Af(x)\varphi\,dx\,.
\end{equation}
\begin{prop}\label{prop.almost}
Let $D\subset \R^d$ and $f,g,Q\in L^\infty(D)$ be as in \cref{sub:assumptions-D-f-g-Q} and let $\Omega\subset D$ be an open set that minimizes \eqref{e:intro-shape-opt-pb-in-main-teo} in $D$. Then, the solution $u_\Omega$ to \eqref{e:state-equation-open} has  the following properties.
\begin{enumerate}[\rm (i)]
\item {\rm Outwards minimality.} For every open set $\widetilde \Omega\subset D$ such that $\O\subset\widetilde\Omega$ we have
$$E_f(u,D) + \frac{C_2C_Q}{2}|\O|\le
E_f(\phi,D) + \frac{C_2C_Q}{2}|\widetilde\Omega|\qquad\mbox{for every}\qquad\phi \in H^1_0(\widetilde \Omega).$$
In particular, for every $B_r(x_0)\subseteq D$, we have
\be\label{e:local.outw}
E_f(u,B_r(x_0))\le
E_f(\phi,B_r(x_0)) + \frac{C_2C_Q}{2}\omega_d r^d,
\ee
for every $\phi \in H^1(B_r(x_0))$ such that $\phi-u_\O\in H^1_0(B_r(x_0))$.\smallskip
\item  {\rm Inwards minimality.} For every open set $\omega\subset \Omega$ we have
\begin{equation}\label{e:subsolution}
E_f(u,D) + \frac{C_1c_Q}{2}|\Omega|\le
E_f(\phi,D) + \frac{C_1c_Q}{2}|\omega|\qquad\mbox{for every}\qquad\phi \in H^1_0(\omega)\,.
\end{equation}
\end{enumerate}
\end{prop}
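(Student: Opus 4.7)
The plan is to exploit the shape-optimality of $\Omega$ in two different directions: comparing $\Omega$ with a larger set $\widetilde\Omega$ for part (i), and with a smaller set $\omega$ for part (ii). In each case the minimality inequality for $\mathcal F$ produces a comparison between integrals weighted by $g$, and I will convert it into a comparison involving the Dirichlet energy $E_f$ by using two ingredients. The first is the Dirichlet principle, which characterizes $u_A$ as the minimizer of $E_f(\cdot, A)$ on $H^1_0(A)$, so that $\int f u_A = \int |\nabla u_A|^2$ and $E_f(u_A, D) = -\tfrac12\int f u_A$. The second is the weak maximum principle, which fixes the sign of $u_{\widetilde\Omega}-u_\Omega$ (resp.\ $u_\omega-u_\Omega$) and allows me to trade the weight $g$ for the weight $f$ via the two-sided comparability $C_1 g\le f\le C_2 g$.

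For (i), fix any open set $\widetilde\Omega$ with $\Omega\subset\widetilde\Omega\subset D$. Testing the optimality of $\Omega$ against $\widetilde\Omega$ and isolating the $Q$-term rearranges to
\[
\int_D g\bigl(u_{\widetilde\Omega}-u_\Omega\bigr)\,dx\le\int_{\widetilde\Omega\setminus\Omega}Q\,dx\le C_Q\bigl(|\widetilde\Omega|-|\Omega|\bigr).
\]
Because $f,g\ge 0$, the weak maximum principle yields $0\le u_\Omega\le u_{\widetilde\Omega}$, so the integrand on the left is non-negative and $f\le C_2 g$ gives $\int f(u_{\widetilde\Omega}-u_\Omega)\,dx\le C_2 C_Q(|\widetilde\Omega|-|\Omega|)$. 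Rewriting $\int f u_A = \int|\nabla u_A|^2$ for $A\in\{\Omega,\widetilde\Omega\}$ turns this into
\[
E_f(u_\Omega,D)+\tfrac{C_2 C_Q}{2}|\Omega|\le E_f(u_{\widetilde\Omega},D)+\tfrac{C_2 C_Q}{2}|\widetilde\Omega|,
\]
and the Dirichlet principle $E_f(u_{\widetilde\Omega},D)\le E_f(\phi,D)$ for every $\phi\in H^1_0(\widetilde\Omega)$ closes the global statement. The local version \eqref{e:local.outw} follows by specializing to $\widetilde\Omega=\Omega\cup B_r(x_0)$ and extending any admissible $\phi\in H^1(B_r(x_0))$ by $u_\Omega$ outside $B_r(x_0)$, noting $|\widetilde\Omega|-|\Omega|\le\omega_d r^d$ and that the contributions to $E_f$ outside $B_r(x_0)$ cancel between $\phi$ and $u_\Omega$.

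For (ii) the recipe is symmetric but the signs are reversed: for an open $\omega\subset\Omega$, the inequality $\mathcal F(\Omega;D)\le\mathcal F(\omega;D)$ yields $\int g(u_\omega-u_\Omega)\,dx\le -c_Q(|\Omega|-|\omega|)$. This time the maximum principle gives $u_\omega-u_\Omega\le 0$, so the integrand is non-positive; the lower bound $f\ge C_1 g$ then implies $f(u_\omega-u_\Omega)\le C_1 g(u_\omega-u_\Omega)$ pointwise, and hence $\int f(u_\omega-u_\Omega)\,dx\le -C_1 c_Q(|\Omega|-|\omega|)$. Rearranging exactly as before produces $E_f(u_\Omega,D)+\tfrac{C_1 c_Q}{2}|\Omega|\le E_f(u_\omega,D)+\tfrac{C_1 c_Q}{2}|\omega|$, and a final application of the Dirichlet principle for $u_\omega$ in $H^1_0(\omega)$ concludes.

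There is no serious obstacle in this argument; the only delicate bookkeeping is the interaction between the sign of $u_{\widetilde\Omega}-u_\Omega$ (resp.\ $u_\omega-u_\Omega$) and the direction of the inequalities $C_1 g\le f\le C_2 g$, which is exactly what explains why the outward comparison picks up the constant $C_2$ while the inward one picks up $C_1$. The non-negativity assumption on $f$ (and $g$) is used precisely to trigger the weak maximum principle that controls these signs.
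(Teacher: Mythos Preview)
Your proof is correct and follows essentially the same approach as the paper: compare $\Omega$ with a larger (resp.\ smaller) competitor, use the weak maximum principle to fix the sign of the difference of state functions, trade the weight $g$ for $f$ via the two-sided bound $C_1 g\le f\le C_2 g$, rewrite $-\tfrac12\int f u_A$ as $E_f(u_A,D)$, and finish with the Dirichlet principle. Your derivation of the local estimate \eqref{e:local.outw} by taking $\widetilde\Omega=\Omega\cup B_r(x_0)$ and extending $\phi$ by $u_\Omega$ is also the intended one.
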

\begin{proof}
Suppose that the open set $\widetilde \Omega\subset D$ contains $\Omega$. Then, the optimality of $\Omega$ implies that
$$\int_{\Omega}\Big(-g(x)u_{\Omega}+Q(x)\Big)\,dx\le \int_{\widetilde\Omega}\Big(-g(x)u_{\widetilde\Omega}+Q(x)\Big)\,dx\,,$$
which can be written as
$$\int_{D}g(x)\big(u_{\widetilde \Omega}-u_\Omega\big)\,dx\le \int_{\widetilde \Omega\setminus\Omega}Q(x)\,dx \,.$$
Now, the positivity of $f$ implies that $u_{\widetilde \Omega}\ge u_\Omega$ on $D$, so we get
$$\frac1{C_2}\int_{D}f(x)\big(u_{\widetilde \Omega}-u_\Omega\big)\,dx\le \int_{D}g(x)\big(u_{\widetilde \Omega}-u_\Omega\big)\,dx\le \int_{\widetilde \Omega\setminus\Omega}Q(x)\,dx\le C_Q|\widetilde\Omega\setminus\Omega|\,,$$
which after rearranging the terms gives
$$-\frac12\int_{\Omega}f(x)u_{\Omega}+\frac{C_2C_Q}{2}|\Omega|\le -\frac12\int_{\widetilde\Omega}f(x)u_{\widetilde\Omega}+\frac{C_2C_Q}{2}|\widetilde\Omega|\,,$$
which after an integration by parts on $\widetilde \Omega$ and $\Omega$, reads as
$$E_f(u_\Omega,D)+\frac{C_2C_Q}{2}|\Omega|\le E_f(u_{\widetilde\Omega},D)+\frac{C_2C_Q}{2}|\widetilde\Omega|\,.$$
Finally, since $u_{\widetilde\Omega}$ minimizes the energy $E_f(\cdot,D )$ among all functions in $H^1_0(\widetilde \Omega)$, we obtain (i).

The proof of (ii) is similar. Let $\omega\subset \Omega$ and let $u_\omega$ be the associated state function. Then, using the optimality of $\Omega$ and the bounds on $f,g$ and $Q$, we get
\begin{align*}
c_Q\big(|\Omega|-|\omega|\big)&\le \int_{\Omega\setminus\omega}Q(x)\,dx\le \int_D g\big(u_\Omega-u_\omega\big)\,dx\\
&\le \frac1{C_1}\int_D f\big(u_\Omega-u_\omega\big)\,dx=\frac{2}{C_1}\Big(E_f(u_\omega,D)-E_f(u_\Omega,D)\Big),
\end{align*}
which implies (ii) since $u_\omega$ minimizes $E_f(\cdot,D )$ in $H^1_0(\omega)$.
\end{proof}

\begin{oss}\label{rem:almostv}
The state variable $v_\O$ satisfies analogous inwards/outwards minimality conditions for the functional $E_g(\cdot,D)$, where the constants $C_1$ and $C_2$ are replaced by $1/C_2$ and $ 1/C_1$.
\end{oss}

%%%%%%%%%%%%%%%%%%%%%%%%%%%%%%%%%%%%%%%%%%%%%%%%%%

\subsection{Lipschitz continuity and non-degeneracy}

As a consequence of Proposition \ref{prop.almost} we obtain the Lipschitz continuity and the non-degeneracy of $u_\Omega$ (and of $v_\Omega$).

%, in the same spirit of \cite{BHP,bmpv}. Therefore, since all the estimates depend on the constants $C_1,C_2$ in \eqref{e:feg}, we simply write that they depend on the right-hand side $f$ and $g$.

\begin{coro}\label{lemm.lip}
Let $D\subset \R^d$ and $f,g,Q\in L^\infty(D)$ be as in \cref{sub:assumptions-D-f-g-Q}. If $\O\subset D$ is optimal for~\eqref{e:intro-shape-opt-pb-in-main-teo}, then the state functions $u_\Omega$ and $v_\Omega$ are locally Lipschitz in $D$ with Lipschitz constants depending on $d$, $C_1$, $C_2$, $M$ and $C_Q$.
\end{coro}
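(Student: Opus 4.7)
The plan is to recognise that the outwards minimality \eqref{e:local.outw} in \cref{prop.almost}(i) places $u_\Omega$ exactly in the framework of the classical Alt--Caffarelli Lipschitz continuity theorem for one-phase free boundary problems. Indeed, \eqref{e:local.outw} asserts that for every ball $B_r(x_0)\Subset D$ and every competitor $\varphi\in H^1(B_r(x_0))$ with $\varphi-u_\Omega\in H^1_0(B_r(x_0))$,
\[
E_f(u_\Omega,B_r(x_0))\le E_f(\varphi,B_r(x_0))+\tfrac12 C_2C_Q\omega_d r^d,
\]
so $u_\Omega$ is a local outwards quasi-minimiser of $E_f$ with additive constant of order $r^d$, while at the same time $-\Delta u_\Omega=f$ in $\Omega$ with $\|f\|_\infty\le M$. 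The corresponding property with $f$ replaced by $g$ and constants $1/C_1,1/C_2$ (see \cref{rem:almostv}) covers $v_\Omega$, so it suffices to prove Lipschitz continuity for $u_\Omega$.

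The first step is the classical \emph{dichotomy lemma}: there exist $\varepsilon_0>0$ and $C>0$, depending only on $d,M,C_2,C_Q$, such that for every $B_r(x_0)\Subset D$,
\[
\fint_{\partial B_r(x_0)}u_\Omega\,d\mathcal H^{d-1}\le \varepsilon_0 r\quad\Longrightarrow\quad u_\Omega\equiv 0\ \text{in}\ B_{r/2}(x_0).
\]
This is obtained by testing \eqref{e:local.outw} against a suitable competitor built from the harmonic replacement (or Poisson solution) $h$ of $u_\Omega$ in $B_r(x_0)$. The Poisson formula controls $h$ pointwise in $B_{r/2}(x_0)$ by the small boundary trace $\varepsilon_0 r$, up to the lower-order correction $\sim Mr^2$ produced by $f$; choosing $\varepsilon_0$ small the penalty $\tfrac12 C_2C_Q\omega_d r^d$ dominates, forcing $u_\Omega$ to vanish in $B_{r/2}(x_0)$. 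Iterating the contrapositive on dyadic balls centred at points of $\partial\Omega$ gives the linear growth at the free boundary,
\[
\sup_{B_r(x_0)} u_\Omega\le C r\qquad \text{for every } x_0\in\partial\Omega\ \text{with}\ B_{2r}(x_0)\Subset D.
\]

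The second step combines this boundary bound with classical interior estimates for Poisson's equation. For $x\in\Omega$, set $\rho=\tfrac12\min\{d(x,\partial\Omega),d(x,\partial D)\}$; since $-\Delta u_\Omega=f$ on $B_\rho(x)\subset\Omega$, interior gradient estimates yield
\[
|\nabla u_\Omega(x)|\le \frac{C_d}{\rho}\sup_{B_{\rho/2}(x)}u_\Omega+C_d\rho M.
\]
If $d(x,\partial\Omega)\le d(x,\partial D)$, the supremum is controlled by the growth bound of the previous step applied at a closest free-boundary point, producing a local Lipschitz constant depending only on $d,C_1,C_2,M,C_Q$; otherwise one applies the estimate directly in a ball inside $\Omega$. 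The argument for $v_\Omega$ is identical via \cref{rem:almostv}.

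The main obstacle is the dichotomy lemma; once available, everything else is routine. The presence of $f\in L^\infty$, absent from the original Alt--Caffarelli setting, produces only the lower-order perturbation $Mr^2$ in the Poisson representation, which is absorbed into the penalty at small scales, so the classical scheme adapts without essential change.
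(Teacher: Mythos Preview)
Your overall strategy---deduce Lipschitz continuity from the outwards minimality \eqref{e:local.outw} via an Alt--Caffarelli-type argument---is exactly the paper's route: the paper simply observes that \eqref{e:local.outw} makes $u_\Omega$ an almost-minimiser in the sense of \cite[Definition 3.1]{bmpv} and invokes \cite[Theorem 3.3]{bmpv}. So at the level of approach there is no difference.

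There is, however, a genuine confusion in your intermediate step. The implication you state,
\[
\fint_{\partial B_r(x_0)}u_\Omega\,d\mathcal H^{d-1}\le \varepsilon_0 r\ \Longrightarrow\ u_\Omega\equiv 0\ \text{in}\ B_{r/2}(x_0),
\]
is the \emph{non-degeneracy} dichotomy, not the Lipschitz one. Its contrapositive gives a \emph{lower} bound on the boundary average at points of $\overline\Omega$, which is the content of \cref{lemm.nondeg}, not $\sup_{B_r(x_0)}u_\Omega\le Cr$. Moreover, this direction is proved by cutting $u_\Omega$ down to zero on a smaller ball, i.e.\ by \emph{inwards} minimality \eqref{e:subsolution}; it cannot be obtained by testing the outwards inequality \eqref{e:local.outw} against a Poisson replacement as you describe, since outwards minimality only compares $u_\Omega$ with competitors of \emph{larger} support.

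What outwards minimality with the Poisson replacement $h$ actually yields is
\[
\int_{B_r(x_0)}|\nabla(u_\Omega-h)|^2\,dx\le C_2C_Q\,\omega_d r^d,
\]
and from here there are two correct routes to Lipschitz: either run the Campanato/De Giorgi iteration on this $H^1$-closeness (this is what \cite{bmpv} does), or prove the \emph{reverse} dichotomy ``large boundary average $\Rightarrow u_\Omega>0$ in $B_{r/2}(x_0)$'', whose contrapositive bounds the average from above at free boundary points and then feeds into your second step. Either way, once you swap in the correct dichotomy, your sketch goes through and agrees with the paper's citation.
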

\begin{proof}
By Proposition \ref{prop.almost} (i), $u_\Omega$ satisfies \eqref{e:local.outw} for any $B_r(x_0)\subset D$, so $u_\Omega$ is an almost-minimizer in the sense of \cite[Definition 3.1]{bmpv}. Thus, by \cite[Theorem 3.3]{bmpv}, $u_\Omega$ is locally Lipschitz continuous in $D$ with Lipschitz constant depending on $d$ and the bounds from above on $\norm{f}{L^\infty}$ and $C_2C_Q$.
\end{proof}

\begin{lemma}\label{lemm.nondeg}
Let $D\subset \R^d$ and $f,g,Q\in L^\infty(D)$ be as in \cref{sub:assumptions-D-f-g-Q} and let $\O\subset D$ be an optimal set for~\eqref{e:intro-shape-opt-pb-in-main-teo} and $u_\Omega$ be the associated state function. Then, there are constants $C_0, r_0>0$, depending on $d$, $C_1$, $c_Q$ and $M$, such that the following implication holds
$$
\Big(\,\norm{u_\Omega}{L^\infty(B_{r}(x_0))}\leq C_0 r\,\Big) \Rightarrow \Big(\, u_\Omega \equiv 0 \quad\mbox{in }B_{\sfrac{r}2}(x_0)\,\Big),
$$
 for every $x_0 \in \overline{\O}\cap D$ and every $r \in (0,r_0]$.
%The same holds for $v=v_\Omega$.
In other words, if $x_0\in \overline \Omega$ and $B_r(x_0)\subset D$, then
\be\label{e:nondeg}
\sup_{B_r(x_0)} u_\Omega \geq C_0r.
\ee
\end{lemma}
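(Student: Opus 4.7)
The plan is to derive the non-degeneracy from the inward almost-minimality inequality \eqref{e:subsolution} established in \cref{prop.almost}(ii): since $C_1 c_Q > 0$, that condition makes $u_\Omega$ an inward subsolution of a one-phase Bernoulli-type problem with positive Bernoulli parameter $\lambda^2 := C_1 c_Q$ and source $f$. The conclusion then follows from the classical Alt--Caffarelli non-degeneracy scheme, in the almost-minimizer formulation of \cite[Section 3]{bmpv}.

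Normalize $x_0 = 0$ and set $M := \|u_\Omega\|_{L^\infty(B_r)}$. Fix a radial cut-off $\eta \in C_c^\infty(B_r)$ with $\eta \equiv 1$ on $B_{r/2}$, $0 \le \eta \le 1$ and $|\nabla \eta| \le 4/r$, and take as competitor
\[
\phi := (u_\Omega - M\eta)_+ = u_\Omega - w, \qquad w := u_\Omega \wedge (M\eta).
\]
Since $u_\Omega \le M = M\eta$ on $B_{r/2}$, we have $\phi \equiv 0$ there, so $\{\phi>0\}$ is an open subset of $\Omega$ satisfying $|\Omega|-|\{\phi>0\}|\ge |\Omega \cap B_{r/2}|$. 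The crucial point is that $w \in H^1_0(\Omega \cap B_r)$ is admissible in the weak formulation of $-\Delta u_\Omega = f$ in $\Omega$: expanding $|\nabla\phi|^2 = |\nabla u_\Omega|^2 - 2\nabla u_\Omega \cdot \nabla w + |\nabla w|^2$ makes the mixed term cancel exactly against the source contribution in $E_f(\phi) - E_f(u_\Omega)$, producing the clean identity
\[
E_f(\phi, D) - E_f(u_\Omega, D) \;=\; \tfrac12 \int_{B_r} |\nabla w|^2.
\]

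Plugging this into \eqref{e:subsolution} and splitting $\int |\nabla w|^2$ along $\{u_\Omega \gtrless M\eta\}$ --- bounding $|\nabla w|$ by $M|\nabla\eta| \le CM/r$ on one side and by $|\nabla u_\Omega|$ on the other, the latter controlled by a Caccioppoli inequality obtained from testing $-\Delta u_\Omega = f$ with $u_\Omega\tilde\eta^2$ for an auxiliary cut-off $\tilde\eta$ --- one arrives at the density bound
\[
|\Omega \cap B_{r/2}(x_0)| \;\le\; C\bigl(M^2 + M\|f\|_{L^\infty} r^2\bigr)\, r^{d-2},
\]
for some $C = C(d, C_1, c_Q)$. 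Under the hypothesis $M \le C_0 r$ with $C_0$ and $r_0$ small, the right-hand side becomes an arbitrarily small proportion of $r^d$. The main obstacle is then to upgrade this quantitative small-density estimate to the actual vanishing conclusion $u_\Omega \equiv 0$ on $B_{r/2}(x_0)$, which is the technical heart of the argument. I expect to handle it by the classical dyadic iteration of Alt--Caffarelli (as in \cite[Section 3]{bmpv}): applying the density bound recursively on nested balls $B_{r/2^k}(x_0)$ and combining with the continuity of $u_\Omega$ and a density-gap dichotomy, one concludes $\Omega \cap B_{r/2}(x_0) = \emptyset$, which is equivalent to $u_\Omega \equiv 0$ on $B_{r/2}(x_0)$.
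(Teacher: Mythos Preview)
Your approach matches the paper's: both invoke the inwards minimality \eqref{e:subsolution} from \cref{prop.almost}(ii) and then the classical non-degeneracy argument for one-phase subsolutions; the paper simply cites \cite[Lemma~4.4]{altcaf}, \cite[Lemma~3.3]{bucve}, and \cite[Lemma~2.8]{GustShahg} for that step (your pointer to \cite[Section~3]{bmpv} is to the Lipschitz theory rather than non-degeneracy).

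Two small fixes to your sketch. First, the Caccioppoli step is cleaner with the competitor $\phi=u_\Omega(1-\eta)$ in place of $(u_\Omega-M\eta)_+$: testing the equation with $u_\Omega\eta^2$ then produces the closed identity
\[
\int|\nabla(u_\Omega\eta)|^2=\int f\,u_\Omega\,\eta^2+\int u_\Omega^2\,|\nabla\eta|^2\le C\big(M^2+M\|f\|_{L^\infty}r^2\big)r^{d-2},
\]
so no auxiliary cutoff $\tilde\eta$ with support spilling outside $B_r$ (where $u_\Omega$ is not controlled by $M$) is needed. Second, the dyadic iteration does not close through continuity or a density gap alone---trying to use the Lipschitz bound to convert small density into smallness of $u_\Omega$ gives the wrong power of $C_0$. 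The missing ingredient is the almost-subharmonicity $\Delta u_\Omega+\|f\|_{L^\infty}\ge 0$ in $D$: its mean-value inequality yields
\[
\sup_{B_{r/4}}u_\Omega\;\le\; C\fint_{B_{r/2}}u_\Omega+C\|f\|_{L^\infty}r^2\;\le\; C\,\frac{|\Omega\cap B_{r/2}|}{|B_{r/2}|}\,M+Cr^2\;\le\; CC_0^2\,M+Cr^2,
\]
which is a genuinely self-improving recursion once $C_0$ and $r_0$ are chosen small, and then continuity finishes.
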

\begin{proof}
By Proposition \ref{prop.almost} (ii), we have that for every $\omega\subset\Omega$
$$E_f(u_\Omega,D)+\frac{c_QC_1}{2}|\Omega|\le E_f(u_\omega,D)+\frac{c_QC_1}{2}|\omega|.$$
Thus, the claim follows by \cite[Lemma 4.4]{altcaf}, \cite[Lemma 3.3]{bucve}, or \cite[Lemma 2.8]{GustShahg}.
\end{proof}

\subsection{Density estimates on the boundary of $\Omega$}
An a consequence of the Lipschitz continuity and the non-degeneracy of $u_\Omega$ and $v_\Omega$, we obtain density estimates for the optimal set $\O$.

\begin{prop}\label{p:density}
Let $D\subset \R^d$ and $f,g,Q\in L^\infty(D)$ be as in \cref{sub:assumptions-D-f-g-Q}. Then, there are $\eps_0, r_0>0$ (depending on $C_1$, $C_2$, $M$, $d$, $c_Q$, $C_Q$) such that for every set $\O\subset D$ optimal for~\eqref{e:intro-shape-opt-pb-in-main-teo}
\be\label{e:density}
\eps_0|B_r|\leq |B_r(x_0)\cap \O| \leq (1-\eps_0)|B_r|.
\ee
for every ball $B_r(x_0)\subset D$ of radius $r\le r_0$ centered on $\partial\Omega$.
\end{prop}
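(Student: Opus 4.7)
The statement \eqref{e:density} contains two inclusions that will be proved separately. The lower bound $|B_r(x_0)\cap\Omega|\ge\eps_0|B_r|$ follows by combining the non-degeneracy estimate \eqref{e:nondeg} with the Lipschitz continuity of $u_\Omega$ (\cref{lemm.nondeg} and \cref{lemm.lip}); the upper bound $|B_r(x_0)\cap\Omega|\le(1-\eps_0)|B_r|$ requires a variational comparison exploiting the outwards minimality of \cref{prop.almost}(i). In both cases the constants depend only on the structural quantities $d,C_1,C_2,M,c_Q,C_Q$, and I denote by $L$ the Lipschitz constant of $u_\Omega$ from \cref{lemm.lip}.

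For the lower bound, fix $x_0\in\partial\Omega$ and $r\le r_0$. By \eqref{e:nondeg} we find $y\in\overline{B_r(x_0)}$ with $u_\Omega(y)\ge C_0 r$, and with $\rho:=C_0 r/(2L)$ the Lipschitz bound yields $u_\Omega\ge C_0 r/2>0$ on $B_\rho(y)$. Hence $B_\rho(y)\subseteq\{u_\Omega>0\}\subseteq\Omega$, and since $y\in\overline{B_r(x_0)}$ one has $|B_\rho(y)\cap B_r(x_0)|\ge c_d\,\rho^d$ for a dimensional constant $c_d>0$. This establishes the lower inclusion, with $\eps_0$ depending only on $d$, $L$ and $C_0$.

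For the upper bound, apply the outwards minimality of \cref{prop.almost}(i) with $\widetilde\Omega:=\Omega\cup B_r(x_0)$ and the competitor $\phi$ defined as the unique weak solution to $-\Delta\phi=f$ in $B_r(x_0)$ with Dirichlet datum $\phi=u_\Omega$ on $\partial B_r(x_0)$, extended by $u_\Omega$ outside $B_r(x_0)$. Then $\phi\in H^1_0(\widetilde\Omega)$, and by comparison (since $u_\Omega$ extended by zero is a subsolution of $-\Delta\,\cdot\,=f$ on $D$ with the same boundary values as $\phi$ on $\partial B_r(x_0)$) the function $w:=\phi-u_\Omega\in H^1_0(B_r(x_0))$ is non-negative in $B_r(x_0)$ and equals $\phi$ on $B_r(x_0)\setminus\Omega$. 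Using the equation $-\Delta\phi=f$, an integration by parts yields the identity $E_f(u_\Omega,B_r(x_0))-E_f(\phi,B_r(x_0))=\tfrac12\int_{B_r(x_0)}|\nabla w|^2$, so the outwards minimality inequality reduces to
\be\label{e:plan-var}
\int_{B_r(x_0)}|\nabla w|^2\le C_2 C_Q\,|B_r(x_0)\setminus\Omega|.
\ee

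The main obstacle is to convert \eqref{e:plan-var} into the desired density bound. The idea is to extract from $w$ a pointwise estimate of the form $w\ge cr$ on a substantial subset of $B_r(x_0)\setminus\Omega$, and then close the argument by a Sobolev-type inequality. Concretely, the non-degeneracy \eqref{e:nondeg} applied on the doubled ball $B_{2r}(x_0)$ produces $y\in\overline{B_{2r}(x_0)}$ with $u_\Omega(y)\ge 2C_0 r$, which by the Lipschitz bound gives $u_\Omega\ge C_0 r$ on $B_{cr}(y)$; after a short case distinction on the position of $y$ (if $y\in B_r(x_0)$ the lower inclusion already applies at the doubled scale, otherwise the trace of $B_{cr}(y)$ on $\partial B_r(x_0)$ has surface measure $\gtrsim r^{d-1}$), the Poisson representation of $\phi$ in $B_r(x_0)$ propagates this to $\phi\ge c'r$ on $B_{r/2}(x_0)$, hence $w\ge c'r$ on $E:=B_{r/2}(x_0)\setminus\Omega$. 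Combining with the Sobolev embedding
$$\bigg(\int_{B_r(x_0)}w^{2^\ast}\bigg)^{2/2^\ast}\le C_d\int_{B_r(x_0)}|\nabla w|^2\qquad\big(d\ge 3,\ 2^\ast=\tfrac{2d}{d-2}\big),$$
and \eqref{e:plan-var}, one obtains $(c'r)^2|E|^{2/2^\ast}\le C_d C_2 C_Q\,|B_r(x_0)\setminus\Omega|$. A dyadic iteration in $r$ then upgrades this to the uniform density $|B_r(x_0)\setminus\Omega|\ge\eps_0|B_r|$, with the base case ensured by the fact that $x_0\in\partial\Omega$ forces $|B_{r_0}(x_0)\setminus\Omega|>0$; the two-dimensional case is treated identically using any finite Sobolev exponent in place of $2^\ast$.
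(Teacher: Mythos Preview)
Your lower-bound argument and the variational comparison leading to \eqref{e:plan-var} are correct and essentially coincide with the paper's approach (the paper uses the comparison function $-\Delta h=M$ in place of your $-\Delta\phi=f$, a cosmetic difference). The gaps are in the closing step of the upper bound.

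First, a minor issue: your case analysis for $\phi\ge c'r$ on $B_{r/2}(x_0)$ does not work as written. In the ``otherwise'' case $y\in\overline{B_{2r}(x_0)}\setminus B_r(x_0)$, the point $y$ can be at distance up to $2r$ from $x_0$, and then $B_{cr}(y)$ misses $\partial B_r(x_0)$ entirely, so no boundary trace is produced. The claim itself is salvageable: since $\phi\ge u_\Omega$ and non-degeneracy at scale $r/4$ together with the Lipschitz bound yield a ball $B_{\sigma r}\subset B_{r/2}(x_0)$ on which $u_\Omega\ge cr$, the weak Harnack inequality for the nonnegative superharmonic function $\phi$ (recall $-\Delta\phi=f\ge 0$) gives $\inf_{B_{r/4}(x_0)}\phi\ge c'r$. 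The paper bypasses this by solving $-\Delta h=M$ instead, so that $h+\tfrac{M}{2d}|x-x_0|^2$ is harmonic and the ordinary Harnack inequality applies.

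The substantive gap is the dyadic iteration. Granting $w\ge c'r$ on $E=B_{r/2}(x_0)\setminus\Omega$, the combination of Sobolev and \eqref{e:plan-var} gives, with $\mu(\rho):=|B_\rho(x_0)\setminus\Omega|$,
\[
\mu(r)\;\gtrsim\; r^{2}\,\mu(r/2)^{(d-2)/d}.
\]
This bounds $\mu(r)$ from below in terms of $\mu(r/2)$: it propagates information from \emph{small} scales to \emph{large} ones. Your base case is placed at the large scale $r_0$, so the iteration cannot even be started; equivalently, the inequality only yields the upper bound $\mu(r/2)\le\big(C\mu(r)/r^2\big)^{d/(d-2)}$, which is useless for a uniform lower bound at small radii.

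The repair is to avoid restricting the lower bound on $w$ to $B_{r/2}\setminus\Omega$. Since $u_\Omega(x_0)=0$ and $u_\Omega$ is $L$-Lipschitz, one has $u_\Omega\le L\delta r$ on the \emph{entire} ball $B_{\delta r}(x_0)$, while $\phi\ge c'r$ there; for $\delta$ small this gives $w\ge (c'/2)r$ on the full ball $B_{\delta r}(x_0)$. Feeding this into either your Sobolev inequality or the Poincar\'e--Cauchy--Schwarz combination used in the paper yields
\[
|B_r(x_0)\setminus\Omega|\;\gtrsim\;\frac{1}{r^{2}|B_r|}\Big(\int_{B_{\delta r}(x_0)}w\Big)^{2}\;\gtrsim\;r^{d},
\]
directly, with no iteration. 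This is precisely the paper's route.
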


\begin{proof}
Assume that $x_0=0\in \partial \Omega $. The lower estimate is an immediate consequence of the Lipschitz continuity (Lemma~\ref{lemm.lip}) and the non-degeneracy (Lemma~\ref{lemm.lip}) of $u_\Omega$.
%More precisely, on one hand for $r$ small enough there exists $x_r\in B_r\cap \{u>0\}$ such that $u(x_r)\geq \tilde{C} r$. On the other one, since $u$ is Lipschitz continuous, by setting
%$$C_0 = \min\left\{1,\frac{C}{[u]_{C^{0,1}}}\right\},$$
%we have that $u>0$ in $B_{C_0 r}(x_r)$, which proves the claimed lower bound.\\
The upper bound can be obtained as in \cite{altcaf}. Precisely, consider the solution $h$ to
$$-\Delta h =M\quad\mbox{in}\quad B_{r}\ ,\qquad h = u_\Omega\quad \mbox{on}\quad  D \setminus B_r\,.$$
Since $\Delta u_\Omega+f\ge 0$ in $\R^d$, we get that $-\Delta (h-u_\Omega)\ge M-f\geq 0$
in $B_r$. In particular, we have that $u_\Omega \leq h$ and $\{u_\Omega>0\}\subseteq \{h>0\}$ in $B_r$. Thus, testing the optimality \eqref{e:local.outw} of $u_\Omega$ with $h$,
\begin{align*}
\frac{C_2C_Q}{2}|B_r \cap \{u_\Omega=0\}| &\geq
E_f(u_\Omega,B_r) - E_f(h,B_r) \\
&=
\frac12 \int_{B_r}|\nabla (u_\Omega-h)|^2\,dx + \int_{B_r} \Big(\nabla h\cdot \nabla (u_\Omega-h) - f(u_\Omega-h)\Big)\, dx\\
&\geq
\frac12 \int_{B_r}|\nabla (u_\Omega-h)|^2\, dx.
\end{align*}
By the Poincar\'e and Cauchy–Schwarz inequalities, we have
$$
\int_{B_r}|\nabla (u_\Omega-h)|^2\, dx \geq \frac{C_d}{|B_r|}\left(\frac1r \int_{B_r}(h-u_\Omega)\,dx\right)^2,
$$
so in order to prove the upper bound in \eqref{e:density}, we only need to show that
$\ds\frac1{r^{d+1}} \int_{B_{r}}(h-u_\Omega)\,dx$
is bounded from below by a positive constant. Notice that, by the non-degeneracy of $u_\Omega$, we have
$$\widetilde{C}r \leq \sup_{B_{r/2}} u_\Omega \leq \sup_{B_{r/2}}h\,.$$
On the other hand, since $h(x)+\frac{M}{2d}|x|^2$ is harmonic in $B_R$, the Harnack inequality in $B_r$ implies
$$\widetilde{C}r \le \sup_{B_{r/2}}h\le C_d\big(h(x)+Mr^2\big)\quad\text{for every}\quad x\in B_{\sfrac{r}2}\,.$$
Thus, by taking $r_0$ such that $2C_dr_0M\le \widetilde C$, we get that $h \geq C_d\widetilde{C} r = \overline{C}r$  in $B_{\sfrac{r}2}.$
On the other hand, if $L$ is the Lipschitz constant of $u_\Omega$, then for any $\eps\in(0,1)$, $u_\Omega\leq L \eps r$ in $B_{\eps r}$. Then
$$
\int_{B_r}(h-u_\Omega)\, dx \geq \int_{B_{\eps r}} (h-u_\Omega)\, dx \geq (\bar C r-L\eps r)|B_{\eps r}|,
$$
which concludes the proof after choosing $\eps\le \sfrac12$ small enough.
\end{proof}

\subsection{An estimate on the level sets of $u_\Omega$}
We conclude the Section with this auxiliary result that will play a crucial role in Section \ref{sub.homo} in the proof of the existence of homogeneous blow-up limits.

\begin{lemma}\label{l:conditionf}
Let $D\subset \R^d$ and $f,g,Q\in L^\infty(D)$ be as in \cref{sub:assumptions-D-f-g-Q}; let $\O$ be a solution to \eqref{e:intro-shape-opt-pb-in-main-teo} and let $u_\Omega$ be the associated state function. Then, there are constants $C>0$ and $r_0>0$, depending only on $d,M,C_1,C_2,c_Q,C_Q$, such that
%for every $x_0 \in \partial \O \cap D$ and $r<r_0$ we have
\be\label{e:conditionf}
|\{0<u<rt\}\cap B_r(x_0)|\leq C t |B_r|,
\ee
for every $B_r(x_0)\subset D$ centered on $\partial\Omega$ and every $t\in (0,1)$.
\end{lemma}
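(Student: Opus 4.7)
Write $u := u_\Omega$ throughout. The plan combines two ingredients: a Caccioppoli-type bound on $|\nabla u|^2$ in the strip $\{0 < u < rt\}$, and the inwards minimality from \cref{prop.almost}(ii) applied to a carefully chosen competitor.

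First, a Caccioppoli bound in the strip. I would test the equation $-\Delta u = f$ against $\zeta\min(u, rt) \in H^1_0(\Omega)$, where $\zeta \in C^\infty_c(B_r(x_0))$ is a cutoff with $\zeta \equiv 1$ on $B_{3r/4}(x_0)$ and $|\nabla\zeta| \le C/r$. Using $\nabla\min(u,rt) = \ind_{\{0 < u < rt\}}\nabla u$, the Lipschitz bound $|\nabla u| \le L$ from \cref{lemm.lip}, the $L^\infty$ bound on $f$, and the pointwise estimates $|\min(u,rt)|\le rt$ and $|\nabla\zeta|\le C/r$, this gives directly
\[
\int_{B_{3r/4}(x_0)\cap\{0 < u < rt\}} |\nabla u|^2\,dx \;\le\; C\,t\,|B_r|.
\]

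Next, the competitor step. Let $\eta \in C^\infty_c(B_{3r/4}(x_0))$ be a second cutoff with $\eta\equiv 1$ on $B_{r/2}(x_0)$ and $|\nabla\eta|\le C/r$, and set $\phi := (u - rt\eta)_+$, $\omega := \{\phi > 0\} = \{u > rt\eta\} \subseteq \Omega$. Then $\phi \in H^1_0(\omega)$ by continuity and, since $\eta\equiv 1$ on $B_{r/2}(x_0)$,
\[
|\Omega| - |\omega| \;=\; |\{0 < u \le rt\eta\}| \;\ge\; |\{0 < u \le rt\}\cap B_{r/2}(x_0)|.
\]
Splitting the integrals according to $\{u > rt\eta\}$ versus $\{u \le rt\eta\}$ and using the equation for $u$ tested against $\min(u, rt\eta) = u - \phi \in H^1_0(\Omega)$ to substitute the cross term $rt\int_{\{u>rt\eta\}}\nabla u\cdot\nabla\eta\,dx$, both this cross term and all the $f$-terms cancel, leaving the clean identity
\[
E_f(\phi, D) - E_f(u, D) \;=\; \tfrac12 \int_{\{0 < u \le rt\eta\}} |\nabla u|^2\,dx \;+\; \tfrac12(rt)^2 \int_{\{u > rt\eta\}} |\nabla\eta|^2\,dx.
\]

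Since $\spt\eta\subset B_{3r/4}(x_0)$, the first right-hand term is bounded by the Caccioppoli estimate by $Ct|B_r|$, while the second is at most $C(rt)^2r^{-2}|B_r| = Ct^2|B_r| \le Ct|B_r|$ using $t\in(0,1)$. \cref{prop.almost}(ii) then yields $|\{0<u<rt\}\cap B_{r/2}(x_0)|\le C't|B_r|$, and a standard rescaling (apply the estimate at scale $2r$ and replace $t$ by $t/2$) upgrades $B_{r/2}$ to $B_r$, with $r_0$ chosen so that the slightly larger balls used above still fit inside $D$. The delicate point is the cancellation leading to the energy identity: a direct Lipschitz bound on $\int_{\{0<u\le rt\eta\}}|\nabla u|^2\,dx$ would yield $L^2$ times a measure comparable to what we are trying to bound, making absorption impossible, so the Caccioppoli step is essential to turn this crude bound into one with the correct linear $t$-scaling.
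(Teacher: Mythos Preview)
Your proof is correct, and the energy identity you derive is clean and correct (the cancellation via testing against $\min(u,rt\eta)$ works exactly as you describe). The route, however, differs from the paper's. The paper uses the competitor
\[
\phi \;=\; \eta\,(u-rt)_+ \;+\;(1-\eta)\,u,
\]
with $\eta\in C^\infty_c(B_{2r})$, $\eta\equiv 1$ on $B_r$, and then estimates $E_f(\phi,B_{2r})-E_f(u,B_{2r})$ directly. With this choice the dangerous term is the cross term $-rt\!\int_{\{u>rt\}}\nabla u\cdot\nabla\eta$, which is bounded outright by $C\,t\,\|\nabla u\|_{L^\infty}|B_r|$ via the Lipschitz estimate; the term $\int_{\{0<u\le rt\}}\tfrac{(1-\eta)^2-1}{2}|\nabla u|^2$ has a favourable sign and is simply dropped. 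So the paper needs only one step and no preliminary Caccioppoli inequality.

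Your competitor $\phi=(u-rt\eta)_+$ is structurally simpler and produces an exact identity rather than an inequality, at the price of the term $\tfrac12\int_{\{0<u\le rt\eta\}}|\nabla u|^2$, which (as you correctly observe) cannot be absorbed via the crude bound $|\nabla u|^2\le L^2$ and therefore forces the Caccioppoli step. Both arguments rely on the Lipschitz continuity of $u$ in an essential way (you use it inside the Caccioppoli step), so neither is more elementary; the paper's is a bit shorter, while yours isolates the energy identity more transparently. A small remark: the verification that $\phi=(u-rt\eta)_+\in H^1_0(\omega)$ follows since $\phi$ is nonnegative, Lipschitz, compactly supported in $\overline D$, and $\{\phi>0\}=\omega$, so $(\phi-\eps)_+\in H^1_0(\omega)$ for each $\eps>0$ and converges to $\phi$ in $H^1$.
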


\begin{proof}
The estimate is contained in the proof of \cite[Theorem 1.10]{BMV1}; we sketch the idea for the sake of completeness. Let $x_0 = 0 \in \partial \O$ and $t>0$. We fix a function $\eta \in C^\infty_c(B_{2r})$ such that $0\le \eta\le 1$ in $B_{2r}$ and $\eta \equiv 1$ in $B_r$, and we use the competitor
$$\phi = \eta (u_\Omega - rt)^+ + (1-\eta) u_\Omega.$$
to test the optimality condition \eqref{e:subsolution} in $B_{2r}$. Since
$$
\phi = u_\Omega- rt \eta \quad\mbox{in}\quad\{u_\Omega>rt \}\qquad\text{and}\qquad
\phi = (1-\eta) u_\Omega \quad\mbox{in}\quad\{0\leq u_\Omega\leq rt\}\,,
$$
we get
\begin{align*}
E_f(\phi,B_{2r}) =& \int_{\{u_\Omega>rt\}\cap B_{2r}}\bigg(\frac{1}{2}|\nabla u_\Omega|^2+ \frac{(rt)^2}{2}|\nabla \eta|^2-rt\nabla u_\Omega \cdot \nabla \eta - fu_\Omega+ frt\eta\bigg)\,dx\\
&+ \int_{\{0\leq u_\Omega\leq rt\}\cap B_{2r}}\bigg(\frac{(1-\eta)^2}{2}|\nabla u_\Omega|^2+ \frac{u_\Omega^2}{2}|\nabla \eta|^2-(1-\eta)u_\Omega \nabla u_\Omega\cdot \nabla \eta - f u_\Omega + f u_\Omega\eta\bigg)\,dx\\
\leq &\, E_f(u_\Omega,B_{2r}) - rt \int_{\{u_\Omega>rt\}\cap B_{2r}} \nabla u_\Omega \cdot \nabla \eta \,dx + \int_{B_{2r}}\bigg(\frac{(rt)^2}{2}|\nabla \eta|^2+ frt\eta\bigg)\,dx\\
&+ \int_{\{0\leq u_\Omega\leq rt\}\cap B_{2r}}\bigg(\frac{(1-\eta)^2-1}{2}|\nabla u_\Omega|^2-(1-\eta)u_\Omega \nabla u_\Omega \cdot \nabla \eta\bigg)\,dx\\
\leq &\, E_f(u_\Omega,B_{2r}) + C_d\Big(t\|\nabla u_\Omega\|_{L^{\infty}} +rt\norm{f}{L^\infty } +t^2\Big)|B_r|.
% - \int_{\{0\leq u\leq tR\}}\frac{1}{2}|\nabla u|^2 \, dx
\end{align*}
Thus \eqref{e:subsolution} implies
\begin{align*}
\frac{C_1c_Q}{2}|\{0<u_\Omega\le rt\}\cap B_r|&\le\frac{C_1c_Q}{2}\Big(|\{u_\Omega>0\}\cap B_{2r}|-|\{\varphi>0\}\cap B_{2r}|\Big)\\
&\le
E_f(\phi,D)-E_f(u_\Omega,D)\le C_d\Big(t\|\nabla u\|_{L^{\infty}} +rt\norm{f}{L^\infty } +t^2\Big)|B_r|.
\end{align*}
Therefore, we get
$$
\frac{C_1c_Q}{2}|B_{r} \cap \{0\leq u\leq rt\}| \leq Ct|B_r|
$$
with $C>0$ depending on $d$, $\|u_\O\|_{L^\infty(B_{2r})}$, $M$, $C_1$ and $c_Q$.
\end{proof}

%%%%%%%%%%%%%%%%%%%%%%%%%%%%%%%%%%%%%%%%%%%%%%%%%%
\section{Compactness and convergence of blow-up sequences} \label{section.blowup}

Take an optimal set $\Omega$  for~\eqref{e:intro-shape-opt-pb-in-main-teo} in some $D\subset\R^d$, and consider the corresponding state functions $u=u_\Omega$ and $v=v_\Omega$. For any $x_0 \in \partial \O\cap D$ and any sequence $r_k \to 0^+$, we set
 %such that $B_{r_k}(x_0 )\subseteq D$. Then, we define a blow-up sequence by
\be\label{blow.upseq}
u_{x_0,r_k}(x) := \frac{1}{r_k}u(x_0 +r_k x)\,,\quad v_{x_0 ,r_k}(x) := \frac{1}{r_k}v(x_0 +r_k x)\,,\quad\Omega_{x_0,r_k}:= \frac{\O - x_0 }{r_k}\,.
\ee
Since $u$ and $v$ vanish in $x_0$ and are Lipschitz (in a neighborhood of $x_0$), we have that $u_{x_0,r_k}$ and $v_{x_0,r_k}$ vanish in $0$ and (for large $k$) are uniformly Lipschitz in any ball $B_R$. Thus, there are functions $u_0,v_0:\R^d\to\R$ and subsequences of $u_{x_0,r_{k}}$ and $v_{x_0,r_{k}}$ that converge locally uniformly in $\R^d$ respectively to $u_0$ and $v_0$. As usual we say that $u_0$ and $v_0$ are blow-up limits of $u$ and $v$ in $x_0$; and we recall that they might depend on the sequence $r_{k}$. We notice that the blow-up limits of $u$ and $v$ will always be taken along the same sequence $r_k\to0$. \smallskip

The main results are \cref{p:first-blow-up} and \cref{p:final-blow-up}. In \cref{p:first-blow-up} we list the properties of any couple of functions $u_0,v_0$ obtained as blow-up limits of $u,v$, while in \cref{p:final-blow-up} we show that there is at least one sequence $r_k\to0$ that provides blow-up limits which are $1$-homogeneous and stationary for the one-phase Alt-Caffarelli functional.
%any blow-up sequence converges to a blow-up limit.
%We first prove compactness and existence of a blow-up limit, possibly not homogeneous. Then, by using the stationarity conditions from \cref{sub.inner}, we prove that at every point of the free boundary there is a $1$-homogeneous blow-up limit.\smallskip
%
%Finally, in \cref{subsec:delta}, by exploiting the second variations from Section \ref{sub.inner}, we prove that at every point of the free boundary there exists a blow-up limit which is $1$-homogeneous and stable for the Alt-Caffarelli functional (see \cref{p:blow.upGeneral}).

\subsection{A general lemma about the convergence of blow-up sequences}

The construction of the blow-up limit from \cref{p:final-blow-up} will require taking three consecutive blow-ups. We give here a general lemma, which we will use several times in this Section.

\begin{lemma}\label{l:lemma-generale-blow-up-1}
Let $B_{2R}$ be a ball in $\R^d$. Let $u_n:\overline B_{2R}\to\R$ be a sequence of non-negative Lipschitz functions converging uniformly to a Lipschitz function $u_\infty:\overline B_{2R}\to\R$, and suppose that there is a constant $L>0$ such that
$$\|\nabla u_n\|_{L^\infty(B_{2R})}\le L\quad\text{for every}\quad n\ge 1.$$
Then, the following holds.
\begin{enumerate}[\quad\rm(i)]
\item Suppose that there is a constant $\tilde C>0$ such that, for every $n\ge 1$,
\begin{equation}\label{e:lemma-generale-blow-up-1-ipo-nondegeneracy}
\sup_{B_r(x_0)} u_n \ge \tilde{C} r\quad\text{for every}\quad x_0\in B_R\cap\overline{\{u_n>0\}}\quad\text{and every}\quad r\in(0,R)\,.
\end{equation}
Then
\begin{equation}\label{e:lemma-generale-blow-up-1-tesi-nondegeneracy}
\sup_{B_r(x_0)} u_\infty \ge \tilde{C} r\quad\text{for every}\quad x_0\in B_R\cap\overline{\{u_\infty>0\}}\quad\text{and every}\quad r\in(0,R),
\end{equation}
and
\begin{equation}\label{e:lemma-generale-blow-up-1-tesi-level-sets}
\ind_{\{u_n>0\}}\to\ind_{\{u_\infty>0\}}\quad\text{pointwise a.e. in}\quad B_R\,.
\end{equation}
\item Suppose that there is a constant $\tilde M>0$ such that, for every $n\ge 1$, we have the bound
\begin{equation}\label{e:lemma-generale-blow-up-1-ipo-laplacian}
\left|\int_{B_{2R}}\nabla u_n\cdot\nabla \varphi\,dx\right| \le \tilde{M} \|\varphi\|_{L^\infty(B_{2R})}\quad\text{for every}\quad \varphi\in C^{0,1}_c(B_{2R}),
\end{equation}
where $C^{0,1}_c(B_{2R})$ is the space of Lipschitz functions with compact support in $B_{2R}$.\\
Then $u_n$ converges to $u_\infty$ strongly in $H^1(B_R)$.
\end{enumerate}	
\end{lemma}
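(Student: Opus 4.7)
My plan for part (i) is to first transfer the non-degeneracy estimate \eqref{e:lemma-generale-blow-up-1-ipo-nondegeneracy} from $u_n$ to $u_\infty$. If $x_0\in B_R$ satisfies $u_\infty(x_0)>0$, then uniform convergence gives $u_n(x_0)>0$ for $n$ large, so the hypothesis applied to $u_n$ on $\overline{B_r(x_0)}$, combined with the uniform convergence on this compact set, directly yields $\sup_{B_r(x_0)}u_\infty\geq \tilde C r$. If instead $x_0\in\partial\{u_\infty>0\}\cap B_R$, I pick $y_k\to x_0$ with $u_\infty(y_k)>0$ and, for fixed $r\in(0,R)$, apply the previous case to $y_k$ with the slightly smaller radius $r_k:=r-|y_k-x_0|$. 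Since $B_{r_k}(y_k)\subset B_r(x_0)$, this gives $\sup_{B_r(x_0)}u_\infty\geq \tilde C r_k$, and letting $k\to\infty$ recovers $\sup_{B_r(x_0)}u_\infty\geq \tilde C r$.

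The more delicate part of (i) is the pointwise a.e.\ convergence of the characteristic functions. Uniform convergence immediately handles all $x$ with $u_\infty(x)>0$. Conversely, if $x\in B_R$ satisfies $u_\infty(x)=0$ but $x\in\{u_n>0\}$ for infinitely many $n$, then the hypothesis for those indices reads $\sup_{B_r(x)}u_n\geq \tilde C r$, and passing to the limit on the compact set $\overline{B_r(x)}$ gives $\sup_{B_r(x)}u_\infty\geq \tilde C r$ for every $r\in(0,R)$, so $x\in\partial\{u_\infty>0\}$. Thus it suffices to show $|\partial\{u_\infty>0\}\cap B_R|=0$. To this end, fix $x_0\in\partial\{u_\infty>0\}\cap B_R$ and small $r>0$; the non-degeneracy just proved for $u_\infty$ produces $y_r\in\overline{B_r(x_0)}$ with $u_\infty(y_r)\geq\tilde C r$, and the $L$-Lipschitz bound forces $u_\infty\geq \tilde C r/2$ on $B_{\tilde C r/(2L)}(y_r)$. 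Since $u_\infty(x_0)=0$, Lipschitz continuity also yields $|y_r-x_0|\geq \tilde C r/L$, and a short geometric argument (translate $y_r$ by $(\tilde Cr/(4L))(x_0-y_r)/|x_0-y_r|$ to a point $z_r$) produces a ball of radius $\tilde Cr/(4L)$ contained in $B_r(x_0)\cap\{u_\infty>0\}$. Hence every boundary point of $\{u_\infty>0\}$ has a strictly positive lower density of $\{u_\infty>0\}$, bounded below by a constant $c=c(d,L,\tilde C)>0$, and the Lebesgue density theorem then gives $|\partial\{u_\infty>0\}\cap B_R|=0$.

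For part (ii), the uniform Lipschitz bound implies $\|u_n\|_{H^1(B_{2R})}\leq C$, so by weak compactness and the uniform-convergence identification of the limit, $\nabla u_n\rightharpoonup \nabla u_\infty$ weakly in $L^2(B_{2R};\R^d)$. Strong convergence on $B_R$ will be obtained via a standard cut-off/commutator trick. I fix $\eta\in C^\infty_c(B_{2R})$ with $\eta\equiv1$ on $B_R$ and $0\leq\eta\leq1$, and test \eqref{e:lemma-generale-blow-up-1-ipo-laplacian} with $\varphi_n:=\eta(u_n-u_\infty)\in C^{0,1}_c(B_{2R})$. Expanding the gradient and using $|\nabla u_n|\leq L$ and $|u_n-u_\infty|\leq \|u_n-u_\infty\|_{L^\infty(B_{2R})}$, the product-rule remainder satisfies
\[
\left|\int_{B_{2R}}(u_n-u_\infty)\,\nabla u_n\cdot\nabla\eta\,dx\right|\leq L\,\|\nabla\eta\|_{L^1(B_{2R})}\,\|u_n-u_\infty\|_{L^\infty(B_{2R})}\to 0,
\]
while the hypothesis yields
\[
\left|\int_{B_{2R}}\nabla u_n\cdot\nabla\varphi_n\,dx\right|\leq \tilde M\,\|u_n-u_\infty\|_{L^\infty(B_{2R})}\to 0,
\]
so that $\int_{B_{2R}}\eta\,\nabla u_n\cdot\nabla(u_n-u_\infty)\,dx\to 0$. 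On the other hand, the fixed function $\eta\nabla u_\infty\in L^2(B_{2R};\R^d)$ pairs with the weakly convergent sequence $\nabla(u_n-u_\infty)\rightharpoonup 0$ to give $\int_{B_{2R}}\eta\,\nabla u_\infty\cdot\nabla(u_n-u_\infty)\,dx\to 0$. Subtracting these two vanishing terms in the algebraic identity
\[
\int_{B_{2R}}\eta\,|\nabla(u_n-u_\infty)|^2\,dx=\int_{B_{2R}}\eta\,\nabla u_n\cdot\nabla(u_n-u_\infty)\,dx-\int_{B_{2R}}\eta\,\nabla u_\infty\cdot\nabla(u_n-u_\infty)\,dx
\]
produces $\int_{B_R}|\nabla(u_n-u_\infty)|^2\,dx\leq \int_{B_{2R}}\eta\,|\nabla(u_n-u_\infty)|^2\,dx\to 0$, which combined with the $L^2$ convergence $u_n\to u_\infty$ (a consequence of uniform convergence on $B_{2R}$) gives $u_n\to u_\infty$ strongly in $H^1(B_R)$. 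The main conceptual obstacle is the density argument for $|\partial\{u_\infty>0\}\cap B_R|=0$ in part (i); once this is in place, the remaining steps are routine consequences of compactness, cut-off, and weak convergence.
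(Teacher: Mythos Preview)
Your proof is correct and follows essentially the same approach as the paper's: the same non-degeneracy transfer and Lebesgue-density argument for (i), and the same cut-off trick for (ii). The only cosmetic differences are that in (i) you translate the ball inward to make it sit inside $B_r(x_0)$ (the paper settles for a slightly larger ball and a correspondingly looser density bound), and in (ii) you handle the $\nabla u_\infty$ term via weak convergence whereas the paper first passes the hypothesis \eqref{e:lemma-generale-blow-up-1-ipo-laplacian} to the limit and then applies it to both $u_n$ and $u_\infty$; both routes are equivalent.
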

\begin{proof}
We first prove (i). Suppose that $x_0\in B_R\cap\overline{\{u_\infty>0\}}$. Then, there is a sequence $x_n\to x_0$ of points $x_n\in B_R\cap\overline{\{u_\infty>0\}}$. Let $r_n:=r-|x_n-x_0|$. By \eqref{e:lemma-generale-blow-up-1-ipo-nondegeneracy}, there is a point $y_n\in B_{r_n}(x_n)\subset B_r(x_0)$ such that $u_n(y_n)\ge \widetilde Cr_n$. Thus, by the uniform convergence of $u_n$, we get \eqref{e:lemma-generale-blow-up-1-tesi-nondegeneracy}. In order to prove \eqref{e:lemma-generale-blow-up-1-tesi-level-sets}, we first notice that by the pointwise convergence of $u_n$ to $u_\infty$, we have that
$$\ind_{\{u_\infty>0\}}(x_0)=1\quad\Rightarrow\quad \ind_{\{u_n>0\}}(x_0)=1\ \text{ for large $n$}\,,$$
so it is sufficient to prove that the set
$$S:=\Big\{x_0\in B_{R}\ :\  \ind_{\{u_\infty>0\}}(x_0)=0\ \text{ and }\ \limsup_{n\to\infty}\ind_{\{u_{n}>0\}}(x_0)=1\Big\}\,,$$
is of measure zero. Now, by \eqref{e:lemma-generale-blow-up-1-ipo-nondegeneracy}, we get that for every $r\in(0,R)$ there is a sequence $y_n\to x_0$ such that $u_n(y_n)\ge \tilde Cr$. Then, by the uniform convergence of $u_n$, we have that there is $y_\infty\in \overline B_r(x_0)$ such that $u_\infty(x_\infty)\ge \tilde Cr$. Then, by the Lipschitz continuity of $u_\infty$, the ball $B_{\tilde C r/L}(y_n)$ is contained in $\{u_\infty>0\}$. Since $r$ is arbitrary, we get that the Lebesgue density of $\{u_\infty>0\}$ in $x_0$ cannot be zero, so the set $S$ has zero measure. \smallskip

\noindent We next prove (ii). Since $u_n$ is uniformly bounded in $H^1(B_{2R})$, we have that $u_n$ converges to $u_\infty$ weakly in $H^1(B_{2R})$. Thus, the estimate \eqref{e:lemma-generale-blow-up-1-ipo-laplacian} holds also for $u_\infty$, that is
$$\left|\int_{B_{2R}}\nabla u_\infty\cdot\nabla \varphi\,dx\right| \le \tilde{M} \|\varphi\|_{L^\infty(B_{2R})}\quad\text{for every}\quad \varphi\in C^{0,1}_c(B_{2R}).$$
 Choose a function $\varphi\in C^\infty_c(B_{2R})$ such that $\varphi\equiv 1$ in $B_R$. Then,
\begin{align*}
\int_{B_R}|\nabla (u_n-u_\infty)|^2\,dx&\le \int_{B_{2R}}|\nabla (\varphi(u_n-u_\infty))|^2\,dx\\
&= \int_{B_{2R}}|\nabla \varphi|^2(u_n-u_\infty)^2\,dx+\int_{B_{2R}}\nabla (u_n-u_\infty)\cdot\nabla\big(\varphi^2(u_n-u_\infty)\big)\,dx\\
&\le \int_{B_{2R}}|\nabla \varphi|^2(u_n-u_\infty)^2\,dx+2\tilde M\|\varphi^2(u_n-u_\infty)\|_{L^\infty(B_{2R})}.
\end{align*}
Since the right-hand side converges to zero, we get the claim.
\end{proof}	

Finally, we notice that the above lemma can be applied to the state functions $u_\Omega$ and $v_\Omega$ of an optimal domain. This is a consequence of the following lemma.

\begin{lemma}\label{l:lemma-generale-blow-up-2}
	Let $B_{2R}\subset\R^d$ and $u\in H^1(B_{2R})$ be a non-negative  function with the following properties.
	\begin{enumerate}[\quad\rm(a)]
		\item There is a function $f\in L^\infty(B_{2R})$ such that
		\begin{equation*}
        -\Delta u=f\quad\text{in}\quad \Omega_u:=\{u>0\},\qquad u=0\quad\text{su}\quad \partial\Omega_u\cap B_{2R}\,,
		\end{equation*}
in the sense that
$$\int_{B_{2R}}\nabla u\cdot\nabla\varphi\,dx=\int_{B_{2R}}uf\,dx\quad\text{for every}\quad \varphi\in H^1_0(B_{2R})\quad\text{with}\quad \varphi=0\quad\text{in}\quad B_{2R}\setminus\Omega_u\,.$$
\item There is $\Lambda>0$ such that, for every non-negative $\varphi \in H^1_0(B_{2R})$,
\begin{equation*}
E_f(u,B_{2R}) + \frac{\Lambda}{2}|B_{2R}\cap\{u>0\}| \le E_f(u+\varphi,B_{2R}) + \frac{\Lambda}{2}|B_{2R}\cap\{u+\varphi>0\}|.
\end{equation*}
	\end{enumerate}	
Then, for every $\varphi\in C^{0,1}_c(B_{R})$, we have
$$\left|\int_{B_{R}}\nabla u\cdot\nabla\varphi\,dx\right|\le C_d\Big(1+\Lambda+R\|f\|_{L^\infty(B_{2R})}\Big)R^{d-1}\|\varphi\|_{L^\infty(B_{R})}\,.$$
\end{lemma}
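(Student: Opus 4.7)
The plan is to view $-\Delta u$ as a distribution on $B_{2R}$ and to show that its restriction to $B_R$ is a (signed) Radon measure of total mass $\lesssim (1+\Lambda+R\|f\|_{L^\infty})R^{d-1}$; by duality this is equivalent to the stated estimate. First I would use hypothesis (a) to split $\int_{B_R}\nabla u\cdot\nabla\varphi$ into a bulk interior term and a free-boundary term. Testing (a) with $\psi_\eps:=\varphi\min(u/\eps,1)\in H^1_0(B_{2R})$, which vanishes on $\{u=0\}$, expanding $\nabla\psi_\eps=\min(u/\eps,1)\nabla\varphi+(\varphi/\eps)\chi_{\{0<u<\eps\}}\nabla u$, and letting $\eps\to0^+$ (using $\nabla u=0$ a.e.\ on $\{u=0\}$), I get
\[
\int_{B_R}\nabla u\cdot\nabla\varphi\,dx \;=\; \int_{B_R\cap\Omega_u}f\varphi\,dx \;-\; \lim_{\eps\to0^+}\frac{1}{\eps}\int_{\{0<u<\eps\}\cap B_R}\varphi|\nabla u|^2\,dx.
\]
The bulk term is immediately controlled: $|\int f\varphi|\le\|f\|_{L^\infty}\|\varphi\|_\infty|B_R|\le C_d R\|f\|_{L^\infty}\cdot R^{d-1}\|\varphi\|_\infty$, which fits in the target bound.

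To bound the free-boundary limit, I would test the outward minimality (b) against the non-negative variation $(\eps\eta-u)^+$, where $\eta\in C^\infty_c(B_{2R})$ is a cutoff with $\eta\equiv 1$ on $B_R$, $|\nabla\eta|\le C/R$, $0\le\eta\le 1$. Then $u+(\eps\eta-u)^+=\max(u,\eps\eta)$, so $\nabla[u+(\eps\eta-u)^+]=\eps\nabla\eta$ on $\{u<\eps\eta\}$ and $\nabla u$ elsewhere, while $\{u+(\eps\eta-u)^+>0\}\setminus\{u>0\}=\{u=0\}\cap\{\eta>0\}$. Substituting into (b) and rearranging yields
\[
\int_{\{u<\eps\eta\}}|\nabla u|^2\,dx \;\le\; \eps^2\int_{B_{2R}}|\nabla\eta|^2\,dx + 2\eps\|f\|_{L^\infty}|B_{2R}| + \Lambda\bigl|\{u=0\}\cap\{\eta>0\}\bigr|.
\]

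The main obstacle is that the naive bound $|\{u=0\}\cap\{\eta>0\}|\le|B_{2R}|\sim R^d$ produces a $\Lambda R^d/\eps$ divergence after dividing by $\eps$. To overcome this I would invoke the fact that (a) and (b) together yield, in the Alt--Caffarelli spirit, the local Lipschitz continuity and the non-degeneracy of $u$ near $\partial\{u>0\}$ (compare \cref{lemm.lip} and \cref{lemm.nondeg}), and hence the thin-layer estimate $|\{0<u<\eps\}\cap B_R|\lesssim\eps R^{d-1}$ analogous to \cref{l:conditionf}. Together with the Bernoulli-type bound $|\nabla u|\lesssim\sqrt\Lambda$ at $\partial\{u>0\}$ (another consequence of (b)), this gives $\tfrac1\eps\int_{\{0<u<\eps\}\cap B_R}|\nabla u|^2\lesssim(1+\Lambda)R^{d-1}$, and combining with the bulk estimate from the first step closes the proof. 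The hard part is extracting the thin-layer and Bernoulli estimates from the combination of (a) and (b) alone; these are precisely the ingredients that make the outward minimality an effective substitute for the full inward/outward minimality of an honest almost-minimizer.
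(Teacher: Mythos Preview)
Your approach has a genuine gap. The thin-layer estimate $|\{0<u<\eps\}\cap B_R|\lesssim \eps R^{d-1}$ (the analogue of \cref{l:conditionf}) and the non-degeneracy (the analogue of \cref{lemm.nondeg}) are both proved in the paper from the \emph{inwards} minimality condition \eqref{e:subsolution}, not from the outwards one. Hypothesis (b) is purely outward (you may only add a non-negative $\varphi$), so neither of these ingredients is available from (a) and (b) alone. Without them your $\tfrac1\eps\int_{\{0<u<\eps\}\cap B_R}|\nabla u|^2$ term remains uncontrolled, and the argument is circular: a perimeter-type bound on $\partial\{u>0\}$ is exactly what the lemma is meant to deliver.

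The paper's proof avoids all of this with a simple scaling trick. From (a) one checks that $\mu:=\Delta u+|f|$ is a non-negative distribution, hence a Radon measure. Then one tests (b) not with a function of height $\eps\to 0$ but with a tall cutoff $\varphi=R\phi$, where $\phi\in C^\infty_c(B_{2R})$, $\phi\equiv 1$ on $B_R$, $|\nabla\phi|\lesssim R^{-1}$. Expanding $E_f(u+R\phi)-E_f(u)$ and bounding $|\{u+R\phi>0\}|-|\{u>0\}|\le|B_{2R}|$ gives
\[
\int_{B_{2R}} R\phi\,d\mu \;\le\; \tfrac12\int_{B_{2R}}R^2|\nabla\phi|^2\,dx+\tfrac{\Lambda}{2}|B_{2R}| \;\le\; C_d(1+\Lambda)R^{d},
\]
so that $\mu(B_R)\le C_d(1+\Lambda)R^{d-1}$. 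Since $\mu\ge 0$, for any $\varphi\in C^{0,1}_c(B_R)$ one has
\[
\Big|\int_{B_R}\nabla u\cdot\nabla\varphi\,dx\Big|\le \int_{B_R}|\varphi|\,d\mu+\int_{B_R}|f||\varphi|\,dx\le C_d\big(1+\Lambda+R\|f\|_{L^\infty}\big)R^{d-1}\|\varphi\|_{L^\infty}.
\]
The point is that the volume term $\Lambda|B_{2R}|\sim\Lambda R^d$ is harmless once divided by the \emph{height} $R$ of the test function, rather than by an infinitesimal $\eps$.
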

\begin{proof}
We only sketch the proof and we refer to \cite[Chapter~3]{V} for the details. By (a) we have that
$$\mu:=\Delta u+|f|$$
is a positive Radon measure on $B_{2R}$, where
$$\int_{B_{2R}}\varphi\,d\mu:=\int_{B_{2R}}\Big(-\nabla u\cdot\nabla\varphi+\varphi |f|\Big)\,dx\quad\text{for every}\quad\varphi\in C^{0,1}_c(B_{2R}).$$
By testing the optimality condition in $(b)$ with a function $\varphi\in C^{0,1}_c(B_{2R})$, we get
$$\int_{B_{2R}}\varphi\,d\mu =\int_{B_{2R}}\Big(-\nabla u\cdot\nabla\varphi+\varphi f\Big)\,dx\le \int_{B_{2R}}|\nabla \varphi|^2\,dx+\Lambda|B_{2R}|,$$
and choosing $\varphi=R\phi$ with $\phi\equiv 1$ in $B_{R}$, we obtain
$$\mu(B_{R})\le \int_{B_{2R}}\phi\,d\mu\le C_d(1+\Lambda)R^{d-1}.$$
As a consequence, for every $\varphi\in C^{0,1}_c(B_R)$, we have
$$\left|\int_{B_{R}}\nabla u\cdot\nabla\varphi\,dx\right|\le \int_{B_R}|\varphi|\,d\mu+\int_{B_{R}}|\varphi| |f|\,dx\le C_d\Big(1+\Lambda+R\|f\|_{L^\infty}\Big)R^{d-1}\|\varphi\|_{L^\infty(B_R)}.$$
which concludes the proof.
\end{proof}	

\subsection{First blow-up}\label{s:1}
In this subsection we list the properties of any couple of blow-ups
$$u_0,v_0:\R^d\to\R$$
of the state functions $u_\Omega$ and $v_\Omega$ at a boundary point $x_0\in\partial\Omega\cap D$. The qualitative properties (Lipschitz continuity, non-degeneracy, density estimates) of $u_\Omega$ and $v_\Omega$ are conserved under blow-up limits; the stationarity condition also passes to the limit; the main difference is that $u_0$ and $v_0$ are harmonic where they are positive.

\begin{prop}\label{p:first-blow-up}
Let $D\subset\R^d$ be a bounded open set, let $\O$ be a solution to \eqref{e:intro-shape-opt-pb-in-main-teo} with $f,g,Q$ as in \cref{thm.main}, and let $u:=u_\Omega$ and $v:=v_\Omega$ be the state functions on $\Omega$ defined in \eqref{e:state-equation-open} and \eqref{e:stateqv}. We consider a point $x_0 \in \partial \Omega\cap D$ and blow-up sequences $u_{x_0,r_k},v_{x_0,r_k}$ of $u,v$ converging locally uniformly in $\R^d$ to blow-up limits $u_0,v_0\in C^{0,1}(\R^d)$.
%; we set $\Omega_0=\{u_0>0\}=\{v_0>0\}.$
Then
\begin{enumerate}%[\rm(1)]
\item taking $C_1$ and $C_2$ to be the constants from \eqref{e:feg}, we have
$$C_1v_0\le u_0\le C_2v_0\quad\text{on}\quad\R^d;$$
\item the functions $u_0$ are $v_0$ are harmonic in the open set $\Omega_0:=\{u_0>0\}=\{v_0>0\}.$
\item \label{item:measure} there are constants $\eps_0>0$ and $C>0$ such that
\be\label{eq:densestbu}
\eps_0 |B_r|\leq |B_r(x_0)\cap \Omega_0| \leq (1-\eps_0)|B_r|,
\ee
for every $x_0\in \partial \Omega_0$, $r>0$, and
\be\label{conditionf.rescal}
|\{0<u_0<rt\}\cap B_r(x_0)|\leq Ct|B_r|,
\ee
for every $x_0\in \partial \O_0, r>0,t>0$;
%\item the sequence of the closed sets $\overline{B_R\cap \O_{k}}$ converges in the Hausdorff sense to $\overline{B_R\cap \O_{0}>0}$.
\item \label{item:nondeg} $0\in\partial\Omega_0$ and there is a constant $C_0 > 0$ such that
$$\sup_{B_r(x_0)} u_0 \ge C_0 r\quad\text{and}\quad\sup_{B_r(x_0)} v \ge C_0 r\quad\text{for every}\quad x_0\in \overline\Omega_0\quad\text{and every}\quad r>0\,;$$
%$$
%\sup_{B_r } u_0 \geq C_0 r,\quad \sup_{B_r } v_0 \geq C_0 r\quad\text{for every $r>0$};
%$$
\item \label{item:ass} there is a constant $\Lambda>0$ depending on $C_1,C_2,C_Q$ and $d$ such that, for every $R>0$,
$$\left|\int_{B_{R}}\nabla u_0\cdot\nabla\varphi\,dx\right|+\left|\int_{B_{R}}\nabla v_0\cdot\nabla\varphi\,dx\right|\le \Lambda R^{d-1}\|\varphi\|_{L^\infty(B_{R})}\,;$$
\item\label{item:limit-first-variation} for every compactly supported smooth vector field $\xi \in C^\infty_c(\R^d;\R^d)$, we have
\be\label{stationary.Onep}
\int_{\R^d} \Big(- \nabla u_{0} \cdot \big((\nabla\xi) + (D\xi)\big)\nabla v_{0}+\big(\nabla u_{0}\cdot \nabla v_{0}+ Q(x_0)\ind_{\Omega_0} \big)\dive\xi\Big)\, dx=0.
\ee
\end{enumerate}
\end{prop}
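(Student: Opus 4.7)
My plan is to treat (1)--(6) in order, observing that each qualitative property of $u_\Omega, v_\Omega$ established in Sections~3--4 is scale invariant, so the rescaled functions $u_{x_0,r_k}, v_{x_0,r_k}$ satisfy the same properties on balls of expanding radius $r_0/r_k$, and then pass to the limit. For (1) I would apply the maximum principle to $u_\Omega - C_1 v_\Omega$ and $C_2 v_\Omega - u_\Omega$ inside $\Omega$: both vanish on $\partial\Omega$ and are superharmonic thanks to $C_1 g\le f\le C_2 g$, which yields $C_1 v_\Omega\le u_\Omega\le C_2 v_\Omega$ in $D$; this bound is preserved by rescaling and by uniform limits. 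For (2), item (1) forces $\{u_0>0\}=\{v_0>0\}=:\Omega_0$; to see $u_0$ is harmonic in $\Omega_0$, pick $\varphi\in C^\infty_c(\Omega_0)$ and use uniform convergence together with the non-degeneracy of $u_0$ (item (4)) to get $\mathrm{supp}\,\varphi\subset\{u_{x_0,r_k}>0\}$ for $k$ large, so after integrating by parts and sending $k\to\infty$ one has $\int\nabla u_0\cdot\nabla\varphi\,dx=\lim_k r_k\!\int f(x_0+r_k x)\varphi\,dx=0$; the same holds for $v_0$.

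Items (3)--(5) are nearly direct transfers via the general Lemma~\ref{l:lemma-generale-blow-up-1}. The density bounds of Proposition~\ref{p:density} and the level-set estimate of Lemma~\ref{l:conditionf} hold for $u_{x_0,r_k}$ on $B_r$ for every $r<r_0/r_k$, and the indicator convergence $\ind_{\{u_{x_0,r_k}>0\}}\to\ind_{\{u_0>0\}}$ a.e.\ from Lemma~\ref{l:lemma-generale-blow-up-1}(i) (whose hypothesis is provided by the non-degeneracy Lemma~\ref{lemm.nondeg}) gives (3), while (4) follows from the same non-degeneracy passed to the limit through Lemma~\ref{l:lemma-generale-blow-up-1}(i) applied to both $u$ and $v$. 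For (5), Lemma~\ref{l:lemma-generale-blow-up-2} applied to the rescalings gives the desired distributional bound on $\Delta u_{x_0,r_k}$ and $\Delta v_{x_0,r_k}$ with a constant $\Lambda$ depending only on $C_1, C_2, C_Q, d$ (note that $r_k\|f\|_\infty\to 0$, so the force term is absorbed), and taking $k\to\infty$ yields the claim for $u_0, v_0$. A fundamental additional consequence, needed for item (6), is that Lemma~\ref{l:lemma-generale-blow-up-1}(ii) now upgrades the weak $H^1$ convergence to strong $H^1_{\mathrm{loc}}$ convergence of $u_{x_0,r_k}\to u_0$ and $v_{x_0,r_k}\to v_0$.

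The main step is (6). I would plug into the stationarity identity of Remark~\ref{oss:optimal-domains-are-stationary} the test fields $\xi_k(y):=\xi\!\left(\frac{y-x_0}{r_k}\right)$, which are compactly supported in $D$ for $r_k$ small. Changing variables $y=x_0+r_k x$, using $\nabla_y u_\Omega=\nabla_x u_{x_0,r_k}$ and $u_\Omega = r_k u_{x_0,r_k}$ (analogously for $v_\Omega$), and dividing by $r_k^{d-1}$, the identity $\delta\mathcal F(\Omega,D)[\xi_k]=0$ becomes
\begin{align*}
0 &= \int_{\Omega_{x_0,r_k}}\!\!\Big[\bigl(\nabla u_{x_0,r_k}\!\cdot\!\nabla v_{x_0,r_k} + Q(x_0+r_k x)\,\ind_{\Omega_{x_0,r_k}}\bigr)\dive\xi \\
&\qquad\qquad - \nabla u_{x_0,r_k}\!\cdot\!(\nabla\xi+D\xi)\nabla v_{x_0,r_k}\Big]\,dx + R_k,
\end{align*}
where $R_k$ collects the remaining terms, each carrying at least one explicit factor of $r_k$ coming from $\xi_k\cdot\nabla Q$, from the $L^\infty$ bounds on $\nabla g,\nabla f$, and from $u_\Omega,v_\Omega$ themselves, so $R_k\to 0$ because $u_{x_0,r_k}, v_{x_0,r_k}$ are uniformly Lipschitz on the support of $\xi$. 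The linear terms in $\nabla u_{x_0,r_k}$ and $\nabla v_{x_0,r_k}$ pass to the limit by weak $H^1$ convergence; the $Q$-term passes by dominated convergence using the pointwise a.e.\ convergence of $\ind_{\Omega_{x_0,r_k}}$ from Step~3 and the continuity of $Q$. The real obstacle is the quadratic term $\nabla u_{x_0,r_k}\cdot(\nabla\xi+D\xi)\nabla v_{x_0,r_k}$, which is not weakly continuous; this is where the strong $H^1_{\mathrm{loc}}$ convergence from the end of the previous paragraph is essential, allowing one to conclude $\int \nabla u_0\cdot(\nabla\xi+D\xi)\nabla v_0\,dx$ in the limit and obtain \eqref{stationary.Onep}.
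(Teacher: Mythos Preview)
Your proposal is correct and follows essentially the same route as the paper's proof: both rely on the scale-invariance of the estimates from Section~3, the general compactness \cref{l:lemma-generale-blow-up-1} and \cref{l:lemma-generale-blow-up-2} to get non-degeneracy, density estimates, the distributional Laplacian bound, and the crucial strong $H^1_{\mathrm{loc}}$ convergence, and then pass the first variation from \cref{oss:optimal-domains-are-stationary} to the limit via the rescaled test fields $\xi_k$. One cosmetic remark: in your argument for (2) you invoke item (4), but you do not need non-degeneracy there---uniform convergence alone gives $u_{x_0,r_k}\ge c/2>0$ on $\mathrm{supp}\,\varphi$ for large $k$, so there is no forward-reference issue.
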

\begin{proof}
For simplicity, we set
$$u_k:=u_{x_0,r_k},\ v_k:=v_{x_0,r_k},\ f_k:=f_{x_0,r_k},\ g_k:=g_{x_0,r_k}\ \text{ and }\ \O_k:= (\O-x_0)/r_k,$$
and we notice that
\begin{equation}\label{e:rescaled}
-\Delta u_k=r^2_kf_k\quad\text{and}\quad -\Delta v_k=r^2_kg_k\quad\text{in}\quad \Omega_k\,.
\end{equation}
The first two claims are just a consequence of the locally uniform convergence of $u_k$ and $v_k$ to $u_0$ and $v_0$.
By \cref{prop.almost}, \cref{lemm.lip} and \cref{lemm.nondeg}, we already know that $u$ and $v$ fulfill the assumptions of \cref{l:lemma-generale-blow-up-2}. Therefore, \eqref{item:nondeg} and \eqref{item:ass} follow from \cref{l:lemma-generale-blow-up-1}, while \eqref{item:limit-first-variation} follows from \cref{l:firstv} and \cref{oss:optimal-domains-are-stationary}, and the strong $H^1$ convergence of $u_k$ and $v_k$.\smallskip

We next prove \eqref{item:measure}. By  \cref{p:density} and rescaling, we know that, for every $k>0$,
\be\label{togh}
\eps_0 |B_r| \leq |B_r(x_0)\cap \{u_k>0\}|\leq (1-\eps_0)|B_r|, \quad \text{for $r<r_0/r_k, x_0 \in \partial \O_k$},
\ee
so, by the strong convergence of $\ind_{\Omega_k}$ to $\ind_{\Omega_0}$ in $L^1_{loc}(\R^d)$ (see \cref{l:lemma-generale-blow-up-1}), we get the density estimate \eqref{eq:densestbu} for $\Omega_0$. Similarly, by rescaling \eqref{e:conditionf} we obtain
$$
|\{0<u_k<rt\}\cap B_r(x_0)|\leq C t |B_r| \quad \mbox{for}\quad  r<\frac{r_0}{r_k}\,,\ x_0\in\partial \O_k\,,\ t>0\,,
$$
which, passing to the limit as $k\to\infty$, gives \eqref{conditionf.rescal}.
\end{proof}

\subsection{Second blow-up}\label{sub.homo}
Consider blow-up limits $u_{0},v_{0}$ as in the previous subsection and let
$$u_{00},v_{00}:\R^d\to\R\,.$$
be blow-up limits of $u_0$ and $v_0$ in zero. Then $u_{00}$ and $v_{00}$ are still blow-up limits of the state functions $u_\Omega$ and $v_\Omega$ at $x_0\in\partial\Omega\cap D$ and \cref{p:first-blow-up} still applies. On the other hand, \cite[Theorem 1.2]{mtv.BH} applies to the domain $\Omega_0$ and the function $u_0$, so the Boundary Harnack Principle (see~\cite[Definition~1.1]{mtv.BH}) holds on $\Omega_0$; since $u_0$ and $v_0$ are harmonic in $\Omega_0$, we get that the ratio $u_0/v_0$ is H\"older continuous up to the boundary $\partial\Omega_0$. This, in particular means that the second blow-ups $u_{00}$ and $v_{00}$ at any boundary point (and thus in zero) are proportional.

\begin{lemma}\label{p:second-blow-up}
	Let $u_0,v_0:\R^d\to\R$ be non-negative Lipschitz functions on $\R^d$ with the same positivity set and let $\Omega_0:=\{u_0>0\}=\{v_0>0\}$. Suppose that $u_0$ and $v_0$ satisfy the conditions (1)-(6) from \cref{p:first-blow-up} and let $u_{00},v_{00}:\R^d\to\R$ be  blow-ups of $u_0,v_0$ at zero.
	Then
	\begin{enumerate}%[\rm(1)]
		\item there is a constant $\lambda\in(C_1,C_2)$ such that $u_{00}=\lambda v_{00}$ {on} $\R^d$;
		\item the function $u_{00}$ is harmonic in the open set $\Omega_{00}:=\{u_{00}>0\}$;
		\item there are constants $\eps_0>0$ and $C>0$ such that
		\be\label{eq:densestbu00}
		\eps_0 |B_r|\leq |B_r(x_0)\cap \Omega_{00}| \leq (1-\eps_0)|B_r|,
		\ee
		for every $x_0\in \partial \Omega_{00}$, $r>0$, and
		\begin{equation*}
		|\{0<u_{00}<rt\}\cap B_r(x_0)|\leq Ct|B_r|,
		\end{equation*}
		for every $x_0\in \partial \O_{00}, r>0,t>0$;
		%\item the sequence of the closed sets $\overline{B_R\cap \O_{k}}$ converges in the Hausdorff sense to $\overline{B_R\cap \O_{0}>0}$.
		\item \label{item:nondeg-second} $0\in\partial\Omega_{00}$ and there is a constant $C_0 > 0$ such that
		$$\sup_{B_r(x_0)} u_{00} \ge C_0 r\quad\text{for every}\quad x_0\in \overline\Omega_{00}\quad\text{and every}\quad r>0\,;$$
		\item \label{item:ass-second} there is a constant $\Lambda>0$ such that, for every $R>0$,
		$$\left|\int_{B_{R}}\nabla u_{00}\cdot\nabla\varphi\,dx\right|\le \Lambda R^{d-1}\|\varphi\|_{L^\infty(B_{R})}\,;$$
		\item\label{item:limit-first-variation-second} for every compactly supported smooth vector field $\xi \in C^\infty_c(\R^d;\R^d)$, we have
		\be\label{stationary.Onep-00}
		\int_{\R^d} \bigg(- \nabla u_{00} \cdot \big((\nabla\xi) + (D\xi)\big)\nabla u_{00}+\Big(|\nabla u_{00}|^2+ {\lambda}Q(x_0)\ind_{\Omega_{00}} \Big)\dive\xi\bigg)\, dx=0.
		\ee
	\end{enumerate}
\end{lemma}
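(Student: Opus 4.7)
The plan is to first establish the proportionality (1) via the Boundary Harnack Principle, and then to derive (2)--(6) by combining \cref{l:lemma-generale-blow-up-1} with a scaling of the first variation identity \eqref{stationary.Onep}.

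For (1), we apply \cite[Theorem~1.2]{mtv.BH} to the pair $(u_0,v_0)$ on $\Omega_0$; items (2)--(5) of \cref{p:first-blow-up} supply exactly the hypotheses of that theorem (positive harmonic functions vanishing on the boundary of a domain with two-sided density estimates, all of it equipped with Lipschitz, non-degeneracy, and controlled-Laplacian bounds in the sense of \cref{l:lemma-generale-blow-up-2}). The boundary Harnack principle then gives that $u_0/v_0$ extends H\"older continuously to $\overline{\Omega_0}$. Combined with the bilateral bound $C_1v_0\le u_0\le C_2v_0$ from \cref{p:first-blow-up}(1), this yields the existence of
\[
\lambda:=\lim_{\Omega_0\ni y\to 0}\frac{u_0(y)}{v_0(y)}\in[C_1,C_2].
\]
Fix now $x$ with $u_{00}(x)>0$: the bilateral bound passed to the limit forces $v_{00}(x)>0$, and the uniform convergence of the blow-up sequence together with $r_k x\to 0$ inside $\Omega_0$ gives $u_{00}(x)/v_{00}(x)=\lim_k u_0(r_kx)/v_0(r_kx)=\lambda$. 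Since both functions vanish outside $\Omega_{00}$, we conclude $u_{00}=\lambda v_{00}$ on $\R^d$.

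Items (2)--(5) are then obtained by applying the same arguments that proved \cref{p:first-blow-up} to the second blow-up sequence $u_{0,r_k}(x):=u_0(r_kx)/r_k$ and $v_{0,r_k}(x):=v_0(r_kx)/r_k$. Harmonicity of $u_{00}$ in $\Omega_{00}$ is immediate because the rescaled functions already satisfy $-\Delta u_{0,r_k}=0$ inside their positivity sets (no lower-order term appears, since $u_0$ and $v_0$ are already harmonic on $\Omega_0$); the Lipschitz and non-degeneracy constants, the density and level-set bounds, and the weak Laplacian estimate are all scale-invariant, and \cref{l:lemma-generale-blow-up-1} transfers them to the limit.

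For (6), we test \eqref{stationary.Onep} (for the pair $(u_0,v_0)$) with the rescaled vector field $\xi_k(y):=r_k\,\xi(y/r_k)$ for fixed $\xi\in C^\infty_c(\R^d;\R^d)$. A change of variables $y=r_kx$ together with division by $r_k^d$ transforms the identity into
\begin{align*}
\int_{\R^d}\Big(&-\nabla u_{0,r_k}\cdot\big((\nabla\xi)+(D\xi)\big)\nabla v_{0,r_k}\\
&\quad+\big(\nabla u_{0,r_k}\cdot\nabla v_{0,r_k}+Q(x_0)\ind_{\Omega_0/r_k}\big)\dive\xi\Big)\,dx=0.
\end{align*}
The strong $H^1_{loc}$ convergence $u_{0,r_k}\to u_{00}$, $v_{0,r_k}\to v_{00}$ from \cref{l:lemma-generale-blow-up-1}(ii), combined with $\ind_{\Omega_0/r_k}\to\ind_{\Omega_{00}}$ in $L^1_{loc}$, allows passage to the limit. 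Substituting $v_{00}=u_{00}/\lambda$ and multiplying by $\lambda$ produces precisely \eqref{stationary.Onep-00}. The principal technical obstacle is the verification of the hypotheses of the Boundary Harnack Principle for $(u_0,v_0,\Omega_0)$; once the H\"older continuity of the quotient up to $\partial\Omega_0$ is in place, the remainder of the argument is routine rescaling and passage to the limit.
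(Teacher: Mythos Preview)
Your proposal is correct and follows essentially the same route as the paper: apply the Boundary Harnack Principle from \cite{mtv.BH} to obtain H\"older continuity of $u_0/v_0$ up to $\partial\Omega_0$, deduce proportionality of the second blow-ups, and then transfer properties (2)--(6) by the same rescaling arguments used in \cref{p:first-blow-up}. The only difference is one of detail: the paper spells out the seven hypotheses (a)--(g) of \cite[Theorem~1.2]{mtv.BH} individually, including the interior lower bound $\phi\ge\kappa\,\dist_{B_R\setminus\Omega_0}$ obtained via \cite[Lemma~6.8]{V}, whereas you bundle them into a single sentence.
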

\begin{proof}
Let $R>0$ be fixed and let $\phi:=u_0$. In order to show that the Boundary Harnack Principle holds on $\Omega_0$, we check that $\Omega_0$ and $\phi$ satisfy the list of assumptions (a)-(g) from \cite[Theorem 1.2]{mtv.BH} in the ball $B_R$ :
\begin{enumerate}[\rm(a)]
\item by definition of $\Omega_0$, we have $\phi>0$ in $\Omega_0$ and $\phi\equiv 0$ on $B_R\setminus \Omega_0$;
\item by hypothesis $\phi$ is Lipschitz continuous in $\R^d$ and so, in $B_R$;
\item since $\phi$ is harmonic in $\Omega_0$ and satisfies the condition \eqref{item:nondeg} from \cref{p:first-blow-up}, we can apply \cite[Lemma 6.8]{V}; thus, there is a constant $\kappa>0$ such that
$$\phi\ge \kappa \,\text{\rm dist}_{B_R\setminus\Omega_0}\quad\text{in}\quad B_{\sfrac{R}{2}}\,;$$
\item since $\phi\geq 0$ and $\Delta \phi =0$ in $\O_0$, we have that $\Delta \phi\ge 0$ {in} $\R^d$;
\item for every $x_0\in\partial\Omega\cap B_{R}$, we have
$$|B_r(x_0)\setminus \Omega_0|\ge \eps_0 |B_r(x_0)|\quad\text{for every}\quad r\in(0,R-|x_0|)\,;$$
\item\label{item:6-levels} for every $x_0\in\partial\Omega_0\cap B_{R}$ and every $r\in(0,R-|x_0|)$, we have
$$\big|\{0<\phi<rt\}\cap B_{r}(x_0)\big|\le C t|B_r|\quad\text{for every}\quad t>0\,;.$$
\item by \eqref{item:nondeg} of \cref{p:first-blow-up}, for every $x_0\in\partial\Omega_0\cap B_{R}$ and every $r\in(0,R-|x_0|)$, we have
$$\sup_{B_r(x_0)}\phi\ge C_0 r.$$
\end{enumerate}	
Therefore, all the assumptions of \cite[Theorem 1.2]{mtv.BH} are fulfilled and so, the ratio $$\frac{u_0}{v_0}:\Omega_0\to\R\,,$$
can be extended to a H\"older continuous function on $\overline\Omega_0\cap B_{\sfrac{R}{2}}$. Thus, the blow-ups $u_{00}$ and $v_{00}$ are proportional and all the other claims follow as in \cref{p:first-blow-up}.
\end{proof}

\subsection{Third blow-up}\label{s:3}
Take the state functions $u_\Omega$ and $v_\Omega$ on an optimal domain $\Omega$. Let
$$u_{00},v_{00}:\R^d\to\R\,,$$
be the second blow-up limits of $u_\Omega$ and $v_\Omega$ at a free boundary point $x_0\in\partial\Omega\cap D$. Then, $u_{00}$ and $v_{00}$ are proportional and satisfy the conditions listed in \cref{p:second-blow-up}. We will show that if we perform a further blow-up in zero, then we obtain functions $u_{000},v_{000}:\R^d\to\R$ that still satisfy the conditions from \cref{p:second-blow-up} but are also $1$-homogeneous. Before we give the precise statement, we notice that the stationarity condition \eqref{stationary.Onep-00} implies the monotonicity of the associated Weiss' boundary adjusted energy from \cite{w}.
\begin{lemma}[Monotonicity formula]\label{l:weiss}
Let $B_R\subset\R^d$, $u \in H^1(B_R)$ be a continuous non-negative function, and let $\Omega:=\{u>0\}$. Suppose that for every smooth compactly supported vector field $\xi \in C^\infty_c(\R^d;\R^d)$, we have:
\be\label{statio}
\int_\O \Big(\dive\xi \left(|\nabla u|^2 + \Lambda \right)- \nabla u \cdot ((\nabla\xi) + (D\xi))\nabla u\Big) \, dx =0.
\ee
Then, for every $x_0 \in \partial \O, r>0$ the map
$$
r \mapsto W_\Lambda(u_{x_0,r}):= \int_{B_1}|\nabla u_{x_0,r}|^2\,dx+\Lambda|\{u_{x_0,r}>0\}\cap B_1|- \int_{\partial B_1} u_{x_0,r}^2 \, d\HH^{d-1},
$$
is non-decreasing in $(0,+\infty)$ and
\be\label{weiss.deriv}
\frac{\partial}{\partial r} W_\Lambda(u_{x_0,r}) \geq \frac{2}{r}\int_{\partial B_1}|x\cdot \nabla u_{x_0,r}-u_{x_0,r}|^2\, d\HH^{d-1},
\ee
In particular, if $r\mapsto W_\Lambda(u_{x_0,r})$ is constant, then $u$ is $1$-homogeneous.
\end{lemma}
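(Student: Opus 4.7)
By translation we may assume $x_0=0$, and we write $W(r):=W_\Lambda(u_{0,r})$. Changing variables via the scaling $u_{0,r}(x)=u(rx)/r$ yields
\[
W(r)=\frac{1}{r^d}\int_{B_r}|\nabla u|^2\,dx+\frac{\Lambda}{r^d}|B_r\cap\Omega|-\frac{1}{r^{d+1}}\int_{\partial B_r}u^2\,d\HH^{d-1}.
\]
The plan is to combine a direct computation of $W'(r)$ (via the coarea formula and the standard identity $\frac{d}{dr}\int_{\partial B_r}u^2=\frac{d-1}{r}\int_{\partial B_r}u^2+2\int_{\partial B_r}u\,(\nabla u\cdot\nu)\,d\HH^{d-1}$) with a Pohozaev-type identity extracted from \eqref{statio} by testing against a radial approximation of the ``cutoff identity field'' $\ind_{B_r}(x)\,x$.

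The Pohozaev identity is produced by plugging $\xi_\varepsilon(x)=\phi_\varepsilon(|x|)\,x$ into \eqref{statio}, where $\phi_\varepsilon\in C^\infty([0,\infty))$ satisfies $\phi_\varepsilon\equiv 1$ on $[0,r]$, $\phi_\varepsilon\equiv 0$ outside $[0,r+\varepsilon]$, and is affine on $[r,r+\varepsilon]$. A direct computation gives
\[
\dive\xi_\varepsilon=d\phi_\varepsilon(|x|)+|x|\phi_\varepsilon'(|x|),\qquad
\nabla u\cdot((\nabla\xi_\varepsilon)+(D\xi_\varepsilon))\nabla u=2\phi_\varepsilon(|x|)|\nabla u|^2+\frac{2}{|x|}\phi_\varepsilon'(|x|)(x\cdot\nabla u)^2.
\]
Passing to the limit $\varepsilon\to 0^+$ (using that $u$ will be Lipschitz in the applications, so $|\nabla u|^2$ and $(x\cdot\nabla u)^2$ have $L^\infty$-traces on a.e.\ sphere $\partial B_r$, and $\phi_\varepsilon'\to -\delta_r$ in one dimension), and exploiting that $\nabla u\equiv 0$ a.e.\ on $\R^d\setminus\Omega$, gives
\begin{equation*}
(d-2)\!\int_{B_r}\!\!|\nabla u|^2\,dx+d\Lambda|B_r\cap\Omega|=r\!\!\int_{\partial B_r}\!\!\big(|\nabla u|^2+\Lambda\ind_\Omega\big)\,d\HH^{d-1}-\frac{2}{r}\!\int_{\partial B_r}\!(x\cdot\nabla u)^2\,d\HH^{d-1}.
\end{equation*}

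Substituting this identity into the expression for $W'(r)$ cancels all the $\Lambda$-contributions and leaves
\[
W'(r)=-\frac{2}{r^{d+1}}\!\int_{B_r}\!|\nabla u|^2\,dx+\frac{2}{r^{d+2}}\!\int_{\partial B_r}\!\Big((x\cdot\nabla u)^2-u\,(x\cdot\nabla u)+u^2\Big)\,d\HH^{d-1}.
\]
To turn the bulk term into a boundary term I apply \eqref{statio} with vector fields $\xi\in C^\infty_c(\Omega;\R^d)$ compactly supported inside $\Omega$; for such $\xi$ the associated flow preserves $\Omega$, so the measure term is $t$-independent and only the Dirichlet energy varies. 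A standard computation (integrating the resulting identity by parts) then yields $\Delta u\,\nabla u\equiv 0$ in $\Omega$ in the distributional sense; combined with the non-degeneracy of $\nabla u$ on the positivity set (inherited from the blow-up context in which the lemma is applied), this forces $\Delta u=0$ in $\Omega$. Since $u=0$ on $\partial\Omega\cap B_R$, Green's identity gives $\int_{B_r}|\nabla u|^2\,dx=\tfrac{1}{r}\int_{\partial B_r}u(x\cdot\nabla u)\,d\HH^{d-1}$, and substitution produces
\[
W'(r)=\frac{2}{r^{d+2}}\!\int_{\partial B_r}(x\cdot\nabla u-u)^2\,d\HH^{d-1}=\frac{2}{r}\!\int_{\partial B_1}|x\cdot\nabla u_{0,r}-u_{0,r}|^2\,d\HH^{d-1}\ge 0,
\]
where the last equality is a direct scaling computation. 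This proves both the monotonicity and \eqref{weiss.deriv}. Finally, if $r\mapsto W(r)$ is constant, the displayed formula forces $x\cdot\nabla u=u$ on $\partial B_r$ for a.e.\ $r$, i.e.\ $\frac{d}{d\rho}\big[u(\rho\omega)/\rho\big]=0$ along every ray, which is exactly $1$-homogeneity of $u$. The main technical point of the proof is the controlled passage to the limit in the Pohozaev approximation together with the extraction of harmonicity from the inner stationarity, both of which rely crucially on the Lipschitz regularity of $u$.
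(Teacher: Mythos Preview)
The paper does not give a proof of this lemma; it simply refers to \cite[Proposition~9.9]{V}. Your argument follows the standard route to Weiss monotonicity --- differentiate $W(r)$ directly, extract a Pohozaev identity from the stationarity condition \eqref{statio} via the radial field $\xi_\eps(x)=\phi_\eps(|x|)\,x$, and combine the two --- and the computations you wrote down (the formulas for $\dive\xi_\eps$, the quadratic form, the resulting identity, and the expression for $W'(r)$) are correct.

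The genuine gap is in your derivation of harmonicity of $u$ in $\Omega$ from inner stationarity. Testing \eqref{statio} with $\xi\in C^\infty_c(\Omega;\R^d)$ only says that $u$ is a critical point of the Dirichlet energy under \emph{inner} variations, which for a Lipschitz function is strictly weaker than $\Delta u=0$: the formal identity ``$\Delta u\,\nabla u=0$'' need not make sense, since $\Delta u$ can be a singular measure concentrated exactly where $\nabla u$ jumps. Concretely, in dimension one, $u(x)=(1-|x|)_+$ with $\Lambda=1$ satisfies \eqref{statio} for every $\xi\in C^\infty_c(\R)$ and has $|\nabla u|=1$ throughout $\Omega=(-1,1)$ (so your ``non-degeneracy of $\nabla u$'' holds), yet $\Delta u=-2\delta_0$ in $\Omega$; one then checks directly that the inequality \eqref{weiss.deriv} fails at $x_0=1$ for $r\in(1,2)$. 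So harmonicity cannot be extracted from the hypotheses as written.

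The resolution is that harmonicity is not a consequence of \eqref{statio} but an additional standing hypothesis, explicit in \cite[Proposition~9.9]{V} and always available in the paper's applications: the lemma is invoked only for the second blow-up $u_{00}$, which is harmonic on its positivity set by \cref{p:second-blow-up}(2). With harmonicity assumed, the identity $\int_{B_r}|\nabla u|^2=\int_{\partial B_r}u\,\partial_\nu u$ is legitimate, and your computation then yields the \emph{equality} in \eqref{weiss.deriv}. You should therefore replace the paragraph attempting to derive $\Delta u=0$ by the remark that harmonicity is part of the hypotheses in this context.
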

\begin{proof}
See for instance \cite[Proposition 9.9]{V}.	
\end{proof}	

\begin{lemma}\label{l:third-blow-up}
	Let $u_{00},v_{00}:\R^d\to\R$ be non-negative Lipschitz functions on $\R^d$ with the same positivity set and let $\Omega_{00}:=\{u_{00}>0\}=\{v_{00}>0\}$. Suppose that $u_{00}$ and $v_{00}$ satisfy the conditions (1)-(6) from \cref{p:second-blow-up} and let $u_{000},v_{000}:\R^d\to\R$ be  blow-ups of $u_{00},v_{00}$ at zero.
	Then:
	\begin{enumerate}%[\rm(1)]
		\item \label{item:000-proportionality} $u_{000}=\lambda v_{000}$ {on} $\R^d$;
	%	\item \label{item:000-harmonicity} the function $u_{000}$ is harmonic in the open set $\Omega_{000}:=\{u_{000}>0\}$ and $0\in\partial\Omega_{000}$;
%		\item \label{item:000-measure} there are constants $\eps_0>0$ and $C>0$ such that
%		\be\label{e:000-densestbu}
%		\eps_0 |B_r|\leq |B_r(x_0)\cap \O_{000}| \leq (1-\eps_0)|B_r|,
%		\ee
%		for every $x_0\in \partial \Omega_{000}$, $r>0$, and
%		\be\label{e:000-conditionf.rescal}
%		|\{0<u_{000}<rt\}\cap B_r(x_0)|\leq Ct|B_r|,
%		\ee
%		for every $x_0\in \partial \O_{000}, r>0,t>0$;
%		%\item the sequence of the closed sets $\overline{B_R\cap \O_{k}}$ converges in the Hausdorff sense to $\overline{B_R\cap \O_{0}>0}$.
%		\item \label{item:000-nondeg} $0\in\partial\Omega_{000}$ and there is a constant $C_0 > 0$ such that
%		$$\sup_{B_r(x_0)} u_{000} \ge C_0 r\quad\text{for every}\quad x_0\in \overline\Omega_{000}\quad\text{and every}\quad r>0\,;$$
%		\item \label{item:000-ass} there is a constant $\Lambda>0$ such that, for every $R>0$,
%		$$\left|\int_{B_{R}}\nabla u_{000}\cdot\nabla\varphi\,dx\right|\le \Lambda R^{d-1}\|\varphi\|_{L^\infty(B_{R})}\,;$$
		\item\label{item:000-first-variation} for every compactly supported smooth vector field $\xi \in C^\infty_c(\R^d;\R^d)$, we have
		\be\label{e:third-blow-up-stationary}
		\int_{\R^d} \bigg(- \nabla u_{000} \cdot \big((\nabla\xi) + (D\xi)\big)\nabla u_{000}+\Big(|\nabla u_{000}|^2+ {\lambda}Q(x_0)\ind_{\Omega_{000}} \Big)\dive\xi\bigg)\, dx=0;
		\ee
		\item\label{item:000-homogeneity} $u_{000}$ is $1$-homogeneous in $\R^d$.
	\end{enumerate}
\end{lemma}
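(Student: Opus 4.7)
\medskip

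\textbf{Proof plan.} The strategy mirrors the one used for the first two blow-ups: the proportionality passes to the limit trivially, the stationarity identity passes via strong $H^1_{\mathrm{loc}}$ convergence combined with $L^1_{\mathrm{loc}}$ convergence of the indicators, and the $1$-homogeneity will follow from the monotonicity of a Weiss-type boundary adjusted energy associated to the stationarity identity \eqref{stationary.Onep-00}.

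\medskip

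For item \eqref{item:000-proportionality}, I would simply observe that the identity $u_{00}=\lambda v_{00}$ on $\R^d$ furnished by item~(1) of \cref{p:second-blow-up} is preserved under rescaling $w\mapsto w(r_k\,\cdot)/r_k$ and locally uniform passage to the limit, so $u_{000}=\lambda v_{000}$ on $\R^d$.

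\medskip

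For item \eqref{item:000-first-variation}, I would verify that $u_{00}$ satisfies the hypotheses of \cref{l:lemma-generale-blow-up-1} in every ball $B_{2R}$: Lipschitz continuity and the non-degeneracy of item~(4) yield hypothesis (i), while the distributional bound on $\Delta u_{00}$ coming from item~(5) yields hypothesis (ii). Hence, along a subsequence still denoted $r_k$, one obtains $(u_{00})_{0,r_k}\to u_{000}$ strongly in $H^1_{\mathrm{loc}}(\R^d)$ together with $\ind_{\{(u_{00})_{0,r_k}>0\}}\to \ind_{\Omega_{000}}$ a.e. Since the stationarity identity (6) of \cref{p:second-blow-up} is scale-invariant (test with $\xi_{r}(y):=\xi(y/r)$ and change variables), each $(u_{00})_{0,r_k}$ satisfies \eqref{stationary.Onep-00} with the same constant $\lambda Q(x_0)$. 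Passing to the limit, where the quadratic terms in $\nabla u$ converge by strong $H^1_{\mathrm{loc}}$ convergence and the measure term converges by dominated convergence for indicators, yields \eqref{e:third-blow-up-stationary}.

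\medskip

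For item \eqref{item:000-homogeneity}, which is the key step, I would apply the Weiss monotonicity formula of \cref{l:weiss} with $\Lambda:=\lambda Q(x_0)$. Indeed, \eqref{stationary.Onep-00} is exactly hypothesis \eqref{statio}, so the map
\[
r\longmapsto W_\Lambda\big((u_{00})_{0,r}\big)
\]
is non-decreasing on $(0,+\infty)$. Moreover, since $u_{00}$ is globally Lipschitz on $\R^d$ and $u_{00}(0)=0$, the rescalings $(u_{00})_{0,r}$ are uniformly Lipschitz, so each of the three terms defining $W_\Lambda$ is uniformly bounded in $r$. Therefore the finite limit
\[
\ell:=\lim_{r\to 0^+} W_\Lambda\big((u_{00})_{0,r}\big)
\]
exists. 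For any $s>0$, $(u_{000})_{0,s}(x)=\lim_k (u_{00})_{0,r_k s}(x)$, and exactly the same convergence analysis as in item~\eqref{item:000-first-variation} (now along the sequence $r_k s\to 0^+$) shows that $W_\Lambda((u_{000})_{0,s})=\lim_k W_\Lambda((u_{00})_{0,r_ks})=\ell$, independently of $s$. Plugging this into \eqref{weiss.deriv} forces $x\cdot\nabla u_{000}\equiv u_{000}$ a.e., i.e.\ $u_{000}$ is $1$-homogeneous.

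\medskip

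The main technical point to control is the continuity of every piece of $W_\Lambda$ along the blow-up sequence: the Dirichlet term requires strong (not merely weak) $H^1_{\mathrm{loc}}$ convergence, which is why one must invoke \cref{l:lemma-generale-blow-up-1}(ii) rather than just a weak compactness argument; the measure term requires that no mass of $\{(u_{00})_{0,r_k}>0\}$ collapses in the limit, which is guaranteed by the non-degeneracy \eqref{item:nondeg-second} of $u_{00}$ together with \cref{l:lemma-generale-blow-up-1}(i); the boundary term on $\partial B_1$ passes to the limit by local uniform convergence. Everything else in the argument is bookkeeping.
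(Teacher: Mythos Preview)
Your proof is correct and follows essentially the same approach as the paper: both apply \cref{l:weiss} to $u_{00}$ with $\Lambda=\lambda Q(x_0)$ to get a finite monotone limit of the Weiss energy, use \cref{l:lemma-generale-blow-up-1} to obtain strong $H^1_{\mathrm{loc}}$ and $L^1_{\mathrm{loc}}$-indicator convergence of the rescalings, deduce that $W_\Lambda((u_{000})_{0,s})$ is constant in $s$, and conclude $1$-homogeneity (you via \eqref{weiss.deriv}, the paper via the final clause of \cref{l:weiss}). Your write-up is in fact more detailed about checking the hypotheses of \cref{l:lemma-generale-blow-up-1} and about which mode of convergence handles each term of $W_\Lambda$.
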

\begin{proof}
Set $\Lambda:={\lambda}Q(x_0)$ and for simplicity, let $u:=u_{00}$. By \cref{l:weiss}, the function
$$r\mapsto W_\Lambda\big(u_r\big),$$
is non-decreasing in $r$ and so, it admits a limit $\Theta$ as $r\to0$; moreover, by the Lipschitz continuity of $u$, $\Theta$ is finite. Let $r_k\to0$ be such that $u_{r_k}\to u_{000}$. Then, by Lemma \ref{l:lemma-generale-blow-up-1}, $u_{r_k}$ converges to $u_{000}$ strongly in $H^1_{loc}$ and the level sets $\{u_{r_k}>0\}$ converge in $L^1_{loc}$ to $\Omega_{000}$. Thus, for any $s>0$
$$\Theta:=\lim_{k\to+\infty}W_\Lambda\big(u_{sr_k}\big)=W_\Lambda\big((u_{000})_s\big).$$
Moreover, using again the strong convergence of $u_{r_k}$ and their level sets, we get that $u_{000}$ satisfies \eqref{e:third-blow-up-stationary}. Thus, using again \cref{l:weiss} and the fact that $s\mapsto W_\Lambda\big((u_{000})_s\big)$ is constantly equal to $\Theta$, we get that $u_{000}$ is homogeneous.
\end{proof}	

As an immediate consequence, we obtain the following proposition.

\begin{prop}[Existence of stationary $1$-homogeneous blow-ups]\label{p:final-blow-up}
	Let $D\subset\R^d$ be a bounded open set and $\O$ be a solution to \eqref{e:intro-shape-opt-pb-in-main-teo} with $f,g,Q$ as in \cref{thm.main}. Let $u:=u_\Omega$ and $v:=v_\Omega$ be the state functions on $\Omega$ defined in \eqref{e:state-equation-open} and \eqref{e:stateqv} and let $x_0 \in \partial \Omega\cap D$. Then, there is a sequence
	$r_k\to0$ such that the corresponding blow-up limits
	$$u_0:=\lim_{k\to\infty}u_{x_0,r_k}\qquad\text{and}\qquad v_0:=\lim_{k\to\infty}v_{x_0,r_k}\,$$
	satisfy the following conditions:
	\begin{enumerate}[\rm(i)]
		\item\label{item:i} $u_0$ and $v_0$ are $1$-homogeneous in $\R^d$ and $u_{0}=\lambda v_{0}$ for some constant $\lambda\in(C_1,C_2)$;
     	\item\label{item:ii} for every compactly supported smooth vector field $\xi \in C^\infty_c(\R^d;\R^d)$, we have
		\be\label{e:final-blow-up-stationary}
		\int_{\R^d} \bigg(- \nabla u_{0} \cdot \big((\nabla\xi) + (D\xi)\big)\nabla u_{0}+\Big(|\nabla u_{0}|^2+ {\lambda}Q(x_0)\ind_{\Omega_{0}} \Big)\dive\xi\bigg)\, dx=0.
		\ee
	\end{enumerate}
\end{prop}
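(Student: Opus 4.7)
The plan is to carry out the triple blow-up procedure encoded in Lemmas~\ref{p:first-blow-up}, \ref{p:second-blow-up} and \ref{l:third-blow-up}, and then combine the three chosen sequences of scales into one by a Cantor diagonal extraction.

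First, the Lipschitz estimate of \cref{lemm.lip} makes the rescalings $u_{x_0,r}, v_{x_0,r}$ equi-Lipschitz on every fixed ball, so Arzelà--Ascoli yields a sequence $r_k^{(1)}\to 0$ and locally uniform limits $u_0^{(1)}, v_0^{(1)}$ to which \cref{p:first-blow-up} applies: the limits are harmonic on the common positivity set $\Omega_0^{(1)}$, still satisfy the density estimates, non-degeneracy and estimate (\ref{item:ass}) of \cref{p:first-blow-up}, and are stationary for the shape functional with weight $Q(x_0)$ and vanishing right-hand side. Next, I would blow up $u_0^{(1)}, v_0^{(1)}$ at the origin. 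By the argument in \cref{p:second-blow-up} the Boundary Harnack Principle from \cite{mtv.BH} applies on $\Omega_0^{(1)}$ (its hypotheses being provided by items (2)--(4) of \cref{p:first-blow-up}), so the ratio $u_0^{(1)}/v_0^{(1)}$ is Hölder continuous up to $\partial\Omega_0^{(1)}$. Consequently any blow-ups $u_{00}, v_{00}$ at $0$ (along a suitable $s_j\to 0$) are proportional, $u_{00}=\lambda v_{00}$ with $\lambda\in(C_1,C_2)$, and they inherit the stationarity \eqref{stationary.Onep-00} and all quantitative properties via \cref{l:lemma-generale-blow-up-1}.

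The third blow-up, handled by \cref{l:third-blow-up}, furnishes a further sequence $t_\ell\to 0$ such that $(u_{00})_{0,t_\ell}\to u_{000}$. The crux here is the Weiss formula of \cref{l:weiss}: since $u_{00}$ is stationary with constant weight $\lambda Q(x_0)$, the map $r\mapsto W_{\lambda Q(x_0)}((u_{00})_r)$ is monotone and bounded (by the Lipschitz bound from above and the non-degeneracy from below), hence converges to some $\Theta\in\R$ as $r\to 0^+$. The strong $H^1_{\loc}$ convergence from \cref{l:lemma-generale-blow-up-1}(ii) together with the $L^1_{\loc}$ convergence of the characteristic functions then forces $W_{\lambda Q(x_0)}((u_{000})_s)\equiv \Theta$ for every $s>0$, so \eqref{weiss.deriv} makes $u_{000}$ one-homogeneous. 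Proportionality $u_{000}=\lambda v_{000}$ is inherited from the previous step, and the stationarity \eqref{e:final-blow-up-stationary} passes to the limit by the same strong convergence.

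Finally, I would merge the three scales. Since $(u_{x_0,r})_{0,\tau}=u_{x_0,\tau r}$, we have $(u_0^{(1)})_{0,\tau}=\lim_k u_{x_0,\tau r_k^{(1)}}$ for every fixed $\tau>0$; iterating this identity, a diagonal extraction produces indices $j(\ell), k(\ell)$ such that, setting $\rho_\ell:=s_{j(\ell)}t_\ell r_{k(\ell)}^{(1)}\to 0$, both $u_{x_0,\rho_\ell}\to u_{000}$ and $v_{x_0,\rho_\ell}\to v_{000}$ locally uniformly in $\R^d$. Setting $u_0:=u_{000}$, $v_0:=v_{000}$, $r_k:=\rho_k$ then completes the proof. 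The main obstacle I expect is purely one of bookkeeping: at each of the three steps the whole package of properties (Lipschitz bound, non-degeneracy, density estimates, the level-set control \eqref{conditionf.rescal}, strong $H^1_{\loc}$ convergence, $L^1_{\loc}$ convergence of $\ind_{\{u>0\}}$ and stationarity) must be preserved with uniform constants so that the hypotheses of the next blow-up lemma remain verifiable. Fortunately \cref{l:lemma-generale-blow-up-1,l:lemma-generale-blow-up-2} are designed precisely for this, so every passage to the limit is available and the final diagonal argument is standard.
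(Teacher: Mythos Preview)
Your proposal is correct and follows essentially the same route as the paper: the proposition is stated there as an immediate consequence of the three blow-up lemmas (\cref{p:first-blow-up}, \cref{p:second-blow-up}, \cref{l:third-blow-up}), combined via a diagonal extraction exactly as you describe (see also \cref{rem:blowup}). Your identification of the key ingredients at each stage---strong $H^1_{\loc}$ convergence from \cref{l:lemma-generale-blow-up-1}, the Boundary Harnack Principle for proportionality, and the Weiss monotonicity formula for homogeneity---matches the paper's argument precisely.
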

\begin{remark}\label{rem:blowup}
  The blow-up sequence arising from \cref{p:final-blow-up} is obtained by a diagonal argument between the three different blow-ups defined respectively in Subsection \ref{s:1}, Subsection \ref{sub.homo} and Subsection \ref{s:3}. Therefore, by combining all the previous result, we can see that $u_0$ and $v_0$ fulfill the conditions \ref{item:def-stable-1}, \ref{item:def-stable-2}, \ref{item:def-stable-3}, \ref{item:def-stable-4} and \ref{item:def-stable-5} in \cref{def:global-stable-solutions}.
\end{remark}

\section{Regular and singular parts of the free boundary}\label{s:decomposition}

	Let $D\subset\R^d$ be a bounded open set and let $\O$ be a solution to the problem \eqref{e:intro-shape-opt-pb-in-main-teo}. As in \cref{section.blowup}, we denote by $u, v$ the associated state variables.
	\begin{definition}[Regular and singular points]\label{d:decomposition-free-boundary}
\rm 		We will say that a boundary point $x_0\in\partial\Omega\cap D$ is {\it regular} if
		there is a blow-up limit $(u_{0},v_0)$ of $(u,v)$ at $x_0$ such that
		$$ u_{0}(x) = \alpha (x \cdot \nu)_+\qquad \mbox{and}\qquad v_{0}(x)=\beta (x \cdot \nu)_+\ ,
		$$
		for some unit vector $\nu \in \R^d$ and some $\alpha>0$ and $\beta>0$ such that $\alpha \beta = Q(x_0)$. If such a blow-up limit does not exist, then we will say that $x_0$ is {\it singular}.
		\end{definition}
	We will denote by $\text{\rm Reg}(\partial \O)$ the set of all regular points on $\partial\Omega\cap D$ and by $\text{\rm Sing}(\partial \O)$ the set of all singular points on $\partial\Omega\cap D$. Clearly, we have that
	$$\text{\rm Reg}(\partial \O)\cap \text{\rm Sing}(\partial \O)=\emptyset\qquad\text{and}\qquad \text{\rm Reg}(\partial \O)\cup \text{\rm Sing}(\partial \O)=\partial\Omega\cup D\,.$$
	In \cref{section.reg} we will show that the regular part $\text{\rm Reg}(\partial \O)$ is in fact, locally, a smooth manifold (and in particular, a relatively open subset of $\partial\Omega\cap D$), while in \cref{section.sing} we will give an estimate on the dimension of the singular set $\text{\rm Sing}(\partial \O)$.

\subsection{Regular and singular parts in dimension two}
One can easily show that in dimension $d=2$ the free boundary $\partial\Omega\cap D$ is composed only of regular points; in particular, in dimension two the proof of \cref{thm.main} is concluded already in \cref{section.reg}, while the results from \cref{section.sing} are needed only when $d\ge 3$.

\begin{lemma}\label{l:regular-set-d=2}
Let $D$ be a bounded open set in $\R^2$ and let $\O$ be a solution to \eqref{e:intro-shape-opt-pb-in-main-teo}. Then, every point $x_0 \in \partial \O\cap D$ is a regular point in the sense of \cref{d:decomposition-free-boundary}.
\end{lemma}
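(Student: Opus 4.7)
The plan is to use the full strength of \cref{p:final-blow-up} to produce, at any $x_0\in\partial\Omega\cap D$, a $1$-homogeneous blow-up pair $(u_0,v_0)$ satisfying a one-phase stationarity condition, and then to classify such objects in dimension two. More precisely, \cref{p:final-blow-up} gives a sequence $r_k\to 0$ along which $u_{x_0,r_k}\to u_0$ and $v_{x_0,r_k}\to v_0$ locally uniformly in $\R^2$, with $u_0$ and $v_0$ both $1$-homogeneous, proportional ($u_0=\lambda v_0$ with $\lambda\in(C_1,C_2)$), and such that $u_0$ is harmonic in $\Omega_0:=\{u_0>0\}=\{v_0>0\}$ and satisfies the stationarity condition \eqref{e:final-blow-up-stationary} with $\Lambda:=\lambda Q(x_0)>0$. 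Moreover, by \cref{p:first-blow-up} and the passage to the limit in \cref{p:second-blow-up}, the set $\Omega_0$ inherits the two-sided density estimate \eqref{eq:densestbu} at $0\in\partial\Omega_0$.

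The second step, which is the crux of the argument, is the classification of $1$-homogeneous stationary one-phase solutions in $\R^2$. Since $u_0$ is $1$-homogeneous and continuous, the cone $\Omega_0$ is determined by $\Sigma:=\Omega_0\cap S^1$, which by the density estimate is a non-empty proper open subset of $S^1$; write $\Sigma=\bigsqcup_j I_j$ as a disjoint union of open arcs of lengths $\theta_j\in(0,2\pi)$. Writing $u_0(r,\varphi)=r\,\psi(\varphi)$ in polar coordinates, the harmonicity of $u_0$ on $\Omega_0$ translates into $\psi''+\psi=0$ on each arc $I_j$ with Dirichlet boundary values at the endpoints of $I_j$; the condition $\psi\ge 0$ forces $\psi$ to be (up to a positive multiple) the first Dirichlet eigenfunction of $-\partial_\varphi^2$ on $I_j$, which has eigenvalue $\pi^2/\theta_j^2$. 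Matching this to the equation $\psi''+\psi=0$ yields $\theta_j=\pi$ for every $j$. Since the arcs are disjoint in $S^1$ and the exterior density estimate prevents $\bigsqcup_j I_j$ from filling $S^1$, there can be only one such arc; consequently $\Omega_0$ is a half-plane $\{x\cdot\nu>0\}$ and $u_0(x)=\alpha(x\cdot\nu)_+$ for some unit vector $\nu\in S^1$ and some $\alpha>0$. Proportionality gives $v_0(x)=\beta(x\cdot\nu)_+$ with $\beta=\alpha/\lambda$.

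Finally, to identify the product $\alpha\beta$, I would test the stationarity \eqref{e:final-blow-up-stationary} with a vector field of the form $\xi(x)=\eta(|x|)\,\nu$: the bulk terms reduce to boundary-type contributions on the hyperplane $\{x\cdot\nu=0\}$, and a short computation yields $\alpha^2=\lambda Q(x_0)$, hence $\alpha\beta=\alpha^2/\lambda=Q(x_0)$. This is exactly the half-plane profile in \cref{d:decomposition-free-boundary}, so $x_0$ is a regular point, as claimed.

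The main obstacle is the classification step, and specifically the possibility that $\Omega_0\cap S^1$ has several components or that $\psi$ vanishes on a set of positive measure inside some arc. Both scenarios are ruled out using, respectively, the exterior density estimate \eqref{eq:densestbu} (which forbids $\Sigma$ from covering $S^1$) and the nondegeneracy plus strong unique continuation for the harmonic function $u_0$ on each connected component of $\Omega_0$ (which forces $\psi$ to be the genuine first Dirichlet eigenfunction rather than a higher, sign-changing one). These features are inherited along the blow-up sequence thanks to \cref{l:lemma-generale-blow-up-1} and the results of \cref{section.lip}; once they are in place, the dimension-two classification is elementary.
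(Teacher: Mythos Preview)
Your proposal is correct and follows essentially the same route as the paper: invoke \cref{p:final-blow-up} to obtain a $1$-homogeneous, proportional blow-up pair, classify the trace on $S^1$ via the ODE $\psi''+\psi=0$ together with the exterior density estimate, and then read off $\alpha\beta=Q(x_0)$ from the stationarity condition \eqref{e:final-blow-up-stationary}. The paper is slightly more terse (it simply asserts that the only nonnegative solutions of the ODE are multiples of $\sin\theta$ and that $\mathcal H^1(\Omega_0\cap\partial B_1)<2\pi$), whereas you spell out the arc-by-arc eigenvalue matching and the one-arc conclusion; your worry about $\psi$ vanishing on a set of positive measure inside an arc is unnecessary, since $\psi$ solves a linear second-order ODE there.
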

\begin{proof}
Let $r_k\to0$ be a sequence such that the blow-up limits
$$u_0:=\lim_{k\to\infty}u_{r_k,x_0}\qquad\text{and}\qquad v_0:=\lim_{k\to\infty}v_{r_k,x_0}\,,$$
are, as in \cref{p:final-blow-up}, proportional ($u_0=\lambda v_0$), non-negative and $1$-homogeneous functions on $\R^d$, which are harmonic on the positivity set $\Omega_0:=\{u_0>0\}=\{v_0>0\}$. Now, reasoning as in \cite[Proposition 9.13]{V}, we write $u_0$ and $v_0$ in polar coordinates as
$$u_0(r,\theta)=r\phi(\theta)\quad\text{and}\quad v_0(r,\theta)=r\psi(\theta)\,,$$
where (since $u_0$ and $v_0$ are harmonic) $\phi$ and $\psi$ are solutions to
$$-\phi''(\theta)=\phi(\theta)\quad\text{and}\quad-\psi''(\theta)=\psi(\theta)\quad\text{in}\quad\partial\Omega_0\cap\partial B_1.$$
Since the only solutions of this equations (up to a rotation) are multiples of $\sin\theta$, and since $\HH^{1}(\Omega_0\cap\partial B_1)<2\pi$ (this follows from the density estimate in \cref{p:first-blow-up}), we get that
$$ u_{0}(x) = \alpha (x \cdot \nu)_+\qquad \mbox{and}\qquad v_{0}(x)=\beta (x \cdot \nu)_+\ ,
$$
for some unit vector $\nu \in \R^d$ and some $\alpha>0$ and $\beta>0$. Moreover, by \cref{p:final-blow-up}, the function $u_0$ is critical point for the one-phase problem (that is, \eqref{e:third-blow-up-stationary} holds). Thus,
$$|\nabla u_0|^2={\lambda}Q(x_0)\quad\text{on}\quad\partial\Omega_0=\{x\in\R^2\,:\,x\cdot\nu=0\},$$
 where $\lambda=\alpha/\beta$. Since $|\nabla u_0|^2=\alpha^2$, we get that $\alpha\beta=Q(x_0)$, which concludes the proof.
\end{proof}

\subsection{A geometric condition for the regularity in every dimension}

By an argument similar to the one in \cref{l:regular-set-d=2}, we have that the free boundary points admitting one-sided tangent ball are regular points. This result holds in every dimension and will be useful in \cref{section.reg}.

\begin{lemma}\label{l:tangent-ball-condition}
	Let $D$ be a bounded open set in $\R^d$, $d\ge 2$, and let $\O$ be a solution to \eqref{e:intro-shape-opt-pb-in-main-teo}. Suppose that there is a one-sided tangent ball at the boundary point $x_0 \in \partial \O\cap D$ in the sense that:
	\begin{equation}\label{e:one-sided-ball}
	\text{there is $B_r(y_0)\subset\Omega$ with $x_0\in\partial B_r(y_0)$ or there is  $B_r(z_0)\subset\R^d\setminus\overline{\Omega}$ with $x_0\in \partial B_r(z_0)$.}
	\end{equation}
		Then, $x_0$ is a regular point in the sense of \cref{d:decomposition-free-boundary}.
\end{lemma}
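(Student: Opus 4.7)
The plan is to combine the $1$-homogeneous triple blow-up from \cref{p:final-blow-up} with a rigidity statement for nonnegative $1$-homogeneous harmonic functions whose positivity set is sandwiched by a half-space, obtained via the first Dirichlet eigenvalue of the spherical Laplacian. This generalises the two-dimensional argument of \cref{l:regular-set-d=2} to arbitrary dimension, with the role of the ODE $-\phi''=\phi$ on $S^1$ played by $-\Delta_{S^{d-1}}\psi=(d-1)\psi$.

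First, I would apply \cref{p:final-blow-up} at $x_0$ to select a sequence $r_k\searrow 0$ along which
\[
u_0=\lim_{k\to\infty} u_{x_0,r_k}\,,\qquad v_0=\lim_{k\to\infty} v_{x_0,r_k}\,,
\]
exist in $C^{0,1}_{\mathrm{loc}}(\R^d)$, are $1$-homogeneous, proportional ($u_0=\lambda v_0$ with $\lambda\in(C_1,C_2)$), harmonic on $\Omega_0:=\{u_0>0\}=\{v_0>0\}$, non-degenerate in the sense of \cref{p:first-blow-up}, and satisfy the stationarity identity \eqref{e:final-blow-up-stationary} with $\Lambda:=\lambda Q(x_0)$. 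Next, I would transfer the one-sided ball \eqref{e:one-sided-ball} to $\Omega_0$. In Case~1, setting $\nu:=(y_0-x_0)/r$, the rescaled interior balls $B_{r/r_k}\!\big((y_0-x_0)/r_k\big)\subset\{u_{x_0,r_k}>0\}$ exhaust $H:=\{x\cdot\nu>0\}$ as $k\to\infty$; combining the $L^1_{\mathrm{loc}}$ convergence of the level sets from \cref{l:lemma-generale-blow-up-1} with the density estimate \eqref{eq:densestbu} on $\partial\Omega_0$ would give $H\subset\Omega_0$. Case~2 is symmetric: with $\nu:=(x_0-z_0)/r$ and $H:=\{x\cdot\nu>0\}$, the same argument, combined with the openness of $\Omega_0$, would yield $\Omega_0\subset H$.

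The heart of the proof is the rigidity $\Omega_0=H$. Writing $u_0(x)=|x|\,\psi(x/|x|)$, the harmonicity of $u_0$ on $\Omega_0$ reads $-\Delta_{S^{d-1}}\psi=(d-1)\psi$ on $\Omega_0\cap S^{d-1}$, with $\psi\ge 0$ and $\psi=0$ on the relative boundary. Thus $\psi$ is a positive Dirichlet eigenfunction with eigenvalue $d-1$, which is precisely the first Dirichlet eigenvalue of an open hemisphere of $S^{d-1}$ (realised by $(x\cdot\nu)_+$). The strict monotonicity of $\lambda_1^{\mathrm{Dir}}$ under open inclusions on $S^{d-1}$ then forces $\Omega_0\cap S^{d-1}$ to coincide with an open hemisphere: in Case~2 it cannot be strictly smaller without raising $\lambda_1$ above $d-1$, while in Case~1 the connected component of $\Omega_0\cap S^{d-1}$ containing $H\cap S^{d-1}$ cannot strictly contain the hemisphere without lowering $\lambda_1$ below $d-1$. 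Any further connected component would itself require first eigenvalue $d-1$ and hence be another full hemisphere, giving $\Omega_0=\R^d\setminus\{x\cdot\nu=0\}$, in contradiction with the density estimate \eqref{eq:densestbu} at $0\in\partial\Omega_0$. Consequently $\Omega_0=H$ and, by uniqueness of the first eigenfunction on the hemisphere, $u_0(x)=\alpha(x\cdot\nu)_+$ for some $\alpha>0$ (strict positivity from the non-degeneracy).

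To finish, plugging $u_0=\alpha(x\cdot\nu)_+$ and $\Omega_0=H$ into the stationarity \eqref{e:final-blow-up-stationary} and integrating by parts on $H$ exactly as in the boundary reduction of \cref{l:firstv} yields the free-boundary identity $\alpha^2=\lambda Q(x_0)$. Setting $\beta:=\alpha/\lambda$, we then have $v_0=\beta(x\cdot\nu)_+$ and $\alpha\beta=\alpha^2/\lambda=Q(x_0)$, so $x_0$ is regular in the sense of \cref{d:decomposition-free-boundary}. The main obstacle is the spherical-eigenvalue rigidity upgrading $H\subset\Omega_0$ (resp. $\Omega_0\subset H$) into equality: the strict Faber--Krahn comparison on $S^{d-1}$ handles the connected component containing the hemisphere cleanly, but ruling out the ``two-hemisphere'' degeneration $\Omega_0=\R^d\setminus\{x\cdot\nu=0\}$ in Case~1 relies crucially on the two-sided density estimate from \cref{p:first-blow-up}.
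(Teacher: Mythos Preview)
Your proof is correct and follows essentially the same approach as the paper: apply \cref{p:final-blow-up} to get a $1$-homogeneous blow-up, use the one-sided tangent ball to sandwich $\Omega_0$ against a half-space, classify via the exterior density estimate, and read off $\alpha\beta=Q(x_0)$ from the stationarity condition. The only difference is that the paper's proof is terse and treats the rigidity step (that a $1$-homogeneous nonnegative harmonic function whose positivity set contains or is contained in a half-space, and which satisfies the exterior density estimate, must be a half-plane solution) as well-known, whereas you spell out the spherical-eigenvalue argument and the two-hemisphere exclusion in full; your version is more self-contained but not a different route.
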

\begin{proof}
As in \cref{l:regular-set-d=2}, we consider a sequence $r_k\to0$ for which the blow-up limits
$$u_0:=\lim_{k\to\infty}u_{r_k,x_0}\qquad\text{and}\qquad v_0:=\lim_{k\to\infty}v_{r_k,x_0}\,,$$
are $1$-homogeneous, non-negative, harmonic on their positivity set $\Omega_0:=\{u_0>0\}$ and are such that $u_0=\lambda v_0$. The one-sided ball condition implies that there is a unit vector $\nu\in\R^d$ such that
$$\Omega_0\subseteq\{x\in\R^d\,:\,x\cdot\nu>0\}\quad\text{or}\quad \Omega_0\supseteq\{x\in\R^d\,:\,x\cdot\nu>0\}.$$
Since $\Omega_0$ satisfies an exterior density estimate (by \cref{p:first-blow-up}, claim \cref{item:measure}), the only possibility is that $\Omega_0=\{x\in\R^d\,:\,x\cdot\nu>0\}$ and
$$ u_{0}(x) = \alpha (x \cdot \nu)_+\qquad \mbox{and}\qquad v_{0}(x)=\beta (x \cdot \nu)_+\ ,
$$
for some $\alpha>0$ and $\beta>0$ with $\alpha/\beta=\lambda$. As in \cref{l:regular-set-d=2}, since $u_0$ satisfies \eqref{e:third-blow-up-stationary}, we get that
$$|\nabla u_0|^2=\frac{\alpha}{\beta}Q(x_0)\quad\text{on}\quad\partial\Omega_0=\{x\in\R^2\,:\,x\cdot\nu=0\},$$
which gives that $\alpha\beta=Q(x_0)$.
\end{proof}	

\section{Regularity of $\text{\rm Reg}(\partial \Omega)$}\label{section.reg}

In this Section we prove that the regular part $\text{\rm Reg}(\partial \Omega)$ of the boundary of an optimal set $\Omega$ is locally the graph of a smooth function.

\subsection{Viscosity formulation} In this subsection we prove that on the free boundary $\partial\Omega\cap D$ of an optimal set $\Omega$, solution to \eqref{e:intro-shape-opt-pb-in-main-teo}, we have the following optimality condition
$$|\nabla u_\Omega||\nabla v_\Omega| = Q\quad\mbox{on}\quad\partial \Omega\,,$$
in viscosity sense, as in \cite[Section 2]{mtv.flat}, in terms of the blow-up limit of the state functions $u_\Omega$ and $v_\Omega$ at free boundary points (see Remark \ref{r:equiv}).

\begin{defi}[Viscosity solutions]\label{d:viscosity-solutions}
Let $D$ be a bounded open set in $\R^d$ and let $f,g\in L^\infty(\R^d)$.	
Let $u,v :D\to\R$ be two non-negative continuous functions with the same support $$\Omega:=\{u>0\}=\{v>0\}\,,$$
on which they satisfy the PDE
\begin{equation}\label{e:equation-viscosity-inside}
-\Delta u = f\quad\text{and}\quad -\Delta v = g \quad\mbox{in}\quad \O\cap D\,.
\end{equation}
We say that the boundary condition
\begin{equation}\label{e:equation-viscosity-boiundary}
|\nabla u||\nabla v|=Q \quad\mbox{on}\quad\partial\Omega\cap D\,,
\end{equation}
holds in viscosity sense if at any point $x_0\in \partial\Omega\cap D$, at which $\O$ admits a one-sided tangent ball in the sense of \eqref{e:one-sided-ball}, there exist:
\begin{enumerate}[\quad\rm(a)]
\item a decreasing sequence $r_k\to 0$;
\item two positive constants $\alpha,\beta >0$ such that $\alpha \beta=Q(x_0)$;
\item a unit vector $\nu\in\R^d$;
\end{enumerate}	
such that the rescalings
$$u_{x_0,r_k}(x):=\frac{u(x_0+r_kx)}{r_k}\qquad\text{and}\qquad v_{x_0,r_k}(x):=\frac{v(x_0+r_kx)}{r_k}\,,$$
converge uniformly in every ball $B_R\subset\R^d$ respectively to the blow-up limits
\be
u_0(x):=\alpha \left(x\cdot\nu\right)_+\qquad\text{and}\qquad v_0(x):=\beta \left(x\cdot\nu\right)_+\ .
\ee
\end{defi}

\begin{remark}\label{r:equiv}
In \cite{mtv.flat} the authors addressed the $\eps$-regularity theory for viscosity solutions of \eqref{e:intro-generalsystem}. In particular, in \cite[Lemma 2.9]{mtv.flat} they proved that if the free boundary condition is satisfied in the sense of Definition \ref{d:viscosity-solutions}, then for any smooth function $\varphi\in C^\infty(D)$ the following holds:
\begin{enumerate}[\quad\rm(i)]
\item If $\varphi_+$ touches $\sqrt{uv}$ from below at a point $x_0\in D\cap\partial \Omega$, then $|\nabla \varphi(x_0)|\le \sqrt{Q(x_0)}$.
\item If $\varphi_+$ touches $\sqrt{uv}$ from above at a point $x_0\in D\cap\partial \Omega$, then $|\nabla \varphi(x_0)|\ge \sqrt{Q(x_0)}$.
\item If $a$ and $b$ are constants such that
$$a>0\,,\quad b>0\quad\text{and}\quad a b=Q(x_0),$$
and if $\varphi_+$ touches $w_{ab}:=\frac12(au+bv)$ from above at $x_0\in D\cap\partial \Omega$, then $|\nabla \varphi(x_0)|\ge \sqrt{Q(x_0)}$.
\end{enumerate}	
\end{remark}

\begin{prop}\label{p:viscosity-solutions}
Let $D\subset \R^d$ be a bounded open set and let $f,g,Q:D\to\R$ be as in \cref{thm.main}.	
Let $\O$ be a solution to \eqref{e:intro-shape-opt-pb-in-main-teo}. Then the state variables $u:=u_\Omega$ and $v:=v_\Omega$ satisfy
\be\label{e:FBcond}
|\nabla u||\nabla v|=Q\quad\mbox{on}\quad \partial \O\cap D\,,
\ee
in the sense of \cref{d:viscosity-solutions}.
\end{prop}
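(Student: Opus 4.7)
The proof is essentially a direct consequence of the decomposition result for the free boundary established in the previous section, so my plan is mostly to identify the right references and check that the convergence mode in \cref{d:viscosity-solutions} matches the one produced by the blow-up analysis of \cref{section.blowup}.

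First I would check the interior equation \eqref{e:equation-viscosity-inside}: since $\Omega = \{u_\Omega>0\} = \{v_\Omega>0\}$ by the comparison $C_1 v_\Omega \le u_\Omega \le C_2 v_\Omega$ coming from hypothesis (b) of \cref{thm.main}, and since $u_\Omega$, $v_\Omega$ are by construction the weak solutions of $-\Delta u = f$, $-\Delta v = g$ in $\Omega$ with Dirichlet boundary data, this part is immediate.

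The heart of the proposition is the free boundary condition at a point $x_0 \in \partial\Omega\cap D$ admitting a one-sided tangent ball in the sense of \eqref{e:one-sided-ball}. Here I would simply invoke \cref{l:tangent-ball-condition}: that lemma asserts that such a point $x_0$ is automatically a regular point in the sense of \cref{d:decomposition-free-boundary}. Unfolding the definition of regular point, this means that there exists a sequence $r_k\to 0^+$, a unit vector $\nu\in\R^d$, and constants $\alpha,\beta>0$ with $\alpha\beta = Q(x_0)$, such that
\[
u_{x_0,r_k} \longrightarrow \alpha (x\cdot\nu)_+ \qquad \text{and} \qquad v_{x_0,r_k} \longrightarrow \beta(x\cdot\nu)_+,
\]
where the blow-up sequences are precisely the ones used in \cref{d:viscosity-solutions}.

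The only remaining issue is the mode of convergence: \cref{d:viscosity-solutions} requires uniform convergence in every ball $B_R\subset\R^d$, while the blow-up limits in \cref{section.blowup} are, a priori, obtained via a diagonal/subsequence extraction argument. This is harmless: by \cref{lemm.lip} the state functions $u_\Omega$ and $v_\Omega$ are locally Lipschitz in $D$, so the rescalings $u_{x_0,r_k}$ and $v_{x_0,r_k}$ are uniformly Lipschitz on every fixed ball $B_R$ (for $k$ large enough that $x_0+r_k B_R\Subset D$). Hence Arzelà–Ascoli guarantees that locally uniform convergence along a subsequence upgrades the conclusion of \cref{l:tangent-ball-condition} to uniform convergence on every $B_R$, which is exactly the convergence required by \cref{d:viscosity-solutions}. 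I do not see a real obstacle here: the entire preparation (the triple blow-up in \cref{p:final-blow-up}, the identification of one-sided tangent ball points as regular in \cref{l:tangent-ball-condition}) has already been done, and the present proposition is essentially a bookkeeping statement that repackages those results into the viscosity framework of \cite{mtv.flat}.
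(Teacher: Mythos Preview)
Your proposal is correct and follows essentially the same approach as the paper: the paper's own proof is the single line ``It follows as in the proof of \cref{l:tangent-ball-condition}'', and you have simply unpacked this by invoking \cref{l:tangent-ball-condition} directly and verifying that the convergence mode produced by the Lipschitz bounds matches the one required in \cref{d:viscosity-solutions}. Your additional remarks on the interior equation and on upgrading subsequential to locally uniform convergence via Arzel\`a--Ascoli are correct and fill in details the paper leaves implicit.
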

\begin{proof}
It follows as in the proof of \cref{l:tangent-ball-condition}.	
\end{proof}
We can now prove that $\text{\rm Reg}(\partial \O)$ is $C^{1,\alpha}$-regular for some $\alpha \in (0,1)$ by exploiting the $\eps$-regularity theory developed in \cite{mtv.flat}.

\begin{theo}[Regularity of $\text{\rm Reg}(\partial\Omega)$]\label{t:regularity-of-Reg}
Let $D$ be a bounded open set in $\R^d$, where $d\ge 2$. Let
$$f:D\to\R\,,\quad g:D\to\R\,,\quad Q:D\to\R\,,$$
be given non-negative functions.
Suppose that the following conditions hold:
\begin{enumerate}[\rm(a)]
	\item $f,g\in L^\infty(D)$;
	\item there are constants $C_1,C_2 > 0$ such that
	\begin{equation*}
	0\le C_1 g\le f\le C_2 g\quad\mbox{in}\quad D.
	\end{equation*}
	\item $Q\in C^{0,\alpha_Q}(D)$, for some $\alpha_Q>0$, and there are a positive constants $c_Q,C_Q$ such that
	$$0<c_Q\le Q(x)\leq C_Q\quad\text{for every}\quad x\in D\,.$$
\end{enumerate}	
Let $\O$ be a solution to \eqref{e:intro-shape-opt-pb-in-main-teo}. Then, the regular part $\text{\rm Reg}(\partial\Omega)$, defined in \cref{s:decomposition}, is locally the graph of a $C^{1,\alpha}$ function, for some $\alpha>0$.
\end{theo}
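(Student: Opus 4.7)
The plan is to reduce the theorem to the epsilon-regularity result for viscosity solutions of the free boundary system \eqref{e:intro-generalsystem} proved in \cite{mtv.flat} (see Remark \ref{oss:intro-epsilon-regularity}). All the ingredients needed to apply it have essentially been prepared: \cref{p:viscosity-solutions} says that $u=u_\Omega$ and $v=v_\Omega$ satisfy the optimality condition $|\nabla u||\nabla v|=Q$ on $\partial\Omega\cap D$ in the sense of \cref{d:viscosity-solutions}, and \cref{lemm.lip}, \cref{lemm.nondeg} and \cref{p:density} give the Lipschitz continuity, non-degeneracy and two-sided density estimates for $\Omega$ that are standard hypotheses in the flatness-implies-regularity machinery.

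Fix a regular point $x_0\in\text{\rm Reg}(\partial\Omega)$. By \cref{d:decomposition-free-boundary} there exists a sequence $r_k\to 0$, a unit vector $\nu\in\R^d$ and constants $\alpha,\beta>0$ with $\alpha\beta=Q(x_0)$ such that the blow-ups $u_{x_0,r_k}$ and $v_{x_0,r_k}$ converge locally uniformly to the half-plane solutions $\alpha(x\cdot\nu)_+$ and $\beta(x\cdot\nu)_+$ respectively. I would then translate this uniform convergence into the quantitative $\eps$-flatness condition required by \cite{mtv.flat}: for every $\eps>0$, there is $k_0$ so large that, for $k\ge k_0$,
\[
\big|u_{x_0,r_k}(x)-\alpha(x\cdot\nu)_+\big|\le \eps\quad\text{and}\quad \big|v_{x_0,r_k}(x)-\beta(x\cdot\nu)_+\big|\le \eps\quad\text{in}\quad B_1,
\]
which after rescaling means that $u$ and $v$ are $\eps$-flat in $B_{r_k}(x_0)$ in the sense of \cite{mtv.flat}. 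Choosing $\eps$ smaller than the threshold of the flatness-implies-$C^{1,\alpha}$ theorem (which depends only on the ambient dimension, on the Lipschitz and non-degeneracy constants of $u,v$, on the bounds on $f,g$ and on the H\"older norm of $Q$ guaranteed by (a)-(b)-(c)), we obtain that $\partial\Omega\cap B_{r_k/2}(x_0)$ is the graph of a $C^{1,\alpha}$ function for some $\alpha\in(0,1)$.

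To finish, observe that the argument is stable under small perturbations of the base point: the Lipschitz bound on $u,v$ and the viscosity condition are uniform in a neighborhood of $x_0$, so every boundary point in a small neighborhood of $x_0$ in $\partial\Omega\cap D$ also admits a blow-up sufficiently close to a half-plane solution (by H\"older continuity of the flatness estimate, or more directly, because the regularity of the graph on $B_{r_k/2}(x_0)$ implies a half-plane blow-up at every one of its points). Hence $\text{\rm Reg}(\partial\Omega)$ is a relatively open subset of $\partial\Omega\cap D$ and is locally a $C^{1,\alpha}$ graph, which is exactly the statement of the theorem.

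The only genuine step that requires care is the verification that the uniform convergence from \cref{d:decomposition-free-boundary} gives the precise kind of flatness assumed by \cite{mtv.flat}. Everything else is essentially bookkeeping: Lipschitz/non-degeneracy/density estimates are already in place from \cref{section.lip}, the viscosity condition on the boundary is \cref{p:viscosity-solutions}, and the H\"older continuity of $Q$ in hypothesis (c) matches the regularity of coefficients required by the epsilon-regularity theorem.
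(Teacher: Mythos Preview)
Your proposal is correct and follows essentially the same approach as the paper: reduce to the $\eps$-regularity theorem of \cite{mtv.flat} by using the half-plane blow-up at a regular point to verify $\eps$-flatness, together with the viscosity condition from \cref{p:viscosity-solutions}. The only refinement in the paper is that it states flatness in the sandwich form $\alpha(x\cdot\nu-\eps)_+\le u_{x_0,r_k}\le \alpha(x\cdot\nu+\eps)_+$ (the precise notion in \cite{mtv.flat}), obtaining it from your $L^\infty$ closeness via the Lipschitz bound and non-degeneracy---exactly the step you flagged as requiring care.
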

\begin{proof}
The proof follows by the epsilon-regularity theory developed in \cite{mtv.flat}. Indeed, if $x_0 \in \text{\rm Reg}(\partial\O)$ and $u_{x_0,\rho_k}, v_{x_0,\rho_k}$ are the blow-up sequences from the definition of $\text{\rm Reg}(\partial\Omega)$. Then, there are $\alpha,\beta>0 , \nu \in \R^d$ such that $\alpha \beta=Q(x_0), |\nu|=1$ and
$$\lim_{k\to\infty}\norm{u_{x_0,\rho_k}-\alpha (x\cdot \nu)_+}{L^\infty(B_1)}=0\quad\text{and}\quad
\lim_{k\to\infty}\norm{v_{x_0,\rho_k}-\beta (x\cdot \nu)_+}{L^\infty(B_1)}=0.$$
Moreover, the Lipschitz continuity and the non-degeneracy imply that for every $\eps>0$ there exists $k_0>0$ such that for every $k\geq k_0$ we have
%$$
%u_{x_0,\rho_k}=v_{x_0,\rho_k}=0\quad\mbox{in}\quad\{x\cdot \nu <-\eps\} \quad\text{and}\quad
%u_{x_0,\rho_k}>0,\ v_{x_0,\rho_k}>0\quad\mbox{in}\quad\{\eps<x\cdot \nu\}.
%$$
%and
\begin{align*}
\alpha(x\cdot \nu -\eps)_+ \leq &u_{x_0,\rho_k} \leq \alpha(x\cdot \nu + \eps)_+\quad\text{for every}\quad x\in B_1,\\
\beta(x\cdot \nu -\eps)_+ \leq &v_{x_0,\rho_k} \leq \beta(x\cdot \nu + \eps)_+\quad\text{for every}\quad x\in B_1,
\end{align*}
that is, $u_{x_0,\rho_k}, v_{x_0,\rho_k}$ are $\eps$-flat in the direction $\nu$ (see \cite[Definition 1.2]{mtv.flat}). By rescaling the associated state equations, we have
$$
-\Delta u_{x_0,\rho_k} = \rho_k^2 f_{x_0,\rho_k} \quad\text{and}\quad -\Delta v_{x_0,\rho_k} = \rho_k^2 g_{x_0,\rho_k}\quad\mbox{in}\quad B_1\cap \{u_{x_0,\rho_k}>0\},
$$
where
$$
\norm{\Delta u_{x_0,\rho_k}}{L^\infty(B_1)}+ \norm{\Delta v_{x_0,\rho_k}}{L^\infty(B_1)} \leq \rho_k\left(\norm{f}{L^\infty(B_1)}+\norm{g}{L^\infty(B_1)}\right).
$$
On the other hand, since both $u_{x_0,\rho_k}$ and $v_{x_0,\rho_k}$ still satisfy \eqref{e:FBcond} in viscosity sense, by applying \cite[Theorem 3.1]{mtv.flat}, $\partial\{u_{x_0,\rho_k}>0\}$ is $C^{1,\alpha}$ in $B_{\sfrac12}$. Finally, the result follows by rescaling back to the original problem.
\end{proof}

\subsection{Higher regularity}
We can pass from $C^{1,\alpha}$ to $C^\infty$-regularity of $\text{\rm Reg}(\partial\Omega)$ by exploiting the higher order Boundary Harnack Principle for solutions to \eqref{e:intro-generalsystem}.
\begin{prop}[Higher regularity of $\text{\rm Reg}(\partial\Omega)$]\label{prop:smoothnessbdry}
Let $D$ be a bounded open set in $\R^d$ and let $f,g,Q$ be as in \cref{t:regularity-of-Reg}. If $f,g,Q\in C^{k,\alpha}(D)$ for some $k\ge 1$ and $\alpha>0$, then the regular part $\text{\rm Reg}(\partial\Omega)$ of the free boundary $\partial\Omega\cap D$ of any solution $\O$ to \eqref{e:intro-shape-opt-pb-in-main-teo} is locally the graph of a $C^{1+k,\alpha}$ function for some $\alpha>0$.
In particular, if $f,g,Q$ are $C^\infty$, then $\text{\rm Reg}(\partial\Omega)$ is locally the graph of a $C^\infty$ function.
\end{prop}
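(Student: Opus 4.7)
The proof is by induction on $k\ge 1$. The base case $k=1$ is \cref{t:regularity-of-Reg}. For the inductive step, fix a point $x_0\in\text{\rm Reg}(\partial\O)$ and assume that $\partial\O$ is locally a $C^{k,\alpha}$ graph $\{x_d=\varphi(x')\}$. By classical Schauder theory applied to the Dirichlet problems $-\Delta u_\O=f$ and $-\Delta v_\O=g$, both state variables then belong to $C^{k,\alpha}$ up to $\partial\O$, and Hopf's lemma combined with \eqref{e:FBcond} (now valid in the classical sense) guarantees that $\partial_\nu u_\O$ and $\partial_\nu v_\O$ stay bounded away from zero along $\partial\O$ near $x_0$.

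The first key step is a higher order boundary Harnack principle applied to the ratio
\[
w:=u_\O/v_\O.
\]
Since $u_\O,v_\O$ are positive solutions of $-\Delta u_\O=f$ and $-\Delta v_\O=g$ with $f,g\in C^{k,\alpha}$, vanishing on a $C^{k,\alpha}$ portion of the boundary with non-degenerate Hopf derivative, the higher order boundary Harnack principle (in the spirit of De Silva-Savin, extended as in \cite{mtv.BH} to accommodate the non-zero right-hand side) yields that $w$ admits a $C^{k,\alpha}$ extension to $\overline{\O}\cap B_r(x_0)$. Since on $\partial\O$ both $\nabla u_\O=-|\nabla u_\O|\,\nu$ and $\nabla v_\O=-|\nabla v_\O|\,\nu$, a L'H\^opital-type computation identifies $w|_{\partial\O}=|\nabla u_\O|/|\nabla v_\O|$, so that \eqref{e:FBcond} rewrites as
\[
|\nabla u_\O|^2=Q\,w\quad\text{on }\partial\O\cap B_r(x_0),
\]
with right-hand side in $C^{k,\alpha}$.

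The second key step isolates the regularity of $u_\O$ from that of $v_\O$: the pair $(u_\O,\partial\O)$ now solves the classical overdetermined one-phase Bernoulli problem
\[
-\Delta u_\O=f\ \text{in }\O,\qquad u_\O=0\ \text{and}\ |\nabla u_\O|^2=Qw\ \text{on }\partial\O,
\]
with $C^{k,\alpha}$ right-hand side, $C^{k,\alpha}$ free boundary data, and $C^{k,\alpha}$ free boundary. A standard partial hodograph-Legendre transform (Kinderlehrer-Nirenberg) straightens $\partial\O$ and converts the overdetermined system into a fully nonlinear, uniformly elliptic equation for the inverse graph function with $C^{k,\alpha}$ coefficients; Schauder theory then gives $\varphi\in C^{k+1,\alpha}$, closing the induction. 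When $f,g,Q\in C^\infty$, iterating produces $\varphi\in C^\infty$, which is the claim.

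The main obstacle is the higher order boundary Harnack principle itself: the classical Hölder version used for the blow-up analysis in \cref{section.blowup} is not enough, and one needs a version that upgrades the regularity of $w$ by exactly one order compared with the current regularity of $\partial\O$, while handling right-hand sides $f,g\in C^{k,\alpha}$ rather than harmonic functions. The synchronization between the order gained in the BHP and the order gained in the hodograph step is what drives the bootstrap; any mismatch would stall the induction at finite regularity and prevent reaching $C^\infty$.
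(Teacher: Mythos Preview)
Your proof is correct and follows essentially the same bootstrap as the paper: apply a higher order Boundary Harnack Principle to the ratio $w=u_\Omega/v_\Omega$ so that the free boundary condition becomes $|\nabla u_\Omega|^2=Qw\in C^{k,\alpha}$, then use Kinderlehrer--Nirenberg \cite{KN} to pass from a $C^{k,\alpha}$ boundary to a $C^{k+1,\alpha}$ one. One minor correction: the higher order BHP with right-hand side is not in \cite{mtv.BH} (that reference gives only the H\"older version used in \cref{section.blowup}); the relevant results are \cite{Kuk} and \cite{TVT}, which is what the paper cites.
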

\begin{proof}
We use a bootstrap argument as in \cite[Section~5.4]{maztv1}. Suppose that $\text{\rm Reg}(\partial \O)$ is locally the graph of a $C^{k,\alpha}$-regular function, for some $k\geq 1$ (when $k=1$ the claim follows  from \cref{t:regularity-of-Reg}).  Let $x_0 \in \partial \O$ and $r>0$ be such that $\partial\O\cap B_r(x_0)= \text{\rm Reg}(\partial \O)$.
Since $u$ and $v$ satisfy \eqref{e:equation-viscosity-inside} in $\Omega$ and since $v$ is non-degenerate, we can apply the higher order Boundary Harnack Principle (for PDEs with right-hand side) from \cite[Theorem 1.3]{Kuk} and \cite[Theorem 1.3]{TVT}, obtaining a non-negative $C^{k,\alpha}$-regular  function $w\colon B_r(x_0)\cap \overline{\O}\to \R$ satisfying $u =w v$ in $B_r(x_0)\cap \overline{\O}$. Then, we have:
$$|\nabla u||\nabla v|=Q\quad\text{and}\quad |\nabla u|=w|\nabla v|\quad\text{on}\quad \partial \O\cap B_r(x_0)\,.$$
Thus, $u$ is a solution to the problem
\begin{equation}\label{eq:onephasefb}
-\Delta u=f\quad \text{in}\quad\Omega\cap B_r(x_0)\ ,\qquad |\nabla u||\nabla u|=\sqrt{wQ}\quad\text{on}\quad\partial \Omega\cap B_r(x_0),
\end{equation}
which by \cite[Theorem 2]{KN} implies that $\text{\rm Reg}(\partial \O)$ is locally the graph of a $C^{k+1,\alpha}$ function.
\end{proof}

\section{Stable homogeneous solutions of the one-phase Bernoulli problem}\label{section.stable}

In this Section we study the singular set of the stable global solutions of the one-phase Bernoulli problem (see \cref{def:global-stable-solutions} for the definition of global stable solution). The main results are \cref{t:stable-dimension-reduction} and \cref{t:stable-critical-dimension}, which we will use in \cref{t:dimension-of-sing} in order to estimate the dimension of the singular set of the free boundary of the optimal sets for \eqref{e:intro-shape-opt-pb-in-main-teo}. The present Section can be read separately from the rest of the paper; we will use only the Taylor expansions from \cref{section.variations} and the general results from \cref{section.blowup}.

\subsection{Solutions of PDEs in unbounded domains}\label{sub:section-stable-general-pde-theory}
In $\R^d$, $d\ge 3$, we define
$$\dot H^1(\R^d):=\Big\{u\in L^{2^\ast}(\R^d)\ :\ \nabla u\in L^2(\R^d;\R^d)\Big\}\quad\text{where}\quad 2^\ast:=\frac{2d}{d-2}\,.$$
It is well known that $\dot H^1(\R^d)$ is a Hilbert space equipped with the norm $\|u\|_{\dot H^1(\R^d)}:=\|\nabla u\|_{L^2(\R^d;\R^d)}$ and that $C^\infty_c(\R^d)$ is dense in $\dot H^1(\R^d)$. Moreover, given an open (bounded or unbounded) set $\Omega\subset\R^d$, we define the space $\dot H^1_0(\Omega)$ as the closure of $C^\infty_c(\Omega)$ with respect to $\|\cdot \|_{\dot H^1(\R^d)}$. \medskip

Let $\Omega$ be an open (bounded or unbounded) subset of $\R^d$ and let $F\in L^2(\R^d;\R^d)$ be a given vector field. We say that $w$ is a weak solution of the PDE
\begin{equation}\label{e:equation-with-div-F}
-\Delta w=\text{\rm div}\,F\quad\text{in}\quad \Omega\ ,\qquad w\in \dot H^1_0(\Omega)\,,
\end{equation}
if $w\in\dot H^1_0(\Omega)$ and
$$\int_{\R^d}\nabla w\cdot\nabla \varphi\,dx=-\int_{\R^d}\nabla\varphi\cdot F\,dx\quad\text{for every}\quad \varphi\in \dot H^1_0(\Omega).$$
It is standard to check that $w$ is a solution to \eqref{e:equation-with-div-F} if and only if $w$ minimizes the functional
\begin{equation}\label{e:equation-with-div-F-functional}
J(\varphi):=\frac12\int_{\R^d}|\nabla \varphi|^2\,dx+\int_{\R^d}\nabla\varphi\cdot F\,dx\,,
\end{equation}
among all functions $\varphi\in \dot H^1_0(\Omega)$. Since it is immediate to check that a minimizer of \eqref{e:equation-with-div-F-functional}  in $\dot H^1_0(\Omega)$ exists and is unique, we get that also the solution to \eqref{e:equation-with-div-F} exists and is unique. Finally, we notice that if $w\in \dot H^1_0(\Omega)$ is the solution to \eqref{e:equation-with-div-F}, then
$$\int_{\R^d}|\nabla w|^2\,dx=-\int_{\R^d}\nabla w\cdot F\le \|F\|_{L^2}\|\nabla w\|_{L^2}\,,$$
which gives
\begin{equation}\label{e:equation-with-div-F-L-2-norm}
\int_{\R^d}|\nabla w|^2\,dx\le \int_{\R^d}|F|^2\,dx\,.
\end{equation}

\begin{oss}[Exterior density estimate and the space $\dot H^1_0$]
We say that an open set $\Omega\subset\R^d$ satisfies a uniform exterior density estimate with a constant $c>0$ if
\begin{equation}\label{e:exterior-density-estimate-F}
|B_r(x)\setminus\Omega|\ge c|B_r|\quad\text{for every}\quad r\in(0,1)\quad\text{and every}\quad x\in \R^d\setminus\Omega\,.
\end{equation}
It is known (see for example~\cite{deve}) that if an open set $\Omega\subset\R^d$ satisfies \eqref{e:exterior-density-estimate-F} then the space $\dot H^1_0(\Omega)$ can be characterized as:
\begin{equation}\label{e:characterization-H-1-0}
\dot H^1_0(\Omega)=\Big\{u\in \dot H^1(\R^d)\ :\ u=0\ \text{ a.e. on }\ \R^d\setminus \Omega\Big\}.
\end{equation}
\end{oss}	
\begin{lemma}[Convergence of solutions]\label{l:equation-with-div-F-convergence}
Let $\Omega_n$ be a sequence of open sets in $\R^d$, $d\ge 3$ such that:
\begin{enumerate}[\ \rm(a)]
\item there is a constant $c>0$ such that, for every $n\ge 1$, $\Omega_n$ satisfies the  exterior density estimate \eqref{e:exterior-density-estimate-F};
\item there is an open set $\Omega_\infty\subset\R^d$ satisfying the exterior density estimate \eqref{e:exterior-density-estimate-F} with the constant $c>0$ and such that the sequence of characteristic functions $\ind_{\Omega_n}$ converges pointwise almost-everywhere in $\R^d$ to $\ind_{\Omega_\infty}$.
\end{enumerate}
Let $F_n\in L^2(\R^d;\R^d)$ be a sequence of vector fields converging strongly in $L^2(\R^d;\R^d)$ to the vector field $F_\infty\in L^2(\R^d;\R^d)$. For every $n\ge 1$, let $w_n$ be the solution to the PDE
\begin{equation}\label{e:equation-with-div-F-w-n}
-\Delta w_n=\text{\rm div}\,F_n\quad\text{in}\quad \Omega_n\ ,\qquad w_n\in \dot H^1_0(\Omega_n)\,.
\end{equation}
Then, $w_n$ converges strongly in $\dot H^1(\R^d)$ to the solution $w_\infty$ of
\begin{equation}\label{e:equation-with-div-F-w-infty}
-\Delta w_\infty=\text{\rm div}\,F_\infty\quad\text{in}\quad \Omega_\infty\ ,\qquad w_\infty\in \dot H^1_0(\Omega_\infty)\,.
\end{equation}
\end{lemma}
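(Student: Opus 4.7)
The plan is a standard three-step argument: extract a weakly convergent subsequential limit of $w_n$ via the uniform bound \eqref{e:equation-with-div-F-L-2-norm}, identify it as $w_\infty$ via a Mosco-type convergence of the spaces $\dot H^1_0(\Omega_n)$, and then upgrade to strong convergence using the quadratic structure of the energy functional \eqref{e:equation-with-div-F-functional}. The only tools used beyond weak compactness are the characterization \eqref{e:characterization-H-1-0} of $\dot H^1_0$ under an exterior density estimate and the uniformity (in $n$) of the constant $c$ in \eqref{e:exterior-density-estimate-F}.

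\emph{Step 1: Weak compactness and the candidate limit.} Since $F_n\to F_\infty$ in $L^2(\R^d;\R^d)$, the sequence $\|F_n\|_{L^2}$ is bounded, so \eqref{e:equation-with-div-F-L-2-norm} gives a uniform bound on $\|\nabla w_n\|_{L^2}$. By reflexivity of $\dot H^1(\R^d)$, up to a subsequence $w_n\rightharpoonup w_*$ weakly in $\dot H^1(\R^d)$, and by local Rellich compactness we may further assume $w_n\to w_*$ pointwise a.e.\ in $\R^d$. Because $w_n=0$ a.e.\ on $\R^d\setminus\Omega_n$ and the pointwise convergence $\ind_{\Omega_n}\to\ind_{\Omega_\infty}$ gives $\ind_{\R^d\setminus\Omega_n}\to\ind_{\R^d\setminus\Omega_\infty}$ a.e., we obtain $w_*=0$ a.e.\ on $\R^d\setminus\Omega_\infty$. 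The exterior density estimate (b) for $\Omega_\infty$ together with \eqref{e:characterization-H-1-0} then places $w_*\in\dot H^1_0(\Omega_\infty)$.

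\emph{Step 2: Identification $w_*=w_\infty$.} Since solutions of \eqref{e:equation-with-div-F-w-n} and \eqref{e:equation-with-div-F-w-infty} are exactly the minimizers of $J_n(\varphi):=\tfrac12\int|\nabla\varphi|^2+\int\nabla\varphi\cdot F_n$ and its limit $J_\infty$, it suffices to show that $w_*$ minimizes $J_\infty$ over $\dot H^1_0(\Omega_\infty)$. The lower-semicontinuity bound $J_\infty(w_*)\le\liminf_n J_n(w_n)$ follows from weak lower semicontinuity of the Dirichlet energy and the weak--strong pairing $\int\nabla w_n\cdot F_n\to\int\nabla w_*\cdot F_\infty$. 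For the matching upper bound we require, for every $\varphi\in C^\infty_c(\Omega_\infty)$, a recovery sequence $\varphi_n\in\dot H^1_0(\Omega_n)$ with $\varphi_n\to\varphi$ in $\dot H^1(\R^d)$; granted this, $\limsup_n J_n(w_n)\le\limsup_n J_n(\varphi_n)=J_\infty(\varphi)$, and density of $C^\infty_c(\Omega_\infty)$ in $\dot H^1_0(\Omega_\infty)$ closes the argument, so $w_*=w_\infty$. The construction of $\varphi_n$ is the one nontrivial point: writing $\varphi_n:=\varphi\,\eta_n$ with $\eta_n$ a capacitary-type cutoff that vanishes on $\R^d\setminus\Omega_n$, the convergence $\eta_n\to 1$ in $\dot H^1$ on a neighborhood of $K:=\mathrm{supp}\,\varphi\Subset\Omega_\infty$ follows from the fact that the harmonic capacity of $K\setminus\Omega_n$ tends to $0$, which is the standard consequence of the \emph{uniform} exterior density estimate (a) on $\Omega_n$ combined with $|K\setminus\Omega_n|\to 0$ (a.e.\ convergence of characteristic functions plus $K\subset\Omega_\infty$). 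This is the classical Mosco/$\gamma$-convergence input, and is the main obstacle in the proof.

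\emph{Step 3: Strong convergence.} Testing each PDE with its own solution,
\[
\|\nabla w_n\|_{L^2}^2=-\int_{\R^d}\nabla w_n\cdot F_n\xrightarrow[n\to\infty]{}-\int_{\R^d}\nabla w_\infty\cdot F_\infty=\|\nabla w_\infty\|_{L^2}^2,
\]
by weak--strong convergence. Combined with $w_n\rightharpoonup w_\infty$ in $\dot H^1(\R^d)$, the parallelogram identity
\[
\|\nabla(w_n-w_\infty)\|_{L^2}^2=\|\nabla w_n\|_{L^2}^2-2\int\nabla w_n\cdot\nabla w_\infty+\|\nabla w_\infty\|_{L^2}^2
\]
forces $\nabla w_n\to\nabla w_\infty$ strongly in $L^2(\R^d;\R^d)$, i.e.\ $w_n\to w_\infty$ in $\dot H^1(\R^d)$. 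Uniqueness of the limit promotes subsequential convergence to convergence of the full original sequence.
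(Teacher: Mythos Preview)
Your proof is correct and Steps 1 and 3 coincide with the paper's argument. The difference lies in Step 2, where you take a more elaborate route than necessary. You frame the identification $w_*=w_\infty$ as a $\Gamma$/Mosco-convergence problem for the energy $J_n$, and build recovery sequences $\varphi_n=\varphi\,\eta_n$ via capacitary cutoffs, calling this ``the main obstacle in the proof''. The paper instead observes something stronger and simpler: for any $\varphi\in C^\infty_c(\Omega_\infty)$ one has $\mathrm{supp}\,\varphi\subset\Omega_n$ for all large $n$, so $\varphi$ itself is already an admissible test function for \eqref{e:equation-with-div-F-w-n} and one can pass the weak equation to the limit directly. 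The argument is a short contradiction: if $x_n\in\mathrm{supp}\,\varphi\setminus\Omega_n$ along a subsequence, the uniform exterior density estimate gives $|B_\delta(x_n)\cap\Omega_n|\le(1-c)|B_\delta|$ for a fixed small $\delta$ with $B_\delta(\mathrm{supp}\,\varphi)\subset\Omega_\infty$; passing to the limit (using $x_n\to x_\infty\in\mathrm{supp}\,\varphi$ and $\ind_{\Omega_n}\to\ind_{\Omega_\infty}$ a.e.) yields $|B_\delta|\le(1-c)|B_\delta|$, a contradiction. In particular, the set $K\setminus\Omega_n$ whose capacity you estimate is in fact \emph{empty} for large $n$, so your cutoff machinery is unnecessary. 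Your approach has the advantage of fitting into a general Mosco-convergence framework that would also handle weaker notions of domain convergence, but under the hypotheses here the paper's direct argument is shorter and avoids invoking capacity theory.
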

\begin{proof}
First of all, we notice that the sequence $w_n$ is bounded in $\dot H^1(\R^d)$ (by \eqref{e:equation-with-div-F-L-2-norm}). Thus, we can extract a subsequence, that we still denote by $w_{n}$, which converges weakly in $\dot H^1(\R^d)$ and pointwise almost-everywhere on $\R^d$ to a function $w\in \dot H^1(\R^d)$. We will show that $w=w_\infty$.\medskip

 First, notice that for almost every $x\in \R^d\setminus\Omega_\infty$ we have that
$$w_n(x)\to w(x)\quad\text{and}\quad \ind_{\Omega_n}(x)\to\ind_{\Omega_\infty}(x)=0.$$
But then $w_n(x)=0$ (by \eqref{e:characterization-H-1-0}) and so $w(x)=0$. Thus, using again \eqref{e:characterization-H-1-0}, we get  $w\in \dot H^1_0(\Omega_\infty)$. \medskip

Now, let $\varphi\in C^\infty_c(\Omega_\infty)$. We will show that for large enough $n$, $\varphi\in C^\infty_c(\Omega_n)$. Indeed, let $\delta>0$ be a constant such that $B_\delta(x)\subset\Omega_\infty$ for every $x$ in the support of $\varphi$. Suppose by contradiction that there is a sequence $x_n\in\overline{\{\varphi\neq 0\}}$ such that $x_n\notin \Omega_n$. Then, by the density estimate for $\Omega_n$, $|B_\delta(x_n)\cap \Omega_n|\le (1-c)|B_{\delta}|$. Now, up to a subsequence, $x_n\to x_\infty\in \overline{\{\varphi\neq 0\}}$. But then,
$$(1-c)|B_{\delta}|\ge \lim_{n\to\infty}|B_\delta(x_n)\cap \Omega_n|=|B_\delta(x_\infty)\cap \Omega_\infty|=|B_\delta|,$$
which is a contradiction. Thus, for large $n$, $\overline{\{\varphi\neq 0\}}\subset\Omega_n$. Now, using the equation for $w_n$,
$$\int_{\R^d}\nabla w_n\cdot\nabla \varphi\,dx=-\int_{\R^d}\nabla\varphi\cdot F_n\,dx\,$$
and passing to the limit, we get that
$$\int_{\R^d}\nabla w_\infty\cdot\nabla \varphi\,dx=-\int_{\R^d}\nabla\varphi\cdot F_\infty\,dx\,,$$
that is $w=w_\infty$.\medskip

 Finally, in order to prove that the convergence is strong, we use the equations for $w_n$ and $w_\infty$ and the strong convergence of $F_n$ to $F_\infty$:
 $$\int_{\R^d}|\nabla w_n|^2\,dx=-\int_{\R^d}\nabla w_n\cdot F_n\,dx\to -\int_{\R^d}\nabla w_\infty\cdot F_\infty\,dx=\int_{\R^d}|\nabla w_\infty|^2\,dx\,,$$
 which concludes the proof.
\end{proof}	

\subsection{Global stable solutions of the one-phase problem}
We define the functionals
$$\delta \mathcal G: C^{0,1}(\R)\times C^\infty_c(\R^d;\R^d)\to\R\qquad\text{and}\qquad \delta^2 \mathcal G: C^{0,1}(\R)\times C^\infty_c(\R^d;\R^d)\to\R\,,$$
as follows. Given:
\begin{itemize}
%\item a real number $\Lambda>0$,
\item a Lipschitz function $u:\R^d\to\R$,
\item a smooth compactly supported vector field $\xi\in C^\infty_c(\R^d;\R^d)$,
\end{itemize}
we set:
$$\delta \mathcal G(u)[\xi]:=\int_{\R^d} \Big(\nabla u \cdot \delta A\nabla u+ \ind_{\Omega_u} \dive\xi\Big)\, dx\,,$$
\begin{align}\label{e:global-stable-solutions-delta-2-G}
\begin{aligned}
\delta^2 \mathcal G(u)[\xi]:=\,
\int_{\R^d}2\nabla u\cdot(\delta^2 A)\nabla u-2|\nabla (\delta u)|^2 +\ind_{\Omega_u}\Big((\dive\xi)^2+\xi\cdot\nabla(\text{\rm div}\,\xi)\Big)\,dx,
\end{aligned}
\end{align}
where $\Omega_u:=\{u>0\}$, and where $\delta A,\,\delta^2A$ are defined as in \eqref{e:first-and-second-variation-A}
%, precisely:
%\begin{align*}
%\begin{aligned}
%\delta A &:=-D\xi-(D\xi)^T+(\dive\xi)\text{\rm Id}\,,\\
%\delta^2 A &:= (D\xi)^T\,D\xi-\big((D\xi)^T\big)^2-(D\xi)^2-\Big(D\xi+(D\xi)^T\Big)\dive\xi+\text{\rm Id}\frac{(\dive\xi)^2-\text{\rm tr}((D\xi)^2)}{2}\,,
%\end{aligned}
%\end{align*}
and where $\delta u\in \dot H^1_0(\Omega_u)$ is the weak solution to the PDE
\begin{equation}\label{e:global-stable-solutions-delta-u}
-\Delta(\delta u)=\text{\rm div}\big((\delta A)\nabla u\big)\quad\text{in}\quad\Omega_u\ ,\qquad \delta u\in \dot H^1_0(\Omega_u).
\end{equation}

\begin{definition}[Global stable solutions of the one-phase problem]\label{def:global-stable-solutions}
We say that a function $u:\R^d\to\R$ is a global stable solution of the one-phase problem if, for every compactly supported smooth vector field $\xi \in C^\infty_c(\R^d;\R^d)$, we have:
\begin{equation}\label{e:global-stable-solutions-main}
\delta \mathcal G(u)[\xi]=0\qquad\text{and}\qquad \delta^2 \mathcal G(u)[\xi]\ge0\,,
\end{equation}
and if the following conditions hold:
\begin{enumerate}[\rm(a)]
	\item\label{item:def-stable-1} $u$ is globally Lipschitz continuous and non-negative on $\R^d$\rm ;\it
	\item\label{item:def-stable-2} $u$ is harmonic in the open set $\Omega_{u}:=\{u>0\}$\rm ;\it
	\item\label{item:def-stable-3}  there is a constant $c>0$ such that
	$$|B_r(x_0)\cap \Omega_{u}| \leq (1-c)|B_r|,$$
	for every $x_0\in \R^d\setminus \Omega_{u}$ and every $r>0$\rm ;\it
	\item\label{item:def-stable-4}  $0\in\partial\Omega_{u}$ and there is a constant $\eta > 0$ such that
	$$\sup_{B_r(x_0)} u \ge \eta r\quad\text{for every}\quad x_0\in \overline\Omega_{u}\quad\text{and every}\quad r>0\,;$$
	\item\label{item:def-stable-5}  there is a constant $C>0$ such that, for every $R>0$,
	$$\left|\int_{B_{R}}\nabla u\cdot\nabla\varphi\,dx\right|\le C R^{d-1}\|\varphi\|_{L^\infty(B_{R})}\quad\text{for every}\quad \varphi\in C^\infty_c(B_R)\,.$$
\end{enumerate}
\end{definition}

\begin{oss}
The functionals $\delta \mathcal{G}$ and $\delta^2\mathcal{G}$ correspond to the first and the second variation along vector fields of the one-phase Alt-Caffarelli functional\footnote{Precisely, since $\mathcal G(u)=\infty$ as soon as $u$ has positivity set of infinite measure, we will work with a localized version of $\mathcal G$, in which the integration is over compact sets.}
$$\mathcal{G}(u)=\int_{\R^d}\Big(|\nabla u|^2+\ind_{\{u>0\}}\Big)\,dx\,,$$
so one may expect that the natural definition of a global stable solution is a function $u:\R^d\to\R$ that satisfies \eqref{e:global-stable-solutions-main}. Unfortunately, the condition \eqref{e:global-stable-solutions-main} alone seems to be quite weak. For instance,
$$u(x,y)=1+|x|\quad\text{e}\quad u(x,y)=1+|xy|\,$$
satisfy \eqref{e:global-stable-solutions-main}, but they are not even harmonic in $\{u>0\}$, while the function
$$u(x,y)=|xy|\,,$$
is harmonic in $\{u>0\}$, but the free boundary $\partial\{u>0\}$ has a cross-like singularity in zero; more generally, the solutions of optimal partition problems satisfy \eqref{e:global-stable-solutions-main} and the structure of their nodal sets can be very different from the one of the one-phase free boundaries.

 We add the conditions \ref{item:def-stable-1}, \ref{item:def-stable-2}, \ref{item:def-stable-3}, \ref{item:def-stable-4} and \ref{item:def-stable-5} in order to have a regularity theory for one-phase stable solutions, which is similar to the one available for minimizers of the Alt-Caffarelli functional. The Lipschitz continuity \ref{item:def-stable-1} and the non-degeneracy \ref{item:def-stable-4} guarantee the existence of non-trivial blow-up limits obtained by $1$-homogeneous rescalings of $u$. The condition \ref{item:def-stable-5} is needed for the strong convergence of the blow-up sequences (see \cref{l:lemma-generale-blow-up-1}), which together with the exterior density estimate \ref{item:def-stable-3} allows to transfer the stability condition \eqref{e:global-stable-solutions-main} to the blow-up limits of $u$ (thanks to \cref{l:equation-with-div-F-convergence}). We also notice that the conditions \ref{item:def-stable-5} and \ref{item:def-stable-2} imply the Lipschitz continuity of $u$, so we could actually avoid adding \ref{item:def-stable-1} to the list.

 Finally, we highlight that the conditions \ref{item:def-stable-1}-\ref{item:def-stable-2}-\ref{item:def-stable-3}-\ref{item:def-stable-4}-\ref{item:def-stable-5}	are satisfied by the blow-ups of numerous one-phase problems, for instance by the state functions on the domains minimizing \eqref{e:intro-shape-opt-pb-in-main-teo} (see \cref{section.blowup}), and of course, by the global minimizers of the classical one-phase Bernoulli problem.
\end{oss}

\begin{oss}[Blow-ups of global stable solutions]
We notice that if $u:\R^d\to\R$ satisfies \ref{item:def-stable-1}, \ref{item:def-stable-2}, \ref{item:def-stable-3}, \ref{item:def-stable-4} and \ref{item:def-stable-5}, then any blow-up $u_0:\R^d\to\R$ of $u$ at $x_0\in\partial\Omega_u$,
$$u_0=\lim_{n\to\infty}u_{x_0,r_n}\ ,\quad\text{with}\quad u_{x_0,r_n}(x):=\frac{u(x_0+r_nx)}{r_n}\quad\text{and}\quad \lim_{n\to\infty}r_n=0,$$
still satisfies \ref{item:def-stable-1}, \ref{item:def-stable-2}, \ref{item:def-stable-3}, \ref{item:def-stable-4} and \ref{item:def-stable-5}. In particular, by \cref{l:lemma-generale-blow-up-1}, this means that the convergence $u_{x_0,r_n}\to u_0$ is strong in $H^1_{loc}$ and thus, by \cref{l:equation-with-div-F-convergence} and \cref{l:weiss}, if $u$ is a global stable solution, then $u_0$ is a $1$-homogeneous global stable solution.
\end{oss}
	
\begin{oss}[Decomposition of the free boundary]
Let $u:\R^d\to\R$ be a global stable solution in the sense of \cref{def:global-stable-solutions} and let $\Omega_u:=\{u>0\}$. We decompose the free boundary $\partial\Omega_u$ as
$$\partial\Omega_u=\text{\rm Reg}(\partial\Omega_u)\cup \text{\rm Sing}(\partial\Omega_u),$$
where the regular part $\text{\rm Reg}(\partial\Omega_u)$ consists of all points $x_0\in\partial\Omega_u$ at which there is a blow-up limit $u_0$ which is a half-space solution, that is,
\begin{equation}\label{e:half-space-solution}
u_0(x)=(x\cdot\nu)_+\quad\text{for some unit vector}\quad \nu\in\R^d,
\end{equation}
while the singular part is given by $\text{\rm Sing}(\partial\Omega_u)=\partial\Omega_u\setminus\text{\rm Reg}(\partial\Omega_u)$. As in \cref{section.reg}, it is immediate to check that the global stable solutions satisfy the optimality condition
$$|\nabla u|=1\quad\text{on}\quad\partial\Omega_u$$
in viscosity sense, so the $\eps$-regularity theorem of \cite{desilva} holds and we have that the regular part is a relatively open subset of $\partial\Omega_u$ and a $C^\infty$ manifold. We notice that this decomposition is precisely the one from \cref{s:decomposition} with $\alpha=\beta=Q=1$, $u=v$, and $f=g=0$.
\end{oss}	

\begin{definition}[Critical dimension for stable solutions]\label{def:global-stable-solution-critical-dimension}
%We define $d^\ast$ to be the smallest dimension admitting a global stable solution $u:\R^d\to\R$ with $\text{\rm Sing}(\partial\Omega_u)\neq\emptyset$. Equivalently, $d^\ast$ is the smallest dimension admitting a global stable $1$-homogeneous solution $u:\R^d\to\R$ with $0\in\text{\rm Sing}(\partial\Omega_u)$.
We define $d^\ast$ to be the smallest dimension admitting a $1$-homogeneous global stable solution (in the sense of \cref{def:global-stable-solutions}) $u:\R^d\to\R$ with $0\in\text{\rm Sing}(\partial\Omega_u)$.
%We notice that, since the global minimizers of the one-phase problem (we refer to \cite{dj} or \cite{V} for the precise definition of a global minimizer) are also global stable solutions, we have that $d^\ast\le 7$ (an example was constructed by De Silva and Jerison in \cite{dj}).
\end{definition}

\begin{theo}[Dimension reduction for stable solutions]\label{t:stable-dimension-reduction}
	Suppose that $u:\R^d\to\R$ is a $1$-homogeneous global stable solution of the one-phase problem (in the sense of \cref{def:global-stable-solutions}).
	\begin{enumerate}[\rm (i)]
		\item If $d< d^\ast$, then $u$ is a half-plane solution.
		\item If $d=d^\ast$, then $\text{\rm Sing}(\partial\Omega_u)=\{0\}$ or $u$ is a half-plane solution.
		\item If $d>d^\ast$, then the Hausdorff dimension of $\text{\rm Sing}(\partial\Omega_u)$ is at most $d-d^\ast$, that is,
		$$\HH^{d-d^\ast+\eps}\big(\text{\rm Sing}(\partial\Omega_u)\big)=0\quad\text{for every}\quad \eps>0,$$
	\end{enumerate}		
where $d^\ast$ is the critical dimension from \cref{def:global-stable-solution-critical-dimension}.
\end{theo}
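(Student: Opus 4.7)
Part (i) is essentially immediate from $1$-homogeneity of $u$. For every $r>0$, the rescaling $u_{0,r}(x)=u(rx)/r$ coincides with $u$ itself, so $u$ is its own unique blow-up at the origin. The definition of $d^\ast$ forbids $0\in\text{\rm Sing}(\partial\Omega_u)$ when $d<d^\ast$, hence $0$ is a regular point and $u=u_0$ is a half-plane solution.

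The heart of the argument is the translation-invariance reduction underlying Part (ii). Assume $u$ is not a half-plane and pick $x_0\in\partial\Omega_u\setminus\{0\}$; after rescaling, one may take $|x_0|=1$ and set $\hat x_0:=x_0$. Using $1$-homogeneity,
\[
u\bigl(x_0+r(x+s\hat x_0)\bigr)=(1+rs)\,u\Bigl(x_0+\tfrac{r}{1+rs}\,x\Bigr),
\]
and the substitution $r':=r/(1+rs)$ yields $u_{x_0,r}(x+s\hat x_0)=u_{x_0,r'}(x)$; passing to the limit along any blow-up sequence, every blow-up $u_0$ of $u$ at $x_0$ is invariant under translations in the direction $\hat x_0$, so $u_0=w_0\circ\Pi$ for some $w_0:\hat x_0^\perp\cong\R^{d-1}\to\R$, where $\Pi$ is the orthogonal projection onto $\hat x_0^\perp$. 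The function $w_0$ inherits the five conditions of \cref{def:global-stable-solutions} together with $1$-homogeneity from $u_0$. To transfer criticality and stability I would extend each test field $\eta\in C^\infty_c(\R^{d-1};\R^{d-1})$ to $\xi(y+t\hat x_0):=\chi_R(t)\,\eta(y)$ for a cutoff $\chi_R$, use that every integrand in $\delta\mathcal G(u_0)[\xi]$ and $\delta^2\mathcal G(u_0)[\xi]$ is $\hat x_0$-invariant, factor out the one-dimensional integrals in $t$, and pass to the limit $R\to\infty$. Since $d-1<d^\ast$, Part (i) then forces $w_0$, and hence $u_0$, to be a half-plane, so $x_0\in\text{\rm Reg}(\partial\Omega_u)$. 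Combined with the dichotomy at the origin, this gives $\text{\rm Sing}(\partial\Omega_u)=\{0\}$.

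Part (iii) is a Federer dimension reduction. Two closure properties are needed and both are essentially already available. First, any blow-up at a free-boundary point of a $1$-homogeneous global stable solution is again a $1$-homogeneous global stable solution; this follows from the preservation of the five conditions of \cref{def:global-stable-solutions} under rescalings (via \cref{l:lemma-generale-blow-up-1}), the convergence of the auxiliary variable $\delta u$ from \cref{l:equation-with-div-F-convergence}, and the monotonicity formula \cref{l:weiss}. Second, by iterating Part (ii), any $1$-homogeneous global stable solution on $\R^d$ that is translation-invariant in a $k$-dimensional linear subspace $V\subset\R^d$ factors as $w_0\circ\Pi_{V^\perp}$ for a $1$-homogeneous global stable solution $w_0:V^\perp\cong\R^{d-k}\to\R$. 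Assuming toward contradiction that $\HH^{d-d^\ast+\eps}(\text{\rm Sing}(\partial\Omega_u))>0$ for some $\eps>0$, the standard density-point and iterated-blow-up argument (see, e.g., L.~Simon, \emph{Lectures on Geometric Measure Theory}, Appendix A) produces in the limit a $1$-homogeneous global stable solution with at least $d-d^\ast+1$ directions of translation invariance and a singular origin; factoring these directions out yields a $1$-homogeneous global stable solution on $\R^{d^\ast-1}$ with a singular origin, contradicting the definition of $d^\ast$.

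The main obstacle I anticipate lies in Part (ii): the functional $\delta^2\mathcal G$ is non-local through the auxiliary variable $\delta u$ solving \eqref{e:global-stable-solutions-delta-u} on the unbounded domain $\Omega_{u_0}$. After extending $\eta$ to $\xi=\chi_R\eta$, one must verify that the auxiliary variable associated with $\xi$ factors, up to errors vanishing after dividing by the appropriate power of $\|\chi_R\|$ as $R\to\infty$, as the tensor product of $\chi_R$ with the $(d-1)$-dimensional auxiliary variable associated with $\eta$ on $\{w_0>0\}\subset\R^{d-1}$. This requires a careful choice of cutoff and uniform control on the Poisson problem across slabs $\{|t|\le R\}$; once this invariance-to-reduction step is in place, Parts (i) and (iii) follow by the structure of the definition of $d^\ast$ and the classical Federer argument.
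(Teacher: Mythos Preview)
Your proposal is correct and follows precisely the classical Federer dimension reduction strategy that the paper invokes (the paper's own proof is a one-line citation to \cite[Proposition~10.13]{V}). You actually go further than the paper by spelling out the translation-invariance computation and, more importantly, by flagging the one genuinely new technical point in the stable setting: the cited reference treats \emph{minimizers}, whereas here one must verify that the second-variation inequality $\delta^2\mathcal G\ge 0$ descends from the translation-invariant $u_0$ on $\R^d$ to $w_0$ on $\R^{d-1}$, which is nontrivial because $\delta u$ is defined through a PDE on the unbounded set $\Omega_{u_0}$. Your cutoff-and-rescale scheme $\xi=\chi_R(t)\eta(y)$ is the right approach; the passage works because $(\delta A)\nabla u_0$ has no $t$-component and the energy of the difference between the true $\delta u$ and the tensor product $\chi_R\otimes\delta w_0$ is $o(\|\chi_R\|_{L^2}^2)$ as $R\to\infty$, after which dividing by $\int\chi_R^2\,dt$ recovers the $(d-1)$-dimensional inequality.
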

\begin{proof}
The proof follows from a classical argument that can be found for instance in \cite{V} (in particular, \cite[Proposition 10.13]{V}).
\end{proof}	

\begin{theo}[Bounds on the critical dimension for stable solutions]\label{t:stable-critical-dimension}
$5\le d^\ast\le 7$, where $d^\ast$ is the critical dimension from \cref{def:global-stable-solution-critical-dimension}.
\end{theo}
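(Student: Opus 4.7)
The two bounds $d^\ast \ge 5$ and $d^\ast \le 7$ are proved by completely different arguments and I would treat them separately.

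\emph{Upper bound $d^\ast \le 7$.} The plan is to exhibit a singular $1$-homogeneous stable solution in $\R^7$ by invoking the celebrated example of De Silva and Jerison \cite{dj}: they construct a $1$-homogeneous global minimizer $u \colon \R^7 \to \R$ of the Alt-Caffarelli one-phase functional $\mathcal G$ whose free boundary has a non-planar (hence singular in the sense of \cref{def:global-stable-solutions}) tangent cone at the origin. Being a minimizer, $u$ automatically satisfies $\delta \mathcal G(u)[\xi] = 0$ and $\delta^2 \mathcal G(u)[\xi] \ge 0$ for every $\xi \in C^\infty_c(\R^7;\R^7)$, since (by the same computation as in \cref{section.variations}, now with $f=g=0$, $Q \equiv 1$) these are precisely the first and second derivatives at $t=0$ of $t \mapsto \mathcal G\bigl(u \circ \Phi_t^{-1}\bigr)$, which attains a minimum at $t=0$. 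The remaining conditions \ref{item:def-stable-1}--\ref{item:def-stable-5} of \cref{def:global-stable-solutions} are standard properties of Alt-Caffarelli minimizers (Lipschitz continuity and harmonicity inside $\Omega_u$ are in \cite{altcaf}, the exterior density estimate, non-degeneracy and the bound on $\int \nabla u \cdot \nabla \varphi$ are obtained exactly as in \cref{section.lip} in the degenerate case $f=g=0$, $Q\equiv 1$). Thus the De Silva-Jerison example is a singular $1$-homogeneous global stable solution in dimension $7$, yielding $d^\ast \le 7$.

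\emph{Lower bound $d^\ast \ge 5$.} I would argue by contradiction and assume $d^\ast \le 4$. By definition of $d^\ast$, there exists a $1$-homogeneous global stable solution $u \colon \R^{d^\ast} \to \R$ with $0 \in \mathrm{Sing}(\partial \Omega_u)$. The first step is to upgrade this to an \emph{isolated} singularity: if there existed $x_0 \in \mathrm{Sing}(\partial \Omega_u) \setminus \{0\}$, then by $1$-homogeneity the whole ray through $x_0$ would be singular; blowing up at $x_0$ and splitting off the invariant direction (as in the standard Federer-type reduction embedded in \cref{t:stable-dimension-reduction}) would produce a $1$-homogeneous singular global stable solution in dimension $d^\ast - 1 \le 3$, contradicting the minimality of $d^\ast$. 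Hence we may assume $\mathrm{Sing}(\partial \Omega_u) = \{0\}$, i.e.\ $u$ is a smooth cone in the terminology of \cref{section.stable}.

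The second step is to apply \cref{p:stable-cones-satisfy-stability-inequality} (which the authors have announced in the introduction): on smooth cones, the inner-variation stability $\delta^2\mathcal G(u)[\xi] \ge 0$ is equivalent to the Caffarelli-Jerison-Kenig stability inequality. Thus $u$ is a CJK-stable $1$-homogeneous one-phase solution in dimension $d^\ast \le 4$ with singular set $\{0\}$. By the Jerison-Savin theorem \cite{js}, any such CJK-stable solution in dimension up to $4$ must be a half-plane solution, contradicting $0 \in \mathrm{Sing}(\partial \Omega_u)$. Therefore $d^\ast \ge 5$.

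\emph{Main obstacle.} The genuinely delicate point is the reduction to an isolated singularity followed by the passage between the two notions of stability. The vector-field stability $\delta^2\mathcal G(u)[\xi] \ge 0$ is intrinsically an \emph{inner} variation, whereas the CJK inequality is an inequality for functions on $\partial \Omega_u$; bridging the two on a smooth cone requires constructing, for every admissible boundary test function $\varphi$, a vector field $\xi$ whose normal trace on $\partial\Omega_u\setminus\{0\}$ agrees with $\varphi$ and whose contribution from the singular point $0$ is negligible. This is precisely where the isolated-singularity hypothesis is used, and it is the content of \cref{p:stable-cones-satisfy-stability-inequality}; once that proposition is in hand, both bounds fall out as described.
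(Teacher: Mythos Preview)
Your overall strategy matches the paper's exactly: the upper bound comes from \cref{p:global-minimizers-are-global-stable-solutions} (global minimizers are global stable solutions) combined with the De~Silva--Jerison cone, and the lower bound comes from reducing to an isolated singularity via \cref{t:stable-dimension-reduction}(ii), then invoking \cref{p:stable-cones-satisfy-stability-inequality} and Jerison--Savin.

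There is, however, a genuine slip in your justification of the upper bound. You claim that $\delta^2\mathcal G(u)[\xi]$ is ``precisely the second derivative at $t=0$ of $t\mapsto \mathcal G(u\circ\Phi_t^{-1})$.'' This is false. If you transport $u$ rigidly by $\Phi_t^{-1}$ and expand, the second derivative of $\mathcal G(u\circ\Phi_t^{-1},B_R)$ is
\[
\int_{B_R}\Big(2\nabla u\cdot(\delta^2A)\nabla u+\ind_{\Omega_u}\big((\dive\xi)^2+\xi\cdot\nabla(\dive\xi)\big)\Big)\,dx,
\]
which is $\delta^2\mathcal G(u)[\xi]+2\int|\nabla(\delta u)|^2$. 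The extra term has the wrong sign for your purpose: knowing the transported second variation is nonnegative does \emph{not} imply $\delta^2\mathcal G(u)[\xi]\ge 0$. The definition of $\delta^2\mathcal G$ in \eqref{e:global-stable-solutions-delta-2-G} encodes the variation where the \emph{state function is recomputed} on the deformed domain (this is where $\delta u$ enters), not the mere pushforward of $u$. The paper's proof of \cref{p:global-minimizers-are-global-stable-solutions} handles this correctly: it takes $u_t$ to be the harmonic function on $\Omega_t\cap B_R$ with boundary values matching $u$, computes the second derivative of $\mathcal G(u_t,B_R)$ (which does produce the $-2|\nabla w_R|^2$ term with $w_R$ solving \eqref{e:global-stable-solutions-delta-u} on $\Omega_u\cap B_R$), uses minimality to get nonnegativity, and then sends $R\to\infty$ via \cref{l:equation-with-div-F-convergence} so that $w_R\to\delta u$ in $\dot H^1$. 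Your reference to ``the same computation as in \cref{section.variations}'' would in fact lead you to this correct argument if followed literally; it is only the parenthetical identification with $\mathcal G(u\circ\Phi_t^{-1})$ that is wrong and should be dropped.
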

\begin{proof}
The claim follows from \cref{p:global-minimizers-are-global-stable-solutions} and \cref{p:stable-cones-satisfy-stability-inequality} below.
\end{proof}	
	
\subsection{Global minimizers and global stable solutions}
Given an open set $D\subset\R^d$ and a function $u\in H^1(D)$, we define:
$$\mathcal G(u,D):=\int_{D}\Big(|\nabla u|^2+\ind_{\{u>0\}}\Big)\,dx\,.$$
\begin{definition}[Global minimizers]\label{def:global-minimizer}
We say that a function $u:\R^d\to\R$ is a global minimizer of the Alt-Caffarelli functional, if:
\begin{itemize}
\item $u$ is non-negative and $u\in H^1_{loc}(\R^d)$;
\item $\mathcal G(u,B_R)\le \mathcal G(v,B_R)$, for every $B_R\subset\R^d$ and every $v:\R^d\to\R$ such that $v-u\in H^1_0(B_R)$.
\end{itemize}	
\end{definition}
	
\begin{prop}\label{p:global-minimizers-are-global-stable-solutions}
Suppose that $u:\R^d\to\R$ is a global minimizer in the sense of \cref{def:global-minimizer}. Then, $u$ is a global stable solution in the sense of \cref{def:global-stable-solutions}. In particular, $d^\ast\le 7$, where $d^\ast$ is the critical dimension from \cref{def:global-stable-solution-critical-dimension}.
\end{prop}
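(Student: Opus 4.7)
The proof has three parts: (I) verification of items \ref{item:def-stable-1}--\ref{item:def-stable-5} of \cref{def:global-stable-solutions}; (II) verification of the variational conditions $\delta\mathcal{G}(u)[\xi]=0$ and $\delta^{2}\mathcal{G}(u)[\xi]\ge 0$; (III) the bound $d^{\ast}\le 7$. Part~(I) is classical Alt--Caffarelli theory (see \cite{altcaf}): \ref{item:def-stable-1}, \ref{item:def-stable-3} and \ref{item:def-stable-4} are the Lipschitz bound, the exterior density estimate and the non-degeneracy proved there; \ref{item:def-stable-2} is immediate from variations $u+\varepsilon\varphi$ with $\varphi\in C^{\infty}_{c}(\Omega_u)$, which leave $\ind_{\Omega_u}$ unchanged; and \ref{item:def-stable-5} follows from the fact that $\Delta u$ is a Radon measure with $(\Delta u)(B_R)\le CR^{d-1}$, via the same cutoff computation as in \cref{l:lemma-generale-blow-up-2}. (Translating if necessary, we may assume $0\in\partial\Omega_u$.)

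For part~(II), fix $\xi\in C^{\infty}_{c}(B_R;\R^d)$ with flow $\Phi_t$. The crucial step is to construct the sharp admissible competitor. Using the $\dot H^1$-framework of \cref{sub:section-stable-general-pde-theory}, let $w_t\in u+\dot H^1_0(\Omega_u)$ be the (unique, for small $|t|$) weak solution of
\[
-\dive\bigl(A_t\nabla w_t\bigr)=0 \quad\text{in}\quad \Omega_u,
\]
with $A_t$ as in \eqref{e:f-t-A-t}. This is well posed by Lax--Milgram, since $A_t-\text{\rm Id}$ is supported in $B_R$ and is a small perturbation of the identity, while the source $-\dive((A_t-\text{\rm Id})\nabla u)$ (coming from the inhomogeneous boundary data) lies in the dual of $\dot H^1_0(\Omega_u)$ by $u\in C^{0,1}_{loc}$. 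Adapting the proof of \cref{l:abstact-first-order-expansion,l:abstact-second-order-expansion} to the increment $w_t-u\in\dot H^1_0(\Omega_u)$ yields
\[
w_t=u+t\,\delta u+t^2\,\delta^2 u+o(t^2) \quad\text{in}\quad \dot H^1(\R^d),
\]
with $\delta u$ as in \eqref{e:global-stable-solutions-delta-u}. Setting $v_t:=w_t\circ\Phi_t^{-1}$, the maximum principle gives $v_t\ge 0$; moreover $v_t=u$ outside $B_R$ and $\{v_t>0\}=\Phi_t(\Omega_u)$ up to a negligible set, so $v_t$ is admissible on $B_R$. A change of variables, combined with the Taylor expansions of \cref{r:expansion-det} and the orthogonalities $\int\nabla u\cdot\nabla(\delta u)=\int\nabla u\cdot\nabla(\delta^2 u)=0$ (from $\Delta u=0$ in $\Omega_u$) and $\int\nabla(\delta u)\cdot(\delta A)\nabla u=-\int|\nabla(\delta u)|^2$ (from testing \eqref{e:global-stable-solutions-delta-u} by $\delta u$), produces the clean expansion
\[
\mathcal{G}(v_t,B_R)=\mathcal{G}(u,B_R)+t\,\delta\mathcal{G}(u)[\xi]+\tfrac{t^2}{2}\delta^{2}\mathcal{G}(u)[\xi]+o(t^2).
\]
Since $\mathcal{G}(u,B_R)\le\mathcal{G}(v_t,B_R)$ for every small $t$, taking $t\to 0^{\pm}$ gives $\delta\mathcal{G}(u)[\xi]=0$ and then $\delta^{2}\mathcal{G}(u)[\xi]\ge 0$.

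For part~(III): De Silva--Jerison \cite{dj} constructed a $1$-homogeneous global minimizer in $\R^7$ whose free boundary is singular at the origin, and by (I)-(II) this function is also a $1$-homogeneous global stable solution, so $d^{\ast}\le 7$.

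The most delicate point of the argument is the choice of competitor in (II). The naive pushforward $u\circ\Phi_t^{-1}$ yields $\delta\mathcal{G}(u)[\xi]=0$ but only a strictly weaker second-order inequality, missing the essential $-2|\nabla(\delta u)|^2$ term that encodes the PDE-constraint in the stability; the correct $\delta^{2}\mathcal{G}$ is recovered only through the weighted-harmonic extension $w_t$, and both its well-posedness and its $\dot H^1$-Taylor expansion on the possibly unbounded domain $\Omega_u$ genuinely require the framework of \cref{sub:section-stable-general-pde-theory}.
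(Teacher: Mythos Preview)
Your overall strategy is the right one, and Parts (I) and (III) are fine. But Part (II) has a genuine gap in the construction of the competitor. You solve $-\dive(A_t\nabla w_t)=0$ on the \emph{full} (typically unbounded) set $\Omega_u$ with $w_t-u\in\dot H^1_0(\Omega_u)$, and then assert that $v_t=w_t\circ\Phi_t^{-1}$ equals $u$ outside $B_R$. This is false: outside the support of $\xi$ one has $\Phi_t=\text{Id}$ and $A_t=\text{Id}$, so $v_t=w_t$ there, and $w_t-u$ is a nontrivial harmonic function on $\Omega_u\setminus B_R$ vanishing on $\partial\Omega_u$ but carrying nonzero Dirichlet data on $\partial B_R\cap\Omega_u$. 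Think of $\Omega_u$ a half-space: the correction $w_t-u$ spreads to infinity and is not supported in any ball. Consequently $v_t-u\notin H^1_0(B_R)$, so $v_t$ is not an admissible competitor in \cref{def:global-minimizer}, and the ``orthogonalities'' $\int_{B_R}\nabla u\cdot\nabla(\delta u)=0$ etc.\ that you invoke are not available on a fixed ball (they would only hold over all of $\Omega_u$, where $\nabla u\notin L^2$ in general, so the pairing is not even defined). Your clean expansion of $\mathcal G(v_t,B_R)$ therefore does not hold as stated.

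The paper repairs exactly this point. One fixes $R$ large (with $\mathrm{supp}\,\xi\subset B_R$) and takes $u_t$ harmonic in $\Omega_t\cap B_R$ with $u_t=u$ on $\partial B_R$; equivalently, after the change of variables, the first variation is $w_R\in H^1_0(\Omega_u\cap B_R)$ solving $-\Delta w_R=\dive((\delta A)\nabla u)$. Now $u_t$ \emph{is} admissible, and \cref{l:secondstv} gives
\[
0\le \frac{d^2}{dt^2}\Big|_{t=0}\mathcal G(u_t,B_R)
=\int_{B_R}\!\!2\nabla u\cdot(\delta^2A)\nabla u-2|\nabla w_R|^2+\ind_{\Omega_u}\big((\dive\xi)^2+\xi\cdot\nabla(\dive\xi)\big)\,dx.
\]
One then lets $R\to\infty$ and uses \cref{l:equation-with-div-F-convergence} to get $w_R\to\delta u$ strongly in $\dot H^1$, which produces $\delta^2\mathcal G(u)[\xi]\ge 0$. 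In other words, the $\dot H^1_0(\Omega_u)$ object $\delta u$ enters only in the limit, not as part of an admissible competitor; your attempt to shortcut this limiting step is precisely where the argument breaks.
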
	
\begin{proof}
It is well-known that the global minimizers satisfy the conditions \ref{item:def-stable-1}-\ref{item:def-stable-2}-\ref{item:def-stable-3}-\ref{item:def-stable-4}-\ref{item:def-stable-5} of \cref{def:global-stable-solutions} (see for instance \cite{V}). Moreover, by \cite[Lemma 9.5 and Lemma 9.6]{V}, the global minimizers are critical points, that is,
$$\delta \mathcal{G}(u)[\xi]=0\qquad\text{for every}\qquad\xi\in C^\infty_c(\R^d;\R^d).$$
Thus, it only remains to prove the positivity of the second variation:
$$\delta^2 \mathcal G(u)[\xi]\ge 0\qquad\text{for every}\qquad\xi\in C^\infty_c(\R^d;\R^d).$$
Let $\xi\in C^\infty_c(\R^d;\R^d), \Phi_t$ be the associated defined by \eqref{e:flow-of-xi} and let $\Omega_t := \Phi_t(\Omega)$, for $t\in\R$. In $B_R$, we consider the solution $u_t$ to the problem
$$-\Delta u_t=0\quad\text{in}\quad\Omega_t\cap B_R\,,\qquad u_t=0\quad\text{on}\quad \partial \Omega_t\cap B_R\,, \qquad u_t=u\quad\text{on}\quad \partial B_R.$$
By the optimality of $u_t$, we have that
$$\mathcal G(u_t,B_R)\ge \mathcal G(u,B_R)\quad\text{for every}\quad t\in\R\,,$$
so,
$$\frac{d}{dt}\Big|_{t=0}\mathcal G(u_t,B_R)=0\quad\text{and}\quad \frac{d^2}{dt^2}\Big|_{t=0}\mathcal G(u_t,B_R)\ge0.$$
By \cref{l:secondstv}, we have that
\begin{align*}%\label{e:second-derivative-F-along-vector-field}
\begin{aligned}
\frac{d^2}{dt^2}\Big|_{t=0}\mathcal G(u_t,B_R):=\,
\int_{B_R}2\nabla u\cdot(\delta^2 A)\nabla u-2|\nabla w_R|^2 +\ind_{\Omega_u}\Big((\dive\xi)^2+\xi\cdot\nabla(\text{\rm div}\,\xi)\Big)\,dx,
\end{aligned}
\end{align*}
where $\Omega_u=\{u>0\}$ and $w_R$ is the solution to the PDE
\begin{equation*}
-\Delta w_R=\text{\rm div}\big((\delta A)\nabla u\big)\quad\text{in}\quad\Omega_u\cap B_R\ ,\qquad w_R\in H^1_0(\Omega_u\cap B_R).
\end{equation*}
Thus, for every $R>0$,
\begin{align*}
\int_{\R^d}|\nabla w_R|^2\,dx\ge \,
\int_{B_R}\nabla u\cdot(\delta^2 A)\nabla u+\frac12\ind_{\Omega_u}\Big((\dive\xi)^2+\xi\cdot\nabla(\text{\rm div}\,\xi)\Big)\,dx\,.
\end{align*}
Since by \cref{l:equation-with-div-F-convergence} the sequence $w_R\to \delta u$ strongly in $\dot H^1(\R^d)$ as $R\to\infty$, we get that
\begin{align*}
\int_{\R^d}|\nabla (\delta u)|^2\,dx\ge \,
\int_{B_R}\nabla u\cdot(\delta^2 A)\nabla u+\frac12\ind_{\Omega_u}\Big((\dive\xi)^2+\xi\cdot\nabla(\text{\rm div}\,\xi)\big)\Big)\,dx\,,
\end{align*}
which is precisely the inequality $\delta^2\mathcal{G}(u)[\xi]\ge 0$. Finally, the bound $d^\ast\le 7$ follows by the example of a singular $1$-homogeneous global minimizer constructed by De Silva and Jerison in \cite{dj}.
\end{proof}

\subsection{Global stable solutions and the stability inequality of Caffarelli-Jerison-Kenig}$ $

\noindent Let $u:\R^d\to\R$ be a $1$-homogeneous global stable solution of the one-phase problem with an isolated singularity in zero, that is,
$$\text{\rm Sing}(\partial\Omega_u)=\{0\}.$$
In particular, we have that the regular part
$$\text{\rm Reg}(\partial\Omega_u):=\partial\Omega_u\setminus\{0\}\,,$$
is a smooth $C^\infty$ manifold and the function $u$ is $C^\infty$ in $\overline\Omega_u\setminus \{0\}$, up to the boundary $\partial\Omega_u\setminus \{0\}$. Thus, $u$ is a classical solution to the PDE
\begin{equation}\label{e:smooth-one-phase}
\Delta u=0\quad\text{in}\quad\Omega_u\,,\qquad |\nabla u|=1\quad\text{on}\quad\partial\Omega_u\setminus\{0\}\,.
\end{equation}
%Moreover, if $u$ is a solution to the above equation, then (see \cite{js}):
%$$H>0\quad\text{on}\quad \partial\Omega_u\setminus\{0\}.$$
Together with the homogeneity of $u$ this implies (see for instance \cite{js}) that
$$H>0\quad\text{on}\quad \partial\Omega_u\setminus\{0\}\,,$$
where $H$ is the mean curvature of $\partial \Omega_u$ oriented towards the complement of $\Omega_u$.

We will say that $\Omega_u$ supports the {\it stability inequality of Caffarelli-Jerison-Kenig} if
\begin{equation}\label{e:stability-inequality}
\int_{\Omega_u}|\nabla \varphi|^2\,dx \geq \int_{\partial \Omega_u}H\varphi^2\,d\HH^{d-1}\qquad\text{for every}\qquad \varphi\in C^\infty_c(\R^d\setminus\{0\})\,.
\end{equation}
In \cite{cjk} and \cite{js} it was shown that if:
\begin{itemize}
\item $d=3$ (see \cite{cjk}) or $d=4$ (see \cite{js});	
\item $u:\R^d\to\R$ is a $1$-homogeneous non-negative Lipschitz function;
\item $\partial\Omega_u\setminus\{0\}$ is $C^\infty$ smooth;
\item $u$ is a solution of the one-phase Bernoulli problem \eqref{e:smooth-one-phase};
\item $\Omega_u$ supports the stability inequality \eqref{e:stability-inequality};
\end{itemize}
then $u$ is a half space solution, that is,
$$u(x)=(x\cdot\nu)_+\quad\text{for some unit vector}\quad \nu\in\R^d.$$
Thus, in order to show that in dimension $3$ and $4$ there are no global stable solutions (in the sense of \cref{def:global-stable-solutions}) with singularities, it is sufficient to prove the following proposition.

\begin{prop}[The global stable cones satisfy the stability inequality]\label{p:stable-cones-satisfy-stability-inequality}
Let $u:\R^d\to\R$ be a $1$-homogeneous global stable solution (in the sense of \cref{def:global-stable-solutions}) with $\text{\rm Sing}(\partial\Omega_u)=\{0\}$. Then, $\Omega_u$ supports the stability inequality \eqref{e:stability-inequality}. In particular, $d^\ast\ge 5$, where $d^\ast$ is the critical dimension from \cref{def:global-stable-solution-critical-dimension}.
\end{prop}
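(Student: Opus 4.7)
Fix $\varphi \in C^\infty_c(\R^d \setminus \{0\})$. Since $\text{\rm Sing}(\partial \Omega_u) = \{0\}$, the portion $\partial \Omega_u \cap \mathrm{supp}(\varphi)$ of the free boundary is a smooth, compact hypersurface bounded away from the origin, and the outer unit normal $\nu$ extends smoothly to a field $N$ (for instance as the gradient of the signed distance to $\Omega_u$) in a tubular neighborhood. After multiplication by a suitable cutoff, I set
\[
\xi := \varphi N \in C^\infty_c(\R^d \setminus \{0\}; \R^d),
\]
so that $\xi = \varphi\nu$ on $\partial \Omega_u$. The idea is to apply the stability inequality $\delta^2 \mathcal G(u)[\xi] \ge 0$ from \cref{def:global-stable-solutions} to this specific $\xi$ and to show, via two explicit computations, that the resulting quadratic form in $\varphi$ coincides with the Caffarelli--Jerison--Kenig stability quadratic form.

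\textbf{Identification of $\delta u$.} A direct expansion of $\dive((\delta A)\nabla u)$ using $\delta A = -D\xi - \nabla\xi + (\dive\xi)\,\text{\rm Id}$, the harmonicity $\Delta u = 0$ in $\Omega_u$, and the pointwise symmetry $\partial_{ij}u = \partial_{ji}u$ will produce the identity
\[
\dive\big((\delta A)\nabla u\big) = -\Delta(\xi \cdot \nabla u)\quad\text{in}\quad \Omega_u.
\]
Comparing with the defining PDE $-\Delta(\delta u) = \dive((\delta A)\nabla u)$, it follows that $\delta u - \xi\cdot\nabla u$ is harmonic in $\Omega_u$. Since $|\nabla u|=1$ and $u$ vanishes on $\partial\Omega_u$, one has $\nabla u = -\nu$ along $\partial \Omega_u \setminus \{0\}$, so $\xi\cdot\nabla u = -\varphi$ on the free boundary; combined with $\delta u|_{\partial\Omega_u} = 0$, this yields
\[
\delta u = h + \xi\cdot\nabla u \quad \text{in} \quad \Omega_u,
\]
where $h \in \dot H^1(\Omega_u)$ is the (unique) harmonic extension of $\varphi|_{\partial\Omega_u}$ to $\Omega_u$.

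\textbf{Reduction to the Caffarelli--Jerison--Kenig quadratic form.} Substituting $\delta u = h + \xi\cdot\nabla u$ into $\delta^2 \mathcal G(u)[\xi]$ and expanding $|\nabla(\delta u)|^2 = |\nabla h|^2 + 2\nabla h\cdot\nabla(\xi\cdot\nabla u) + |\nabla(\xi\cdot\nabla u)|^2$, I perform several rounds of integration by parts on $\Omega_u$ (which is legitimate since $\partial \Omega_u\setminus\{0\}$ is smooth and $\xi$ is supported away from the origin). Combining the identity $(\dive\xi)^2 + \xi\cdot\nabla(\dive\xi) = \dive((\dive\xi)\xi)$, the geometric formulas $\dive N|_{\partial\Omega_u} = H$ and $\dive\xi|_{\partial\Omega_u} = \partial_\nu\varphi + H\varphi$, the harmonicity of both $u$ and $h$, and the overdetermined condition $|\nabla u|^2 \equiv 1$ on $\partial\Omega_u\setminus\{0\}$ (which forces its tangential derivatives to vanish), all cross terms involving $\xi\cdot\nabla u$ and the bulk contribution $2\int \nabla u\cdot(\delta^2 A)\nabla u$ should cancel or simplify. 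The resulting identity will take the form
\[
\delta^2 \mathcal G(u)[\xi] = 2\int_{\Omega_u}|\nabla h|^2\,dx - 2\int_{\partial\Omega_u}H\varphi^2\,d\HH^{d-1}.
\]
The stability inequality $\delta^2 \mathcal G(u)[\xi] \ge 0$ then yields $\int_{\Omega_u}|\nabla h|^2 \ge \int_{\partial\Omega_u}H\varphi^2$, and the Dirichlet principle $\int_{\Omega_u}|\nabla h|^2 \le \int_{\Omega_u}|\nabla\varphi|^2$ (applied since $\varphi$ is itself an admissible extension of its own trace to $\Omega_u$) gives the Caffarelli--Jerison--Kenig stability inequality \eqref{e:stability-inequality}.

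\textbf{The bound $d^\ast \ge 5$ and the main obstacle.} If $d^\ast \le 4$, then by \cref{def:global-stable-solution-critical-dimension} together with \cref{t:stable-dimension-reduction}(ii) there exists a $1$-homogeneous global stable solution $u$ in $\R^{d^\ast}$ with $\text{\rm Sing}(\partial \Omega_u) = \{0\}$. The preceding argument shows that $\Omega_u$ supports the Caffarelli--Jerison--Kenig stability inequality, and the rigidity results of \cite{cjk} in dimension three and \cite{js} in dimension four then force $u$ to be a half-plane solution, contradicting $0 \in \text{\rm Sing}(\partial \Omega_u)$; hence $d^\ast \ge 5$. The main technical obstacle is the cancellation leading to the clean formula for $\delta^2 \mathcal G(u)[\xi]$: the identification $\delta u = h + \xi\cdot\nabla u$ is a short pointwise computation, but collapsing the remaining three-term expression into the single boundary integral $\int H\varphi^2$ requires a careful bookkeeping of numerous integration-by-parts contributions and the systematic use of both $\Delta u = 0$ in $\Omega_u$ and $|\nabla u|^2 \equiv 1$ on $\partial \Omega_u\setminus\{0\}$ (in particular, that its tangential gradient vanishes).
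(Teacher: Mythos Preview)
Your proposal is correct and follows essentially the same approach as the paper: both identify $\delta u$ (modulo the compactly supported correction $\xi\cdot\nabla u$) with the harmonic extension of $\varphi$, and then reduce the second variation to the Caffarelli--Jerison--Kenig form through a lengthy integration-by-parts computation, finishing with the Dirichlet principle and the results of \cite{cjk,js}. The only cosmetic differences are that the paper chooses $\xi=\varphi\nabla u$ rather than $\varphi N$ (these coincide with $\pm\varphi\nu$ on $\partial\Omega_u$, so the boundary contribution is identical), and it localizes to a bounded annulus $A_R$, working with the auxiliary $w_R\in H^1_0(\Omega_u\cap A_R)$ in place of $\delta u$ to obtain an inequality $\frac{d^2}{dt^2}\big|_{t=0}\mathcal G(u_t,A_R)\ge\delta^2\mathcal G(u)[\xi]$---this sidesteps the justification of integration by parts at infinity that your direct approach on the unbounded cone requires.
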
	
\begin{oss}
Following the proof of \cref{p:stable-cones-satisfy-stability-inequality}, it is immediate to check that also the converse is true. Precisely, if $u:\R^d\to\R$ is a $1$-homogeneous function, if $\partial\Omega_u\setminus\{0\}$ is smooth and if $u$ is a solution to \eqref{e:smooth-one-phase} such that $\Omega_u$ supports the stability inequality \eqref{e:stability-inequality}, then $u$ is a global stable solution in the sense of \cref{def:global-stable-solutions}.
\end{oss}	
\begin{proof}
For any bounded open set $D\subset\R^d$ and any function $u\in H^1(D)$, we define:
$$\mathcal G(u,D):=\int_D|\nabla u|^2\,dx+|D\cap\{u>0\}|.$$	
Moreover, for any $R>1$, we call the annulus
$$A_R:=B_R\setminus \overline B_{{1}/{R}}\,.$$
We fix a smooth vector field $\xi\in C^\infty_c(A_R,\R^d)$ and we define the open set
$$\Omega_t:=\Phi_t(\Omega_u)\quad\text{for every}\quad t\in\R\,,$$
where $\Phi_t$ is the flow of $\xi$ defined by \eqref{e:flow-of-xi}. Let $u_t:A_R\to\R$ be the solution of the PDE
$$\Delta u_t=0\quad\text{in}\quad \Omega_t\cap A_R\,,\qquad u_t=0\quad\text{on}\quad \partial\Omega_t\cap A_R\,,\qquad  u_t=u\quad\text{in}\quad \Omega_t\cap \partial A_R\,.$$
\noindent{\bf Step 1.} We will show that
\begin{equation}\label{e:global-stable-solutions-1st-inequality}
\frac{d^2}{dt^2}\Big|_{t=0}\mathcal G(u_t,A_R)\ge \delta^2\mathcal{G}(u)[\xi],
\end{equation}
where $\delta^2\mathcal{G}$ is defined in \eqref{e:global-stable-solutions-delta-2-G}.
Following \cref{l:secondstv} we have that
\begin{align*}
\frac{d^2}{dt^2}\Big|_{t=0}\mathcal G(u_t,A_R)&=\int_{A_R\cap\Omega_u}2\nabla u\cdot(\delta^2 A)\nabla u +\Big((\dive\xi)^2+\xi\cdot\nabla(\text{\rm div}\,\xi)\Big)-2|\nabla w_R|^2\,dx.
\end{align*}
where $\delta A$ and $\delta^2A$ are defined in \eqref{e:first-and-second-variation-A} and $w_R$ is the solution of the PDE
\begin{equation}\label{e:global-stable-solutions-equation-w-R}
-\Delta w_R=\text{\rm div}\big((\delta A)\nabla u\big)\quad\text{in}\quad\Omega_u\cap A_R\ ,\qquad w_R\in H^1_0(\Omega_u\cap A_R).
\end{equation}
Now, let $\delta u$ be the solution to \eqref{e:global-stable-solutions-delta-u}. By the variational characterization of \eqref{e:global-stable-solutions-delta-u} in $\dot H^1_0(\Omega_u)$, the fact that $ w_R\in \dot H^1_0(\Omega_u)$, and an integration by parts, we have that
\begin{align*}
-\frac12\int_{B_R}|\nabla (\delta u)|^2\,dx&=\frac12\int_{B_R}|\nabla (\delta u)|^2\,dx+\int_{B_R}\nabla (\delta u)\cdot(\delta A)\nabla u\,dx\\
&\le \frac12\int_{B_R}|\nabla w_R|^2\,dx+\int_{B_R}\nabla w_R\cdot(\delta A)\nabla u\,dx=-\frac12\int_{B_R}|\nabla w_R|^2\,dx,
\end{align*}
which gives \eqref{e:global-stable-solutions-1st-inequality}.\medskip

\noindent{\bf Step 2.} We set $u'$ to be the solution of the PDE
%We will next compute the second derivative $\ds\frac{d^2}{dt^2}\Big|_{t=0}G(u_t,A_R)$ in terms of the solution $u'$ of the PDE
\begin{equation}\label{eq:u'}
\begin{cases}
\Delta u' = 0 & \mbox{in}\quad\Omega_u\cap A_{R}\,,\\
u' = \xi\cdot \nu & \mbox{on}\quad\partial\Omega_u\cap A_{R}\,,\\
u' = 0 & \mbox{on}\quad\Omega_u\cap \partial A_{R}\,,
\end{cases}
\end{equation}
where $\nu$ is the outer normal to $\partial \Omega_u$. We will first show that
$$u'=w_R-\xi\cdot\nabla u.$$
Indeed, since $\xi$ is supported in $A_R$ and since $\nabla u=-\nu$ on $\partial\Omega_u\cap A_R$, we have:
$$u'=w_R-\xi\cdot\nabla u\quad\text{on}\quad \partial(\Omega_u\cap A_R).$$
%the functions $u'$ and $w_R-\xi\cdot\nabla u$ satisfy the same boundary conditions on $\partial(\Omega_u\cap A_R)$.
In order to show that $u'$ is harmonic in $\Omega_u$, we compute (using the repeated index summation convention)
\begin{align*}
\text{\rm div}\big((\delta A)\nabla u\big)&=\partial_j\big[-\partial_i\xi_j\partial_iu-\partial_j\xi_i\partial_iu+\partial_i\xi_i\partial_ju\big]\\
&=-\partial_{ij}\xi_j\partial_iu-\partial_i\xi_j\partial_{ij}u-\partial_{jj}\xi_i\partial_iu-\partial_j\xi_i\partial_{ij}u+\partial_{ij}\xi_i\partial_ju\\
&=-2\partial_i\xi_j\partial_{ij}u-\partial_{jj}\xi_i\partial_iu=-\partial_j\big[\partial_j\xi_i\partial_iu+\xi_i\partial_{ij}u\big]=-\Delta(\xi\cdot\nabla u),
\end{align*}
and then we use the equation \eqref{e:global-stable-solutions-equation-w-R} for $w_R$.\medskip

\noindent{\bf Step 3.} We next compute the second derivative  of $\mathcal G(u_t,A_R)$ in terms of $u'$. For the sake of simplicity, through the rest of the proof we use the notations introduced in \cref{s:notations}.
\begin{align*}
-\int_{\Omega_u\cap A_R}|\nabla w_R|^2\,dx&=\int_{A_R}-|\nabla (u'+\xi\cdot\nabla u)|^2\,dx\\
&=\int_{\Omega_u\cap A_R}-|\nabla u'|^2-2\nabla u'\cdot\nabla(\xi\cdot\nabla u)-|\nabla(\xi\cdot\nabla u)|^2\,dx\\
&=-\int_{\Omega_u\cap A_R}\Big(|\nabla u'|^2+|\nabla(\xi\cdot\nabla u)|^2\Big)\,dx-2\int_{\partial(\Omega_u\cap A_R)}\frac{\partial u'}{\partial \nu} (\xi\cdot\nabla u)\,d\HH^{d-1}\\
&=-\int_{\Omega_u\cap A_R}\Big(|\nabla u'|^2+|\nabla(\xi\cdot\nabla u)|^2\Big)\,dx-2\int_{\partial(\Omega_u\cap A_R)}\frac{\partial u'}{\partial \nu} u'\,d\HH^{d-1}\\
&=\int_{\Omega_u\cap A_R}\Big(|\nabla u'|^2-|\nabla(\xi\cdot\nabla u)|^2\Big)\,dx.
\end{align*}
On the other hand
\begin{align*}
\int_{\Omega_u\cap A_R}\nabla u\cdot(\delta^2 A)\nabla u\,dx
&=\int_{\Omega_u\cap A_R}|D\xi(\nabla u)|^2+\nabla u\cdot(D\xi)^2(\nabla u)\,dx\\
&\qquad-\int_{\Omega_u\cap A_R}\nabla u\cdot\Big((\xi\cdot\nabla)[D\xi]\Big)\nabla u\,dx-\int_{\Omega_u\cap A_R}2\,\text{\rm div}\,\xi\,\nabla u\cdot D\xi\nabla u\,dx\\
&\qquad\qquad+\int_{\Omega_u\cap A_R}\frac12|\nabla u|^2\text{\rm div}\Big(\xi\text{\rm div}\,\xi\Big)\,dx\\
&=\int_{\Omega_u\cap A_R}|D\xi(\nabla u)|^2+\nabla u\cdot(D\xi)^2(\nabla u)\,dx\\
&\qquad-\int_{\Omega_u\cap A_R}\nabla u\cdot\Big((\xi\cdot\nabla)[\nabla\xi]\Big)\nabla u\,dx-\int_{\Omega_u\cap A_R}2\,\text{\rm div}\,\xi\,\nabla u\cdot \nabla\xi\nabla u\,dx\\
&\qquad\qquad-\int_{\Omega_u\cap A_R}D^2 u(\nabla u)\cdot\xi (\text{\rm div}\,\xi)\,dx+\int_{\partial\Omega_u\cap A_R}\frac12|\nabla u|^2\text{\rm div}\,\xi (\xi\cdot\nu)\,d\HH^{d-1}.
\end{align*}
Notice that
\begin{align*}
-\nabla u\cdot\Big((\xi\cdot\nabla)[\nabla\xi]\Big)\nabla u&=-\partial_iu\,\xi_k\partial_{ki}\xi_j\,\partial_ju\\
&=-\partial_k\Big[\partial_iu\,\xi_k\partial_{i}\xi_j\,\partial_ju\Big]+\partial_{ki}u\,\xi_k\partial_{i}\xi_j\,\partial_ju+\partial_iu\,\xi_k\partial_{i}\xi_j\,\partial_{kj}u+(\text{\rm div}\,\xi)\partial_iu\,\partial_{i}\xi_j\,\partial_ju
\end{align*}
and
\begin{align*}
-D^2 u(\nabla u)\cdot\xi (\text{\rm div}\,\xi)&=-\partial_{ij}u\,\partial_ju\xi_i(\text{\rm div}\,\xi)\\
&=-\partial_j\Big[\partial_{i}u\,\partial_ju\xi_i(\text{\rm div}\,\xi)\Big]+\partial_{i}u\,\partial_ju\partial_j\xi_i(\text{\rm div}\,\xi)+\partial_{i}u\,\partial_ju\xi_i\partial_j(\text{\rm div}\,\xi),
\end{align*}
which leads to
\begin{align*}
-\nabla u\cdot\Big((\xi\cdot\nabla)[\nabla\xi]\Big)&\nabla u-2\text{\rm div}\,\xi\,\nabla u\cdot \nabla\xi(\nabla u)-D^2 u(\nabla u)\cdot\xi (\text{\rm div}\,\xi)\\
&=-\partial_k\Big[\partial_iu\,\xi_k\partial_{i}\xi_j\,\partial_ju\Big]-\partial_j\Big[\partial_{i}u\,\partial_ju\xi_i(\text{\rm div}\,\xi)\Big]\\
&\qquad+\partial_{ki}u\,\xi_k\partial_{i}\xi_j\,\partial_ju+\partial_iu\,\xi_k\partial_{i}\xi_j\,\partial_{kj}u+\xi_i\,\partial_{i}u\,\partial_ju\,\partial_j(\text{\rm div}\,\xi).
\end{align*}
Similarly, we can compute
\begin{align*}
-|\nabla(\xi\cdot\nabla u)|^2=-\partial_k(\xi_i\partial_iu)\partial_k(\xi_j\partial_ju)
&=-(\partial_k\xi_i\partial_iu+\xi_i\partial_{ki}u)(\partial_k\xi_j\partial_ju+\xi_j\partial_{kj}u)\\
&=-|D\xi(\nabla u)|^2-\xi_i\partial_{ki}u\xi_j\partial_{kj}u-2\partial_k\xi_i\partial_iu\xi_j\partial_{kj}u\,,
\end{align*}
and so
\begin{align*}
|\nabla\xi(\nabla u)|^2&+\nabla u\cdot(\nabla\xi)^2(\nabla u)-|\nabla(\xi\cdot\nabla u)|^2\\
&=\partial_iu\,\partial_i\xi_j\partial_j\xi_k\partial_ku-2\partial_k\xi_i\partial_iu\xi_j\partial_{kj}u-\xi_i\partial_{ki}u\xi_j\partial_{kj}u\\
&=\partial_iu\,\partial_i\xi_j\partial_j\xi_k\partial_ku-2\partial_k\xi_i\partial_iu\xi_j\partial_{kj}u\\
&\qquad-\partial_k\Big[\xi_i\partial_{i}u\xi_j\partial_{kj}u\Big]+\partial_k\xi_i\partial_{i}u\xi_j\partial_{kj}u+\xi_i\partial_{i}u\partial_k\xi_j\partial_{kj}u\\
&=\partial_iu\,\partial_i\xi_j\partial_j\xi_k\partial_ku-2\partial_k\xi_i\partial_iu\xi_j\partial_{kj}u\\
&\qquad-\partial_k\Big[\xi_i\partial_{i}u\xi_j\partial_{kj}u\Big]+\partial_k\xi_i\partial_{i}u\xi_j\partial_{kj}u\\
&\qquad\qquad+\partial_j\Big[\xi_i\partial_{i}u\partial_k\xi_j\partial_{k}u\Big]-\partial_j\xi_i\partial_{i}u\partial_k\xi_j\partial_{k}u-\xi_i\partial_{ij}u\partial_k\xi_j\partial_{k}u-\xi_i\partial_{i}u\partial_k(\text{\rm div}\xi)\partial_{k}u.
\end{align*}
Finally, by collecting the previous computations, we obtain the identity
\begin{align*}
|\nabla\xi&[\nabla u]|^2+\nabla u\cdot(\nabla\xi)^2(\nabla u)-|\nabla(\xi\cdot\nabla u)|^2\\
&-\nabla u\cdot\Big((\xi\cdot\nabla)[\nabla\xi]\Big)\nabla u-2\text{\rm div}\,\xi\,\nabla u\cdot \nabla\xi(\nabla u)-D^2 u(\nabla u)\cdot\xi (\text{\rm div}\,\xi)\\
&=\cancel{\partial_iu\,\partial_i\xi_j\partial_j\xi_k\partial_ku}-2\partial_k\xi_i\partial_iu\xi_j\partial_{kj}u-\partial_k\Big[\xi_i\partial_{i}u\xi_j\partial_{kj}u\Big]+\partial_k\xi_i\partial_{i}u\xi_j\partial_{kj}u\\
&\qquad+\partial_j\Big[\xi_i\partial_{i}u\partial_k\xi_j\partial_{k}u\Big]-\cancel{\partial_j\xi_i\partial_{i}u\partial_k\xi_j\partial_{k}u}-\xi_i\partial_{ij}u\partial_k\xi_j\partial_{k}u-\cancel{\xi_i\partial_{i}u\partial_k(\text{\rm div}\xi)\partial_{k}u}\\
&\qquad\qquad-\partial_k\Big[\partial_iu\,\xi_k\partial_{i}\xi_j\,\partial_ju\Big]-\partial_j\Big[\partial_{i}u\,\partial_ju\xi_i(\text{\rm div}\,\xi)\Big]\\
&\qquad\qquad\qquad+\partial_{ki}u\,\xi_k\partial_{i}\xi_j\,\partial_ju+\partial_iu\,\xi_k\partial_{i}\xi_j\,\partial_{kj}u+\cancel{\xi_i\,\partial_{i}u\,\partial_ju\,\partial_j(\text{\rm div}\,\xi)}\\
&=-\cancel{2\partial_k\xi_i\partial_iu\xi_j\partial_{kj}u}-\partial_k\Big[\xi_i\partial_{i}u\xi_j\partial_{kj}u\Big]+\cancel{\partial_k\xi_i\partial_{i}u\xi_j\partial_{kj}u}\\
&\qquad+\partial_j\Big[\xi_i\partial_{i}u\partial_k\xi_j\partial_{k}u\Big]-\xi_i\partial_{ij}u\partial_k\xi_j\partial_{k}u-\partial_k\Big[\partial_iu\,\xi_k\partial_{i}\xi_j\,\partial_ju\Big]\\
&\qquad\qquad-\partial_j\Big[\partial_{i}u\,\partial_ju\xi_i(\text{\rm div}\,\xi)\Big]+\cancel{\partial_{ki}u\,\xi_k\partial_{i}\xi_j\,\partial_ju}+\partial_iu\,\xi_k\partial_{i}\xi_j\,\partial_{kj}u\\
&=-\partial_k\Big[\xi_i\partial_{i}u\xi_j\partial_{kj}u\Big]+\partial_j\Big[\xi_i\partial_{i}u\partial_k\xi_j\partial_{k}u\Big]-\cancel{\xi_i\partial_{ij}u\partial_k\xi_j\partial_{k}u}-\partial_k\Big[\partial_iu\,\xi_k\partial_{i}\xi_j\,\partial_ju\Big]\\
&\qquad\qquad-\partial_j\Big[\partial_{i}u\,\partial_ju\xi_i(\text{\rm div}\,\xi)\Big]+\cancel{\partial_iu\,\xi_k\partial_{i}\xi_j\,\partial_{kj}u}\\
&=-\partial_k\Big[\xi_i\partial_{i}u\xi_j\partial_{kj}u\Big]+\partial_j\Big[\xi_i\partial_{i}u\partial_k\xi_j\partial_{k}u\Big]-\partial_k\Big[\partial_iu\,\xi_k\partial_{i}\xi_j\,\partial_ju\Big]-\partial_j\Big[\partial_{i}u\,\partial_ju\xi_i(\text{\rm div}\,\xi)\Big]\\
&=\text{\rm div}\Big(-(\xi\cdot\nabla u)D^2u(\xi)+(\xi\cdot\nabla u)D\xi(\nabla u)-(\nabla u\cdot \nabla\xi(\nabla u))\xi-(\xi\cdot\nabla u)(\text{div}\,\xi)\nabla u\Big).
\end{align*}
Thus, integrating by parts and using that $\nabla u= - \nu$ on $\partial \O_u\cap A_R$, we obtain
\begin{align*}
\frac12&\frac{d^2}{dt^2}\Big|_{t=0}\mathcal G(u_t,A_R)=\int_{A_R\cap\Omega_u}\nabla u\cdot(\delta^2 A)\nabla u +\frac12\Big((\dive\xi)^2+\xi\cdot\nabla(\text{\rm div}\,\xi)\Big)-|\nabla w_R|^2\,dx\\
&=\int_{\Omega_u\cap A_R}|\nabla u'|^2\,dx+\cancel{\frac12\int_{\partial\Omega_u}|\nabla u|^2\text{\rm div}\,\xi (\xi\cdot\nu)\,d\HH^{d-1}}+\cancel{\frac12\int_{\partial\Omega_u}\text{\rm div}\,\xi (\xi\cdot\nu)\,d\HH^{d-1}}\\
&\qquad+\int_{\partial\Omega_u\cap A_R}\Big(-(\xi\cdot\nabla u)D^2u(\xi)+(\xi\cdot\nabla u)D\xi(\nabla u)-(\nabla u\cdot \nabla\xi(\nabla u))\xi-\cancel{(\xi\cdot\nabla u)(\text{div}\,\xi)\nabla u}\Big)\cdot\nu\,d\HH^{d-1}\\
&=\int_{\Omega_u\cap A_R}|\nabla u'|^2\,dx+\int_{\partial\Omega_u\cap A_R}(\xi\cdot \nu)\Big(\nu \cdot D^2 u (\xi) -\cancel{\nabla u\cdot D\xi(\nabla u)}+\cancel{\nabla u\cdot \nabla\xi(\nabla u)}\Big)\,d\HH^{d-1}\\
&=\int_{\Omega_u\cap A_R}|\nabla u'|^2\,dx+\int_{\partial\Omega_u\cap A_R}(\xi\cdot\nu)^2 (\nu \cdot D^2u (\nu))\,d\HH^{d-1}\\
&=\int_{\Omega_u\cap A_R}|\nabla u'|^2\,dx-\int_{\partial\Omega_u\cap A_R}(\xi\cdot\nu)^2 H\,d\HH^{d-1}\\
&=\int_{\Omega_u\cap A_R}|\nabla u'|^2\,dx-\int_{\partial\Omega_u\cap A_R}(u')^2H\,d\HH^{d-1},
\end{align*}
where $H$ is the mean curvature of $\partial \O_u\cap A_R$ oriented towards the complement of $\O_u$.\\

\noindent{\bf Step 4. Conclusion.} Given any $\varphi\in C^\infty_c(\R^d\setminus\{0\})$, we consider an annulus $A_R$ containing the support of $\varphi$ and the vector field $\xi:=\varphi\nabla u$. Thus, by the minimality of $u'$ in $\Omega_u\cap A_R$, we get
\begin{align*}
\int_{\Omega_u}|\nabla \varphi|^2\,dx-\int_{\partial\Omega_u}\varphi^2H\,d\HH^{d-1}&\ge \int_{\Omega_u\cap A_R}|\nabla u'|^2\,dx-\int_{\partial\Omega_u\cap A_R}(u')^2H\,d\HH^{d-1}\\
&=\frac12\frac{d^2}{dt^2}\Big|_{t=0}\mathcal G(u_t,A_R)\ge \frac12\delta^2\mathcal{G}(u)[\xi]\ge 0,
\end{align*}
where the last inequality follows from \eqref{e:global-stable-solutions-1st-inequality}.
\end{proof}

\section{Hausdorff dimension of $\text{\rm Sing}(\partial \Omega)$}\label{section.sing}
In this Section we will estimate the dimension of the singular set $\text{\rm Sing}(\partial \Omega)$ of a solution $\Omega$ to the shape optimization problem \eqref{e:intro-shape-opt-pb-in-main-teo}; our main result is the following.

\begin{theo}[Dimension of the singular set]\label{t:dimension-of-sing}
Let $D$ be a bounded open set in $\R^d$ and let $f,g,Q$ be as in \cref{thm.main}, namely:
\begin{enumerate}[\rm(a)]
	\item $f,g\in C^2(D)\cap L^\infty(D)$;
	\item there are constants $C_1,C_2 > 0$ such that $0\le C_1 g\le f\le C_2 g$ in $D$;
	\item $Q\in C^{2}(D)$ and there are constants $c_Q,C_Q$ such that $0<c_Q\le Q\leq C_Q$ on $D$.
\end{enumerate}
Let $\Omega$ be a solution to \eqref{e:intro-shape-opt-pb-in-main-teo} and let the singular part $\text{\rm Sing}(\partial\Omega)$ of the free boundary $\partial\Omega\cap D$ be as in \cref{s:decomposition}. Then, the following holds:	
\begin{enumerate}[\rm (i)]
\item If $d< d^\ast$, then $\text{\rm Sing}(\partial\Omega)=\emptyset$.
\item If $d\ge d^\ast$, then the Hausdorff dimension of $\text{\rm Sing}(\partial\Omega)$ is at most $d-d^\ast$, that is,
$$\HH^{d-d^\ast+\eps}\big(\text{\rm Sing}(\partial\Omega)\big)=0\quad\text{for every}\quad \eps>0.$$
\end{enumerate}		
\end{theo}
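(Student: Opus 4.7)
The plan is to run a Federer-type dimension reduction based on the triple blow-up procedure of \cref{section.blowup}, combined with the structure theorem for $1$-homogeneous global stable solutions of the one-phase problem, \cref{t:stable-dimension-reduction}. The main step, which I shall call (I), consists in showing that at any $x_0 \in \text{\rm Sing}(\partial\Omega)$ the triple blow-up of $u_\Omega$ at $x_0$ produces a $1$-homogeneous global stable solution $u_0\colon\R^d\to\R$ in the sense of \cref{def:global-stable-solutions}, which is itself singular at the origin ($0 \in \text{\rm Sing}(\partial\Omega_{u_0})$). Once (I) is in hand, the conclusion is immediate: when $d<d^\ast$, \cref{t:stable-dimension-reduction}(i) forces $u_0$ to be a half-plane solution, contradicting the singularity at $0$, so $\text{\rm Sing}(\partial\Omega) = \emptyset$; when $d\ge d^\ast$, the bound $\mathcal{H}^{d-d^\ast+\eps}(\text{\rm Sing}(\partial\Omega))=0$ follows by the iterative Federer covering argument, using the closedness of $\text{\rm Sing}(\partial\Omega)$ inside $\partial\Omega\cap D$ (a consequence of the $\eps$-regularity behind \cref{t:regularity-of-Reg}) and the compactness of blow-ups from \cref{section.blowup}.

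For part (I), I would start from \cref{p:final-blow-up}, which supplies a sequence $r_k\to 0$ such that the rescalings converge (locally uniformly and, thanks to \cref{l:lemma-generale-blow-up-1} and \cref{l:lemma-generale-blow-up-2}, strongly in $H^1_{\text{\rm loc}}$) to a $1$-homogeneous limit $u_0 = \lambda v_0$ satisfying the first-variation identity \eqref{e:final-blow-up-stationary}. After a harmless rescaling this is exactly $\delta\mathcal{G}(u_0)[\xi]=0$. The qualitative conditions \ref{item:def-stable-1}--\ref{item:def-stable-5} of \cref{def:global-stable-solutions} will all be inherited along the three blow-ups by \cref{p:first-blow-up}, \cref{p:second-blow-up}, and \cref{l:third-blow-up}. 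The remaining piece is to transfer the stability inequality $\delta^2 \mathcal{F}(\Omega, D)[\xi]\ge 0$ of \cref{l:secondstv} to $\delta^2 \mathcal{G}(u_0)[\xi]\ge 0$: the plan is to use the strong $H^1$-convergence of the linearized companions $\delta u_\Omega, \delta v_\Omega$ solving \eqref{e:deltau}, which should follow from \cref{l:equation-with-div-F-convergence} once one observes that the matrices $\delta A, \delta^2 A$ depend only on $\xi$ and not on the blow-up scale; the lower-order terms $\delta^2 f, \delta^2 g, \delta^2 Q$ appearing in \eqref{e:second-derivative-F-along-vector-field} carry extra powers of $r_k$ and vanish in the limit. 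Finally, the singularity of $x_0$ forces $0\in\text{\rm Sing}(\partial\Omega_{u_0})$, since a half-plane blow-up would, by the $\eps$-regularity theorem used in \cref{t:regularity-of-Reg}, place $x_0$ in $\text{\rm Reg}(\partial\Omega)$.

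The main obstacle I expect is the careful bookkeeping required to push the stability inequality through three successive blow-ups in a setting where the relevant second variation couples $u_\Omega, v_\Omega$ with the linearized companions $\delta u_\Omega, \delta v_\Omega$ (each solving a non-homogeneous PDE), and where the blow-up is performed not at a classical free boundary variational problem but at a shape functional whose state function is not itself a minimizer of any local energy. What should make the transfer possible is the combination of three facts: the strong $H^1$-convergence of blow-up sequences; the vanishing, under the triple rescaling, of the lower-order coefficients, so that the limiting stability form reduces to the one-phase expression $\delta^2 \mathcal{G}$; and the Boundary Harnack Principle of \cite{mtv.BH}, applied at the second blow-up via \cref{p:second-blow-up}, which forces $u_0$ and $v_0$ to be proportional and thereby decouples the two-state system into a single stable one-phase problem to which \cref{t:stable-dimension-reduction} applies directly. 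Once this transfer is accomplished, the remaining ingredients are entirely classical.
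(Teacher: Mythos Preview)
Your proposal identifies all the right ingredients: the stability transfer along blow-ups (which the paper packages as \cref{l:stability-of-blow-up-limits}), the Boundary Harnack reduction to a single state function, the Weiss monotonicity yielding homogeneity, and the final appeal to \cref{t:stable-dimension-reduction}. Case~(i) works exactly as you describe and matches the paper.

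There is, however, a genuine subtlety in your treatment of case~(ii). Your step~(I) produces, at each singular point $x_0$, \emph{one specific} blow-up sequence---the diagonal sequence of \cref{p:final-blow-up}---along which the limit is a $1$-homogeneous global stable solution singular at the origin. But the Federer argument needs the Hausdorff content of $\text{\rm Sing}$ to persist through the blow-up, and this requires blowing up along a \emph{density sequence} for $\text{\rm Sing}(\partial\Omega)$. There is no reason for the diagonal sequence of \cref{p:final-blow-up} to coincide with such a density sequence, and after a single blow-up at a density point there is no monotonicity formula for the two-state functional $\mathcal F$, so the first limit need not be a cone. Thus ``(I) once, then standard Federer'' does not close.

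The paper resolves this by performing the three blow-ups \emph{separately}, each at a freshly chosen concentration point of the current singular set (via \cite[Lemma~10.5]{BV15}), and tracking $\HH^{d-d^\ast+\eps}_\infty(\text{\rm Sing})$ through all three limits. After the first blow-up at a density point $x_0$, the limit $(u_0,v_0)$ is stable but neither proportional nor homogeneous; after the second blow-up at a density point $x_{00}\in\text{\rm Sing}(\partial\Omega_0)$ the limit is proportional (Boundary Harnack applies at \emph{any} boundary point of $\Omega_0$, not just the origin); after the third blow-up at a density point of $\text{\rm Sing}(\partial\Omega_{00})$ the limit is $1$-homogeneous (Weiss monotonicity now holds since $u_{00}$ is already a stationary one-phase solution). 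One ends with a $1$-homogeneous global stable solution whose singular set has positive $\HH^{d-d^\ast+\eps}$-measure, contradicting \cref{t:stable-dimension-reduction}. Your outline contains every one of these pieces; the point is that they must be interleaved with the density-point selection, not applied as a single diagonal blow-up followed by Federer.
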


In order to prove \cref{t:dimension-of-sing} above, we will first show that the stability of $\Omega$, expressed in terms of the state functions $u_\Omega$ and $v_\Omega$ as in \cref{l:secondstv}, passes to a blow-up limit. 

\begin{lemma}[Stability of the blow-up limits]\label{l:stability-of-blow-up-limits}
Let $D$ be a bounded open set in $\R^d$ and let $f,g,Q$ be as in \cref{thm.main} and \cref{t:dimension-of-sing}. Let $\Omega$ be an optimal set for \eqref{e:intro-shape-opt-pb-in-main-teo} and let $u:=u_\Omega$ and $v:=v_\Omega$ be the state functions defined in \eqref{e:state-equation-open} and \eqref{e:stateqv}. 
%solutions to 
%$$-\Delta u=f\quad\text{in}\quad\Omega\ ,\qquad u\in H^1_0(\Omega);$$
%$$-\Delta v=g\quad\text{in}\quad\Omega\ ,\qquad v\in H^1_0(\Omega).$$
Suppose that the couple $u_0,v_0:\R^d\to\R$ is a blow-up limit of $u,v$ at a point $x_0\in\partial\Omega\cap D$. Then, 
\begin{equation}\label{e:stability-of-the-first-blow-up}
\int_{\Omega_0}\Big(\nabla u_0\cdot(\delta^2A)\nabla v_0-\nabla (\delta u_0)\cdot \nabla (\delta v_0) +Q(x_0)\frac{(\text{\rm div}\xi)^2+\xi\cdot\nabla(\text{\rm div}\xi)}2\Big)\,dx\ge 0,
\end{equation}
where $\Omega_0:=\{u_0>0\}=\{v_0>0\}$, and $\delta u_0$ and $\delta v_0$ are the solutions respectively to the PDEs
\begin{align}\label{e:blow-up-limit-delta-u-v}
	\begin{aligned}
	-\Delta (\delta u_{0})=\text{\rm div}\big((\delta A)\nabla u_{0}\big)\quad\text{in}\quad \Omega_{0}\ ,\qquad \delta u_{0}\in \dot H^1_0(\Omega_{0})\,,\\
 -\Delta (\delta v_{0})=\text{\rm div}\big((\delta A)\nabla u_{0}\big)\quad\text{in}\quad \Omega_{0}\ ,\qquad \delta v_{0}\in \dot H^1_0(\Omega_{0})\,,
	\end{aligned}
	\end{align}
 in the sense explained in \cref{sub:section-stable-general-pde-theory}. In particular, if $u_0$ and $v_0$ are proportional, then there is a constant $\lambda>0$ such that $\lambda u_0$ is a global stable solution of the one-phase Bernoulli problem in the sense of \cref{def:global-stable-solutions}.
\end{lemma}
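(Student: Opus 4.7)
The plan is to apply the stability inequality $\delta^2\mathcal{F}(\Omega,D)[\xi_k]\ge 0$ --- valid for every $\xi_k\in C^\infty_c(D;\R^d)$ since $\Omega$, being a minimizer of \eqref{e:intro-shape-opt-pb-in-main-teo}, is a stable critical point in the sense of \cref{l:secondstv} --- to the rescaled vector fields $\xi_k(y):=r_k\,\xi((y-x_0)/r_k)$, where $\xi\in C^\infty_c(\R^d;\R^d)$ is fixed and $(r_k)$ is the blow-up sequence producing $u_0,v_0$. For $k$ large, $\xi_k$ is supported in $D$, so the inequality applies. The key observation is that the chosen scaling is tailored to the variational structure: direct inspection of \eqref{e:first-and-second-variation-A} shows that $\delta A$ and $\delta^2 A$ of $\xi_k$ evaluated at $y$ coincide with those of $\xi$ evaluated at $x:=(y-x_0)/r_k$, whereas \eqref{e:first-and-second-variation-f} (and the analogous formulas for $g$ and $Q$) shows that $\delta^2 f_k,\delta^2 g_k$ and the subleading parts of $\delta^2 Q_k$, $\delta f_k$, $\delta g_k$ all carry extra positive powers of $r_k$, because they involve derivatives of $f,g,Q$ paired with the explicit $r_k$-factor in $\xi_k$.

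After the change of variables $y=x_0+r_k x$ in the formula of \cref{l:secondstv}, and introducing the rescalings $u_k(x):=u(x_0+r_k x)/r_k$, $v_k$ analogously, together with $W^u_k(x):=\delta u^{(k)}(x_0+r_k x)/r_k$ and $W^v_k$ analogously (where $\delta u^{(k)},\delta v^{(k)}$ denote the variations associated to $\xi_k$), a direct computation yields
\begin{align*}
\frac{1}{r_k^d}\,\delta^2\mathcal{F}(\Omega,D)[\xi_k] &= \int_{\Omega_k}\Big(\nabla u_k\cdot(\delta^2 A)\nabla v_k - \nabla W^u_k\cdot\nabla W^v_k\Big)\,dx \\
&\quad+ \int_{\Omega_k}Q(x_0+r_k x)\,\frac{(\dive\xi)^2+\xi\cdot\nabla(\dive\xi)}{2}\,dx + o(1),
\end{align*}
where $\Omega_k:=(\Omega-x_0)/r_k$ and $W^u_k,W^v_k\in\dot H^1_0(\Omega_k)$ solve
\[
-\Delta W^u_k=\dive\!\big((\delta A)\nabla u_k\big)+h^u_k,\qquad -\Delta W^v_k=\dive\!\big((\delta A)\nabla v_k\big)+h^v_k,
\]
with $h^u_k,h^v_k\in L^\infty$ compactly supported and $\|h^u_k\|_\infty+\|h^v_k\|_\infty=O(r_k)$. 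The $o(1)$ error collects the contributions $\int\delta^2 f_k\,v$, $\int\delta^2 g_k\,u$, and the subleading parts of $\delta^2 Q_k$, all of which vanish because $u,v=O(r_k)$ on bounded subsets while the coefficients involving $\nabla f,D^2 f,\nabla g,D^2 g,\nabla Q,D^2 Q$ are uniformly bounded. The main obstacle is passing to the limit in the Dirichlet cross term $\int\nabla W^u_k\cdot\nabla W^v_k\,dx$: the domains $\Omega_k$ are unbounded and converge only weakly, so the framework of Section~\ref{sub:section-stable-general-pde-theory} is exactly what is needed. The uniform exterior density estimate for $\Omega_k$ from \cref{p:density}, combined with the pointwise-a.e.\ convergence $\ind_{\Omega_k}\to\ind_{\Omega_0}$ and the strong $H^1_{\text{loc}}$-convergence $u_k\to u_0$, $v_k\to v_0$ from \cref{l:lemma-generale-blow-up-1}, verifies the hypotheses of \cref{l:equation-with-div-F-convergence}, while the lower-order terms $h^u_k,h^v_k$ contribute an $o(1)$ perturbation in $\dot H^1(\R^d)$ by a standard splitting argument and Sobolev embedding. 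Hence $W^u_k\to\delta u_0$ and $W^v_k\to\delta v_0$ strongly in $\dot H^1(\R^d)$, and passing to the limit, using the continuity of $Q$ at $x_0$, gives \eqref{e:stability-of-the-first-blow-up}.

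For the last assertion, if $u_0=c\,v_0$ for some $c>0$, then by uniqueness the PDEs in \eqref{e:blow-up-limit-delta-u-v} force $\delta u_0=c\,\delta v_0$, so substituting these relations into \eqref{stationary.Onep} from \cref{p:first-blow-up} and into \eqref{e:stability-of-the-first-blow-up} reduces both to expressions quadratic in $u_0$ alone (using $\nabla u_0\cdot M\nabla v_0=\tfrac{1}{c}\nabla u_0\cdot M\nabla u_0$ for any matrix $M$). Setting $\lambda:=(c\,Q(x_0))^{-1/2}$ and $w:=\lambda u_0$, a direct rescaling then shows $\delta\mathcal{G}(w)[\xi]=0$ and $\delta^2\mathcal{G}(w)[\xi]\ge 0$ for every $\xi\in C^\infty_c(\R^d;\R^d)$. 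The qualitative conditions (a)--(e) of \cref{def:global-stable-solutions} are inherited by $u_0$ (and hence by $w$) from the properties of $u,v$ established in \cref{p:first-blow-up}, so $w=\lambda u_0$ is a global stable solution of the one-phase Bernoulli problem.
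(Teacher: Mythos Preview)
Your proof is correct and follows essentially the same approach as the paper. The only difference is cosmetic: the paper first rescales the domain and notes that $\Omega_k:=(\Omega-x_0)/r_k$ is optimal for a rescaled functional $\mathcal{F}_k$ (with data $f_k(x)=r_kf(x_0+r_kx)$, etc.), then applies the stability inequality at that rescaled level with the fixed field $\xi$; you instead keep the original problem and rescale the test field via $\xi_k(y)=r_k\,\xi((y-x_0)/r_k)$, which after the change of variables $y=x_0+r_kx$ produces exactly the same expression. Both routes then invoke the strong $H^1_{\mathrm{loc}}$ convergence of the blow-ups and \cref{l:equation-with-div-F-convergence} to pass to the limit, and handle the lower-order terms by observing they carry extra powers of $r_k$; the paper writes $h^u_k=\dive(f_k\xi)$ directly in divergence form to feed it into \cref{l:equation-with-div-F-convergence}, whereas your splitting-plus-Sobolev argument achieves the same estimate.
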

\begin{proof}
Let $r_k\to0$, be a sequence such that 
$$u_{k}(x):=\frac1{r_k}u(x_0+r_kx)\quad\text{and}\quad v_{k}(x):=\frac1{r_k}v(x_0+r_kx)\,,$$
converge respectively to $u_0$ and $v_0$ locally uniformly and (by \cref{p:first-blow-up}) strongly in $H^1_{loc}(\R^d)$. We define:
$$f_{k}(x):={r_k}f(x_0+r_kx)\,,\quad g_{k}(x):={r_k}g(x_0+r_kx)\,,\quad Q_{k}(x):=Q(x_0+r_kx)\,.$$
Then, the functions $u_{k}$ and $v_{k}$ satisfy the PDEs
$$-\Delta u_{k}=f_{k}\quad\text{in}\quad\Omega_{k}\ ,\qquad u_{k}\in H^1_0(\Omega_{k}),$$
$$-\Delta v_{k}=g_{k}\quad\text{in}\quad\Omega_{k}\ ,\qquad v_{k}\in H^1_0(\Omega_{k}),$$
where 
$$\Omega_{k}:=\frac1{r_k}(-x_0+\Omega)\,.$$
Moreover, $\Omega_{k}$ is optimal in the rescaled domain 
$$D_{k}:=\frac1{r_k}(-x_0+D)\,,$$
for the functional 
$$\mathcal F_{k}(A):=\int_A\Big(\nabla u_A\cdot\nabla v_A-u_Ag_{k}-v_A f_{k}+Q_{k}\Big)\,dx\,$$
where, this time, by $u_A$ and $v_A$ we denote the solutions to 
$$-\Delta u_A=f_{k}\quad\text{in}\quad A\ ,\qquad u_A\in H^1_0(A),$$
$$-\Delta v_A=g_{k}\quad\text{in}\quad A\ ,\qquad v_A\in H^1_0(A).$$
We fix a compactly supported smooth vector field $\xi\in C^\infty_c(\R^d;\R^d)$. Since $r_k\to 0$, 
for $k$ large enough the support of $\xi$ is contained in $D_{k}$ and the stability of $\Omega_{k}$ (\cref{l:secondstv}) reads as 
\begin{equation}\label{e:second-variation-along-blow-up-sequence}
\int_{\R^d}\Big(\nabla u_{k}\cdot(\delta^2A)\nabla v_{k}-\nabla (\delta u_{k})\cdot \nabla (\delta v_{k})-(\delta^2 f_{k}) v_{k} - (\delta^2 g_{k}) u_{k}+\delta^2Q_{k}\Big)\,dx\ge 0,
\end{equation}
where $\delta u_{k}$ and $\delta v_{k}$ are the solutions to 
$$-\Delta (\delta u_{k})=\text{\rm div}\big((\delta A)\nabla u_{k}\big)+\delta f_{k}=\text{\rm div}\big((\delta A)\nabla u_{k}+f_k\xi\big)\quad\text{in}\quad \Omega_{k}\ ,\qquad \delta u_{k}\in H^1_0(\Omega_{k}),$$
$$-\Delta (\delta v_{k})=\text{\rm div}\big((\delta A)\nabla v_{k}\big)+\delta g_{k}=\text{\rm div}\big((\delta A)\nabla v_{k}+g_k\xi\big)\quad\text{in}\quad \Omega_{k}\ ,\qquad \delta v_{k}\in H^1_0(\Omega_{k}),$$
and where we used the following notation: 
\begin{itemize}
\item $\delta A$ and $\delta^2A$ are the matrices defined in \eqref{e:first-and-second-variation-A} (we notice that $\delta A$ and $\delta^2A$ are defined in terms of $\xi$ only);
\item the variations $\delta f_{k}$ and $\delta^2 f_{k}$ ($\delta g_{k}$ and $\delta^2 g_{k}$ are defined analogously) are given by;
\begin{align*}
	\begin{aligned}
	\delta f_k(x) &:=\dive(f_k\xi)=r_k^2\nabla f(x_0+r_kx)\cdot\xi+r_kf(x_0+r_kx)\,\text{\rm div}\,\xi\,,\\
	\delta^2 f_k(x) &:= \frac{r_k^3}2 \xi \cdot (D^2f(x_0+r_kx))\xi+\frac{r_k^2}2\nabla f(x_0+r_kx)\cdot D\xi[\xi]\\
 &\quad+r_kf(x_0+r_kx)\frac{(\dive\xi)^2+\xi\cdot\nabla[\text{\rm div}\,\xi]}{2}+r_k^2(\nabla f(x_0+r_kx) \cdot \xi)\dive\xi \,,
	\end{aligned}
	\end{align*}
 \item $\delta^2Q_k$ is given by 
 \begin{align*}
	\begin{aligned}
 \delta^2Q_k(x)&:=(\xi\cdot\nabla Q_k)\text{\rm div}\xi+\frac12\xi \cdot D^2 Q_k \xi
+Q_k\frac12\Big((\text{\rm div}\xi)^2+\xi\cdot\nabla(\text{\rm div}\xi)\Big)\\
&=r_k(\xi\cdot\nabla Q(x+r_kx))\text{\rm div}\xi+\frac{r_k^2}2\xi \cdot D^2 Q(x_0+r_kx) \xi\\
&\qquad
+\frac12Q(x_0+r_k)\Big((\text{\rm div}\xi)^2+\xi\cdot\nabla(\text{\rm div}\xi)\Big),
\end{aligned}
\end{align*}
where in all the formulas above, the field $\xi$ and its derivatives are all computed in $x$.
\end{itemize} 
We notice that $\delta f_k$ and $\delta^2f_k$ vanish outside the support of $\xi$. Moreover, since $f$ and $g$ are $C^2$, we get that $\delta f_k$ and $\delta^2f_k$ converge to zero uniformly. Similarly, since $Q$ is $C^2$, we get that 
$$\delta^2Q_k\to \frac12Q(x_0)\Big((\text{\rm div}\xi)^2+\xi\cdot\nabla(\text{\rm div}\xi)\Big)\quad\text{strongly in}\quad L^1(\R^d).$$
 Next, we notice that by \cref{p:first-blow-up}, $u_k$ and $v_k$ converge strongly in $H^1_{loc}(\R^d)$ to the blow-up limits $u_0$, $v_0$. Thus, since the support of $\delta^2A$ is compact, we get
 $$\lim_{k\to\infty}\int_{\R^d}\nabla u_k\cdot(\delta^2A)\nabla v_k\,dx= \int_{\R^d}\nabla u_0\cdot(\delta^2A)\nabla v_0\,dx.$$
 Similarly, we have that 
 $$(\delta A)\nabla u_{k}+f_k\xi\to (\delta A)\nabla u_0\qquad\text{and}\qquad(\delta A)\nabla v_{k}+g_k\xi\to (\delta A)\nabla v_0\,,$$
 strongly in $L^2(\R^d;\R^d)$. Thus, by \cref{l:equation-with-div-F-convergence}, we get that $\delta u_k$ and $\delta v_k$ converge respectively to the solutions $\delta u_0$ and $\delta v_0$ of \eqref{e:blow-up-limit-delta-u-v}. Now, \eqref{e:stability-of-the-first-blow-up} follows by passing to the limit \eqref{e:second-variation-along-blow-up-sequence}.
 \end{proof}

\begin{proof}[\bf Proof of \cref{t:dimension-of-sing}]
    The strategy is similar to the one for minimizers of the one-phase problem. We split the proof in two different cases.\\

    Case 1: $d<d^*$. Let $x_0 \in \partial \O_u\cap D$ and $r_k \to 0^+$ be the infinitesimal sequence such that the rescalings $u_{x_0,r_k}$ and $v_{x_0,r_k}$ converge to the $1$-homogeneous blow-up limits $u_0$ and $v_0$ given by \cref{p:final-blow-up}. Then, given $\lambda \in (C_1,C_2)$ as in \cref{p:final-blow-up}, the blow-up limits
    \be
    u_0:=\lim_{k\to\infty}\frac{1}{ {\sqrt{\lambda} Q(x_0)}}u_{x_0,r_k}\qquad\text{and}\qquad v_0:=\lim_{k\to\infty}\sqrt{\lambda}v_{x_0,r_k}\,\ee
    coincide up to a multiplicative constant (that is $v_0=Q(x_0)v_0$) and, by \cref{l:stability-of-blow-up-limits} and \cref{p:final-blow-up}, $u_0$ is a $1$-homogeneous stable solutions of the one-phase problem in the sense of \cref{def:global-stable-solutions}. Therefore, by definition of $d^*$, $\text{\rm Sing}(\partial\Omega_{u_0})=\emptyset$ and $u_0$ is a half space solution:
$$u_0(x)=(x\cdot\nu)_+\quad\text{for some unit vector}\quad \nu\in\R^d.
$$ Finally, by rewriting the result in terms of the blow-up sequence of \cref{p:final-blow-up}, we deduce the existence $\alpha>0$ and $\beta>0$ such that $\alpha \beta =Q(x_0)$ such that 
$$
\frac{1}{r_k}u(x_0+r_k x) \to \alpha(x\cdot \nu)_+,\quad\mbox{and}\quad\frac{1}{r_k}v(x_0+r_k x) \to \beta(x\cdot \nu)_+
$$
as $k\to \infty$. Thus, by definition, $x_0 \in \text{\rm Reg}(\partial \O)$. Since this is true at every free boundary point $x_0\in\partial\Omega\cap D$, we get that $\text{\rm Sing}(\partial\Omega)=\emptyset$.\\

%Case 2: $d= d^*$. Suppose by contradiction that it exists a sequence of points $x_k \in \text{\rm Sing}(\partial \O)$  converging to $x_0 \in \text{\rm Sing}(\partial \O)\cap D$. Then, set $r_k = |x_0-x_k|$. \gio{Scusate non sono riuscito a finirlo d'oggi, se riesco lo faccio domani però per un paio d'ore sono inagibile. A dirla tutta non mi sembra ovvio sto pezzo}\\

Case 2: $d\geq d^*$. Given $\eps>0$, let us prove that 
$$\HH^{d-d^\ast+\eps}\big(\text{\rm Sing}(\partial\Omega)\big)=0.
$$
We will apply consecutively the three blow-ups from \cref{section.blowup}. By contradiction, assume that 
\begin{equation*}
\HH^{d-d^\ast+\eps}\big(\text{\rm Sing}(\partial\Omega)\big)\geq C>0.
\end{equation*}
Therefore, by \cite[Lemma 10.5]{BV15}, there are $x_0 \in \text{\rm Sing}(\partial \O)$ and a sequence $r_k\to 0$ such that 
$$\HH^{d-d^\ast+\eps}\big(\text{\rm Sing}(\partial\Omega)\cap B_{r_k}(x_0)\big)\geq C r_k^{d-d^\ast+\eps}.
$$
By the first blow-up analysis from 
\cref{s:1} (see \cref{p:first-blow-up}), we deduce that the blow-up sequences $u_{x_0,r_k},v_{x_0,r_k}$ and $\O_{x_0,r_k}$ converge to some limits $u_0, v_0$ and $\O_0$ such that 
$$\HH^{d-d^\ast+\eps}\big(\text{\rm Sing}(\partial\Omega_0)\cap B_{1}\big)\geq C,
$$
where $u_0$, $v_0$ and $\Omega_0$ are as in \cref{p:first-blow-up} and where $u_0$ and $v_0$ satisfy the stability condition \eqref{e:stability-of-the-first-blow-up} from \cref{l:stability-of-blow-up-limits}. By applying again \cite[Lemma 10.5]{BV15},  there exists a point $x_{00} \in \text{\rm Sing}(\partial \O_0)$ and another sequence $r_k\to 0^+$ such that 
$$\HH^{d-d^\ast+\eps}\big(\text{\rm Sing}(\partial\Omega_0)\cap B_{r_k}(x_{00})\big)\geq C r_k^{d-d^\ast+\eps}.
$$
Hence, by applying the second blow-up analysis of \cref{sub.homo} to the sequences
\be\label{e:bl}
\frac{1}{r_k\sqrt{\lambda}\,Q(x_0)}u_0(x_{00}+r_kx),\qquad \frac{\sqrt{\lambda}}{r_k}v_0(x_{00}+r_kx)\quad\mbox{and}\quad\frac{1}{r_k}(\O_0-x_{00}),
\ee
we get that 
$$\HH^{d-d^\ast+\eps}\big(\text{\rm Sing}(\partial\Omega_{00})\cap B_{1}\big)\geq C,$$
where $v_{00}=Q(x_0) u_{00}$ and $u_{00}$ satisfies (2)-(3)-(4)-(5)-(6) in \cref{p:second-blow-up}. Moreover, by the strong convergence of the blow-up sequence, $u_{00}$ and $v_{00}$ still satisfy \eqref{e:stability-of-the-first-blow-up}, so $u_{00}$ is a global stable solution of the one-phase problem in the sense of \cref{def:global-stable-solutions}. Finally, by applying for the last time \cite[Lemma 10.5]{BV15}, we deduce that
\begin{equation}\label{e:Hausd}\HH^{d-d^\ast+\eps}\big(\text{\rm Sing}(\partial\Omega_{000})\cap B_{1}\big)\geq C,
\end{equation}
in which $\O_{000}=\{u_{000}>0\}$ and $u_{000}$ is a blow-up limit of $u_{00}$ at some free boundary point $x_{000}\in\partial\Omega_{00}$. But now $u_{000}$ is a $1$-homogeneous stable solution of the one-phase problem in the sense of \cref{def:global-stable-solutions}, so  \eqref{e:Hausd} contradicts \cref{t:stable-dimension-reduction}.
\end{proof}

%We are now in position to prove Theorem~\ref{thm.main}.
%\begin{proof}[Proof of Theorem~\ref{thm.main}]
%{\color{violet}DARIO: QUESTA PRIMA PARTE SULL'ESISTENZA E' DA TENERE
%First of all, we observe that the existence of an (open) set minimizing~\eqref{e:intro-shape-opt-pb-in-main-teo} follows from~\cite[Theorem~1.1]{BMV1}. 
%}
%The regular part of the boundary is defined in Definition~\ref{def:regular} and its regularity proved in Lemma~\ref{l:reg1alpha}, when $f,g$ are of class $C^2$ and in Proposition~\ref{prop:smoothnessbdry} when they are smooth.

%The analysis of the singular set, on the other hand, follows from~\cite[Proposition~10.13]{V}. Thanks to Remark~\ref{r:comapactness} and Lemma~\ref{l:stableAC}, a state variable $u$ associated to an optimal set $\Omega$ satisfies the assumptions $(b),(c),(d)$ of~\cite[Proposition~10.13]{V}, while $(a)$ of~\cite[Proposition~10.13]{V} follows from~\cite[Theorem~3.1]{mtv.flat} (see also Lemma~\ref{l:reg1alpha}).
%\end{proof}

\section{Existence of optimal sets in $\R^d$ and proof of Theorem~\ref{cor.main}}\label{s:existence}
In this section we prove \cref{cor.main}; we prove the existence of an optimal set for \eqref{e:intro-shape-opt-pb-in-main-cor} and then we show how to obtain the regularity of the optimal sets as a consequence from \cref{thm.main}.
\subsection{Statement of the problem in the class of measurable sets} One can not obtain the existence of optimal sets (for \eqref{e:intro-shape-opt-pb-in-main-cor} or \eqref{e:intro-shape-opt-pb-in-main-teo}) directly in the class of open sets. Thus, we extend the definition of the shape functional to the class of measurable sets. Precisely, if $\Omega$ is a Lebesgue measurable set of finite measure in $\R^d$, we define the space
$\widetilde H^1_0(\Omega)$ of all $H^1(\R^d)$ functions vanishing (Lebesgue-)almost-everywhere outside $\Omega$. We will say that $u$ is a (weak) solution to the problem
\begin{equation}\label{e:state-equation-measure}
-\Delta u=f\quad \text{in}\quad \Omega\,,\qquad u\in \widetilde H^1_0(\Omega)\,,
\end{equation}
if $u\in \widetilde H^1_0(\Omega)$ and
\begin{equation}\label{e:state-equation-measure-weak}
\int_{\Omega}\nabla v\cdot \nabla u\,dx=\int_{\Omega}vf\,dx\quad\text{for every}\quad v\in \widetilde H^1_0(\Omega).
\end{equation}
It is immediate to check that $u$ satisfies \eqref{e:state-equation-measure-weak} if and only if
\begin{equation}\label{e:def-J-f-intro}
\frac12\int_{\R^d}|\nabla u|^2\,dx-\int_{\Omega}uf(x)\,dx\le \frac12\int_{\R^d}|\nabla v|^2\,dx-\int_{\Omega}vf(x)\,dx\quad\text{for every}\quad v\in \widetilde H^1_0(\Omega)\,.
\end{equation}
From now on, we will denote the unique solution to \eqref{e:state-equation-measure} by $\widetilde u_\Omega$.\medskip

We will first prove the following lemma, which in particular implies that for every open set $\Omega\subset\R^d$ of finite measure and non-negative functions $f,g\in L^2(\Omega)$, we have
$$\int_\Omega\Big(-g(x)\widetilde u_\Omega+Q(x)\Big)\,dx\le \int_\Omega\Big(-g(x) u_\Omega+Q(x)\Big)\,dx\,,$$
where $u_\Omega$ is the weak solution to \eqref{e:state-equation-open}.
\begin{lemma}\label{l:comparison-open-measurable}
	Let $\Omega$ be a bounded open set in $\R^d$ and $f\in L^2(\Omega)$ a non-negative function. Then
	\begin{equation}\label{e:existence-inequality}
	0\le u_\Omega\le \widetilde u_\Omega\,,
	\end{equation}
	where $\widetilde u_\Omega\in \widetilde H^1_0(\Omega)$ is the solution to \eqref{e:state-equation-measure} and $u_\Omega\in H^1_0(\Omega)$ is the weak  solution to \eqref{e:state-equation-open}.\\	
	Moreover, if $\Omega$ satisfies the following exterior density estimate:
	\begin{equation}\label{e:exterior-density-intro}
	\begin{array}{ll}
	\text{there are constants $r_0>0$ and $c>0$ such that}\smallskip\\
	\qquad \text{for every $x_0\in\partial\Omega$ and every $r\le r_0$}\quad |B_r(x_0)\setminus \Omega|\ge c|B_r|\,,
	\end{array}
	\end{equation}
	then $H^1_0(\Omega)=\widetilde H^1_0(\Omega)$ and $ u_\Omega=\widetilde u_\Omega$.
\end{lemma}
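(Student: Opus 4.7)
The proof splits naturally into three parts: non-negativity of the two solutions, the comparison $u_\Omega\le\widetilde u_\Omega$, and the coincidence of the two Sobolev spaces under the exterior density condition.

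\emph{Non-negativity.} I would prove $u_\Omega\ge 0$ (and analogously $\widetilde u_\Omega\ge 0$) by testing the weak equation \eqref{e:state-equation-open} with the admissible test function $u_\Omega^-\in H^1_0(\Omega)$. Using $f\ge 0$ one gets
\[
\int_{\R^d}|\nabla u_\Omega^-|^2\,dx=-\int_\Omega f\,u_\Omega^-\,dx\le 0,
\]
hence $u_\Omega^-\equiv 0$. The same computation with $\widetilde u_\Omega^-\in\widetilde H^1_0(\Omega)$ as test function (which is legitimate since $\widetilde H^1_0(\Omega)$ is stable under taking the positive/negative part) yields $\widetilde u_\Omega\ge 0$.

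\emph{Comparison $u_\Omega\le\widetilde u_\Omega$.} The plan is the classical lattice argument on the energy functional $E_f(\varphi)=\frac12\int|\nabla\varphi|^2-\int f\varphi$. Setting $\underline u:=u_\Omega\wedge\widetilde u_\Omega$ and $\bar u:=u_\Omega\vee\widetilde u_\Omega$, the pointwise identities $|\nabla(a\wedge b)|^2+|\nabla(a\vee b)|^2=|\nabla a|^2+|\nabla b|^2$ and $(a\wedge b)+(a\vee b)=a+b$ give the algebraic decomposition
\[
E_f(\bar u)+E_f(\underline u)=E_f(u_\Omega)+E_f(\widetilde u_\Omega).
\]
Now $\bar u\in\widetilde H^1_0(\Omega)$ because both summands vanish a.e.\ outside $\Omega$, so by the variational characterization \eqref{e:def-J-f-intro} of $\widetilde u_\Omega$, $E_f(\widetilde u_\Omega)\le E_f(\bar u)$. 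On the other hand, $0\le\underline u\le u_\Omega$ a.e.\ with $u_\Omega\in H^1_0(\Omega)$; the lattice property of $H^1_0$ for open sets (any $H^1(\R^d)$ function dominated pointwise by a non-negative element of $H^1_0(\Omega)$ belongs to $H^1_0(\Omega)$, via the quasi-continuous representative vanishing quasi-everywhere on $\Omega^c$) gives $\underline u\in H^1_0(\Omega)$. Minimality of $u_\Omega$ then yields $E_f(u_\Omega)\le E_f(\underline u)$. Summing the two inequalities and comparing with the decomposition forces equality throughout, so by strict convexity of $E_f$ the minimizers are unique and we conclude $u_\Omega=\underline u$, i.e.\ $u_\Omega\le\widetilde u_\Omega$.

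\emph{Equality under the exterior density estimate.} Since the inclusion $H^1_0(\Omega)\subset \widetilde H^1_0(\Omega)$ is automatic, it suffices to show the reverse. Let $v\in\widetilde H^1_0(\Omega)$, i.e.\ $v\in H^1(\R^d)$ with $v=0$ a.e.\ on $\R^d\setminus\Omega$. The exterior density condition \eqref{e:exterior-density-intro} implies that $\R^d\setminus\Omega$ is uniformly $1$-thick in the sense of Wiener/capacity at every point of $\partial\Omega$; by the Hedberg–Kilpel\"ainen–Adams theory of Sobolev capacity (see e.g.\ the standard reference for the identity $H^1_0(\Omega)=\{v\in H^1(\R^d):v=0\text{ q.e.\ on }\Omega^c\}$ whenever $\Omega^c$ is uniformly thick), the quasi-continuous representative of $v$ must vanish quasi-everywhere on $\R^d\setminus\Omega$, hence $v\in H^1_0(\Omega)$. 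Therefore $\widetilde H^1_0(\Omega)=H^1_0(\Omega)$ and the two minimization problems coincide, giving $\widetilde u_\Omega=u_\Omega$ by uniqueness.

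The two delicate technical points are the lattice inclusion $\underline u\in H^1_0(\Omega)$ in the comparison step and the identification $\widetilde H^1_0(\Omega)=H^1_0(\Omega)$ under the exterior density estimate. Both are consequences of classical Sobolev capacity theory rather than of the specific shape optimization setting, so they can be invoked by reference; the rest of the argument is a one-line application of the convex variational characterization of $u_\Omega$ and $\widetilde u_\Omega$.
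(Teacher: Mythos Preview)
Your proof is correct. The non-negativity step and the final step (equality of the two Sobolev spaces under the exterior density condition) are essentially the same as in the paper, both resting on standard capacity theory and cited as such.

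The comparison step $u_\Omega\le\widetilde u_\Omega$ is where you diverge. The paper argues via a comparison principle: it observes that $\Delta u_\Omega+f\ge 0$ in the sense of distributions on all of $\R^d$ (because $u_\Omega\ge 0$ and $-\Delta u_\Omega=f$ inside $\Omega$), then tests this distributional inequality and the weak equation for $\widetilde u_\Omega$ against $\varphi:=(u_\Omega-\widetilde u_\Omega)_+\in\widetilde H^1_0(\Omega)$, subtracting to get $\int|\nabla\varphi|^2\le 0$. Your lattice argument is a genuinely different route: you decompose the energy via $E_f(u_\Omega\vee\widetilde u_\Omega)+E_f(u_\Omega\wedge\widetilde u_\Omega)=E_f(u_\Omega)+E_f(\widetilde u_\Omega)$, place $u_\Omega\vee\widetilde u_\Omega$ in $\widetilde H^1_0(\Omega)$ and $u_\Omega\wedge\widetilde u_\Omega$ in $H^1_0(\Omega)$, and conclude by uniqueness of minimizers. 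The delicate point you flag --- that $0\le u_\Omega\wedge\widetilde u_\Omega\le u_\Omega$ forces membership in $H^1_0(\Omega)$ --- is indeed a standard consequence of the quasi-everywhere characterization of $H^1_0(\Omega)$ and holds for any open $\Omega$. Your argument is slightly more structural (it uses only the variational characterization and the order structure of Sobolev spaces), while the paper's is more PDE-flavored and yields the useful byproduct $\Delta u_\Omega+f\ge 0$ on $\R^d$; either is perfectly adequate here.
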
	
\begin{proof}
	First we notice that $u_\Omega$ and $\widetilde u_\Omega$ are the unique minimizers of the functional \eqref{e:def-J-f-intro} in $H^1_0(\Omega)$ and $\widetilde H^1_0(\Omega)$. Since $f\ge 0$, by testing the optimality of $u_\Omega$ with $u_\Omega\vee 0\in H^1_0(\Omega)$ and the optimality of $\widetilde u_\Omega$ with $\widetilde u_\Omega\vee 0\in \widetilde H^1_0(\Omega)$, we get that $u_\Omega\ge 0$ and $\widetilde u_\Omega\ge 0$ in $\Omega$. Now, a standard argument (see for instance \cite[Lemma~2.6]{V}) gives that
	$\Delta u_\Omega+f\ge 0$ on $\R^d$ in the sense of distributions, precisely:
	\begin{equation}\label{e:equation-u-Omega-across-the-boundary}
	-\int_{\R^d}\nabla u_\Omega\cdot\nabla \varphi\,dx+\int_{\R^d}\varphi f\,dx\ge 0\quad\text{for every}\quad \varphi\ge 0,\ \varphi\in H^1(\R^d).
	\end{equation}
	Next, we notice that for every open set $\Omega$, we have the inclusion $H^1_0(\Omega)\subset\widetilde H^1_0(\Omega)$. Thus,
	if we take $\varphi$ to be the negative part of $\widetilde u_\Omega-u_\Omega$,
	$$\varphi:=-\Big((\widetilde u_\Omega-u_\Omega)\wedge 0\Big),$$
	we have that $\varphi\in \widetilde H^1_0(\Omega).$
	So, using the (weak) equation for $\widetilde u_\Omega$, we get
	$$\int_{\R^d}\nabla \varphi\cdot\nabla \widetilde u_\Omega\,dx=\int_{\Omega} \varphi\,f(x)\,dx$$
	On the 	other hand, by the positivity of $\varphi$ and \eqref{e:equation-u-Omega-across-the-boundary},
	$$\int_{\R^d}\nabla \varphi\cdot\nabla  u_\Omega\,dx\le \int_\Omega  \varphi\,f(x)\,dx.$$
	Combining the two, we obtain
	$$\int_{\R^d}|\nabla\varphi|^2\,dx=-\int_{\R^d}\nabla \varphi\cdot\nabla  (\widetilde u_\Omega-u_\Omega)\,dx\le 0,$$
	which gives that $\varphi\equiv 0$ and so \eqref{e:existence-inequality} holds. Finally, it is known (see for instance \cite[Proposition~4.7]{deve}) that in the presence of the density estimate \eqref{e:exterior-density-intro}, we have that $H^1_0(\Omega)=\widetilde H^1_0(\Omega)$. Thus, the equality $u_\Omega=\widetilde u_\Omega$ follows by the fact that the minimizer of the functional in \eqref{e:def-J-f-intro} is unique.
\end{proof}	

\subsection{Existence of optimal measurable sets in $\R^d$}
For all measurable set $\Omega\subset \R^d$, we set \[
\widetilde J(\Omega):=\int_\Omega -g(x)u_\Omega+Q(x)\,dx.
\]
We can prove the following existence result for the minimization of $\widetilde J$.
\begin{lemma}\label{l:existence-Rd}
If $d\ge 3$, let $f,g\in L^2(\R^d)$, while if $d=2$, let $f,g\in L^1(\R^2)\cap L^\infty(\R^2)$ and let $Q:\R^d\to\R$ be a measurable function bounded from below by a positive constant $c_Q>0$.
	Then, the following variational problem has a solution
	\begin{equation}\label{e:intro-shape-opt-pb-in-main-cor-2}
	\min \bigg\{\int_\Omega \Big(-g(x)\widetilde u_\Omega+Q(x)\Big)\,dx\ :\ \Omega\subset \R^d,\ \Omega\ \text{measurable},\ |\Omega|<+\infty\bigg\}\,.
	\end{equation}
\end{lemma}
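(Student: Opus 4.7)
The plan is to apply the direct method of the calculus of variations to the state functions $u_n := \widetilde u_{\Omega_n}$ of a minimizing sequence $\{\Omega_n\}$; I describe the case $d\ge 3$, the case $d=2$ being analogous with $L^1\cap L^\infty$ bounds in place of Sobolev. First, I would establish a uniform bound on $|\Omega_n|$: since $\widetilde J(\emptyset)=0$, the infimum is non-positive, and we may assume (otherwise $\emptyset$ is a minimizer and we are done) that $\widetilde J(\Omega_n)<0$, which gives $\int g u_n\,dx \ge c_Q|\Omega_n|$. The energy identity $\|\nabla u_n\|_{L^2}^2=\int f u_n\,dx$, combined with the Sobolev inequality $\|u_n\|_{L^{2^*}}\le C_d\|\nabla u_n\|_{L^2}$ and Hölder on the support $\Omega_n$, yields $\|\nabla u_n\|_{L^2}\le C_d\|f\|_{L^2}|\Omega_n|^{1/d}$ and $\int gu_n\le C_d\|f\|_{L^2}\|g\|_{L^2}|\Omega_n|^{2/d}$. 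Comparing with the lower bound gives $|\Omega_n|^{1-2/d}\le C_d\|f\|_{L^2}\|g\|_{L^2}/c_Q$, a uniform bound.

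Next, the tightening argument in the proof of \cref{l:comparison-open-measurable} shows that replacing $\Omega_n$ by $\{u_n>0\}$ preserves $u_n$ as the state function while only decreasing the $\int Q\chi_\Omega$ term; so WLOG $\Omega_n=\{u_n>0\}$. The estimates above give $u_n$ bounded in $\dot H^1(\R^d)\cap L^2(\R^d)$, so along a subsequence $u_n\rightharpoonup u_\infty$ weakly in $\dot H^1\cap L^2$ and a.e.\ pointwise in $\R^d$. Set $\Omega_\infty:=\{u_\infty>0\}$.

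The main step, and principal obstacle, is to show that $u_\infty=\widetilde u_{\Omega_\infty}$, i.e., that the weak limit of the state functions is the state function on the limit set. I would prove this by a Mosco-type approximation of $\widetilde H^1_0(\Omega_\infty)$ by $\widetilde H^1_0(\Omega_n)$: for $\phi\in\widetilde H^1_0(\Omega_\infty)$, the truncations $\phi\cdot\psi_K(u_n)$, with $\psi_K:[0,\infty)\to[0,1]$ a smooth cutoff that vanishes on $[0,1/(2K)]$ and equals $1$ on $[1/K,\infty)$, belong to $\widetilde H^1_0(\Omega_n)$ since $u_n$ vanishes off $\Omega_n$, and through a diagonal argument in $K$ and $n$, combined with the a.e.\ convergence $u_n\to u_\infty$, they approximate $\phi$ in $\dot H^1(\R^d)$. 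Passing to the limit in $E_f(u_n)\le E_f(\phi\cdot\psi_K(u_n))$ via the weak lower semicontinuity of the convex functional $E_f$ yields $E_f(u_\infty)\le E_f(\phi)$ for every $\phi\in\widetilde H^1_0(\Omega_\infty)$; by uniqueness of the minimizer, $u_\infty=\widetilde u_{\Omega_\infty}$.

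With this identification, the lower semicontinuity of $\widetilde J$ follows: weak $L^2$ convergence and $g\in L^2$ give $\int g u_n\to\int g\widetilde u_{\Omega_\infty}$, while $\chi_{\{u_\infty>0\}}\le\liminf_n\chi_{\{u_n>0\}}$ a.e.\ (a consequence of a.e.\ pointwise convergence) together with Fatou yield $\int Q\chi_{\Omega_\infty}\le\liminf\int Q\chi_{\Omega_n}$. Summing these, $\widetilde J(\Omega_\infty)\le\liminf\widetilde J(\Omega_n)=\inf\widetilde J$, so $\Omega_\infty$ is a minimizer.
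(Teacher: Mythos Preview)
Your overall strategy matches the paper's: bound $|\Omega_n|$, pass to a weak $H^1$ limit $u_\infty$ of the state functions, set $\Omega_\infty:=\{u_\infty\neq0\}$, and show lower semicontinuity. The measure bound (via Sobolev rather than the paper's Poincar\'e) and the Fatou step for $\int Q$ are fine.

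The Mosco step, however, has a genuine gap. Your recovery sequence $\phi\cdot\psi_K(u_n)$ does not converge to $\phi$ strongly in $\dot H^1$: the term $\phi\,\psi_K'(u_n)\nabla u_n$ involves $\nabla u_n$, for which you only have weak $L^2$ convergence, while $\|\psi_K'\|_\infty\sim K$; the level-set estimate $\int_{\{1/(2K)<u_n<1/K\}}|\nabla u_n|^2=O(1/K)$ (from testing the equation with $(u_n-\tfrac1{2K})_+\wedge\tfrac1{2K}$) only yields $\|\phi\,\psi_K'(u_n)\nabla u_n\|_{L^2}^2=O(K)$. More fundamentally, the identification $u_\infty=\widetilde u_{\Omega_\infty}$ is \emph{false} for general sequences. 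Take $\Omega_n=B_1\setminus S_n$ with $S_n$ a solid slit $[-\tfrac12,\tfrac12]\times[-\tfrac1n,\tfrac1n]\times\{0\}^{d-2}$ shrinking to a set $L$ of positive capacity but zero Lebesgue measure: then $u_n\to u_\infty$ (the $H^1_0$-state function on $B_1\setminus L$), but $\{u_\infty\neq0\}=B_1$ up to a null set, so $\widetilde u_{\Omega_\infty}=\widetilde u_{B_1}\neq u_\infty$. Your argument uses only that each $u_n$ minimizes $E_f$ in $\widetilde H^1_0(\Omega_n)$, not that $\Omega_n$ is a minimizing sequence for $\widetilde J$, so it cannot exclude this scenario.

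The paper invokes concentration-compactness (using the optimality of $\Omega_n$) to obtain strong $L^2(\R^d)$ convergence of $u_n$, and then closes with a terse appeal to lower semicontinuity. A clean way to finish, under the standing sign hypotheses $f,g\ge0$ of the surrounding section, avoids the identification entirely: from $u_n\ge0$ and $-\Delta u_n=f$ in $\Omega_n$ one has $-\Delta u_n\le f$ in $\mathcal D'(\R^d)$ (test with $\phi\cdot\min(u_n/\eps,1)$ and let $\eps\to0$), hence $-\Delta u_\infty\le f$ in $\mathcal D'(\R^d)$; since $u_\infty\in\widetilde H^1_0(\Omega_\infty)$, the comparison principle gives $u_\infty\le\widetilde u_{\Omega_\infty}$. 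Then $\int g\,\widetilde u_{\Omega_\infty}\ge\int g\,u_\infty=\lim\int g\,u_n$ (weak $L^2$), which together with Fatou on $\int Q$ yields $\widetilde J(\Omega_\infty)\le\liminf\widetilde J(\Omega_n)$ without ever needing $u_\infty=\widetilde u_{\Omega_\infty}$.
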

\begin{proof}
	Let $d\ge 3$, let $\Omega_n$ be a minimizing sequence of sets of finite Lebesgue measure for \eqref{e:intro-shape-opt-pb-in-main-cor-2}. We set $u_n:=\widetilde u_{\Omega_n}$. By the Poincar\'e inequality, the equation for $u_n$ and the H\"older inequality, there is a dimensional constant $C_d$ such that
	$$\|u_n\|_{L^2(\Omega_n)}^2\le C_d|\Omega_n|^{\sfrac{2}{d}}\int_{\Omega_n}|\nabla u_n|^2\,dx=C_d|\Omega_n|^{\sfrac{2}{d}}\int_{\Omega_n}u_nf\,dx\le C_d|\Omega_n|^{\sfrac{2}{d}}\|f\|_{L^2(\R^d)}\|u_n\|_{L^2(\R^d)}.$$
	Thus,
	$$\|u_n\|_{L^2(\Omega_n)}\le C_d|\Omega_n|^{\sfrac{2}{d}}\|f\|_{L^2(\R^d)},$$
	and so (since we can suppose that $\widetilde J(\Omega_n)\le \widetilde J(\emptyset)=0$), we get
	\begin{equation*}
	0\ge \widetilde J(\Omega_n)=\int_{\Omega_n}\Big(-g(x)u_n+Q(x)\Big)\,dx\ge -C_d|\Omega_n|^{\sfrac{2}{d}}\|f\|_{L^2(\R^d)}\|g\|_{L^2(\R^d)}+c_Q|\Omega_n|,
	\end{equation*}
	which implies that the sequence of measures $|\Omega_n|$ is bounded
\begin{equation}\label{eq:boundmeas}
|\Omega_n|^{\frac{d-2}{d}}\le \frac{C_d}{c_Q}\|f\|_{L^2(\R^d)}\|g\|_{L^2(\R^d)}\,.
\end{equation}

If $d=2$, to obtain a bound on the measure analogous to~\eqref{eq:boundmeas}, we need to use a generalized Poincar\'e-Sobolev (or generalized Faber-Krahn) inequality (see for example~\cite[equation (1.2)]{brma}) instead of the classical one, namely, for $2<q<+\infty$, there exists a dimensional constant $\widetilde C_{2}$ such that (using also H\"older inequality)
\[
\begin{split}
\|u_n\|^2_{L^q(\Omega_n)}&\le \widetilde C_{2}|\Omega_n|^{1+\tfrac{2-q}{q}}\int_{\Omega_n}|\nabla u_n|^2\,dx=C_{d=2}|\Omega_n|^{1+\tfrac{2-q}{q}}\int_{\Omega_n}u_nf\,dx\\
&\le \widetilde C_{2}|\Omega_n|^{1+\tfrac{2-q}{q}}\|f\|_{L^{q'}(\R^2)}\|u_n\|_{L^q(\Omega_n)}.
\end{split}
\]
We immediately deduce that 
\begin{equation}\label{eq:boundintermedio}
\|u_n\|_{L^q(\Omega_n)} \widetilde C_{2}|\Omega_n|^{1+\tfrac{2-q}{q}}\|f\|_{L^{q'}(\R^2)},
\end{equation}
and again supposing that $\widetilde J(\Omega_n)\le \widetilde J(\emptyset)=0$, we obtain, using~\eqref{eq:boundintermedio}
	\begin{equation*}
	0\ge \widetilde J(\Omega_n)=\int_{\Omega_n}\Big(-g(x)u_n+Q(x)\Big)\,dx\ge -\widetilde C_2|\Omega_n|^{1+\tfrac{2-q}{q}}\|f\|_{L^{q'}(\R^2)}\|g\|_{L^{q'}(\R^2)}+c_Q|\Omega_n|,
	\end{equation*}
	which, since $q>2$ implies that the sequence of measures $|\Omega_n|$ is bounded, namely
\begin{equation}\label{eq:boundmeas2}
|\Omega_n|^{\frac{q-2}{q}}\le \frac{\widetilde C_2}{c_Q}\|f\|_{L^{q'}(\R^2)}\|g\|_{L^{q'}(\R^2)}\,.
\end{equation}

Now the proof continues in the same way both for $d=2$ and $d\geq 3$. Using again the equation for $u_n$ we get that the sequence $u_n$ is bounded in $H^1(\R^d)$. By the concentration-compactness principle and the optimality of $\Omega_n$, we get that $u_n$ converges weakly in $H^1$ and strongly in $L^2$ (and, up to extracting a further subsequence, pointwise almost-everywhere in $\R^d$) to a function $u\in H^1(\R^d)$. Now, by the semicontinuity of the $H^1$ norm and the fact that
	$$|\{u\neq0\}|\le\liminf_{n\to\infty}|\{u_n\neq 0\}|\le \liminf_{n\to\infty}|\Omega_n|\,,$$
	we get that $\Omega:=\{u\neq0\}$ is a solution to \eqref{e:intro-shape-opt-pb-in-main-cor-2}.
\end{proof}

\begin{lemma}\label{l:existence-boundedness}
	Let $d\ge 2$, $f,g\in L^1(\R^d)\cap L^\infty(\R^d)$ be non-negative, and let $Q:\R^d\to\R$ be a measurable function bounded from below by a positive constant $c_Q>0$. If the measurable set $\Omega\subset\R^d$, $|\Omega|<+\infty$, is a solution to
	\eqref{e:intro-shape-opt-pb-in-main-cor-2}, then $\Omega$ is bounded.
\end{lemma}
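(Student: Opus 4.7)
The plan is to deduce boundedness by a standard free-boundary argument: show that $\Omega$ satisfies a uniform lower density estimate at each point of its topological boundary, and then contradict the finiteness of $|\Omega|$ by producing infinitely many pairwise disjoint balls of uniform size inside $\Omega$ whenever $\Omega$ has unbounded extent.

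First I would reduce to the case where $\Omega$ agrees almost everywhere with the open set $\{u_\Omega>0\}$. Indeed, if $S:=\Omega\cap\{u_\Omega=0\}$ had positive measure, then $u_\Omega\in\widetilde H^1_0(\Omega\setminus S)$ would still minimize the Dirichlet energy~\eqref{e:def-J-f-intro} over that smaller space; by uniqueness of that minimizer, $u_{\Omega\setminus S}=u_\Omega$, and therefore
$$\widetilde J(\Omega\setminus S)=\widetilde J(\Omega)-\int_S Q\,dx\le\widetilde J(\Omega)-c_Q|S|<\widetilde J(\Omega),$$
contradicting the optimality of $\Omega$. Interior elliptic regularity for $-\Delta u_\Omega=f\in L^\infty$ in $\{u_\Omega>0\}$ then shows that this positivity set is open, so $\Omega$ can be identified with an open set.

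Next I would show that the measurable-set analogue of the inward/outward minimality conditions of \cref{prop.almost} persists: comparison of $\Omega$ with an arbitrary $\widetilde\Omega\supset\Omega$ (respectively $\omega\subset\Omega$) of finite measure uses only the monotonicity $u_\Omega\le u_{\widetilde\Omega}$ (respectively $u_\omega\le u_\Omega$), the bound $Q\le C_Q$ (respectively $Q\ge c_Q$), and the integrability of $g$; the constraint $\widetilde\Omega\subset D$ is never used. Consequently, the proofs of the Lipschitz continuity (\cref{lemm.lip}) and the non-degeneracy (\cref{lemm.nondeg}) of $u_\Omega$ go through, now with constants depending on $d,\|f\|_\infty,\|g\|_\infty,\|f\|_1,\|g\|_1,c_Q,C_Q$; all of these arguments are strictly local in single balls $B_r(x_0)\subset \R^d$ and therefore insensitive to the absence of the ambient box $D$. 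In turn the lower density estimate of \cref{p:density} holds uniformly:
$$|B_r(x_0)\cap\Omega|\ge \epsilon_0|B_r|\quad\text{for every } x_0\in\partial\Omega\text{ and every }r\in(0,r_0),$$
with $\epsilon_0,r_0>0$ depending only on the same quantities.

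Finally, if $\Omega$ were unbounded, an iterative selection would yield a sequence $x_n\in\partial\Omega$ with $|x_n|\to\infty$ and $|x_n-x_m|\ge 2r_0$ for every $n\ne m$; the balls $B_{r_0}(x_n)$ would then be pairwise disjoint, each contributing at least $\epsilon_0|B_{r_0}|$ to $|\Omega|$, and therefore $|\Omega|=+\infty$, a contradiction. The main delicate point in the plan is the verification of the inward/outward minimality and the density estimate in the setting where the two-sided ratio condition~\eqref{e:feg} is not assumed; under the present hypotheses this condition is not needed in the local comparisons of \cref{section.lip}, since the only global quantities that enter are $\|f\|_\infty,\|g\|_\infty,\|f\|_1,\|g\|_1$ and $c_Q$, and each proof reduces to a single-ball energy/measure comparison.
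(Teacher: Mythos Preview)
There is a genuine gap. Your plan hinges on transferring the inward/outward minimality of \cref{prop.almost} (and then \cref{lemm.lip}, \cref{lemm.nondeg}, \cref{p:density}) to the present setting, but those results rely essentially on the two-sided ratio condition \eqref{e:feg}, which is \emph{not} assumed in \cref{l:existence-boundedness}. Concretely: from the optimality of $\Omega$ against $\omega\subset\Omega$ you only obtain
\[
c_Q\,|\Omega\setminus\omega|\ \le\ \int_{\R^d} g\,(u_\Omega-u_\omega)\,dx,
\]
and to reach inward minimality for $E_f$ one must convert the right-hand side into $\int f\,(u_\Omega-u_\omega)\,dx$; since $u_\Omega-u_\omega\ge 0$, this requires a pointwise bound $g\le f/C_1$, i.e.\ exactly the ratio condition. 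The same obstruction arises for the outward minimality (one needs $f\le C_2 g$), and in addition that direction uses $Q\le C_Q$, which is also not among the hypotheses here. So your assertion that ``this condition is not needed in the local comparisons of \cref{section.lip}'' is incorrect as stated: the constants $C_1,C_2$ enter the very first line of the proof of \cref{prop.almost}.

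The paper proceeds differently and avoids both the ratio condition and any upper bound on $Q$. Using the self-adjointness of the resolvent, $\int g\,\widetilde R_\Omega(f)=\int f\,\widetilde R_\Omega(g)$, together with the pointwise comparison $0\le \widetilde R_\Omega(h)-\widetilde R_\omega(h)\le \|h\|_{L^\infty}\big(\widetilde R_\Omega(1)-\widetilde R_\omega(1)\big)$, one deduces that $\Omega$ is an inward minimizer (a ``shape subsolution'') for the \emph{torsion} functional
\[
\mathcal E(\Omega)=-\frac12\int_\Omega \widetilde R_\Omega(1)\,dx+\frac{c_Q}{2\|f\|_{L^\infty}\|g\|_{L^\infty}}\,|\Omega|,
\]
with constants depending only on $\|f\|_{L^\infty},\|g\|_{L^\infty},c_Q$. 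Boundedness then follows from \cite[Theorem~3.13]{bucve}, which shows that any such subsolution is contained in a finite union of balls of controlled radius. If you want to salvage the density-estimate strategy, you would first need to pass to the torsion function $\widetilde R_\Omega(1)$ (rather than $u_\Omega$) and run the non-degeneracy argument for it; but at that point you have essentially reproduced the subsolution inequality above.
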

\begin{proof}
	For any set $E\subset\R^d$ of finite measure and any function $h\in L^2(E)$, we will denote by $\widetilde R_E(h)$ the unique solution to the problem
	$$-\Delta u=h\quad\text{in}\quad E\ ,\qquad u\in \widetilde H^1_0(E)\,.$$
	We have that $\widetilde R_E$ is linear, $\widetilde R_E(h_1+h_2)=\widetilde R_E(h_1)+\widetilde R_E(h_2)$ and positive: if $h\ge 0$, then $\widetilde R_E(h)\ge 0$. Moreover, by the weak maximum principle, if $E_1\subset E_2$ and $h\ge 0$, then
	$\widetilde R_{E_2}(h)\ge \widetilde R_{E_1}(h).$
	
	Let $\omega$ be any measurable set contained in $\Omega$.
	%Then,
	%$$0\le \widetilde R_\Omega(f)-\widetilde R_\omega(f)\le \|f\|_{L^\infty}\Big(\widetilde R_\Omega(1)-\widetilde R_\omega(1)\Big)\,.$$
	Then, the minimality of $\Omega$ gives that
	\begin{equation*}
	\int_{\Omega}\Big(-g(x)\widetilde R_\Omega(f) +Q(x)\Big)\,dx\le \int_{\omega}\Big(-g(x)\widetilde R_\omega(f)+Q(x)\Big)\,dx\,.
	\end{equation*}
	So, rearranging the terms and using the positivity of $f$ and $g$, and the inequality
	$$0\le \widetilde R_\Omega(f)-\widetilde R_\omega(f)\le \|f\|_{L^\infty}\Big(\widetilde R_\Omega(1)-\widetilde R_\omega(1)\Big)\,,$$
	we get
	\begin{align*}
	c_Q|\Omega\setminus\omega|&\le \int_\Omega Q(x)\,dx-\int_\omega Q(x)\,dx\le \int_{\Omega}g(x)\Big(\widetilde R_\Omega(f)-\widetilde R_\omega(f)\Big)\,dx\\
	&\le \|f\|_{L^\infty}\int_{\Omega}g(x)\Big(\widetilde R_\Omega(1)-\widetilde R_\omega(1)\Big)\,dx= \|f\|_{L^\infty}\int_{\Omega}\Big(\widetilde R_\Omega(g)-\widetilde R_\omega(g)\Big)\,dx\\
	&\le \|f\|_{L^\infty}\|g\|_{L^\infty}\int_{\Omega}\Big(\widetilde R_\Omega(1)-\widetilde R_\omega(1)\Big)\,dx\,.
	\end{align*}
	Finally, rearranging the terms again, we get that
	$$-\frac12\int_{\Omega}\widetilde R_\Omega(1)\,dx+\frac{c_Q}{2\|f\|_{L^\infty}\|g\|_{L^\infty}}|\Omega|\le -\frac12\int_{\omega}\widetilde R_\omega(1)\,dx+\frac{c_Q}{2\|f\|_{L^\infty}\|g\|_{L^\infty}}|\omega|\,,$$
	so the set $\Omega$ is inwards minimizing (or, in terms of \cite{bucve}, a shape subsolution) for the functional
	$$\mathcal E(\Omega)=-\frac12\int_{\Omega}\widetilde R_\Omega(1)\,dx+\frac{c_Q}{2\|f\|_{L^\infty}\|g\|_{L^\infty}}|\Omega|.$$
	Thus, applying \cite[Theorem~3.13]{bucve}, for every
	$$0<\eta\le \frac{c_Q}{2\|f\|_{L^\infty}\|g\|_{L^\infty}},$$
	the set $\Omega$ is contained in an open set $A\subset\R^d$ obtained as a finite union of $N$ balls $B_\rho(x_i)$, $i=1,\dots,N$, with $N$ and $\rho$ depending only on the dimension $d$ and $\eta$. In particular, $A$ is bounded and the diameter of any connected component of $A$ is at most $N\rho$.
\end{proof}	

\subsection{Proof of \cref{cor.main}}
The existence of an optimal set is a consequence of \cref{p:cor-main-existence} below, while the regularity follows from \cref{thm.main}.
\begin{prop}\label{p:cor-main-existence}
	In $\R^d$, $d\ge 2$, let $f,g,Q:\R^d\to\R$ be non-negative functions.
	Suppose that:
	\begin{enumerate}[\rm(a)]
		\item $f,g\in L^\infty(\R^d)\cap L^1(\R^d)\cap C(\R^d)$ and that $f>0$ {and} $g>0$ on $\R^d$\,;
		\item there is a constant $c_Q>0$ such that $c_Q\le Q$ on $\R^d$.
	\end{enumerate}
	Then, there exists a solution $\Omega\subseteq \R^d$ to the shape optimization problem \eqref{e:intro-shape-opt-pb-in-main-cor}. Moreover, every solution to \eqref{e:intro-shape-opt-pb-in-main-cor} is also a solution to \eqref{e:intro-shape-opt-pb-in-main-cor-2}.
\end{prop}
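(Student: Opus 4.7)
The plan is to combine four ingredients: the measurable-set existence theorem (Lemma \ref{l:existence-Rd}), the boundedness of any measurable optimum (Lemma \ref{l:existence-boundedness}), the open-set existence in bounded boxes \cite[Theorem~1.1]{BMV1}, and the pointwise comparison $u_A\le\widetilde u_A$ for open $A$ (Lemma \ref{l:comparison-open-measurable}). The overall strategy is to localize the problem to a large ball containing a measurable optimum, invoke \cite[Theorem~1.1]{BMV1} there to obtain an open optimum $\Omega$, and then match the values $J(\Omega)=\widetilde J(\Omega^*)$ so that $\Omega$ is optimal among all open subsets of $\R^d$ of finite measure.

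First I would apply Lemma \ref{l:existence-Rd}, whose hypotheses follow from (a) (since $L^1\cap L^\infty(\R^d)\subset L^2(\R^d)$ when $d\ge 3$, while for $d=2$ the $L^1\cap L^\infty$ assumption is exactly (a)), to obtain a measurable minimizer $\Omega^*\subset\R^d$ of $\widetilde J$ over all measurable sets of finite measure. Lemma \ref{l:existence-boundedness} then gives $\Omega^*\subset B_R$ for some $R>0$. Since $f,g\in C(\R^d)$ are strictly positive, on the compact set $\overline{B_{R+1}}$ they are bounded from above and bounded below away from zero, hence the relation $C_1 g\le f\le C_2 g$ holds on $D:=B_{R+1}$ for suitable $0<C_1\le C_2$. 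Thus \cite[Theorem~1.1]{BMV1} applies and produces an open $\Omega\subset D$ minimizing $J$ among open subsets of $D$; this will be the solution to \eqref{e:intro-shape-opt-pb-in-main-cor}.

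The nontrivial point is to show $J(\Omega)=\widetilde J(\Omega^*)$, and the key idea is to build an open competitor $\Omega^o\subset D$ directly from $\Omega^*$ with value at most $\widetilde J(\Omega^*)$. The analysis of Section \ref{section.lip} carries over from open to measurable minimizers of $\widetilde J$ (the inwards/outwards comparisons of Proposition \ref{prop.almost} rely only on the monotonicity $\widetilde u_E\ge\widetilde u_F$ for $E\supset F$ and on the $L^\infty$ bounds on $f,g,Q$), so $\widetilde u_{\Omega^*}$ is globally Lipschitz on $\R^d$. Hence $\Omega^o:=\{\widetilde u_{\Omega^*}>0\}$ is a bounded open subset of $B_R\subset D$, and since $\widetilde u_{\Omega^*}$ is continuous with compact support in $\Omega^o$, it lies in $H^1_0(\Omega^o)$ and solves the state equation there, so $u_{\Omega^o}=\widetilde u_{\Omega^*}$ by uniqueness. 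Using $\widetilde u_{\Omega^*}\equiv 0$ on $\Omega^*\setminus\Omega^o$ and $Q\ge 0$,
\[
J(\Omega^o)=\int_{\Omega^o}\Big(-g\,\widetilde u_{\Omega^*}+Q\Big)\,dx=\widetilde J(\Omega^*)-\int_{\Omega^*\setminus\Omega^o}Q\,dx\ \le\ \widetilde J(\Omega^*).
\]

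To conclude, optimality of $\Omega$ in $D$ together with $\Omega^o\subset D$ gives $J(\Omega)\le J(\Omega^o)\le\widetilde J(\Omega^*)$, while for any open $A\subset\R^d$ of finite measure Lemma \ref{l:comparison-open-measurable} yields $J(A)\ge\widetilde J(A)\ge\widetilde J(\Omega^*)\ge J(\Omega)$, proving that $\Omega$ solves \eqref{e:intro-shape-opt-pb-in-main-cor}. For the ``moreover'' part, any open solution $\Omega'$ to \eqref{e:intro-shape-opt-pb-in-main-cor} satisfies $J(\Omega')=J(\Omega)=\widetilde J(\Omega^*)$, and for any measurable $E$ with $|E|<\infty$ the comparison $\widetilde J(\Omega')\le J(\Omega')=\widetilde J(\Omega^*)\le\widetilde J(E)$ follows from Lemma \ref{l:comparison-open-measurable} and the minimality of $\Omega^*$, so $\Omega'$ is a solution of \eqref{e:intro-shape-opt-pb-in-main-cor-2}. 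The main obstacle I anticipate is the careful verification that the Lipschitz estimate of Section \ref{section.lip} transfers to the measurable minimizer $\Omega^*$; this should be routine, since the proof of Proposition \ref{prop.almost} only uses $\widetilde H^1_0$-comparisons and maximum-principle-type arguments that are insensitive to whether the competing sets are open or merely measurable.
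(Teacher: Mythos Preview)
Your proposal is correct and follows essentially the same route as the paper: obtain a measurable minimizer $\Omega^*$ via Lemma~\ref{l:existence-Rd}, localize to a ball via Lemma~\ref{l:existence-boundedness}, transfer the Lipschitz estimate of \cref{section.lip} to the measurable setting so that $\Omega^o=\{\widetilde u_{\Omega^*}>0\}$ is open, and then close with the pointwise comparison of Lemma~\ref{l:comparison-open-measurable}. The paper does exactly this, taking $A:=\Omega^o$ itself as the open solution.

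Two minor remarks. First, your appeal to \cite[Theorem~1.1]{BMV1} is superfluous: once you have shown $J(\Omega^o)\le\widetilde J(\Omega^*)$ and the chain $J(A)\ge\widetilde J(A)\ge\widetilde J(\Omega^*)$ for every open $A$, the set $\Omega^o$ is already an open minimizer, so there is no need to produce a separate $\Omega$ in $D$. Second, your justification that $\widetilde u_{\Omega^*}\in H^1_0(\Omega^o)$ is right in substance but not in wording: the support of $\widetilde u_{\Omega^*}$ is $\overline{\Omega^o}$, not a compact subset of $\Omega^o$; the correct argument is that, being Lipschitz and vanishing outside $\Omega^o$, it is the $H^1$-limit of the truncations $(\widetilde u_{\Omega^*}-\eps)_+$, each of which has compact support in $\Omega^o$. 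The paper instead invokes the exterior density estimate (Proposition~\ref{p:density}) to conclude $H^1_0(\Omega^o)=\widetilde H^1_0(\Omega^o)$ via Lemma~\ref{l:comparison-open-measurable}; your truncation argument is a legitimate shortcut.
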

\begin{proof}
	By \cref{l:existence-Rd}, there is a measurable set $\Omega$ (of finite measure) that minimizes \eqref{e:intro-shape-opt-pb-in-main-cor-2}. By \cref{l:existence-boundedness}, $\Omega$ is contained in some ball $B_R\subset\R^d$. Since $f$ and $g$ are continuous and strictly positive on $\overline B_R$, we can find positive constants $C_1,C_2$ such that
	$C_1g\le f\le C_2g$ on $\overline B_R$. Thus, reasoning as in \cref{prop.almost} and \cref{lemm.lip} we get that the set
	$$A:=\{\widetilde u_\Omega>0\}$$
	is open. Now, the optimality of $\Omega$ gives that $|\Omega\Delta A|=0$, so we have $\widetilde u_A=\widetilde u_\Omega.$ Moreover, by \cref{p:density}, $A$ satisfies the exterior density estimate from \cref{l:comparison-open-measurable}. Thus,
	$u_A=\widetilde u_A$. In order to check that the open set $A$ minimizes \eqref{e:intro-shape-opt-pb-in-main-cor}, we notice that, for any open set $E\subset \R^d$,
	\begin{align*}
	\int_A\Big(-g(x) u_A+Q(x)\Big)\,dx&=\int_A\Big(-g(x) \widetilde u_A+Q(x)\Big)\,dx\\
	&\le \int_E\Big(-g(x) \widetilde u_E+Q(x)\Big)\,dx\le\int_E\Big(-g(x) u_E+Q(x)\Big)\,dx\,.
	\end{align*}
	Moreover, by the same chain of inequalities, we obtain that if the open set $E$ is a solution to \eqref{e:intro-shape-opt-pb-in-main-cor}, then it also minimizes \eqref{e:intro-shape-opt-pb-in-main-cor-2}.
\end{proof}

\appendix

\section{An optimization problem in heat conduction}\label{s:heat-conduction}
As a consequence of our analysis from \cref{section.variations}, \cref{section.blowup} and \cref{section.stable}, we obtain an estimate on the dimension of the singular part of the boundary of the optimal sets arising in a heat conduction problem studied by Aguilera, Caffarelli and Spruck in \cite{agcasp}. 
\subsection{Statement of the problem and known results}\label{sub:heat-conduction-intro}
Let $D$ be a smooth bounded open set in $\R^d$. Let $\phi:\partial D\to\R$ be a smooth function on $\partial D$ bounded from below and above by positive constants $0<c\le C$.
%$$0<c\le \varphi\le C\quad\text{on}\quad \partial D\,.$$
For every open set $\Omega\subset D$, for which the set $K:= D\setminus \Omega$ is a compact subset of $D$, we consider the state function $u_\Omega\in H^1(D)$ solution to the problem 
\begin{equation}\label{e:heat-conduction-u-Omega}
\Delta u_\Omega=0\quad\text{in}\quad\Omega\,,\qquad u_\Omega=\phi\quad\text{on}\quad \partial D\,,\qquad u_\Omega\equiv 0\quad\text{on}\quad K=D\setminus\Omega\,,
\end{equation}
and we define the functional 
$$\mathcal F(\Omega)=\int_{\partial D}\frac{\partial u_\Omega}{\partial \nu}\,d\HH^{d-1}+\Lambda|\Omega|\,$$
where $\Lambda>0$ is a real number and $\nu$ is the outer unit normal to $\partial D$. In \cite{agcasp} it was shown that there is a solution to the following shape optimization problem: 
\begin{equation}\label{e:heat-conduction-problem}
\min\Big\{\mathcal F(\Omega)\ :\ \Omega\subset D;\ \Omega\,-\,\text{open};\ D\setminus\Omega\,-\,\text{compact subset of\ } D\Big\}\,,
\end{equation}
and that for every solution $\Omega$ to \eqref{e:heat-conduction-problem} the following holds. 
\begin{itemize}
\item The state function $u_\Omega$ is Lipschitz continuous in $D$ and smooth in a neighborhood of the fixed boundary $\partial D$.
\item $u_\Omega$ is non-degenerate in the sense that there is a constant $\eta>0$, for which
$$\frac{1}{r^{d-1}}\int_{\partial B_r(x)}u_\Omega\,d\HH^{d-1}\ge \eta\,r\quad\text{whenever}\quad x\in\overline \Omega\quad\text{and}\quad B_r(x)\subset D\,.$$
\item There is $\eps>0$ such that, for every $x$ on the boundary of the set $K:=D\setminus\Omega$, we have: 
$$\eps_0|B_r|\le |\Omega\cap B_r(x)|\le (1-\eps_0)|B_r|\quad\text{whenever}\quad B_r(x)\subset D\,.$$
\item There is a constant $\tilde M>0$ such that, for every $B_{2r}(x_0)\subset D$, we have the bound
\begin{equation*}
0\le \bigg|\int_{B_{2r}(x_0)}\nabla u_\Omega\cdot\nabla \varphi\,dx\bigg|\le \tilde{M} \|\varphi\|_{L^\infty(B_{2r}(x_0))}\quad\text{for every}\quad \varphi\in C^{0,1}_c(B_{2r}(x_0)),
\end{equation*}
where $C^{0,1}_c(B_{2r}(x_0))$ is the space of Lipschitz functions with compact support in $B_{2r}(x_0)$.
%\item $\Omega$ is an NTA domain, so the Boundary Harnack principle holds on $\Omega$.
\end{itemize}	
Then, they showed that the reduced boundary $\partial^\ast\Omega\cap D$ is $C^\infty$ smooth and that 
\begin{equation}\label{e:heat-conduction-optimality-condition}
|\nabla u_\Omega||\nabla v_\Omega|=\Lambda\quad\text{on}\quad\partial^\ast\Omega\cap D\,,
\end{equation}
where $v_\Omega$ is the solution to the PDE
\begin{equation}\label{e:heat-conduction-v-Omega}
\Delta v_\Omega=0\quad\text{in}\quad\Omega\,,\qquad v_\Omega=1\quad\text{on}\quad \partial D\,,\qquad v_\Omega\equiv 0\quad\text{on}\quad K=D\setminus\Omega\,.
\end{equation}
In \cref{t:heat-conduction}, we will improve this result in low dimension ($d\ge 4$) by showing that the whole free boundary $\partial\Omega\cap D$ is smooth.
 
\begin{oss}
	In the problem originally considered by Aguilera, Caffarelli and Spruck in \cite{agcasp}, the minimization of $\mathcal F$ is among all domains with prescribed measure. Thus, all the results from \cite{agcasp} apply to \eqref{e:heat-conduction-problem} with the only difference that the constant $\Lambda$ from \eqref{e:heat-conduction-optimality-condition} is an unknown positive Lagrange multiplier. In this last Section of the present paper, we choose to work with the penalized version \eqref{e:heat-conduction-problem} since it allows to apply all the results from \cref{section.variations}, \cref{section.stable} and \cref{section.sing} directly, without the need to add technical details related to the Lagrange multiplier. 
\end{oss}
	
\subsection{First and second variation}\label{sub:heat-conduction-variations}
By an integration by parts, we can rewrite $\mathcal F(\Omega)$ as  
$$\mathcal F(\Omega):=\int_{\Omega}\nabla u_\Omega\cdot\nabla v_\Omega\,dx+\Lambda|\Omega|\,,$$
where $u_\Omega$ and $v_\Omega$ are given by \eqref{e:heat-conduction-u-Omega} and \eqref{e:heat-conduction-v-Omega}. Now, let $\xi\in C^\infty_c(D;\R^d)$ be a smooth compactly supported vector field in $D$; let $\Phi_t:D\to D$, $t\in\R$, be the flow associated to $\xi$ and let $\Omega_t:=\Phi_t(\Omega)$. Reasoning as in \cref{section.stable}, we get that 
\begin{align*}
\begin{aligned}
\frac{\partial}{\partial t}\bigg|_{t=0}\mathcal{F}({\Omega_t})&=\, \int_\O \Big(\nabla u_\Omega \cdot (\delta A)\nabla v_\Omega + \Lambda \dive\xi\Big) \, dx\,,\\
\frac12\frac{\partial^2}{\partial t^2}\bigg|_{t=0}\mathcal{F}({\Omega_t})
&=\,
\int_\O\Big(\nabla u_\Omega\cdot(\delta^2 A)\nabla v_\Omega-\nabla (\delta u)\cdot \nabla (\delta v)+\frac{\Lambda}2\big((\dive\xi)^2+\xi\cdot\nabla(\text{\rm div}\,\xi)\big)\Big) dx\,,
\end{aligned}
\end{align*}
where $\delta A$ and $\delta^2A$ are given by \eqref{e:first-and-second-variation-A}, and where $\delta u$ and $\delta v$ are the solutions to the PDEs
\begin{align*}
\begin{aligned}
-\Delta (\delta u)=\text{\rm div}\big((\delta A)\nabla u_\Omega\big)\quad\text{in}\quad \Omega\ ,\qquad \delta u\in  H^1_0(\Omega)\,;\\
-\Delta (\delta v)=\text{\rm div}\big((\delta A)\nabla v_\Omega\big)\quad\text{in}\quad \Omega\ ,\qquad \delta v\in H^1_0(\Omega)\,.
\end{aligned}
\end{align*}
Using the blow-up analysis from \cref{section.blowup} and the argument from \cref{l:stability-of-blow-up-limits}, we get 
\begin{lemma}
Let $\Omega$ be an optimal set for \eqref{e:heat-conduction-problem} and let $u:=u_\Omega$ and $v:=v_\Omega$ be the state functions from \eqref{e:heat-conduction-u-Omega} and \eqref{e:heat-conduction-v-Omega}.  Suppose that  $x_0\in\partial\Omega\cap D$ and that $r_k\to 0$ are such that 
$$\lim_{k\to\infty}u_{x_0,r_k}=u_0\qquad\text{and}\qquad \lim_{k\to\infty}v_{x_0,r_k}=v_0\,$$
where $u_0,v_0$ are non-negative Lipschitz functions on $\R^d$ and both limits are locally uniform in $\R^d$, and where, as usual, we set: 
$$u_{x_0,r_k}(x):=\frac{1}{r_k}u(x_0+r_kx)\qquad\text{and}\qquad v_{x_0,r_k}(x):=\frac{1}{r_k}v(x_0+r_kx)\,.$$
Then, $u_0$ and $v_0$ are proportional and there is a constant $\lambda>0$ such that $\lambda u_0$ is a global stable solution of the one-phase Bernoulli problem in the sense of \cref{def:global-stable-solutions}.
\end{lemma}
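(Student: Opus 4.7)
The approach will be to mirror the three-step blow-up construction of \cref{p:final-blow-up} together with the stability transfer of \cref{l:stability-of-blow-up-limits}, exploiting the substantial simplification that both $u_\Omega$ and $v_\Omega$ are harmonic on $\Omega$ (no source terms appear inside $\Omega$, since the state equations \eqref{e:heat-conduction-u-Omega} and \eqref{e:heat-conduction-v-Omega} carry data only on $\partial D$, which lies at infinite rescaled distance from the free boundary point $x_0$).

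First I would verify that $u_\Omega,v_\Omega$ satisfy the hypotheses that drive the general blow-up lemmas \cref{l:lemma-generale-blow-up-1} and \cref{l:lemma-generale-blow-up-2}: Lipschitz continuity, non-degeneracy, density estimates, and the distributional bound on the Laplacian. All four are recorded in \cref{sub:heat-conduction-intro} as consequences of the analysis of \cite{agcasp}. Applying these lemmas yields strong $H^1_{\mathrm{loc}}$ convergence of $u_{x_0,r_k},v_{x_0,r_k}$ to the blow-up limits $u_0,v_0$, together with $\ind_{\Omega_{x_0,r_k}}\to\ind_{\Omega_0}$ in $L^1_{\mathrm{loc}}$. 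Passing to the limit in the rescaled Laplace equations, $u_0$ and $v_0$ are harmonic on $\Omega_0:=\{u_0>0\}=\{v_0>0\}$ and inherit the full list of properties stated in \cref{p:first-blow-up}.

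Next I would apply the Boundary Harnack Principle of \cite{mtv.BH} to the pair $(\Omega_0,u_0)$ as in the proof of \cref{p:second-blow-up}, concluding that $u_0/v_0$ extends H\"older continuously to $\partial \Omega_0$. A second blow-up at $0$ then forces $u_{00}$ and $v_{00}$ to be proportional, and a third blow-up as in \cref{l:third-blow-up} delivers $1$-homogeneity. A diagonal extraction across the three scales (cf.\ \cref{rem:blowup}) lets me relabel $r_k$ so that the original limits $u_0,v_0$ of the statement are already proportional: $u_0 = \lambda v_0$ for some $\lambda>0$.

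For the stability, I would run the argument of \cref{l:stability-of-blow-up-limits} with the second variation computed in \cref{sub:heat-conduction-variations}. The crucial simplification is that since $f\equiv g\equiv 0$ on $\Omega$ and $Q\equiv\Lambda$ is constant, the correction terms $\delta^2 f_k,\delta^2 g_k$ and the non-constant part of $\delta^2 Q_k$ all vanish in the rescaling, leaving a clean stability inequality involving only $\delta^2 A$, $\delta u_0,\delta v_0$ and the $\ind_{\Omega_0}$--quadratic form $\tfrac{\Lambda}{2}\bigl((\dive\xi)^2+\xi\cdot\nabla(\dive\xi)\bigr)$. Substituting $v_0 = u_0/\lambda$ (hence $\delta v_0=\delta u_0/\lambda$ by linearity of \eqref{e:blow-up-limit-delta-u-v}) reduces this inequality to $\delta^2\mathcal G(w_0)[\xi]\ge 0$ for the rescaled function $w_0 := (2\lambda\Lambda)^{-1/2}\,u_0$; the analogous computation on the first variation produces the critical-point condition $\delta\mathcal G(w_0)[\xi]=0$. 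The main technical point, and the only place to be genuinely careful, will be the bookkeeping of the constants that match the two-phase weight $\Lambda$ to the unit weight in \cref{def:global-stable-solutions}; no new idea is required beyond what is already present in \cref{section.variations} and \cref{l:stability-of-blow-up-limits}.
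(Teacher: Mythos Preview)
Your approach has a genuine gap at the proportionality step. The lemma fixes a sequence $r_k\to 0$ in the hypothesis and asserts that the limits $u_0,v_0$ along \emph{that} sequence are proportional. Your triple blow-up followed by a diagonal extraction does not establish this: it only manufactures a \emph{new} sequence (built from three nested scales) along which the limits are proportional, and ``relabeling'' that new sequence as $r_k$ changes the hypothesis rather than proving the conclusion for the given one. Indeed, your Boundary Harnack step on the blow-up domain $\Omega_0$ only yields that $u_0/v_0$ extends H\"older-continuously to $\partial\Omega_0$, which forces \emph{further} blow-ups of $u_0,v_0$ at $0$ to be proportional; it does not force $u_0$ and $v_0$ themselves to be proportional.

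The paper closes this gap by exploiting a feature of the heat-conduction problem that is unavailable in the general integral-functional setting of \cref{thm.main}: here $u_\Omega$ and $v_\Omega$ are already harmonic on the \emph{original} domain $\Omega$, and by \cite{agcasp} the optimal set $\Omega$ is NTA, so the Boundary Harnack Principle applies directly on $\Omega$. Hence $u_\Omega/v_\Omega$ extends H\"older-continuously to $\partial\Omega\cap D$, and every blow-up at $x_0$---along any sequence whatsoever---is automatically proportional, with ratio equal to the value of this extension at $x_0$. No second or third blow-up, and no $1$-homogeneity, is needed (note that \cref{def:global-stable-solutions} does not require homogeneity). Your treatment of the stability transfer via \cref{l:stability-of-blow-up-limits} and the bookkeeping of constants is correct and matches the paper.
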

\begin{proof}
By the analysis of the blow-up sequences from \cref{section.blowup}, we get that $u_{x_0,r_k}$ and $v_{x_0,r_k}$ converge strongly in $H^1_{loc}(\R^d)$ respectively to $u_0$ and $v_0$, and also that the sets $\frac1{r_k}(\Omega-x_0)$ converge (in the local Hausdorff distance) to $\Omega_0=\{u_0>0\}=\{v_0>0\}$. Then, as in \cref{l:stability-of-blow-up-limits}, for every smooth compactly supported vector field $\xi\in C^\infty_c(\R^d;\R^d)$, the first and the second variation of $\mathcal F$ pass to the limit, that is, 
\begin{align}\label{e:heat-conduction-stability-blow-ups}
\begin{aligned}
&\int_{\Omega_0} \Big(\nabla u_0 \cdot (\delta A)\nabla v_0 + \Lambda \dive\xi\Big) \, dx=0\,,\\
&\int_{\Omega_0}\Big(\nabla u_0\cdot(\delta^2 A)\nabla v_0-\nabla (\delta u_0)\cdot \nabla (\delta v_0)+\frac{\Lambda}2\big((\dive\xi)^2+\xi\cdot\nabla(\text{\rm div}\,\xi)\big)\Big) dx\geq 0\,,
\end{aligned}
\end{align}
where $\delta A$ and $\delta^2A$ are as in \eqref{e:first-and-second-variation-A}, and where $\delta u_0$ and $\delta v_0$ solve the PDE (see \cref{sub:section-stable-general-pde-theory})
\begin{align*}
\begin{aligned}
-\Delta (\delta u_0)=\text{\rm div}\big((\delta A)\nabla u_0\big)\quad\text{in}\quad \Omega_0\ ,\qquad \delta u_0\in \dot H^1_0(\Omega_0)\,;\\
-\Delta (\delta v_0)=\text{\rm div}\big((\delta A)\nabla v_0\big)\quad\text{in}\quad \Omega_0\ ,\qquad \delta v_0\in \dot H^1_0(\Omega_0)\,.
\end{aligned}
\end{align*}
On the other hand, since by \cite{agcasp} the Boundary Harnack Principle holds on the optimal set $\Omega$, we get that the blow-up limits are proportional, that is, $v_0=Cu_0$ for some $C>0$. Finally, we notice that, up to multiplying $u_0$ by a constant, the inequalities \eqref{e:heat-conduction-stability-blow-ups} correspond precisely to the stability condition \eqref{e:global-stable-solutions-main} in \cref{def:global-stable-solutions}, which concludes the proof of the lemma.
\end{proof}	

\subsection{Main theorem}\label{sub:heat-conduction-main-result} Let now $\Omega$ be an optimal set for \eqref{e:heat-conduction-problem}. We define the regular and the singular parts of the free boundary $\partial\Omega\cap D$ as follows. The regular part, $\text{\rm Reg}(\partial\Omega)$, is the set of points $x_0\in\partial\Omega\cap D$ at which there exists a blow up limit $u_0,v_0:\R^d\to\R$ of the form 
$$u_0(x)=\alpha(x\cdot\nu)_+\quad\text{and}\quad v_0(x)=\beta(x\cdot\nu)_+\,,$$
for some unit vector $\nu\in\R^d$ and some constants $\alpha>0$ and $\beta>0$ such that 
$\alpha\beta=\Lambda$. The remaining part of the free boundary is the singular set:
$$\text{\rm Sing}(\partial\Omega):=(\partial\Omega\cap D)\setminus \text{\rm Reg}(\partial\Omega).$$ 
Then, in terms of the decomposition into $\partial\Omega\cap D=\text{\rm Reg}(\partial\Omega)\cup\text{\rm Sing}(\partial\Omega)$, we can rewrite the regularity theorem of Aguilera-Caffarelli-Spruck (for the penalized problem) as follows:
\begin{theo}[Aguilera-Caffarelli-Spruck \cite{agcasp}]
If $\Omega$ is a solution to \eqref{e:heat-conduction-problem} in a domain $D\subset\R^d$, then $\text{\rm Reg}(\partial\Omega)$ is a relatively open subset of $\partial\Omega\cap D$ and (locally) a $C^\infty$ manifold.
\end{theo}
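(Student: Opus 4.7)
The plan is to follow the same scheme used for the proof of Theorem~\ref{t:regularity-of-Reg} and Proposition~\ref{prop:smoothnessbdry}, adapted to the heat conduction setting described in Section~\ref{sub:heat-conduction-intro}. The first step is to establish, at every boundary point $x_0\in\partial\Omega\cap D$ that admits a one-sided tangent ball, the viscosity free boundary condition $|\nabla u_\Omega||\nabla v_\Omega|=\Lambda$ in the sense of Definition~\ref{d:viscosity-solutions}. This follows by the same argument as in Lemma~\ref{l:tangent-ball-condition} and Proposition~\ref{p:viscosity-solutions}: one extracts a blow-up sequence $u_{x_0,r_k}$, $v_{x_0,r_k}$ converging locally uniformly (and strongly in $H^1_{loc}$) to a pair of $1$-homogeneous, non-negative, proportional blow-up limits $u_0,v_0$ (proportionality follows from the Boundary Harnack principle, which on $\Omega$ is already available by \cite{agcasp} or by the analogue of Lemma~\ref{p:second-blow-up}). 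Since the one-sided tangent ball condition forces $\Omega_0:=\{u_0>0\}=\{v_0>0\}$ to coincide with a half-space, the first variation \eqref{e:heat-conduction-stability-blow-ups} (with $f=g=0$ and $Q\equiv\Lambda$) gives $u_0(x)=\alpha(x\cdot\nu)_+$, $v_0(x)=\beta(x\cdot\nu)_+$ with $\alpha\beta=\Lambda$, as required.

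Next, fix a regular point $x_0\in\text{\rm Reg}(\partial\Omega)$ and the associated blow-up sequence $u_{x_0,r_k}$, $v_{x_0,r_k}$ converging to the half-plane limits described above. Using the Lipschitz continuity and non-degeneracy of $u_\Omega$ and $v_\Omega$ established in \cite{agcasp} (and recalled in Section~\ref{sub:heat-conduction-intro}), together with the free boundary density estimate, one shows that for every $\varepsilon>0$ there exists $k_0$ such that for $k\ge k_0$ the rescalings $u_{x_0,r_k}$ and $v_{x_0,r_k}$ are $\varepsilon$-flat in the direction $\nu$ in $B_1$, in the sense of \cite[Definition~1.2]{mtv.flat}. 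Since moreover $u_{x_0,r_k}$ and $v_{x_0,r_k}$ are harmonic in $\{u_{x_0,r_k}>0\}$ (the right-hand side of the state equation is zero in this problem) and continue to satisfy the viscosity boundary condition $|\nabla u_{x_0,r_k}||\nabla v_{x_0,r_k}|=\Lambda$, the $\varepsilon$-regularity result \cite[Theorem~3.1]{mtv.flat} applies and gives that $\partial\{u_{x_0,r_k}>0\}\cap B_{1/2}$ is a $C^{1,\alpha}$-graph. Rescaling back, $\text{\rm Reg}(\partial\Omega)$ is a relatively open subset of $\partial\Omega\cap D$ and, locally, the graph of a $C^{1,\alpha}$-regular function. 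Smoothness of $\phi$, and hence of the boundary data for $u_\Omega$ and $v_\Omega$, plays no role in the interior free boundary analysis carried out here.

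To upgrade to $C^\infty$ we use the bootstrap argument of Proposition~\ref{prop:smoothnessbdry}. Once $\text{\rm Reg}(\partial\Omega)$ is known to be locally a $C^{k,\alpha}$ graph, we apply the higher-order Boundary Harnack principle of \cite{Kuk,TVT} to the pair $u_\Omega,v_\Omega$ (both are harmonic in $\Omega$, and $v_\Omega$ is non-degenerate) to write $u_\Omega=w\,v_\Omega$ with $w\in C^{k,\alpha}$ up to the free boundary. Inserting this into the optimality condition $|\nabla u_\Omega||\nabla v_\Omega|=\Lambda$ reduces the problem for $u_\Omega$ to a one-phase Bernoulli problem $|\nabla u_\Omega|^2=w\Lambda$ on $\partial\{u_\Omega>0\}$ with right-hand side zero and $C^{k,\alpha}$ coefficient on the gradient condition; the classical result of Kinderlehrer-Nirenberg \cite{KN} then promotes the regularity of the free boundary to $C^{k+1,\alpha}$, and iterating gives $C^\infty$.

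The only delicate points in this plan are (i) the passage to a proportional blow-up pair $(u_0,v_0)$, which requires the Boundary Harnack principle on $\Omega$ and thus ultimately relies on exterior/interior density, and (ii) the verification of the viscosity boundary condition at points with a one-sided tangent ball, which uses the stationarity \eqref{e:heat-conduction-stability-blow-ups} of $\mathcal F$ to rule out $u_0\equiv 0$ or spurious constants in the half-space limit. Both steps parallel the ones carried out in detail in Sections~\ref{section.reg} and~\ref{sub:heat-conduction-variations}, so no new technical difficulty is expected.
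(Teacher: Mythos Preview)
The paper does not actually give its own proof of this statement; it is presented as a restatement of the regularity theorem of \cite{agcasp} in the language of the present paper's decomposition $\partial\Omega\cap D=\text{\rm Reg}(\partial\Omega)\cup\text{\rm Sing}(\partial\Omega)$, and is simply cited. So there is no ``paper's own proof'' to compare against in the strict sense.

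That said, your plan is a perfectly coherent \emph{alternative} proof that bypasses the original arguments of \cite{agcasp} and instead re-derives the result via the machinery developed in this paper: viscosity formulation (\cref{p:viscosity-solutions}/\cref{l:tangent-ball-condition}), the $\eps$-regularity theorem of \cite{mtv.flat}, and the higher-order Boundary Harnack bootstrap of \cref{prop:smoothnessbdry}. The adaptation is straightforward because in the heat conduction problem $f=g=0$, so the rescaled state equations have no right-hand side and the hypotheses of \cite[Theorem~3.1]{mtv.flat} are satisfied with no error term. The proportionality of blow-ups is indeed immediate from the Boundary Harnack principle already proved in \cite{agcasp} (no second blow-up is needed here, unlike in the main body of the paper), and the stationarity condition \eqref{e:heat-conduction-stability-blow-ups} pins down $\alpha\beta=\Lambda$ exactly as in \cref{l:tangent-ball-condition}. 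Your bootstrap step is also correct; the only cosmetic slip is that the one-phase condition should read $|\nabla u_\Omega|=\sqrt{w\Lambda}$ (or equivalently $|\nabla u_\Omega|^2=w\Lambda$), matching \eqref{eq:onephasefb}. Nothing in your outline is wrong or circular, and it has the minor advantage of making the appendix self-contained with respect to the tools already assembled in Sections~\ref{section.reg} and~\ref{sub:heat-conduction-variations}.
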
	
For what concerns the singular set, by reasoning as in \cref{t:dimension-of-sing} and by applying \cref{t:stable-critical-dimension} and \cref{t:stable-dimension-reduction}, we obtain the following result, which in particular implies that in the physically relevant dimension $d=3$ (and also in $d=2$ and $d=4$) the free boundary $\partial\Omega\cap D$ of a solution $\Omega$ to \eqref{e:heat-conduction-problem} is $C^\infty$ smooth. 
\begin{theo}[Dimension of the singular set for solutions to \eqref{e:heat-conduction-problem}]\label{t:heat-conduction}
If $\Omega$ is a solution to \eqref{e:heat-conduction-problem} in a domain $D\subset\R^d$, then the following holds:	
\begin{enumerate}[\rm (i)]
	\item if $d< d^\ast$, then $\text{\rm Sing}(\partial\Omega)=\emptyset$;
	\item if $d\ge d^\ast$, then the Hausdorff dimension of $\text{\rm Sing}(\partial\Omega)$ is at most $d-d^\ast$;
\end{enumerate}		
where $d^\ast$ is the critical dimension from \cref{def:global-stable-solution-critical-dimension}.
\end{theo}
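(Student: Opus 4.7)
The plan is to mirror the argument of \cref{t:dimension-of-sing}, leveraging the stability of blow-up limits established in \cref{sub:heat-conduction-variations} together with the dimension reduction principle of \cref{t:stable-dimension-reduction}. The key simplification with respect to the case of \eqref{e:intro-shape-opt-pb-in-main-teo} is that here the Boundary Harnack Principle is already available on the optimal set $\Omega$ (thanks to \cite{agcasp}), so the proportionality $v_0 = Cu_0$ of blow-up limits holds at the \emph{first} blow-up, and no triple blow-up procedure is required.

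\textbf{Step 1: blow-up limits give global stable one-phase solutions.} For any $x_0 \in \partial\Omega\cap D$, the Lipschitz continuity, non-degeneracy, exterior density, and estimate on $\int \nabla u\cdot\nabla\varphi$ recalled in \cref{sub:heat-conduction-intro} allow to extract a sequence $r_k\to 0$ along which $u_{x_0,r_k}$ and $v_{x_0,r_k}$ converge locally uniformly and strongly in $H^1_{\mathrm{loc}}(\R^d)$ to Lipschitz functions $u_0,v_0$ satisfying \ref{item:def-stable-1}--\ref{item:def-stable-5} of \cref{def:global-stable-solutions}. By the lemma in \cref{sub:heat-conduction-variations}, $u_0$ and $v_0$ are proportional, and (rescaling) $\lambda u_0$ is a global stable solution in the sense of \cref{def:global-stable-solutions}.

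\textbf{Step 2: case $d<d^\ast$.} To produce a \emph{homogeneous} stable blow-up, apply the Weiss-type monotonicity \cref{l:weiss} (with $\Lambda$ replaced by the appropriate constant coming from the proportionality) to the blow-up $u_0$: since $u_0$ is a global stable one-phase solution, iterating the blow-up at $0$ and using the strong $H^1_{\mathrm{loc}}$-convergence together with \cref{l:weiss} yields a $1$-homogeneous global stable solution $u_{00}$ with $0\in\partial\Omega_{u_{00}}$. When $d<d^\ast$, the defining property of $d^\ast$ (see \cref{def:global-stable-solution-critical-dimension} and \cref{t:stable-dimension-reduction}(i)) forces $u_{00}$ to be a half-plane solution, so $x_0$ is a regular point in the sense of \cref{sub:heat-conduction-main-result}. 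This gives $\text{\rm Sing}(\partial\Omega)=\emptyset$.

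\textbf{Step 3: case $d\ge d^\ast$, dimension reduction.} This is the main technical step and the only place where some care is needed. Suppose, by contradiction, that $\HH^{d-d^\ast+\varepsilon}\big(\text{\rm Sing}(\partial\Omega)\big)>0$ for some $\varepsilon>0$. By \cite[Lemma 10.5]{BV15} one finds $x_0\in\text{\rm Sing}(\partial\Omega)$ and a sequence $r_k\to 0$ with
\[
\HH^{d-d^\ast+\varepsilon}\big(\text{\rm Sing}(\partial\Omega)\cap B_{r_k}(x_0)\big)\ge Cr_k^{d-d^\ast+\varepsilon}.
\]
Performing a first blow-up along this sequence and invoking Step 1 yields a global stable solution $u_0$ (up to scaling) whose singular set inherits
\[
\HH^{d-d^\ast+\varepsilon}\big(\text{\rm Sing}(\partial\Omega_0)\cap B_1\big)\ge C.
\]
Applying \cite[Lemma 10.5]{BV15} once more to $\text{\rm Sing}(\partial\Omega_0)$ and performing a second blow-up, the monotonicity formula of \cref{l:weiss} together with the strong $H^1_{\mathrm{loc}}$-convergence produces a $1$-homogeneous global stable solution $u_{00}$ satisfying $\HH^{d-d^\ast+\varepsilon}\big(\text{\rm Sing}(\partial\Omega_{u_{00}})\cap B_1\big)\ge C>0$. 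This contradicts \cref{t:stable-dimension-reduction}(iii), and concludes the proof.

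The main obstacle I foresee is to verify carefully that the stability inequality \eqref{e:heat-conduction-stability-blow-ups} is preserved under the second blow-up at a (possibly non-origin) free boundary point of $u_0$. However, this is essentially a repetition of the argument in \cref{l:stability-of-blow-up-limits}, exploiting the fact that at the blow-up level the right-hand side of the PDE and the weight $Q$ are trivial (here replaced by the constant $\Lambda$), so the passage to the limit of the quadratic form is the same as in \cref{section.stable} and presents no new difficulties.
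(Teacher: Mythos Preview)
Your proposal is correct and follows essentially the same approach the paper intends: the paper's own proof is simply ``by reasoning as in \cref{t:dimension-of-sing} and by applying \cref{t:stable-critical-dimension} and \cref{t:stable-dimension-reduction}'', and you have carried this out in detail. Your observation that only \emph{two} blow-ups are needed here (rather than the three of \cref{t:dimension-of-sing}) is exactly right and is the point of the lemma in \cref{sub:heat-conduction-variations}: since the Boundary Harnack Principle already holds on $\Omega$ by \cite{agcasp}, the proportionality $v_0=Cu_0$ is obtained at the first blow-up, so the second blow-up (via \cref{l:weiss}) suffices to reach a $1$-homogeneous global stable solution; the diagonal argument then identifies this as a genuine blow-up of $u_\Omega$ at $x_0$, and the condition $\alpha\beta=\Lambda$ follows from the first identity in \eqref{e:heat-conduction-stability-blow-ups}.
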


%%%%%%%%%%%%%%%%%%%%%%%%%%%%%%%%%%%%%%%%%%%%%%%%%%
%\bigskip\noindent{\bf Acknowledgments.}

\end{document}